\numberwithin{equation}{section}
\definecolor{Arancio}{cmyk}{0,0.61,0.87,0}
\definecolor{blus}{RGB}{0,102,204}
\newcommand{\arxiv}[1]{arXiv:\href{https://arxiv.org/abs/#1}{#1}}
\let\arXiv\arxiv
\newcommand{\brd}[1]{\mathbb{#1}}
\newcommand{\R}{\brd{R}}
\newcommand{\N}{\brd{N}}
\newcommand{\e}{\varepsilon}
\newcommand{\eps}{\varepsilon}
\newcommand{\be}{\begin{equation}}
\newcommand{\ee}{\end{equation}}
\newtheorem{teo}{Theorem}[section]
\newtheorem{Corollary}[teo]{Corollary}
\newtheorem{Lemma}[teo]{Lemma}
\newtheorem{Theorem}[teo]{Theorem}
\newtheorem{Proposition}[teo]{Proposition}
\theoremstyle{definition}
\newtheorem{remark}[teo]{Remark}
\newtheorem{ass}[teo]{Assumption}
\newcommand{\supp}{\operatorname{supp}}
\newcommand{\D}{\nabla}
\newcommand{\dive}{\operatorname{div}}
\renewcommand{\div}{\operatorname{div}}
\renewcommand{\S}{\mathbb{S}}
\newcommand{\loc}{{\rm loc}}
\pgfplotsset{compat=1.18} 
\begin{document}

\subjclass[2020] {35B65, 35J70, 35J75, 35B40, 35B44, 35B45, 35B53}

\keywords{Weighted elliptic equations; Degenerate ellipticity; Monomial weights; Schauder regularity estimates; Liouville Theorems.}

\title[Regularity for elliptic equations with monomial weights]{Regularity for elliptic equations with monomial weights}

\author{Gabriele Cora, Gabriele Fioravanti, Francesco Pagliarin, Stefano Vita}

\address{Gabriele Cora, D\'epartement de Math\'ematique, Universit\'e Libre de Bruxelles, Boulevard du Triomphe 155, 1050, Brussels, Belgium}
\email{gabriele.cora@ulb.be}

\address{Gabriele Fioravanti, Dipartimento di Matematica "G. Peano", Universit\`a degli Studi di Torino, Via Carlo Alberto 10, 10124, Torino, Italy}
\email{gabriele.fioravanti@unito.it}

\address{Francesco Pagliarin, Institut Camille Jordan, Universit\'e Claude Bernard Lyon 1, 43 boulevard du 11 Novembre 1918
69622 Villeurbanne cedex, France}
\email{pagliarin@math.univ-lyon1.fr}

\address{Stefano Vita, Dipartimento di Matematica "F. Casorati", Universit\`a di Pavia, Via Ferrata 5, 27100, Pavia, Italy}
\email{stefano.vita@unipv.it}

\begin{abstract}
We study regularity properties for solutions to elliptic equations that are degenerate or singular along orthogonal hyperplanes. The degenerate ellipticity is carried out by a weight term which is the monomial product of different powers of the distance functions to each hyperplane; that is, given the space dimension $d\geq2$, the number of orthogonally crossing hyperplanes $1\leq n\leq d$ and the generic variable point $z=(x,y)\in\mathbb R^{d-n}\times\mathbb R^n$, then the weight is given by $\omega(y)=\prod_{i=1}^ny_i^{a_i}$ with $a_i>-1$, $y_i=\mathrm{dist}(z,\Sigma_i)$ and $\Sigma_i=\{y_i=0\}$. We prove $C^{0,\alpha}$ and $C^{1,\alpha}$ estimates up to the corners formed by the intersections of two or more hyperplanes, for solutions of the conormal problem with variable coefficients. This is done by a regularization-approximation procedure, a blow-up argument and Liouville theorems. Finally, we provide smoothness of solutions when the equation is isotropic and homogeneous, and we show an application to Caffarelli-Kohn-Nirenberg inequalities with monomial weights.

\end{abstract}

\maketitle


\section{Introduction}

Aim of the paper is to start investigating regularity features of solutions to elliptic equations which are degenerate on multiple orthogonally crossing hyperplanes. The degeneration is carried out by some monomial weights, already object of interest in the literature in connection to particular isoperimetric inequalities, see e.g. \cite{CabRos13a,CabRos13b,CabRosSer16,CinGlaPraRosSer22}.

The local regularity theory for weighted degenerate equations starts with the seminal work \cite{FabKenSer82}, where the De Giorgi-Nash-Moser theory is proved for equations with weights emerging from quasi-conformal mapping or belonging to the celebrated $\mathcal A_2$-Muckenhoupt class.

Then, operators that are degenerate on a single hyperplane (or on a curved hypersurface) have profound connections with the edge calculus \cite{Maz91,MazVer14}, the realization of Dirichlet-to-Neumann maps and fractional operators \cite{CafSil07,CafSti16}, also in a conformal geometric setting \cite{ChaGon11,GonSae18}.

The paper can be seen as a generalization to the case of several hyperplanes of the regularity theory in \cite{SirTerVit21a,SirTerVit21b,TerTorVit24,JeoVit24,DonJeoVit24}, see also \cite{AudFioVit24,AudFioVit25,DonJeo25a,DonJeo25b} for the parabolic counterpart. We would like to mention also \cite{DonPha21,DonPha23,DonPhaSir24,HanXie24a,HanXie24b} and many references therein.

Let $d\geq2$ and $1\leq n\leq d$ be two integers. Consider the coordinates $z=(x,y)\in \R^{d-n}\times\R^n$, and define
\[
\R^d_* =\R^d_*(n):= \bigcap_{i=1}^n\{z \in \R^d \mid y_i > 0\}\,.
\]
This set can be written as $\R^d_* =\R^{d-n}\times(0,\infty)^n$, where $(0,\infty)^n$ is the \emph{positive orthant} in $\R^n$. 
With a slight abuse of notation, we will refer to $\R^d_*$ as the $n$-orthant in $\R^d$. Note that $\R^d$ is partitioned into $2^n$ adjacent copies of $\R^d_*$, corresponding to all combinations of signs in the $y$ coordinates.

Then, in $\R^d_*$ we consider the unit ball centered at the origin $B_1^*:=B_1\cap \R^d_*$. This domain presents some corner-type singularities wherever at least two orthogonal hyperplanes $\Sigma_i:=\{y_i=0\}$ intersect.

Consider the vector $a=(a_1,\dots,a_n)\in\R^n$ and for $z\in B_1^*$ the monomial weight
\begin{equation*}
    \omega^a(y):=\prod_{i=1}^ny_i^{a_i},
\end{equation*}
which may be degenerate or singular on each $\Sigma_i$ depending on the sign of its powers $a_i$. Notice that $\omega^a$ may not fall into the $\mathcal A_2$-Muckenhoupt class. The latter happens as soon as some power $a_i\notin(-1,1)$.

We aim to establish regularity results for weak solutions of the conormal derivative problem
\begin{equation}\label{eq:degenerate:equation:B*}
\begin{cases}
-\dive(\omega^a A\D u) = \omega^a f+\dive(\omega^a F), & \text{in }B_1^* \\
\omega^a(A\D u +F)\cdot e_{y_i} = 0,  & \text{on }\partial B^*_1 \cap \Sigma_i, \text{ for } i=1,\dots,n,
\end{cases}
\end{equation}
namely, functions $u\in H^{1}(B_1^*,\omega^a(y)dz)$ satisfying
\[
\int_{B_1^*}\omega^a A\D u\cdot \D \phi\, dz = \int_{B_1^*}\omega^a (f\phi - F \cdot \D \phi)\, dz, \qquad \text{for every } \phi \in C_c^\infty(B_1).
\]

Additionally, we are interested in determining the necessary structural assumptions on the variable coefficient matrix in order to \emph{preserve} the orthogonality structure of the corner domain. In other words, we would like to provide macroscopic minimal assumptions that ensure a local description of solutions as polynomials.

\begin{ass}\label{Ass:matrix}
   Let $A$ be a matrix valued function, written in block form as
$$A= \left(\begin{array}{cc}
       P & Q  \\ 
       R & S \\ 
  \end{array}\right),$$
  where $P$ is $(d-n)\times (d- n)$-dimensional, $Q$ is $(d-n)\times n$-dimensional, $R$ is $n\times (d-n)$-dimensional, $S$ is $n\times n$-dimensional. We make the following assumptions:
\begin{enumerate}
\item [$i)$] the matrix $A$ satisfies the \emph{uniform ellipticity condition}; that is, there exist $0<\lambda\le\Lambda$ such that
\[
\|A\|_{L^\infty}\le \Lambda,\quad  A(z)\xi\cdot \xi \ge \lambda|\xi|^2,\quad \text{ for a.e. }z, \text{ for any }\xi\in\R^d;
\]

\item [$ii)$] the block $S$ satisfies an \emph{orthogonality condition} on the intersection of hyperplanes
\begin{equation*}
   (S_{i,j})_{{|}_{\Sigma_i\cap\Sigma_j}}= (S_{j,i})_{{|}_{\Sigma_i\cap\Sigma_j}} = 0, \qquad\text{for }i,j=1,\dots,n, \ i\not=j;
\end{equation*}

      \item [$iii)$] if $n\geq2$, the blocks $Q, R$ satisfy a \emph{symmetry condition} on the hyperplanes:
\[
Q_{j,i}=R_{i,j},\quad \text { on } \Sigma_i, \ \text{ for every } i = 1,\dots,n , \ j=1,\dots,d-n.
\]
  \end{enumerate}
\end{ass}

We note that it remains unclear whether the symmetry assumption in $iii)$ can be removed, although we believe that all subsequent results should still hold without it.

For a vector $a\in\R^n$, we define its positive part as $a^+ \in [0, \infty)^n$ by $(a^+)_i = (a_i)^+= \max\{a_i,0\}\,$. For any $a \in \R^n$, we define $\langle a \rangle = \sum_{i=1}^{n} a_i$. The following theorem provides the main result of this work.

\begin{Theorem} [Regularity for elliptic equations degenerating along orthogonal hyperplanes]\label{thm1}
    Let  $u$ be a weak solution to \eqref{eq:degenerate:equation:B*}, where the matrix $A$ satisfies Assumption \ref{Ass:matrix}. Then the following statements hold.
    \begin{enumerate}
        \item [$i)$] Let $p>\frac{d+\langle a^+\rangle}{2}$, $q>d+\langle a^+\rangle$ and $\alpha \in (0,2-\frac{d+\langle a^+\rangle}{p}]\cap(0,1-\frac{d+\langle a^+\rangle}{q}]$. Let us suppose that $f\in L^p(B_1^*,\omega^a)$, $F\in (L^q(B_1^*,\omega^a))^d$ and that $A$ admits a modulus of continuity $\sigma$ such that
        \[
        \|A\|_{C^{0,\sigma(\cdot)}(B_1^*)}:= \|A\|_{L^\infty(B_1^*)} + \sup_{x,y\in B_1^*}\frac{|A(x)-A(y)|}{\sigma(|x-y|)}\le L.
        \]       
        Then $u \in C^{0,\alpha}(B_r^*)$ for any $r\in(0,1)$. Moreover, there exists a constant $C>0$ depending only on $d,\lambda,\Lambda,p,q,a,\alpha,L$ such that
        \[
            \|u\|_{C^{0,\alpha}(B_{1/2}^*)}\le C \big(
            \|u\|_{L^2(B_1^*,\omega^a)}+ \|f\|_{L^p(B_1^*,\omega^a)} +\|F\|_{L^q(B_1^*,\omega^a)}
            \big).
        \]
        \item [$ii)$] Let $p>d+\langle a^+\rangle$, and $\alpha \in (0,1-\frac{d+\langle a^+\rangle}{p}]$. Let us suppose that $f\in L^p(B_1^*,\omega^a)$, $F\in C^{0,\alpha}(B_1^*)$ and $\|A\|_{C^{0,\alpha}(B_1^*)}\le L$.   
        Then $u \in C^{1,\alpha}(B_r^*)$ for any $r\in(0,1)$. Moreover, there exists a constant $C>0$ depending only on $d,\lambda,\Lambda,p,a,\alpha,L$ such that
        \begin{equation}\label{eq:1alpha}
            \|u\|_{C^{1,\alpha}(B_{1/2}^*)}\le C \big(
            \|u\|_{L^2(B_1^*,\omega^a)}+ \|f\|_{L^p(B_1^*,\omega^a)} +\|F\|_{C^{0,\alpha}(B_1^*)}
            \big).
        \end{equation}    
        Furthermore, $u$ satisfies pointwise the boundary condition
        \begin{equation}\label{eq:BC:thm1}
            (A\D u +F)\cdot e_{y_i} = 0,  \quad \text{on }\partial B^*_r \cap \Sigma_i, \, \text{ for every } i=1,\dots,n,
        \end{equation}
    \end{enumerate}
\end{Theorem}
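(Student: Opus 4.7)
My strategy follows the three-step scheme announced in the introduction: an approximation of the degenerate/singular weight, a blow-up by contradiction, and a Liouville classification of the blow-up limits on orthant domains.

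For the approximation, since $\omega^a$ may be singular or degenerate along the hyperplanes $\Sigma_i$ and may even fail the $A_2$ condition, I would replace it by $\omega^a_\eps(y) := \prod_{i=1}^n (y_i+\eps)^{a_i}$. The regularized weight is smooth, bounded above and bounded away from $0$ on $B_1^*$ (with constants depending on $\eps$), so classical Schauder and Calder\'on-Zygmund theory yield smooth solutions $u_\eps$ of the corresponding conormal problem with the same $A$, $f$, $F$. The core of the proof is then to derive \emph{$\eps$-uniform} Hölder or $C^{1,\alpha}$ estimates; once those are in hand, Arzel\`a-Ascoli and weak convergence in $H^1(B_1^*,\omega^a\,dz)$ transfer them to the original solution $u$. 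Assumption \ref{Ass:matrix} is unaffected by this regularization since $A$ itself is left untouched.

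The uniform estimate is proved by contradiction. Assuming $(i)$ fails along a sequence $(u_k, A_k, f_k, F_k)$ with controlled right-hand sides and blowing-up Hölder seminorm, I would pick realizing points $z_k, z'_k \in B^*_{1/2}$ at scale $r_k := |z_k - z'_k|\to 0$ and set
\[
v_k(z) := \frac{u_k(z_k + r_k z) - c_k}{M_k},
\]
with $c_k$ an appropriate average and $M_k$ the excess Hölder quantity. The $v_k$ are precompact in $C^{0,\alpha'}_\mathrm{loc}$ for every $\alpha'<\alpha$. Depending on whether $z_k$ stays interior, approaches a single hyperplane $\Sigma_i$, or converges to a corner $\bigcap_{i\in I}\Sigma_i$ with $|I|=n'$, the limit $v_\infty$ solves a homogeneous constant-coefficient conormal problem on $\R^d$, on a half-orthant, or on a full $n'$-orthant $\R^{d-n'}\times(0,\infty)^{n'}$ with $n'\ge 2$. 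At this stage Assumption \ref{Ass:matrix}$(ii)$-$(iii)$ plays its essential role: at such a corner the off-diagonal entries $S_{ij}$ vanish and $Q_{j,i}=R_{i,j}$, forcing $A_\infty$ to be block-diagonal in the $y$-directions. This decouples the limit equation into a classical constant-coefficient problem in $x$ plus independent one-dimensional Bessel-type operators $\partial_{y_i}(y_i^{a_i}\partial_{y_i}\cdot)$ in each $y_i$.

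The contradiction is closed by a Liouville theorem on $\R^d_*(n')$: any weak solution of the homogeneous conormal problem with $|v(z)|\le C(1+|z|^\gamma)$ is constant for $\gamma<1$ and affine for $\gamma<1+\alpha$. Thanks to the splitting just described, this reduces to combining the classical Liouville theorem in $\R^{d-n'}$ with one-variable Liouville statements for the Bessel operator on $(0,\infty)$, already available from the single-hyperplane literature. Rerunning the same scheme with the second-order Campanato excess of order $1+\alpha$ gives $(ii)$; the pointwise condition \eqref{eq:BC:thm1} then follows because, once $u\in C^{1,\alpha}$ and $F\in C^{0,\alpha}$, the vector $A\D u + F$ is continuous up to $\Sigma_i$ and the distributional conormal identity forces its trace to vanish. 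I expect the main obstacle to be the corner Liouville theorem for $n'\ge 2$: single-hyperplane Liouville results typically rely on an even-reflection trick that converts the weighted problem into an unweighted one in higher dimension, but across several orthogonal hyperplanes this reduction is genuinely multidimensional and only becomes viable because of the block-diagonal structure enforced by Assumption \ref{Ass:matrix}. A related delicate point will be to verify that this rigidity is stable under both the regularization and the blow-up rescaling, which requires the approximation of $A$ (if needed) to respect the same block decomposition.
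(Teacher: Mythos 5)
Your overall scheme (regularize the weight, derive $\varepsilon$-uniform estimates via blow-up, classify limits by a Liouville theorem on orthants) is exactly the strategy of the paper. But there are two concrete gaps that the paper's argument is specifically built to close and that your proposal passes over.

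First, you assert that since the regularized weight is bounded above and below on $B_1^*$, \emph{classical} Schauder and Calder\'on--Zygmund theory yield the regularity of $u_\varepsilon$. This is false as stated: the domain $B_1^*$ has genuine corners where $n\geq 2$ hyperplanes meet, and conormal problems in cornered domains are not covered by the classical theory even when the operator is uniformly elliptic with smooth coefficients (the counterexample $u_\theta$ in the paper's Remark after Theorem \ref{L:Liouville:UE} shows that without the structural conditions on the matrix one gets non-smooth solutions). The paper devotes all of Section \ref{S:unif:ell:reg} to precisely this uniformly elliptic case $a=0$: it proceeds by induction on the number $n$ of hyperplanes, freezes the coefficients near the corner, performs a change of variables $\Phi_\delta$ to kill the off-diagonal block $\bar Q_\delta$ on $\Sigma_n$, and then reflects evenly to reduce to $n-1$ hyperplanes. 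This is where Assumption \ref{Ass:matrix} first earns its keep, well before any Liouville theorem is invoked, and your proof has no substitute for it.

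Second, your choice of regularizer $\omega^a_\varepsilon(y)=\prod_i(y_i+\varepsilon)^{a_i}$ is not compatible with the even-reflection mechanism that your own argument needs. The scheme relies on reflecting solutions evenly across each $\Sigma_i$ (cf.\ Lemma \ref{L:Hevenid} and Remark \ref{rem:even:reflection}), turning the conormal problem on the orthant into an interior equation on a ball. After even reflection your weight becomes $(|y_i|+\varepsilon)^{a_i}$, which is only Lipschitz at $y_i=0$; the Liouville argument (Lemma \ref{L:y:derivate:liouville}) differentiates the weight in $y_i$ twice, and Lemma \ref{L:traceineq}, \ref{L:Hardy} require an even, $C^1$ radial profile. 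The paper's choice $\rho_\varepsilon(y_i)^{a_i}=(\varepsilon^2+y_i^2)^{a_i/2}$ is even and smooth in $y_i$, which is what makes these lemmas $\varepsilon$-stable. Relatedly, your description of the Liouville theorem as ``classical Liouville in $x$ plus independent one-dimensional Bessel operators'' understates the difficulty: even after reducing to $-\div(\omega^a_\varepsilon \nabla u_j)=0$ in $\R^n$ with diagonal $S$, the operator is not a tensor product of 1D operators, and the paper has to iterate Lemma \ref{L:y:derivate:liouville} to force vanishing of higher weighted derivatives before an ODE reduction can close the argument.
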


The result is proved through several steps:

\begin{enumerate}
    \item[$i)$] Proof of Theorem \ref{thm1} when $a=0$. The proof is done by induction on the number of hyperplanes $n$ with $1\leq n\leq d$ and is based on a priori estimates, freezing of the variable coefficients along intersections of hyperplanes, and an approximation argument. See Section \ref{S:unif:ell:reg}.
    
    \item[$ii)$] Proof of uniform regularity estimates with respect to families of regularized problems. Given $\eps>0$, the weight $\omega^a$ is regularized as
    \begin{equation*}     \omega^a_\varepsilon(y):=\prod_{i=1}^n\rho_{\varepsilon}(y_i)^{a_i},\qquad \rho_{\varepsilon}(y_i)=(\varepsilon^2+y_i^2)^{1/2}.
    \end{equation*}
    Uniform estimates are proved by blow-up procedures in the spirit of \cite{Sim97}, and by a Liouville theorem (see Theorem \ref{T:Liouville}). See Section \ref{S:liouville} and Section \ref{S:regularity}.
    
    \item[$iii)$] Approximation of the limiting problem with solutions to the regularized ones. See Section \ref{S:solution}.
\end{enumerate}

Let us remark that the $C^{0,\alpha}$ estimate in Theorem \ref{thm1}, $i)$ requires the continuity of the variable coefficients and comes with an explicit exponent $\alpha$, which is arbitrarily close to $1$ in case of zero right hand sides. This is a stronger result compared to the $C^{0,\alpha}$ estimate (for some implicit $\alpha$) coming from the De Giorgi-Nash-Moser theory in the presence of only $L^\infty$ variable coefficients. The latter result is provided in Section \ref{s:degiorgi} by reframing our weighted Sobolev setting into the theory of $2$-admissible weights; see \cite{FabKenSer82,HeiKilMar06}.

Once the main result has been established, we devote Section \ref{S:CKN} to the study of higher order regularity in some simple cases. 

First, in connection with a Caffarelli--Kohn--Nirenberg inequality \cite{CafKohNir85} with monomial weights, we provide a $C^3$-regularity result under additional structural assumptions (see Theorem \ref{thm7.1} and Corollary \ref{corapplCKN}).
In \cite{Paglia25}, the third author provides, under curvature bounds, the optimal constant $C_{opt}$ of the following monomial CKN inequality
\begin{equation}\label{eqmonCKN}
\Big(\int_{\R^d_*}\omega^a \frac{|u|^p}{|z|^{bp}}\, dz \Big)^{{2}/{p}}\leq C_{opt}\int_{\R^d_*}\omega^a \frac{|\nabla u|^2}{|z|^{2q}}\,dz,\qquad \forall u \in C^\infty_c(\overline{\R^d_*} \setminus\{0\})
\end{equation}
where $\omega^a=\omega^a(y)$ is the monomial weight defined above, the parameters $a,b,p,q$ satisfy suitable assumptions and $u$ belongs to an appropriate functional space. In that context, in order to take advantage of an integrated curvature dimension condition, some integration by parts concerning solutions to equations \eqref{eq7.2} need to be justified. Since derivatives up to third order are involved, our $C^3$-regularity result is sufficient to make the arguments in \cite{Paglia25} rigorous. 

Finally, we prove $C^\infty$ regularity in the isotropic homogeneous setting; that is, the special case where $A = I_d$ and $f = F =0$ (see Theorem \ref{TheoSmooth}).

\subsection*{Notation}
We establish the notation that will be used throughout the paper.

\begin{itemize}[left=0pt]

    \item[$-$] Let $d\geq 2$ and $1\leq n\leq d$. We always write coordinates in the form $z=(x,y)\in \R^{d-n}\times\R^n \equiv\R^d$.

    \item [$-$] We denote by $\{e_{z_\ell}\}$, where $\ell=1,\dots,d$, the canonical basis of $\R^d$. To distinguish between the variables $x$ and $y$, we will often denote the basis as $\{e_{x_j}\,, e_{y_i}\}$, where $j=1,\dots,d-n$ and $i=1,\dots,n$. 
    
    \item [$-$] For $m \in \N$ we denote by $B_R^{m}(\zeta)$ the open $m$-dimensional ball of radius $R>0$ centered at $\zeta \in \R^m$. To ease the notation, we simply write $B_R^m= B^m_R(0)$ when $\zeta = 0$, and $B_R(\zeta)= B^d_R(\zeta)$, when the dimension $m=d$. In particular, $B_R = B^d_R(0)$. 

\item[$-$] We recall that with $\R^d_*$ we refer to the $n$-orthant in $\R^d$ given by $\R^d_* =\R^{d-n}\times(0,\infty)^n$. Moreover, for $R>0$, we set $B_R^*=B_R\cap \R^d_*$.  

\item[$-$] For each $i = 1, \ldots, n$, we introduce the hyperplane $\Sigma_i = \{z \in \R^d \mid y_i = 0\}$,
and define the characteristic set
\[
\Sigma_0 = \bigcup_{i=1}^n \Sigma_i\,.
\]

\item [$-$]  Throughout the paper, any positive constant whose value is not important is denoted by $c$. It may take different values at different places.

\end{itemize}

\section{Regularity for uniformly elliptic equations in domains with orthogonal corners}\label{S:unif:ell:reg}

This section is devoted to the proof of Theorem \ref{thm1} in the case of uniformly elliptic problems, i.e., when $a=0$. The argument relies on a regularization-approximation scheme, which combines the freezing of variable coefficients along the intersection of orthogonal hyperplanes with suitable a priori estimates. Local regularity is then deduced from a Liouville-type theorem in the orthant for homogeneous conormal boundary value problems.

Given a matrix $A$, a function $f$ and a vector field $F$, a weak solution to the boundary value problem
\begin{equation}\label{eq:uniell:equation:B*}
\begin{cases}
-\dive(A\D u) = f+\dive(F), & \text{ in }B_1^* \\
(A\D u +F)\cdot e_{y_i} = 0,  & \text{ on }\partial B_1^* \cap \Sigma_i, \text{ for } i=1,\dots,n\,,
\end{cases}
\end{equation}
is a function $u\in H^{1}(B_1^*)$ satisfying
\[
\int_{B_1^*}A\D u\cdot \D \phi\, dz = \int_{B_1^*} (f\phi - F \cdot \D \phi)\, dz, \qquad \text{for every } \phi \in C_c^\infty(B_1).
\]
The main result of this section is the following. 
\begin{Theorem} [Regularity in domains with corners]\label{L:schuader:unifell}
    Let  $u$ be a weak solution to \eqref{eq:uniell:equation:B*}, where the matrix $A$ satisfies the Assumption \ref{Ass:matrix}. Then:
    \begin{enumerate}
        \item [$i)$] Let $p>d/2$, $q>d$ and $\alpha \in (0,2-d/p]\cap(0,1-d/q]$. Suppose that $f\in L^p(B_1^*)$, $F\in (L^q(B_1^*))^d$, and that there exists a modulus of continuity $\sigma$ such that
        \[
        \|A\|_{C^{0,\sigma(\cdot)}(B_1^*)}:= \|A\|_{L^\infty(B_1^*)} + \sup_{x,y\in B_1^*}\frac{|A(x)-A(y)|}{\sigma(|x-y|)}\le L.
        \]       
       Then $u \in C^{0,\alpha}(B_r^*)$ for any $r\in(0,1)$, and there exists a constant $c>0$ depending on $d,\lambda,\Lambda,p,q,\alpha,L$ such that
        \begin{equation}\label{eq:unifell:0alpha}
            \|u\|_{C^{0,\alpha}(B_{1/2}^*)}\le c \big(
            \|u\|_{L^2(B_1^*)}+ \|f\|_{L^p(B_1^*)} +\|F\|_{L^q(B_1^*)}
            \big).
        \end{equation}
        \item [$ii)$] Let $p>d$ and $\alpha \in (0,1-d/p]$. Suppose that $f\in L^p(B_1^*)$, $F\in C^{0,\alpha}(B_1^*)$ and that $A \in C^{0,\alpha}(B_1^*)$ is such that $\|A\|_{C^{0,\alpha}(B_1^*)}\le L$.   
        Then $u \in C^{1,\alpha}(B_r^*)$ for any $r\in(0,1)$, and there exists a constant $c>0$ depending on $d,\lambda,\Lambda,p,\alpha,L$ such that
        \begin{equation}\label{eq:unifell:1alpha}
            \|u\|_{C^{1,\alpha}(B_{1/2}^*)}\le c \big(
            \|u\|_{L^2(B_1^*)}+ \|f\|_{L^p(B_1^*)} +\|F\|_{C^{0,\alpha}(B_1^*)}
            \big).
        \end{equation}     
    \end{enumerate}
\end{Theorem}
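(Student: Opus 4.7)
I will prove Theorem~\ref{L:schuader:unifell} by induction on the number $n$ of orthogonal hyperplanes, keeping the ambient dimension $d$ fixed. The base case $n=1$ is the conormal problem on the half-ball $B_1^+$; since Assumption~\ref{Ass:matrix} then reduces to plain uniform ellipticity (conditions (ii) and (iii) are vacuous), the statement follows from classical Schauder theory for Neumann-type problems, obtained for instance by combining even reflection across $\Sigma_1$ of $u$, $f$ and of the blocks $P$, $S_{1,1}$ of $A$ with odd reflection of the off-diagonal $y_1$-row and column of $A$ and of the $y_1$-component of $F$, and then applying interior Calder\'on--Zygmund and Schauder estimates on $B_1$.

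\textbf{Reduction to the full corner.} Assume the theorem for every problem with at most $n-1$ hyperplanes in every ambient dimension. For a solution $u$ on $B_1^*$ and a point $z_0\in B_{1/2}^*$, let $\rho:=\dist(z_0,\Gamma)$ be the distance to the full corner $\Gamma:=\bigcap_{i=1}^n\Sigma_i=\R^{d-n}\times\{0\}$. If $\rho>0$, then on $B_{\rho/2}(z_0)\cap\R^d_*$ the problem sees at most $n-1$ hyperplanes, and the estimate at $z_0$ follows from the inductive hypothesis after a rescaling, combined with classical interior Schauder estimates where they apply and a standard covering argument. The only new content is therefore a boundary estimate uniform up to points $z_0\in\Gamma$.

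\textbf{Freezing and Campanato decay at the corner.} At $z_0\in\Gamma$ I would use the Campanato comparison scheme: let $v$ solve the frozen, homogeneous conormal problem
\begin{equation*}
-\dive(A(z_0)\D v)=0 \text{ in } B_r^*(z_0), \qquad (A(z_0)\D v)\cdot e_{y_i}=0 \text{ on } B_r^*(z_0)\cap\Sigma_i,\; i=1,\dots,n,
\end{equation*}
and observe that its constant matrix $A(z_0)$ still satisfies Assumption~\ref{Ass:matrix} pointwise. A blow-up argument combined with a Liouville-type statement on the orthant yields the sharp decay
\begin{equation*}
\inf_P\avint_{B_\rho^*(z_0)}|v-P|^2\,dz\le c\Bigl(\frac{\rho}{r}\Bigr)^{2+2\alpha}\inf_P\avint_{B_r^*(z_0)}|v-P|^2\,dz,
\end{equation*}
where $P$ ranges over constants for part $(i)$ and over affine functions respecting the conormal condition for part $(ii)$. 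Combining this decay with a Caccioppoli/energy estimate for $u-v$, which absorbs the oscillation of $A$ through its H\"older modulus and controls the right-hand sides in terms of $\|f\|_{L^p}$ and $\|F\|_{L^q}$ (respectively $\|F\|_{C^{0,\alpha}}$), one transfers the decay to $u$ itself; Campanato's characterization of H\"older spaces then produces the stated $C^{0,\alpha}$ and $C^{1,\alpha}$ bounds.

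\textbf{Approximation and main obstacle.} The scheme is first carried out as an a priori estimate under extra smoothness of $A$, $f$, $F$ (ensuring that a classical solution exists and that the computations are justified), and the general case is recovered by mollification together with the uniform quantitative estimates. The main technical obstacle is the Liouville classification underlying the decay of $v$: reflecting the constant-coefficient problem successively across the various $\Sigma_i$ produces a globally defined operator on $\R^d$, but for the reflected matrix to remain uniformly elliptic and for consistency at the pairwise intersections $\Sigma_i\cap\Sigma_j$ one must rely precisely on conditions (ii) and (iii) of Assumption~\ref{Ass:matrix}. Once this Liouville step is secured, the Campanato iteration and the covering/approximation arguments are essentially mechanical.
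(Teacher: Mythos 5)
Your skeleton (induction on $n$, classical base case on the half-ball, constant-coefficient analysis at the corner plus a perturbation/covering argument) matches the paper's in outline, and replacing the paper's a priori estimates of Lemma \ref{L:schuader:unifell:apriori} -- which are obtained by a contradiction blow-up and the Liouville Theorem \ref{L:Liouville:UE}, as in Theorems \ref{thmC0stable}--\ref{thmc1stab} -- by a direct Campanato iteration with frozen coefficients is a legitimate alternative route to the quantitative bounds. However, two steps of your proposal do not work as written. First, the reflection claim at the corner: at $z_0\in\Gamma=\bigcap_i\Sigma_i$ the frozen matrix has $S(z_0)$ diagonal and $R(z_0)=Q(z_0)^\top$, but $Q(z_0)$ need not vanish; even reflection across $\Sigma_n$ conjugates $A(z_0)$ by $J$ and flips the sign of the column $Q_{\cdot,n}$, so the reflected coefficients are only piecewise constant and successive reflections do \emph{not} produce a globally constant-coefficient operator on $\R^d$ (ellipticity is not the issue, continuity is). You must first block-diagonalize by the substitution $x\mapsto x-Q(z_0)S(z_0)^{-1}y$ -- legitimate precisely because of Assumption \ref{Ass:matrix} $ii)$--$iii)$ at corner points -- and only then reflect; this is the constant-coefficient analogue of the paper's diffeomorphism $\Phi_\delta$ and corrector $g_\delta$, and it is the step your sketch glosses over.

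The more serious gap is the approximation step. Mollifying $A$ destroys conditions $ii)$--$iii)$ (they are constraints on lower-dimensional sets), and even for smooth $A,f,F$ satisfying Assumption \ref{Ass:matrix} there is no classical theory ensuring that the solution is regular up to the corner: that qualitative regularity is exactly the content of the theorem (the example $u_\theta$ after Theorem \ref{L:Liouville:UE} shows that constant, smooth coefficients without the structure already produce non-Lipschitz solutions at the corner). The paper's Steps 1--2 exist precisely to manufacture approximating solutions that \emph{are} $C^{1,\alpha}$ up to the corner: freeze $A$ to $A(x,0)$ in $\{|y|\le\delta\}$ so that the structural identities hold identically there (not just on the hyperplanes), mollify in $x$ only, apply $\Phi_\delta$ and the corrector $g_\delta$ so that the field is tangential on $\Sigma_n$, reflect evenly via Remark \ref{rem:even:reflection}, and invoke the inductive hypothesis; the a priori estimates are then applied to these $u_\delta$ and one passes to the limit. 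Your sentence ``extra smoothness of $A,f,F$ ensures that a classical solution exists'' is therefore unjustified. The gap can be avoided if you run the Campanato iteration directly on the $H^1$ weak solution (the classical Campanato proof of Schauder estimates requires no a priori regularity), but then the frozen-corner decay must be established rigorously as above; note also that in part $i)$ the matrix only has a modulus of continuity (not a H\"older modulus), and that with constant competitors the decay exponent in your display should be $2\alpha$ rather than $2+2\alpha$.
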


\begin{remark}\label{rem:even:reflection}
Given a solution to \eqref{eq:uniell:equation:B*} in $B_1^*$, one can perform an even reflection across any $\Sigma_i$, obtaining a solution of a new equation in the full ball $B_1$. However, this procedure may introduce jumps in the coefficients and in the field terms across the hyperplanes.

Let $y = (y', y_n)\in \R^{n-1}\times \R$, and define the symmetrization of $B_1^*$ across the hyperplane $\Sigma_n$ as
\begin{equation}\label{eq:reflectset}
 B_1^{\dagger} := B_1 \cap  \bigcap_{i=1}^{n-1} \{z \in \R^d \mid y_i>0\}\,.
\end{equation}
We define the reflected data as
\[
\begin{aligned}
&A^\dagger(x,y',y_n) = \begin{cases}
    A(x,y',y_n), & \text{ in }  B_1^\dagger \cap \{y_n>0\}\\
    JA(x,y',-y_n)J  & \text{ in }   B_1^\dagger \cap \{y_n<0\}
\end{cases}\\
&f^\dagger(x,y',y_n) = \begin{cases}
    f(x,y',y_n), &\!\!\qquad \text{ in }  B_1^\dagger \cap \{y_n>0\}\\
    f(x,y',-y_n)  & \!\!\qquad\text{ in }  B_1^\dagger \cap \{y_n<0\}
\end{cases}\\
&F^\dagger(x,y',y_n)  = \begin{cases}
    F(x,y',y_n), & \!\!\quad\text{ in }  B_1^\dagger \cap \{y_n>0\}\\
    JF(x,y',-y_n)  & \!\!\quad\text{ in }   B_1^\dagger \cap \{y_n<0\}
    \end{cases}
\end{aligned}
\]
where 
$$
J:=\left(\begin{array}{cc}
       I_{d-1} & 0  \\ 
       0 & -1\\ 
  \end{array}\right)\,.
$$
The even reflection $u^\dagger$ of $u$ across the hyperplane $\Sigma_n$, given by 
\[
u^\dagger(x, y', y_n) = u(x, y', |y_n|)\,,
\]
is a weak solution to the problem
\begin{equation*}
\begin{cases}
-\dive( A^\dagger\D u^\dagger) = f^\dagger+\dive( F^\dagger), & \text{in } B_1^\dagger \\
( A^\dagger\D u^\dagger + F^\dagger)\cdot e_{y_i} = 0,  & \text{on }\partial B_1^\dagger \cap \Sigma_i, \text{ for } i=1,\dots,n-1\,.
\end{cases}
\end{equation*}
If $ A\in C^{0,\alpha}(B_1^*)$, then $A^\dagger \in C^{0,\alpha}(B_1^\dagger)$ if and only if 
\[
A_{j,d}=A_{d,j}=0 \quad \text{ on } \Sigma_n \quad \text{ for all }j=1,\dots,d-1\,,
\]
that is, the last column and row of $A$ vanish on the hyperplane $\Sigma_n$, except possibly for $A_{d,d}$. Similarly, if $F\in C^{0,\alpha}(B_1^*)$ then $F^\dagger \in C^{0,\alpha}(B_1^\dagger)$ if and only if $F\cdot e_{y_n}=0$ on $\Sigma_n$. For related considerations in a similar context, see \cite[Remark 4.4]{AudFioVit24}.
\end{remark}

The first step in the proof of Theorem \ref{L:schuader:unifell} is the following Liouville-type result. The argument follows the same scheme as in Section \ref{S:liouville}; indeed, the statement can be recovered as a special case of Theorem \ref{T:Liouville} by taking $a =0$. For this reason, we omit the proof.
\begin{Theorem} [Liouville Theorem in the orthant]\label{L:Liouville:UE} 
Let $A$ be a constant, uniformly elliptic matrix of the form
\begin{equation*}
    A:= \left(\begin{array}{cc}
       P & Q  \\ 
       R & S\\ 
  \end{array}\right),
\end{equation*}
where $P$ is $(d-n)\times (d- n)$-dimensional, $Q$ is $(d-n)\times n$-dimensional, $R$ is $n\times (d- n)$-dimensional, and $S$ is $n\times n$-dimensional and diagonal. 
Let $u$ be an entire weak solution to 
\begin{equation*}
\begin{cases}
-\dive(A\D u) = 0, & \text{in }\R^d_* \\
A\D u\cdot e_{y_i} = 0,  & \text{on } \partial\R^d_* \cap \Sigma_i, \text{ for } i=1,\dots,n\,,
\end{cases}
\end{equation*}
satisfying the growth condition
\[
|u(z)|\le c(1+|z|^\gamma)
\]
for some constant $c>0$ and $\gamma \in  [0,2)$.  Then $u$ is an affine function.
Moreover, if $\gamma\in [0,1)$, then $u$ is constant.
\end{Theorem}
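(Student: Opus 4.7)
My plan is to reduce the problem to the Laplace equation on $\R^d_*$ with standard Neumann boundary conditions, and then apply even reflection combined with the classical Liouville theorem for harmonic functions.

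\emph{Step 1: Reduction to the Laplace equation via a change of variables.} Since $A$ is constant, only its symmetric part enters the weak formulation up to boundary contributions; thanks to the diagonal structure of $S$ and the (implicit) symmetry condition $Q = R^T$ inherited from Assumption~\ref{Ass:matrix}, one may reduce to the case of a symmetric $A$. I would then construct an explicit, orthant-preserving linear transformation $T \colon \R^d \to \R^d$ that brings $A$ into the identity. Concretely, taking $T = \begin{pmatrix} I & -QS^{-1} \\ 0 & D \end{pmatrix}$ with $D$ diagonal positive satisfying $D^2 = S^{-1}$, a direct computation yields $TAT^T = \mathrm{diag}(P - QS^{-1}Q^T,\, I_n)$. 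The Schur complement $P - QS^{-1}Q^T$ is symmetric positive definite, so an additional rotation and rescaling acting only on the $x$-coordinates reduces it to the identity. Since $T$ maps each $\Sigma_i$ to itself and preserves $\R^d_*$, the pushforward $\phi(\tilde z) := u(T^{-1}\tilde z)$ is a weak solution to $-\Delta \phi = 0$ in $\R^d_*$ with $\partial_{y_i} \phi = 0$ on each $\Sigma_i$, still obeying a polynomial growth bound of order $\gamma$.

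\emph{Step 2: Even reflection and classical Liouville.} Extending $\phi$ by successive even reflections across each hyperplane $\Sigma_i$, the resulting function $\tilde\phi(x,y) := \phi(x, |y_1|, \ldots, |y_n|)$ belongs to $H^1_{\loc}(\R^d)$ (the Neumann condition guarantees continuity of the normal flux across each $\Sigma_i$) and is harmonic on all of $\R^d$, with $|\tilde\phi(z)| \le c(1+|z|^\gamma)$. The classical Liouville theorem for harmonic functions of polynomial growth then implies that $\tilde\phi$ is a harmonic polynomial of degree at most $\lfloor \gamma \rfloor$.

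\emph{Step 3: Conclusion.} When $\gamma \in [0,1)$, $\tilde\phi$ is constant, hence so is $u$. When $\gamma \in [0,2)$, $\tilde\phi$ is affine; its invariance under each reflection $y_i \mapsto -y_i$ forces the coefficients of the $y_i$'s to vanish, so $\tilde\phi(x,y) = c + b \cdot x$. Pulling back through $T$, $u$ is an affine function of $z$, as claimed.

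\emph{Main obstacle.} The principal technical point is arranging the change of variables $T$ to be simultaneously orthant-preserving and block-diagonalizing; this forces $T$ to be lower triangular with respect to the $(x,y)$ splitting, and the symmetry condition $Q=R^T$ plays an essential role here. A more flexible alternative would follow the blow-down scheme of Section~\ref{S:liouville}: uniform Schauder-type estimates on rescalings $u_R(z) = u(Rz)/R^\gamma$, extraction of a limiting profile by compactness, and a rigidity argument. This is the path taken by the authors, since the statement is invoked as a particular case of Theorem~\ref{T:Liouville}.
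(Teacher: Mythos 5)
Your route --- a linear change of variables straightening the constant-coefficient conormal problem to the Laplacian with Neumann conditions on the orthant, followed by successive even reflections and the classical Liouville theorem for harmonic functions --- is a genuinely different and more elementary argument than the one intended by the paper, which recovers Theorem~\ref{L:Liouville:UE} as the case $a=0$ of Theorem~\ref{T:Liouville}. (That proof goes through Lemma~\ref{L:x:derivatives}, iterated weighted second derivatives in the $y_i$'s via Lemma~\ref{L:y:derivate:liouville}, and an explicit ODE analysis; it is not a blow-down/compactness scheme as your closing remark suggests --- those appear only in Section~\ref{S:regularity}.) Your computation of $TAT^T$ and the reflection step are correct under the extra hypothesis $R=Q^T$, which you invoke explicitly; but that hypothesis is \emph{not} part of the theorem statement, where only the diagonality of $S$ is assumed.

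This is a genuine gap relative to the theorem as stated, and it is not cosmetic. Replacing $A$ by its symmetric part changes the conormal boundary condition on $\Sigma_i$ by the term $\tfrac{1}{2}\big[(R-Q^T)\nabla_x u\big]_i$. If one instead takes $T$ with off-diagonal block $-R^TS^{-1}$, the boundary condition does normalize to $\partial_{y_i}\phi=0$, but $TAT^T$ then retains the top-right block $(Q-R^T)D$, so the interior operator keeps mixed $x$--$y_i$ coefficients; an even reflection across $\Sigma_i$ flips the sign of exactly those coefficients, so the reflected function solves an equation with discontinuous piecewise-constant coefficients, and the classical Liouville theorem no longer applies (the De~Giorgi--Nash--Moser H\"older exponent available for $L^\infty$ coefficients is far below the $\gamma<2$ needed). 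The paper's argument sidesteps this by first reducing, via $x$-derivatives, to solutions depending only on $y$ --- which see only the diagonal block $S$ and reflect cleanly --- and then by subtracting the explicit affine solution $\beta\cdot x - (S^{-1}R\beta)\cdot y$, which handles a general $R$. In the paper's applications $A$ is obtained by freezing a matrix satisfying Assumption~\ref{Ass:matrix} at a point of $\bigcap_{i=1}^n\Sigma_i$, where item~$iii)$ does force $R=Q^T$, so your restricted version covers those uses; it just does not prove the theorem at the level of generality claimed.
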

\begin{remark}
    If the block $S$ is not diagonal, Theorem \ref{L:Liouville:UE} may not hold. Consider for instance the simple case $d=n=2$ (so that, in the notation above, $A = S$), and the boundary value problem 
  \begin{equation}\label{eq:contex}
        \begin{cases}
            -\mathrm{div}(A_\theta\nabla u)=0 &\mathrm{in \ }\R^2_*\\
            A_\theta\nabla u\cdot e_i=0 &\mathrm{on \ }\partial \R^2_* \cap \Sigma_i, \ \mathrm{for \ } i=1,2,
        \end{cases}
   \end{equation}
  where the constant, uniformly elliptic matrix $A_\theta$ is given by
  \[
        A_\theta=\frac{1}{\sin\theta}\left(\begin{array}{cc}
       1 & -\cos\theta  \\ 
       -\cos\theta & 1\\ 
  \end{array}\right)\,, \qquad \ \theta\in(0,\pi)\,.
  \]
The function
    \[
u_\theta(y_1, y_2) = \left( y_1^2 + y_2^2 + 2y_1y_2 \cos\theta \right)^{\frac{\pi}{2\theta}} 
 \cos\left( \frac{\pi}{\theta} \arctan\left( \frac{y_2 \sin\theta}{y_1 + y_2 \cos\theta} \right) \right)
\]
is an entire solution to \eqref{eq:contex}, and satisfies $|u_\theta(z)| \leq |z|^{\pi/\theta}$. However, $u_\theta$ is not affine for any $\theta \in (\pi/2, \pi)$. 
\end{remark}

The second key ingredient in the proof of Theorem \ref{L:schuader:unifell} consists of suitable a priori estimates, stated in the following lemma. The proof is based on a contradiction argument combined with Theorem \ref{L:Liouville:UE}. Since the argument follows the same strategy as in the proofs of Theorems \ref{thmC0stable} and \ref{thmc1stab} -- which are substantially more involved and will be presented in later sections -- we omit the details here.

\begin{Lemma}[A priori estimates in domains with corners]\label{L:schuader:unifell:apriori}
     Let  $u$ be a weak solution to \eqref{eq:uniell:equation:B*}, where the matrix $A$ satisfies the Assumption \ref{Ass:matrix}. Then
    \begin{enumerate}
        \item [$i)$] if the assumptions of Theorem \ref{L:schuader:unifell}, $i)$ hold, and $u\in C^{0,\alpha}(B_1^*)$, then estimate \eqref{eq:unifell:0alpha} holds;
        \item [$ii)$] if the assumptions of Theorem \ref{L:schuader:unifell}, $ii)$ hold, and  $u\in C^{1,\alpha}(B_1^*)$, then estimate \eqref{eq:unifell:1alpha} holds.
    \end{enumerate}
\end{Lemma}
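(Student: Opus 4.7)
The plan is to argue by contradiction, via a Simon-type blow-up procedure \cite{Sim97} that reduces matters to the Liouville Theorem \ref{L:Liouville:UE}. I describe part $i)$ in detail; part $ii)$ follows the same scheme after subtracting an affine Taylor polynomial.

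Suppose \eqref{eq:unifell:0alpha} fails. Then there exist sequences $(A_k,f_k,F_k)$ satisfying the hypotheses of Theorem \ref{L:schuader:unifell} $i)$ with uniform constants, and solutions $u_k\in C^{0,\alpha}(B_1^*)$ for which the left-hand side of \eqref{eq:unifell:0alpha} diverges faster than the right-hand side. Dividing each $u_k$ by $\|u_k\|_{C^{0,\alpha}(B_{1/2}^*)}$, I may assume
\[
\|u_k\|_{C^{0,\alpha}(B_{1/2}^*)}=1, \qquad \|u_k\|_{L^2(B_1^*)}+\|f_k\|_{L^p(B_1^*)}+\|F_k\|_{L^q(B_1^*)}\to 0.
\]
Since the $u_k$ are equicontinuous and $\|u_k\|_{L^2}\to 0$, Arzel\`a-Ascoli gives $u_k\to 0$ uniformly, so $\|u_k\|_{L^\infty}\to 0$ and necessarily $[u_k]_{C^{0,\alpha}(B_{1/2}^*)}\to 1$. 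I then choose point pairs $z_k,z_k'\in\overline{B_{1/2}^*}$ with $|u_k(z_k)-u_k(z_k')|\ge \tfrac{1}{2} r_k^{\alpha}$, where $r_k:=|z_k-z_k'|$; uniform convergence $u_k\to 0$ forces $r_k\to 0$.

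Next, I introduce the blow-up
\[
v_k(z):=\frac{u_k(z_k+r_k z)-u_k(z_k)}{r_k^{\alpha}}, \qquad z\in\Omega_k:=\tfrac{1}{r_k}(B_1^*-z_k),
\]
which has uniformly bounded $C^{0,\alpha}$ seminorm, satisfies $v_k(0)=0$, and attains a H\"older quotient $\ge 1/2$ between $0$ and $(z_k'-z_k)/r_k\in\overline{B_1}$. Up to subsequences, the domains $\Omega_k$ converge in the local Hausdorff sense to $\Omega_\infty=\R^{d-m}\times(0,\infty)^m$, where $m\in\{0,\dots,n\}$ counts the indices $i$ for which the ratios $y_{k,i}/r_k$ stay bounded; the translated matrices $A_k(z_k+r_k\,\cdot)$ converge locally uniformly to a constant limit $A_\infty$, thanks to the uniform modulus of continuity $\sigma$; and the Morrey-type exponents $\alpha\le 2-d/p$, $\alpha\le 1-d/q$ make the rescaled forcing terms $r_k^{2-\alpha}f_k(z_k+r_k\,\cdot)$ and $r_k^{1-\alpha}F_k(z_k+r_k\,\cdot)$ vanish in $L^p_{\loc}(\Omega_\infty)$ and $L^q_{\loc}(\Omega_\infty)$, respectively. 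By Arzel\`a-Ascoli we pass to a locally uniform limit $v_k\to v_\infty$, and passing to the limit in the weak formulation yields an entire weak solution $v_\infty$ of $-\dive(A_\infty\D v_\infty)=0$ in $\Omega_\infty$ with homogeneous conormal conditions on the surviving hyperplanes. The algebraic conditions $ii)$ and $iii)$ of Assumption \ref{Ass:matrix} are preserved in the limit, and if $m<n$ one additionally eliminates the non-surviving coordinates by even reflection (as in Remark \ref{rem:even:reflection}, where condition $iii)$ is precisely what makes the reflected coefficients admissible), extending $v_\infty$ to an entire solution in the $m$-orthant whose coefficient matrix still meets the hypotheses of Theorem \ref{L:Liouville:UE}.

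Since $|v_\infty(z)|\le c(1+|z|^\alpha)$ with $\alpha<1$, Theorem \ref{L:Liouville:UE} forces $v_\infty$ to be constant, hence $v_\infty\equiv 0$ because $v_\infty(0)=0$; this contradicts the lower bound on the H\"older quotient. Part $ii)$ is treated analogously with the modified blow-up
\[
v_k(z):=\frac{u_k(z_k+r_k z)-u_k(z_k)-r_k\D u_k(z_k)\cdot z}{r_k^{1+\alpha}}.
\]
The growth $|v_\infty(z)|\le c(1+|z|^{1+\alpha})$ with $1+\alpha<2$ then forces $v_\infty$ to be affine, and the normalization $v_\infty(0)=0$, $\D v_\infty(0)=0$ inherited from the subtraction of the Taylor polynomial yields $v_\infty\equiv 0$. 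The main obstacle is the careful bookkeeping in the blow-up -- identifying the limiting orthant, propagating Assumption \ref{Ass:matrix} through the reflection, and confirming that Theorem \ref{L:Liouville:UE} still applies on a possibly lower-dimensional orthant. This is precisely the step that becomes substantially more delicate in the weighted setting of Sections \ref{S:liouville}--\ref{S:regularity}.
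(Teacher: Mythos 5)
Your overall strategy (contradiction, blow-up at near-extremal points, Liouville Theorem \ref{L:Liouville:UE}) is exactly the one the paper intends, since the omitted proof is modeled on Theorems \ref{thmC0stable} and \ref{thmc1stab}. However, there is a concrete gap in the blow-up. You normalize $\|u_k\|_{C^{0,\alpha}(B^*_{1/2})}=1$ and pick $z_k,z_k'\in\overline{B^*_{1/2}}$, but then assert that $v_k$ has uniformly bounded H\"older seminorm on all of $\Omega_k=(B_1^*-z_k)/r_k$ and that $v_\infty$ is an \emph{entire} solution in the limit orthant with $|v_\infty(z)|\le c(1+|z|^\alpha)$. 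The normalization controls $u_k$ only on $B^*_{1/2}$; after dividing by $\|u_k\|_{C^{0,\alpha}(B^*_{1/2})}$ the H\"older norms on $B_1^*\setminus B^*_{1/2}$ are finite for each $k$ but not uniformly bounded. If the near-extremal pair approaches $\partial B_{1/2}$, the region where you actually control $v_k$, namely $(B^*_{1/2}-z_k)/r_k$, converges to a half-space (possibly intersected with the orthant), not to the full orthant, and both the passage to the limit in the weak formulation (which needs local $L^\infty$/Caccioppoli bounds for $v_k$ on arbitrarily large balls) and the growth bound for $v_\infty$ are available only there. Theorem \ref{L:Liouville:UE} cannot be applied on such a domain: a solution with sublinear growth in a half-space carrying no condition on the extra flat boundary need not be constant (e.g.\ ${\rm Re}\sqrt{x_1+\mathrm{i}\,x_2}$ in $\{x_1>0\}$), so the contradiction does not follow. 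This is precisely what the paper's analogous proofs fix with a localization device: one works with $\eta u_k$, $\eta\in C^\infty_c(B_{3/4})$, $\eta\equiv1$ on $B_{1/2}$, measures $M_k=[\eta u_k]_{C^{0,\alpha}(B_1^*)}$ over the whole domain (so that $|v_k(z)|\le |z|^{\alpha}$ on all of $\Omega_k$), uses the $L^2\to L^\infty$ estimate to show that the cut and uncut rescalings have the same limit, and only then invokes Liouville. Your argument needs this cutoff (or an equivalent Simon-type interior-seminorm device); as written, the key uniform bounds are claimed where they are not available, and the same issue affects part $ii)$.

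A secondary point: the ``even reflection to eliminate the non-surviving coordinates'' is misplaced. For $i\notin H$ the hyperplanes $\Sigma_i$ escape to infinity under the rescaling, so the limit domain is already $\R^{d-m}\times(0,\infty)^m$ and no reflection is performed at this stage (nor is Assumption \ref{Ass:matrix} $iii)$ used here); what is needed, and what you do get from Assumption \ref{Ass:matrix} $ii)$, is that the $m\times m$ block of $A_\infty$ corresponding to the surviving $y$-variables is diagonal, because the blow-up points converge to a point of $\bigcap_{i\in H}\Sigma_i$. On the positive side, your treatment of the forcing terms (the exponents $\alpha\le 2-d/p$, $\alpha\le 1-d/q$) is correct, and in this unweighted a priori setting the compactness argument ($C^{1,\alpha}$-boundedness plus $L^2\to0$) does give $r_k\to0$ in part $ii)$ more directly than in the paper's $\varepsilon$-stable analogues.
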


We are now ready to prove Theorem \ref{L:schuader:unifell}.

\begin{proof}[Proof of Theorem \ref{L:schuader:unifell}]
We present only the proof of $ii)$, since the argument for $i)$ is analogous. 
We set $d = k + n$, where $k \geq 0$ is a fixed integer, and proceed by induction on $n$, the number of intersecting hyperplanes.
The base case $n=1$ follows from the classical Schauder boundary regularity theory, since in this case the domain $B_1^*$ is a half-ball. 
Assume now that the statement holds for $n-1$ hyperplanes. We shall prove it for the case of $n$ hyperplanes.

\emph{Step 1. Construction of the approximating solutions}. Fix $\delta\in(0,1/4)$ and define
\[ 
A_\delta(x,y):=A(x,y)+\eta\Big(\frac{|y|}{\delta}\Big)(A(x,0)-A(x,y)), 
\]
where
\[
\eta\in C^\infty([0,\infty)),\quad \eta=1 \text{ in } [0,1), \quad \eta=0 \text{ in } (2,\infty), \quad 0\le\eta\le1.
\]
Then, $A_\delta$ satisfies Assumption \ref{Ass:matrix} with the same ellipticity constants $\lambda, \Lambda$ as $A$. Notice that when $|y| \leq \delta$ then $A_\delta(x, y) = A(x, 0)$. Since $(x, 0) \in \cap_{i=1}^n \Sigma_i$, Assumption \ref{Ass:matrix} ensures that $A_\delta$ satisfies in $B_1\cap\{|y|\le\delta\}$ the following two conditions: $R_\delta=Q_\delta^\top$ and $S_\delta$ is diagonal. Moreover, there exists $c>0$, independent of $\delta$, such that 
\begin{equation}\label{eq:approxreg}
\|A_\delta\|_{C^{0,\alpha}(B_1^*)}\le c \|A\|_{C^{0,\alpha}(B_1^*)}.
\end{equation}
Indeed, for $z=(x,y),\,z'=(x',y') \in B_1^*$ with $|y|\le 2\delta$, a direct computation gives
    \begin{align*}
        |A_\delta(z)-A_\delta(z')| &\le |A(z)-A(z')| 
        + \Big|\eta\Big(\frac{|y|}{\delta}\Big)\Big(A(x,0)-A(z)\Big) - \eta\Big(\frac{|y'|}{\delta}\Big)\Big(A(x',0)-A(z')\Big) \Big|\\
        &\le [A]_{C^{0,\alpha}(B_1^*)}|z-z'|^\alpha 
        +\Big| \eta\Big(\frac{|y'|}{\delta}\Big)\Big|\Big(|
        A(x,0) - A(x', 0)| + |A(z)-A(z')|\Big)\\
        &+\Big| \eta\Big(\frac{|y|}{\delta}\Big)- \eta\Big(\frac{|y'|}{\delta}\Big)\Big|\Big|
        A(x,0)-A(z)
        \Big|\\
        &\le 3[A]_{C^{0,\alpha}(B_1^*)}|z-z'|^\alpha + [\eta]_{C^{0,\alpha}(\R)}\frac{|y-y'|^\alpha}{\delta^\alpha} [A]_{C^{0,\alpha}(B_1^*)}|y|^\alpha \le  c[A]_{C^{0,\alpha}(B_1^*)}|z-z'|^\alpha\,,
    \end{align*}    
and \eqref{eq:approxreg} follows. 

Next, let $\rho_\delta=\rho_\delta(x)$ be a standard mollifier supported in $\{|x|\le \delta\}$, acting only on the 
$x$-variable. Define 
\[
\bar A_\delta(x,y):= A_\delta(x,y) \ast \rho_\delta(x)\quad \text { and } \quad \bar F_\delta(x,y):=F(x,y) \ast \rho_\delta(x).
\] 
Also in this case one can verify that $\bar A_\delta$ satisfies Assumption \ref{Ass:matrix} with the same ellipticity constants $\lambda, \Lambda$ as $A$, and $\|\bar A_\delta\|_{C^{0,\alpha}(B_{3/4}^*)}\le c \|A\|_{C^{0,\alpha}(B_1^*)}$. Similarly, the field $\bar F_\delta$ satisfies $\|\bar F_\delta\|_{C^{0,\alpha}(B_{3/4}^*)}\le c \|F\|_{C^{0,\alpha}(B_1^*)}$. 

Let now $u$ be a weak solution to \eqref{eq:uniell:equation:B*}, and denote by $u_\delta$ the unique weak solution to
\[
\begin{cases}
-\dive(\bar A_\delta\D u_\delta) = f+\dive(\bar F_\delta), & \text{in }B_{3/4}^* \\
(\bar A_\delta\D u_\delta +\bar F_\delta)\cdot e_{y_i} = 0,  & \text{on }\partial B^*_{3/4} \cap \Sigma_i,  \text{ for } i=1,\dots,n, \\
u_\delta = u, & \text{on }\partial B^*_{3/4} \cap \R^{k+n}_*, 
\end{cases}
\]
It follows from standard arguments that 
\begin{equation}\label{eq:u:delta:unifell:regularity}
    u_\delta \to u \ \text{ in } H^1(B^*_{1/2}),\quad \text{ and }\quad \|u_\delta\|_{H^1(B^*_{1/2})} \le c(\|u\|_{L^2(B^*_1)}+\|f\|_{L^2(B^*_1)}+\|F\|_{L^2(B^*_1)}),
\end{equation}
for some constant $c>0$ which does not depend on $\delta$.

\emph{Step 2. Regularity of the approximating solutions.} We claim that $u_\delta \in C^{1,\alpha}(B_{1/2}^*)$.  By the inductive assumption, we already know that $u_\delta \in C_\loc^{1,\alpha}(B_{3/4}^*\setminus \{y=0\})$. Hence, it suffices to show that 
\[
u_\delta \in C^{1,\alpha}(B_{1/2}^* \cap \{|y| \leq \bar r\})\quad  \text{ for some }\bar r>0.
\]

First we recall that, by construction, in $B_{3/4}^* \cap \{|y| \leq \delta\}$ the matrix $\bar A_\delta$ satisfies Assumption \ref{Ass:matrix} and it depends only on the $x$-variables (and in fact $\bar A_\delta \in C^{\infty}(B_{3/4}^*\cap \{|y| \leq \delta\})$).  Moreover, it can be written in block form as
$$\bar A_\delta = \left(\begin{array}{cc}
       \bar P_\delta & \bar Q_\delta  \\ 
       \bar Q^\top_\delta  & \bar S_\delta \\ 
  \end{array}\right),$$
where $\bar P_\delta$ is $k \times k$-dimensional, $\bar Q_\delta$ is $k\times n$-dimensional, and $\bar S_\delta$ is $n\times n$-dimensional and diagonal (noting that the structural conditions follow from the fact that $A_\delta$ satisfies them in $B_1\cap \{|y|\le \delta\}$).
Since in the following we restrict ourselves to $B_{3/4}^*\cap \{|y| \leq \delta\}$, to simplify the notation we simply write $\bar A_\delta = \bar A_\delta(x)$.
  
Next, we introduce an appropriate change of variables that removes the last column of the off-diagonal block $\bar Q_\delta$ when evaluated on $\Sigma_n$. Define the vector field 
\[
b_\delta(x):=\bar Q_\delta(x) \bar S_\delta^{-1}(x)e_{y_n}\,,
\]
and consider the smooth map
\[
{\Phi_\delta}:B^*_{3/4}\cap \{|y| \leq \delta\} \to \R^d\,, \qquad \Phi_\delta(x,y):=(x+ y_nb_\delta(x) ,y)\,.
\]
Its Jacobian matrix is 
\[
J_{\Phi_\delta}(x,y) = \left(
\begin{array}{cc}
    I_{k}+y_n J_{b^\delta}(x) & \mathcal{B}_\delta(x)  \\
   0  & I_n
\end{array}\right), \quad 
 \text{ where }  \quad  
\mathcal{B}_\delta(x):= (\,0 \ |\  0\ | \ldots \ | \ 0 \ |\ b_\delta(x)\,)\,.
\]
Since $\det J_{{\Phi_\delta}} = \det(I_{k}+y_n J_{b^\delta})$, it follows that $1/2\le |\det J_{{\Phi_\delta}}|\le 3/2$ whenever $|y_n|$ is sufficiently small. Thus, for some $0 < r \leq \delta$, the map ${\Phi_\delta} \in C^{\infty}(B_{3/4}^*\cap\{|y|\le r\})$ is a diffeomorphism.

For $0 < \rho \leq r$, set
\[
\mathcal C^*_\rho := B^*_{1/2}\cap \{|y|\le \rho\} \subset B_{3/4}^*\cap\{|y|\le r\}
\]
and choose $\rho >0$ such that 
\[
\Phi_\delta(\mathcal C^*_\rho) \subset B^*_{3/4} \cap \{|y| \leq r\}.
\]
On $\mathcal C^*_\rho$, define
\[
\hat{A}_\delta:=|\det J_{{\Phi_\delta}}|J_{{\Phi_\delta}}^{-1} \bar A_\delta \circ \Phi_\delta (J_{{\Phi_\delta}}^{-1})^T,\qquad \hat f_\delta:=|\det J_{{\Phi_\delta}}|f\circ {\Phi_\delta},\qquad \hat F_\delta := |\det J_{{\Phi_\delta}}|J_{{\Phi_\delta}}^{-1}  \bar F_\delta\circ {\Phi_\delta}.
\]
Since $u_\delta$ weakly solves 
\[
\begin{cases}
-\dive(\bar A_\delta\D u_\delta) = f+\dive(\bar F_\delta), & \text{in }B_{3/4}^*\cap\{|y|\le r\} \\
(\bar A_\delta\D u_\delta +\bar F_\delta)\cdot e_{y_i} = 0,  & \text{on }\partial (B_{3/4}^*\cap\{|y|\le r\}) \cap \Sigma_i,  \text{ for } i=1,\dots,n\,,
\end{cases}
\]
a direct computation shows that $\hat u_\delta := u_\delta \circ {\Phi_\delta} $ weakly satisfies
\[
\begin{cases}
-\dive(\hat A_\delta\D \hat u_\delta) = \hat f_\delta+\dive(\hat F_\delta), & \text{in }\mathcal{C}_{\rho}^* \\
(\hat A_\delta\D \hat u_\delta +\hat F_\delta)\cdot e_{y_i} = 0,  & \text{on }\partial \mathcal{C}^*_{\rho} \cap \Sigma_i,  \text{ for } i=1,\dots,n.
\end{cases}
\]
Moreover, using that
\[
J_{\Phi_\delta}^{-1} =
\begin{pmatrix}
(I_k + y_n J_{b^\delta}(x))^{-1} & - (I_k + y_n J_{b^\delta}(x))^{-1}\mathcal{B}_\delta(x)\\
0 & I_d
\end{pmatrix}\,,
\]
one readily verifies that, in $\mathcal{C}^*_\rho$, the matrix $\hat A_\delta$ can be expressed  in block form as
\[
\hat A_\delta = \begin{pmatrix}
\hat P_\delta  & \hat Q_\delta\\
\hat Q_\delta^\top & \hat S_\delta
\end{pmatrix}\,,
\]
where $\hat S_\delta$ is diagonal and the last column of $\hat Q_\delta$ is identically zero when $y_n = 0$. 

Further, define
\[
g_\delta(x, y):= (\bar S_\delta^{-1})_{n,n}(x)(\hat F_\delta(x, y_1, \ldots, y_{n-1},0)\cdot e_{y_n})y_n, \quad \text{ and } \quad \hat G^\delta := \hat F_\delta-\hat A^\delta\D g_\delta\,.
\]
Then $v_\delta:=\hat{u}_\delta + g_\delta$ is a weak solution to 
\[
\begin{cases}
-\dive(\hat A_\delta\D v_\delta) = \hat f_\delta+\dive\hat G_\delta, & \text{in }\mathcal{C}_{\rho}^* \\
(\hat A_\delta\D v_\delta +\hat G_\delta)\cdot e_{y_i} = 0,  & \text{on }\partial \mathcal{C}^*_{\rho} \cap \Sigma_i,  \text{ for } i=1,\dots,n,
\end{cases}
\]
with the additional property $\hat G_\delta\cdot e_{y_n}=0$ on $\Sigma_n$. As a consequence, thanks to Remark \ref{rem:even:reflection}, we can reflect $v_\delta$ evenly in $y_n$, obtaining a solution of an equation in $B^\dagger_{1/2}\cap\{|y|\leq \rho\}$ (see \eqref{eq:reflectset}). By construction, the regularity of both $\hat{A}_\delta$ and $\hat{G}_\delta$ is preserved under this reflection. Thus, by the inductive step we have $v_\delta \in C^{1,\alpha}(B_{1/2}^\dagger\cap \{|y|\leq \rho\})$ and then, composing back with the diffeomorphism $\Phi_\delta$, it follows that $u_\delta \in C^{1,\alpha}(B^*_{1/2} \cap \{|y| \leq \bar r\})$, for some $\bar r=\bar r(\delta)>0$. This completes the proof of the claim.

\emph{Step 3. Conclusion.} Combining the regularity of $u_\delta$ established in \emph{Step 2} with Lemma \ref{L:schuader:unifell:apriori} and \eqref{eq:u:delta:unifell:regularity}, the desired result follows by a standard compactness argument using the Arzel\'a-Ascoli Theorem.
\end{proof}

\section{Functional framework}\label{S:ff}

Let $a  \in (-1, \infty)^n$ and $\eps \in [0,1]^n$. For each $i=1, \ldots, n$, define 
\[
\rho_{\eps_i}(y_i) = (\eps_i^2 + y_i^2)^{1/2}\,.
\]
We introduce the weight function
\begin{equation}\label{eq:regweight}
\omega^a_\eps(y) = \prod_{i=1}^n \rho_{\eps_i}^{a_i}(y_i)\,,
\end{equation}
which satisfies $\omega_\eps^a \in L^1_\loc(\R^d)$. To simplify the notation, we write $\omega^a = \omega^a_0$ whenever $\varepsilon = 0$. 

The aim of this section is to introduce the Sobolev spaces naturally associated with the weight $\omega^a_\eps$. We provide both the definition via density and the definition via weak derivatives, and we show that these two definitions coincide. Finally, we establish several inequalities that hold in these spaces and that are stable with respect to $\eps$. Specifically, we prove a trace inequality on the boundary of balls, a weighted Sobolev inequality, and a weighted Hardy inequality.

\subsection{Weighted Lebesgue and Sobolev spaces}

Let $\Omega \subseteq \R^d$ be a bounded Lipschitz domain. Given $p \geq 1$, we define 
\[
L^{p,a, \eps}(\Omega) := L^p(\Omega, \omega^a_\eps(y) dz)
\]
as the space of $p$-integrable functions with respect to the measure $\omega^a_\eps(y) dz$. If $F$ is a vector field, we write $F \in (L^{p,a, \eps}(\Omega))^d$.

\begin{remark}
Since $\omega_\eps^a \in L^1_\loc(\R^d)$, the null sets of the measure $\omega^a_\eps(y) dz$ coincide with those of the standard Lebesgue measure on $\R^d$. As a result, when we say that a property holds almost everywhere, we will not specify the reference measure, as it is understood to be the Lebesgue measure in all cases.
\end{remark}

Let $C^\infty(\overline\Omega) = \{ u_{\mid \overline\Omega} \mid u \in C^\infty(\R^d)\}\,$ and consider the weighted norm on $C^\infty(\overline\Omega)$ given by 
\[
\| u\|_{H^{1,a, \eps}(\Omega)}^2 = \int_{\Omega}\omega^a_\eps ( u^2  + |\nabla u|^2) \,dz\,.
\]
The corresponding weighted Sobolev space $H^{1,a,\eps}(\Omega)$ is then defined as
\[
H^{1,a, \eps}(\Omega) = \text{the completion of $C^\infty(\overline{\Omega})$ with respect to $\| \cdot\|_{H^{1,a, \eps}(\Omega)}$} \,.
\]
If $\Omega$ is unbounded, we define the corresponding local space as
\[
H^{1,a,\eps}_\loc(\Omega) = \{u: \Omega \to \R \mid u \in H^{1,a, \eps}(B_R\cap \Omega) \text{ for all }R>0\}\,.
\]
In the case $\Omega = B_R$, we also introduce the subspace of functions that are even with respect to each $y_i$-variable as
\[
H^{1,a, \eps}_{\rm e}(B_R) = \{ u \in H^{1,a, \eps}(B_R) \mid u(x, y) = u(x, |y_1|, \ldots, |y_n|) \}\,. 
\]
In all the above definitions, we adopt the following notational simplifications:
\begin{itemize}[left=0pt]
\item[$-$] if $\eps=0$, we omit the superscript and write, for instance, $H^{1,a}(\Omega) = H^{1,a,0}(\Omega)$. 
\item[$-$] if $a = 0$, we write $H^1(\Omega) = H^{1,0,\eps}(\Omega)$ for any $\eps \in [0,1]^n$. In this case, the space coincides with the standard Sobolev space $H^1(\Omega)$.
\end{itemize}

Let us recall that $B_R^*=B_R\cap \R^d_*$. The next lemma establishes the precise relationship between
\[
\begin{aligned}
H^{1,a,\eps}(B_R^*) \qquad \text{ and }\qquad  H^{1,a,\eps}(B_R)\,.
\end{aligned}
\]
\begin{Lemma}\label{L:Hevenid}
Let $a \in (-1, \infty)^n$ and $\eps \in [0,1]^n$. Let $u \in H^{1,a,\eps}(B_R^*)$, and define $\bar u : B_R \to \R$ by
\[
\bar u(x, y) = u(x, |y_1|, \ldots, |y_n|)\,.
\]
Then $\bar u \in H^{1,a,\eps}_{\rm e}(B_R)$ and $\| \bar u \|_{H^{1,a, \eps}(B_R)} = 2^n \| u\|_{H^{1,a, \eps}(B_R^*)}$. Moreover, the map $u \mapsto \bar u$ defines a bijection between $H^{1,a,\eps}(B_R^*)$ and $H^{1,a,\eps}_{\rm e}(B_R)$. Therefore,
\[
H^{1,a,\eps}(B_R^*) \simeq H^{1,a,\eps}_{\rm e}(B_R) \subset H^{1,a,\eps}(B_R)\,.
\]
\end{Lemma}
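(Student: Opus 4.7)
The strategy exploits two features: both spaces are defined by completion, and the weighted norms are invariant under each reflection $y_i \mapsto -y_i$ because the weight $\omega^a_\eps$ depends only on the absolute values $|y_i|$. My plan is to establish the reflection map and the norm identity first on the dense subspace of smooth functions, then extend by density, and finally verify bijectivity via a symmetrization procedure.

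\emph{Step 1 (norm identity on smooth functions).} For $u \in C^\infty(\overline{B_R^*})$, the reflection $\bar u(x,y) := u(x,|y_1|,\dots,|y_n|)$ is globally Lipschitz on $\overline{B_R}$ and smooth away from $\Sigma_0$, with a.e.\ weak derivatives
\[
\partial_{x_j}\bar u(x,y) = (\partial_{x_j}u)(x,|y_1|,\dots,|y_n|), \qquad \partial_{y_i}\bar u(x,y) = \mathrm{sign}(y_i)\,(\partial_{y_i}u)(x,|y_1|,\dots,|y_n|).
\]
Splitting $B_R$ into its $2^n$ orthant components and changing variables $y_i \mapsto |y_i|$ immediately gives $\|\bar u\|_{H^{1,a,\eps}(B_R)}^2 = 2^n \|u\|_{H^{1,a,\eps}(B_R^*)}^2$, provided $\bar u$ actually belongs to $H^{1,a,\eps}(B_R)$. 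To secure this, I would mollify $\bar u$ with a standard radial mollifier $\phi_\eta$ that is symmetric in each $y_i$: the convolutions $\bar u \ast \phi_\eta$ are smooth, remain even in each $y_i$, and are uniformly bounded in $W^{1,\infty}$. Since $\omega^a_\eps \in L^1_\loc(\R^d)$, dominated convergence yields $\bar u \ast \phi_\eta \to \bar u$ in the weighted Sobolev norm, placing $\bar u$ in $H^{1,a,\eps}(B_R)$.

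\emph{Step 2 (extension by density).} Given $u \in H^{1,a,\eps}(B_R^*)$, pick $u_k \in C^\infty(\overline{B_R^*})$ with $u_k \to u$ in norm. The norm identity applied to differences shows $\{\bar u_k\}$ is Cauchy in $H^{1,a,\eps}(B_R)$, hence converges to some $v \in H^{1,a,\eps}(B_R)$. Since convergence in the weighted $L^2$ norm implies a.e.\ subsequential convergence (the weight's null sets coincide with Lebesgue's), one identifies $v(x,y) = u(x,|y_1|,\dots,|y_n|)$ almost everywhere, confirming $v = \bar u$. Evenness passes to the limit, so $\bar u \in H^{1,a,\eps}_{\mathrm{e}}(B_R)$, and the norm identity $\|\bar u\|_{H^{1,a,\eps}(B_R)} = 2^{n/2}\|u\|_{H^{1,a,\eps}(B_R^*)}$ (squared form) holds on the full space.

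\emph{Step 3 (bijectivity).} The inverse is restriction $v \mapsto v\big|_{B_R^*}$. Given $v \in H^{1,a,\eps}_{\mathrm{e}}(B_R)$, pick smooth approximants $v_k \in C^\infty(\overline{B_R})$ and symmetrize them by setting $\tilde v_k(x,y) = 2^{-n}\sum_{\sigma \in \{-1,1\}^n} v_k(x,\sigma_1 y_1,\dots,\sigma_n y_n)$. The evenness of $v$ and of $\omega^a_\eps$ forces $\tilde v_k \to v$ in $H^{1,a,\eps}(B_R)$, while each $\tilde v_k$ is smooth and even, with restriction in $C^\infty(\overline{B_R^*})$. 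Applying Step 1 to the Cauchy sequence of restrictions identifies $v|_{B_R^*}$ as an element of $H^{1,a,\eps}(B_R^*)$ whose reflection is $v$. The two constructions are inverse to each other, proving the bijection and the embedding $H^{1,a,\eps}_{\mathrm{e}}(B_R) \subset H^{1,a,\eps}(B_R)$.

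The main obstacle is Step 1: since the reflection of a function smooth up to the boundary of the orthant is merely Lipschitz across each $\Sigma_i$, one must verify that a mollification recovers it in the weighted Sobolev norm. This hinges on the local integrability of $\omega^a_\eps$, which is why the range $a \in (-1,\infty)^n$ is essential — without it, mollification estimates would fail near the degenerate set.
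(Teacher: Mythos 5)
Your proof is correct and follows essentially the same route as the paper: reflect smooth approximants of $u$, smooth the reflected (Lipschitz) functions by convolution with a mollifier symmetric in each $y_i$, and pass to the limit (the paper compresses your Steps 1--2 into a single diagonal argument and dismisses your Step 3 as "straightforward," with the inverse likewise given by restriction). The only discrepancy is the normalization in the statement: your computation gives $\|\bar u\|^2_{H^{1,a,\eps}(B_R)} = 2^n\|u\|^2_{H^{1,a,\eps}(B_R^*)}$, i.e. $\|\bar u\| = 2^{n/2}\|u\|$, which is the correct form (the factor $2^n$ in the lemma should be understood for the squared norms).
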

\begin{proof}
Let $u \in H^{1,a,\eps}(B_R^*)$, and let $\varphi_h\in C^\infty(\overline{B^*_R})$ be a sequence such that $ \varphi_h \to u$ in $H^{1,a,\eps}(B_R^*)$. Define
\[
\tilde \varphi_h(x, y) = \varphi(x, |y_1|, \ldots, |y_n|)\,. 
\]
Via convolution with a radially symmetric mollifier and a standard diagonal argument, starting from $\tilde \varphi_h$ we construct a new sequence $\bar \varphi_h \in C^\infty(\overline{B_R})$ such that each $\bar \varphi_h$ is even in each $y_i$ and satisfies $\| \bar u -\bar \varphi_h\|_{H^{1,a, \eps}(B_R)} \to 0$. Hence, $\bar u \in H^{1,a,\eps}_{\rm e}(B_R)$. The remaining claims are straightforward: we limit ourselves to notice that the inverse of the map $u \mapsto \bar u$ is simply the restriction $\bar u \mapsto \bar u_{|B_R^*}$.
\end{proof}

\subsection{The weighted \texorpdfstring{$H=W$}{H=W} theorem}

Let $\Omega \subseteq \R^d$ be a bounded Lipschitz domain. Define 
\[
\tilde W(\Omega):= \{u\in W^{1,1}_{\loc}(\Omega) \mid \|u\|_{H^{1,a, \eps}(\Omega)}<\infty\}\,,
\]
and introduce the Hilbert space
\begin{equation}\label{defW1a}
W^{1,a, \eps}(\Omega):=\text{the completion of }\tilde{W}(\Omega)\text{ with respect to the norm }\|\cdot\|_{H^{1,a, \eps}(\Omega)}.
\end{equation}
Observe that if $a_i \geq 1$, then $|y_i|^{-a_i} \not \in L^1_\loc(\R^d)$. For this reason, we say that the factor $\rho_{\eps_i}^{a_i}$ is a \emph{superdegenerate} component of the weight $\omega^a_\eps$ whenever both $a_i \geq 1$ and $\eps_i= 0$. 
We denote by  
\[
J_{\rm sd}:=\{i\in \{1,2,\dots,n\} \mid a_i\geq 1, \ \eps_i = 0\}
\]
the set of indices corresponding to the superdegenerate components of the weight $\omega^a_\eps$. Accordingly, the superdegenerate part of the characteristic set $\Sigma_0$ is defined as 
\[
\Sigma_{\rm sd} = \bigcup_{i \in J_{\rm sd}} \Sigma_i\,.
\]
We then have the following characterization.
\begin{Lemma}\label{lemH=W}
Let $a \in (-1, \infty)^n$ and $\eps \in [0,1]^n$. Then 
\[
W^{1,a, \eps}(\Omega)=\{u\in W^{1,1}_{\loc}(\Omega\setminus \Sigma_{\rm sd})\mid \|u\|_{H^{1,a, \eps}(\Omega)}<\infty\}\,.
\]
Moreover, for every $u \in W^{1,a, \eps}(\Omega)$ there exists a sequence $u_h \in W^{1,a, \eps}(\Omega)$ such that 
\[
{\rm supp}(u_h) \subset \Omega \setminus \Sigma_{\rm sd}\quad \text{ and }\quad  \|u-u_h\|_{H^{1,a, \eps}(\Omega)}\to 0 \quad \text{as } h\to \infty.
\]
\end{Lemma}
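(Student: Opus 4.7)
My plan is to prove the set identity and the ``moreover'' approximation statement together. The easy inclusion $W^{1,a,\eps}(\Omega)\subseteq\{u\in W^{1,1}_\loc(\Omega\setminus\Sigma_{\rm sd}) : \|u\|_{H^{1,a,\eps}}<\infty\}$ follows from the fact that $(\omega^a_\eps)^{-1}\in L^1_\loc(\Omega\setminus\Sigma_{\rm sd})$. Indeed, on a compact $K\subset\Omega\setminus\Sigma_{\rm sd}$, for $i\in J_{\rm sd}$ the set $K$ stays at positive distance from $\Sigma_i$, while for $i\notin J_{\rm sd}$ either $\eps_i>0$, giving $\rho_{\eps_i}^{-a_i}$ bounded, or $\eps_i=0$ with $a_i<1$, giving $y_i^{-a_i}$ locally integrable (since $-a_i>-1$). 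A Cauchy--Schwarz argument then shows that any sequence $(u_h)\subset\tilde W(\Omega)$ that is Cauchy in the weighted norm is Cauchy in $W^{1,1}(K)$, so its limit in the completion agrees a.e.\ with a $W^{1,1}_\loc(\Omega\setminus\Sigma_{\rm sd})$ function.

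For the reverse inclusion together with the approximation statement, I would combine a vertical truncation with a capacity-vanishing spatial cutoff near $\Sigma_{\rm sd}$. Given $u$ in the right-hand side, set $u^M=\max(-M,\min(M,u))\in L^\infty\cap W^{1,1}_\loc(\Omega\setminus\Sigma_{\rm sd})$; the chain rule together with dominated convergence yields $u^M\to u$ in $H^{1,a,\eps}$ as $M\to\infty$. Then, for each $i\in J_{\rm sd}$, I would build a smooth one-dimensional cutoff $\eta_{h,i}(y_i)$ which vanishes on $\{|y_i|\le a_h\}$ and equals $1$ on $\{|y_i|\ge b_h\}$, with $a_h,b_h\to 0$, chosen so that
\[
\int_0^R |\eta'_{h,i}(y_i)|^2\,y_i^{a_i}\,dy_i \ \longrightarrow\ 0 \qquad\text{as }h\to\infty.
\]
For $a_i>1$, a piecewise linear profile on $[a_h,b_h]=[h^{-2},h^{-1}]$ delivers an integral of order $h^{1-a_i}$; for the borderline case $a_i=1$, a logarithmic profile on the same interval yields $1/\log h$.

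Setting $\eta_h:=\prod_{i\in J_{\rm sd}}\eta_{h,i}(|y_i|)$ and $u_{M,h}:=\eta_h u^M$, by construction $\supp u_{M,h}\subset\Omega\setminus\Sigma_{\rm sd}$ and $u_{M,h}\in L^\infty\cap W^{1,1}_\loc(\Omega)\subset \tilde W(\Omega)\subset W^{1,a,\eps}(\Omega)$. The convergence $u_{M,h}\to u^M$ in $H^{1,a,\eps}$ then reduces, via Leibniz, to controlling the remainder
\[
\int_\Omega |u^M\nabla\eta_h|^2\,\omega^a_\eps\,dz \ \le\ c\,M^2\sum_{i\in J_{\rm sd}}\int_\Omega |\eta'_{h,i}(y_i)|^2\,\omega^a_\eps\,dz,
\]
which by Fubini factors as the vanishing one-dimensional integral above times a bounded integral of the residual weight $\prod_{j\ne i}\rho_{\eps_j}^{a_j}$, finite on bounded sets because every $a_j>-1$. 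A diagonal extraction $u_{M_h,h}\to u$ in $H^{1,a,\eps}$ then delivers both the $\supseteq$ inclusion and the desired approximation by $W^{1,a,\eps}(\Omega)$-functions supported in $\Omega\setminus\Sigma_{\rm sd}$.

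The main obstacle lies in the borderline case $a_i=1$, where the weight falls outside the $A_2$ class and the naive linear cutoff fails to be capacity-vanishing; the logarithmic profile with widely spaced endpoints is exactly what resolves this. A second subtle point is the necessity of the vertical truncation at level $M$: without an a priori pointwise bound on $u$, the vanishing of $\int|\nabla\eta_h|^2\omega^a_\eps$ does not transfer directly to $\int|u\nabla\eta_h|^2\omega^a_\eps$, and the subsequent diagonal limit $M\to\infty$ is what allows one to recover a general $u\in H^{1,a,\eps}$.
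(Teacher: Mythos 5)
Your proof is correct and follows essentially the same route as the paper: the easy inclusion via local integrability of $(\omega^a_\eps)^{-1}$ away from $\Sigma_{\rm sd}$, then reduction to bounded $u$ by truncation, multiplication by cutoffs vanishing near $\Sigma_{\rm sd}$ whose weighted gradient energy vanishes, and a diagonal argument. The only cosmetic difference is the cutoff construction: the paper uses a single logarithmic-scale profile $\eta(-\log|y_i|/h)$ supported in $\{e^{-2h}\le |y_i|\le e^{-h}\}$, which handles $a_i=1$ and $a_i>1$ simultaneously, whereas you split these two cases (linear versus logarithmic profile).
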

\begin{proof}
We consider the case $\eps=0$, the general case follows by the same argument. Let $u_k \in  \tilde{W}(\Omega)$ be a Cauchy sequence with respect to the $H^{1,a}(\Omega)$-norm. Since weighted $L^2$-spaces are complete, 
there exist $u\in L^{2,a}(\Omega)$ and a field $V\in (L^{2,a}(\Omega))^d$ such that 
\begin{equation}\label{eqhw2.1}
u_k \rightarrow u \quad \text{in }  L^{2,a}(\Omega)\,, \quad \text{ and }\quad 
\nabla u_k \rightarrow V \quad \text{in } (L^{2,a}(\Omega))^d\,.
\end{equation}
Since $\omega^{-a} \in L^1_\loc(\Omega \setminus \Sigma_{\rm sd})$, H\"older's inequality gives $u\in L^1_{\loc}(\Omega \setminus \Sigma_{\rm sd})$ and $V\in (L^1_{\loc}(\Omega \setminus \Sigma_{\rm sd}))^d$. Fix $\phi\in C^\infty_c(\Omega \setminus \Sigma_{\rm sd})$. By \eqref{eqhw2.1} and H\"older's inequality
\[
\int_{\Omega} (u-u_k)\nabla \phi \,dz\rightarrow 0\, \quad \text{and} \quad\int_{\Omega}(V-\nabla u_k)\phi \,dz\rightarrow 0\,.
\]
Since $u_k\in W^{1,1}_{\loc}(\Omega)$, it follows that
\[
\int_{\Omega}(u \nabla \phi +V \phi)\, dz=\lim_{k\to \infty}\int_{\Omega} (u_k \nabla \phi+\phi\nabla u_k)\, dz=0\,,
\]
showing that $V = \nabla u$ in  $\Omega \setminus \Sigma_{\rm sd}$. Thus $u\in W^{1,1}_{\loc}(\Omega\setminus \Sigma_{\rm sd})$, and we conclude that
\[
W^{1,a}(\Omega)\subseteq \{u\in W^{1,1}_{\loc}(\Omega\setminus \Sigma_{\rm sd})\mid \|u\|_{H^{1,a}(\Omega)}<\infty\}\,.\]

Before proving the reverse inclusion, we construct a family of smooth cutoff functions adapted to the superdegenerate set. Let $\eta\in C^\infty(\R)$ satisfy $0 \leq \eta \leq 1$, with $\eta(t) = 1$ for $t>2$ and $\eta(t) = 0$ for $t<1$. 
For each integer $h >0$, define 
\begin{equation}\label{eq:degcapfunction}
\eta_h:= 1 - \prod_{i \in J_{sd}}\Big(1- \eta \Big(\frac{-\log|y_i|}{h}\Big)\Big)\,.
\end{equation}
Then $\eta_h \in C^\infty(\R^d)$ depends only on the superdegenerate variables $y_i$, $i\in J_{sd}$, and is such that 
\[
\begin{aligned}
&\eta_h = 1\quad \text{ if } |y_i| < e^{-2h} \text{ for some } i \in J_{\rm sd} \,,\\
&\eta_h = 0 \quad \text{ if } |y_i| > e^{-h} \text{ for all } i \in J_{\rm sd}\,.
\end{aligned}
\]
Clearly, $\eta_h\to 0$ a.e. as $h \to \infty$, and hence $\eta_h \to 0$ in $L^{2,a}(\Omega)$. To estimate the gradient, note that
\[
|\nabla \eta_h| \leq \sum_{i \in J_{sd}} \Big|\eta'\Big(-\frac{\log|y_i|}{h}\Big) \frac{y_i}{h y_i^2}\Big| \leq \frac{c}{h} \sum_{i \in J_{sd}}|y_i|^{-1}\chi_{\{e^{-2h}\leq |y_i|\leq e^{-h}\}}\,, 
\]
where the constant $c$ depends only on $\|\eta'\|_{L^\infty(\R)}$. Since $a_i \geq 1$ for $i \in J_{sd}$, it follows that
\[
\int_{\Omega}\omega^a|\nabla \eta_h|^2 \, dz \leq \frac{c}{h^2}\sum_{i \in J_{sd}}\int_{\Omega}\omega^{a - 2e_i}\chi_{\{e^{-2h}\leq |y_i|\leq e^{-h}\}}\,dz \leq \frac{c}{h^2}\sum_{i \in J_{sd}}\int_{e^{-2h}}^{e^{-h}}t^{a_i-2} dt \to 0\,, \quad \text{ as }\quad h \to \infty\,.
\]
Hence, $\|\eta_h\|_{H^{1,a}(\Omega)} \to 0 $ as $h \to \infty$. 

We now prove the reverse inclusion. Let $u\in  W^{1,1}_{\loc}( \Omega\setminus\Sigma_{\rm sd})$ with  $\|u\|_{H^{1,a}(\Omega)}<\infty$. Without loss of generality, we may assume that $u\in L^\infty(\Omega)$, since the truncations
\[
u_k=
\begin{cases}
u &\text{if }|u| \leq k\,\\
\pm k &\text{if }\pm u>k\,,
\end{cases}
\]
satisfy $u_k\rightarrow u$ in $H^{1,a}(\Omega)$ and $u_k\in L^\infty(\Omega)\cap  W^{1,1}_{\loc}(\Omega\setminus\Sigma_{\rm sd})$. Fix $h>0$, and define
\[
u_h:=u(1-\eta_h)\,,
\]
where $\eta_h$ is as in \eqref{eq:degcapfunction}.
Since $\supp(u_h)\subset \Omega\setminus\Sigma_{\rm sd}$, we have $u_h, |\nabla u_h|\in L^1(\Omega)$. Moreover, for any $\phi\in C^\infty_c(\Omega)$ it holds $(1-\eta_h)\phi\in C^\infty_c( \Omega\setminus\Sigma_{\rm sd})$, thus 
\[
\int_{\Omega} (u_h\nabla \phi+\nabla u_h \phi)\, dz=\int_{\Omega}( u \nabla [(1-\eta_h) \phi]+ (1-\eta_h)\phi \nabla u)\, dz=0\,,
\]
using that $u\in W^{1,1}_{\loc}(\Omega\setminus\Sigma_{\rm sd})$. Hence, $u_h\in W^{1,1}_{\loc}(\Omega)$. To conclude, we estimate
\[
\begin{aligned}
\|u-u_h\|_{H^{1,a}(\Omega)}^2\leq &\ 2\int_{\Omega}\omega^a \eta_h^2(u^2+|\nabla u|^2)\,dz+2\int_{\Omega}\omega^a  u^2  |\nabla \eta_h|^2 \,dz \\
\leq &\ o(1) + 2\|u\|_{L^\infty}\|\eta_h\|_{H^{1,a}(\Omega)}\rightarrow 0\,, \quad \text{ as }\quad h \to \infty\,,
\end{aligned}
\]
by dominated convergence and using that $\|\eta_h\|_{H^{1,a}(\Omega)} \to 0 $. This completes the proof. 
\end{proof}

We are now ready to prove the main theorem of this section.              
\begin{Lemma}\label{thmH=W}
Let $a \in (-1, \infty)^n$ and $\eps \in [0,1]^n$. Then
\[
H^{1,a, \eps}(\Omega)=W^{1,a, \eps}(\Omega)\,.
\]
\end{Lemma}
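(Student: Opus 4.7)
The plan is to prove the two inclusions separately.

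The inclusion $H^{1,a,\eps}(\Omega) \subseteq W^{1,a,\eps}(\Omega)$ is immediate: since $C^\infty(\overline{\Omega})\subset \tilde W(\Omega)$ and both spaces are completed with respect to the same norm $\|\cdot\|_{H^{1,a,\eps}(\Omega)}$, the completion of the smaller set sits inside the completion of the larger one.

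For the reverse inclusion, given $u \in W^{1,a,\eps}(\Omega)$, I would first invoke Lemma \ref{lemH=W} to obtain a sequence $u_h$ with $\supp u_h \subset \Omega \setminus \Sigma_{\rm sd}$ such that $u_h \to u$ in $H^{1,a,\eps}(\Omega)$. By a diagonal extraction, the problem reduces to approximating each such $u_h$ in $H^{1,a,\eps}(\Omega)$ by functions in $C^\infty(\overline{\Omega})$. The structural observation that drives this approximation is the following: on a neighborhood of $\supp u_h$, each superdegenerate factor $\rho_{\eps_i}^{a_i}$ with $i \in J_{\rm sd}$ is bounded above and below by positive constants (since $|y_i|$ is bounded away from $0$ there), while the remaining factors are either bounded (when $\eps_i > 0$) or of the form $|y_i|^{a_i}$ with $-1 < a_i < 1$. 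Products of the latter form are $\mathcal{A}_2$-Muckenhoupt weights, so $\omega^a_\eps$ restricted to a neighborhood of $\supp u_h$ is comparable to an $\mathcal{A}_2$-weight.

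With this reduction in hand, I would invoke the standard Meyers-Serrin approximation for $\mathcal{A}_2$-weighted Sobolev spaces on bounded Lipschitz domains (see \cite{HeiKilMar06}): cover $\overline{\Omega}$ by finitely many interior balls and boundary patches straightened by Lipschitz charts, use a subordinate partition of unity to localize $u_h$, and on each piece mollify, combining the mollification with an inward translation on the pieces that meet $\partial\Omega$. The $\mathcal{A}_2$-property guarantees that mollifiers are uniformly bounded on $L^{2,a,\eps}$ (via the boundedness of the Hardy-Littlewood maximal operator on weighted $L^2$) and converge strongly to the identity. Commutation of mollification with weak derivatives on $W^{1,1}_{\loc}$, provided by the representative of $u_h$ guaranteed by Lemma \ref{lemH=W}, then yields convergence in $H^{1,a,\eps}(\Omega)$; summing over the partition of unity produces a sequence in $C^\infty(\overline{\Omega})$ converging to $u_h$, as required.

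The main obstacle is controlling mollification in the weighted norm in the presence of the non-superdegenerate hyperplanes $\Sigma_i$, $i \notin J_{\rm sd}$, on which $\omega^a_\eps$ may still vanish or blow up; the truncation of Lemma \ref{lemH=W} only removes the superdegenerate components from $\supp u_h$, not the entire characteristic set $\Sigma_0$. It is exactly at this point that the $\mathcal{A}_2$-Muckenhoupt nature of the surviving product of factors $|y_i|^{a_i}$ with $a_i \in (-1,1)$ is essential, since it brings the problem within the classical $2$-admissible weight framework, where the Meyers-Serrin theorem and its boundary variants are well established.
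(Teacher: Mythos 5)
Your proposal is correct and follows essentially the same route as the paper: both inclusions are handled identically, with the nontrivial one reduced via Lemma \ref{lemH=W} to functions supported away from $\Sigma_{\rm sd}$, where $\omega^a_\eps$ is comparable to the $\mathcal A_2$-part $\tilde\omega^a_\eps$ of the weight, after which one invokes density of smooth functions up to the boundary for $\mathcal A_2$-weighted Sobolev spaces and concludes by a diagonal argument. The only difference is cosmetic: the paper cites this density result directly (\cite[Theorem 2.5]{Kil94}), whereas you sketch its standard proof by partition of unity, mollification and inward translation as in \cite{HeiKilMar06}.
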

\begin{proof}
Fix $u\in H^{1,a, \eps}(\Omega)$. By definition, there exists a sequence $u_k\in C^\infty(\overline{\Omega}) \subset W^{1,1}_\loc(\Omega)$ such that $\|u_k-u\|_{H^{1,a,\eps}(\Omega)}\to 0$. Thus, \eqref{defW1a} trivially implies $H^{1,a, \eps}(\Omega)\subseteq W^{1,a, \eps}(\Omega)$.

To prove the reverse inclusion, we begin by decomposing the weight $\omega_\eps^a$ into a superdegenerate part and a regular parts as 
\[
\omega^{a}_\eps = \tilde \omega^a_\eps\prod_{i\in J_{sd}}|y|^{a_i}\,.
\]
Indeed, for every $i \not \in J_{\rm sd}$ either $\eps_i \neq 0$ or $a_i \in (-1, 1)$, so the weight $\tilde\omega^a_\eps$ belongs to the Muckenhoupt $\mathcal A_2$ class. 

Now, by Lemma \ref{lemH=W}, any $u\in W^{1,a, \eps}(\Omega)$ can be approximated by functions $u_h\in W^{1,a, \eps}(\Omega)$ with $\supp(u_h)\subset \Omega \setminus \Sigma_{\rm sd}$. On $\supp(u_h)$, the weights $\omega^a_\eps$ and $\tilde \omega^a_\eps$ are equivalent. Therefore, for each $h$, there exists a sequence $\varphi_{k}^h\in C^\infty_c(\overline{\Omega})$ such that $\|u_h-\varphi^h_k\|_{H^{1}(\Omega, \tilde{\omega}^a_\eps(z)dz)}\rightarrow 0$ as $k\rightarrow \infty$ (see for instance \cite[Theorem 2.5]{Kil94}). Since $\omega^a_\eps \leq c(a, \Omega)\tilde{\omega}^a_\eps$ in $\Omega$, it follows that
\[
\|u_h-\varphi^h_k\|_{H^{1,a, \eps}(\Omega)}\leq \|u_h-\varphi^h_k\|_{H^{1}(\Omega, \tilde{\omega}^a_\eps(z)dz)}\rightarrow 0\,.
\]
Then, a standard diagonal argument gives a sequence in $C^\infty(\overline {\Omega})$ converging to $u$ in $H^{1,a, \eps}(\Omega)$, showing that $u \in H^{1,a, \eps}(\Omega)$. This completes the proof. 
\end{proof}

\subsection{Trace theorem and zero trace Sobolev spaces}

This subsection is devoted to the proof of a trace inequality and to the characterization of weighted spaces of functions with zero trace on $\partial B_R$.

\begin{Lemma}\label{L:traceineq}
Let $a \in (-1, \infty)^n$ and $R>0$. Then there exists a constant $c = c(a, d)>0$ such that, for every $\eps \in [0,1]^n$, $0<r\leq R$, and for each $u\in C^\infty(\overline{B_R})$, it holds
\begin{equation*}
    c\int_{\partial B_r}\omega^a_\eps u^2 d\mathcal{H}^{d-1}\leq r\int_{B_R}\omega^a_\eps|\nabla u|^2 dz+r^{-1}\int_{B_R}\omega^a_\eps  u^2  dz\,.
\end{equation*}
\end{Lemma}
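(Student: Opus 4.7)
The plan is to prove the inequality via the divergence theorem applied to the vector field $V(z)=\omega^a_\eps(y)u^2(z)z$ on the ball $B_r$, first under the assumption that $\eps$ has strictly positive components (so that $\omega^a_\eps$ is smooth), and then extending to arbitrary $\eps\in[0,1]^n$ by dominated convergence. The virtue of this choice of vector field is that the outward unit normal on $\partial B_r$ equals $z/r$, so the boundary integral directly produces the factor $r$ appearing on the left hand side of the claim.

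Assume first that $\eps_i>0$ for every $i$, so that $\omega^a_\eps\in C^\infty(\R^d)$. The divergence theorem and a direct expansion give
\[
r\int_{\partial B_r}\omega^a_\eps u^2\,d\mathcal H^{d-1}
=\int_{B_r}\dive(\omega^a_\eps u^2 z)\,dz
=\int_{B_r}\bigl(d\,\omega^a_\eps u^2+u^2\,z\cdot\nabla\omega^a_\eps+2\,\omega^a_\eps u\,z\cdot\nabla u\bigr)\,dz.
\]
From $\nabla\omega^a_\eps=\omega^a_\eps\sum_{i=1}^n a_i y_i \rho_{\eps_i}^{-2}(y_i)\,e_{y_i}$ and $y_i^2\leq\rho_{\eps_i}^2(y_i)$ one reads off the $\eps$-uniform bound $|z\cdot\nabla\omega^a_\eps|\leq\omega^a_\eps\sum_i|a_i|$. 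For the cross term, I would exploit $|z|\leq r$ on $B_r$ and Young's inequality in the form $2r|u|\,|\nabla u|\leq u^2+r^2|\nabla u|^2$. Combining these bounds and dividing by $r$ yields
\[
\int_{\partial B_r}\omega^a_\eps u^2\,d\mathcal H^{d-1}
\leq C\,r^{-1}\!\int_{B_r}\omega^a_\eps u^2\,dz + r\int_{B_r}\omega^a_\eps|\nabla u|^2\,dz,
\]
with $C=d+\sum_i|a_i|+1$, and enlarging the domain of integration from $B_r$ to $B_R$ on the right hand side gives the claim (for strictly positive $\eps$) with $c=1/C$ depending only on $a$ and $d$.

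To cover the general case $\eps\in[0,1]^n$, I would approximate by a sequence $\eps^k$ with strictly positive components converging componentwise to $\eps$, apply the estimate just established to each $\eps^k$, and pass to the limit in all three integrals. Since the constant is independent of $\eps$, it is enough to verify dominated convergence. For indices with $a_i\geq 0$ one uses $\rho_{\eps^k_i}^{a_i}(y_i)\leq(1+R^2)^{a_i/2}$ on $B_R$; for indices with $a_i<0$ one uses $\rho_{\eps^k_i}^{a_i}(y_i)\leq|y_i|^{a_i}$, which is integrable on $B_R$ and on $\partial B_r$ precisely because $a_i>-1$. This last integrability on the sphere, where each $\Sigma_i\cap\partial B_r$ is a set of codimension one inside $\partial B_r$, is the only mildly delicate point of the argument; the rest is routine manipulation of the divergence identity and Young's inequality.
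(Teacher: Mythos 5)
Your proof is correct, and it takes a genuinely different route from the paper. You apply the divergence theorem on $B_r$ to the radial field $V=\omega^a_\eps u^2 z$, absorb the weight term via the uniform bound $|z\cdot\nabla\omega^a_\eps|\le\big(\sum_i|a_i|\big)\,\omega^a_\eps$ (which is exactly the same key estimate the paper uses, in the form $|\tfrac{d}{d\tau}\omega^a_\eps(\tau\sigma)|\le c\,\tau^{-1}\omega^a_\eps(\tau\sigma)$), handle the cross term by Young's inequality, and then remove the positivity assumption on $\eps$ by letting $\eps^k\downarrow\eps$ with dominated convergence, where the domination $\omega^a_{\eps^k}\le C\prod_{a_i<0}|y_i|^{a_i}$ and the integrability of this majorant on $\partial B_r$ (a Dirichlet-type spherical integral, finite precisely because each $a_i>-1$ and the $\Sigma_i\cap\partial B_r$ are transversal great spheres) are both valid. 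The paper instead works in spherical coordinates: it compares $r^{d-1}\omega^a_\eps(r\sigma)u^2(r\sigma)$ with the same quantity on an inner sphere of radius $\rho$ via the fundamental theorem of calculus along rays, estimates the three resulting terms, and then averages over $\rho\in(0,r)$. The trade-off is that your argument is shorter and more transparent (one integration by parts on the ball plus Young), but it needs the regularization step and the integrability of the singular weight on spheres to pass to the limit, whereas the paper's radial argument applies directly to every $\eps\in[0,1]^n$, including components with $\eps_i=0$, since it only differentiates along rays where the weight is smooth for a.e.\ direction $\sigma$ and never invokes the divergence theorem near the singular set. Both yield a constant depending only on $a$ and $d$, as required.
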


\begin{proof}

Fix $u \in C^\infty(\overline B_r)$. In fact, without loss of generality we can assume $u$ to be the restriction of a function, still denoted by $u$, in $C^\infty_c(\R^d)$. Take $0 < \rho < r \leq R$ and $\sigma \in \S^{d-1}$. Using spherical coordinates we compute
\begin{equation}\label{eq:trace1}
\begin{aligned}
\int_{\partial B_r}\omega^a_\eps  u^2  d\mathcal{H}^{d-1}  &= \int_{\S^{d-1}}r^{d-1}\omega^a_\eps(r \sigma)  u^2 (r\sigma) d\mathcal{H}^{d-1}(\sigma) \\
&\leq \int_{\S^{d-1}}\Big|r^{d-1}\omega^a_\eps(r \sigma)  u^2 (r\sigma) - \rho^{d-1}\omega^a_\eps(\rho \sigma)  u^2 (\rho\sigma)\Big| d\mathcal{H}^{d-1}(\sigma) \\
&+ \int_{\S^{d-1}}\rho^{d-1}\omega^a_\eps(\rho \sigma)  u^2 (\rho\sigma) d\mathcal{H}^{d-1}(\sigma)\,.
\end{aligned}
\end{equation}
To estimate the first integral in the right hand side of \eqref{eq:trace1}, we use the fundamental theorem of calculus to get
\[
\int_{\S^{d-1}}\Big|r^{d-1}\omega^a_\eps(r \sigma)  u^2 (r\sigma) - \rho^{d-1}\omega^a_\eps(\rho \sigma)  u^2 (\rho\sigma)\Big| \leq I_1 + I_2 + I_3\,,
\]
where
\[
\begin{aligned}
&I_1 = (d-1) \int_{\S^{d-1}}\int^r_\rho \tau^{d-2}\omega^a_\eps(\tau \sigma)  u^2 (\tau\sigma)d\tau d\mathcal{H}^{d-1}(\sigma)\,\\
&I_2 = \int_{\S^{d-1}}\int^r_\rho \tau^{d-1}\Big|\frac{d}{d\tau}\omega^a_\eps(\tau \sigma)\Big|  u^2 (\tau\sigma)d\tau d\mathcal{H}^{d-1}(\sigma)\,\\
&I_3 = 2\int_{\S^{d-1}}\int^r_\rho \tau^{d-1}\omega^a_\eps(\tau \sigma)|u|(\tau\sigma)|\nabla u|(\tau \sigma)d\tau d\mathcal{H}^{d-1}(\sigma)\,.
\end{aligned}
\]
Since $\rho \leq \tau \leq r$, we immediately have
\[
I_1 \leq (d-1)\rho^{-1}\int_{\S^{d-1}}\int^r_\rho \tau^{d-1}\omega^a_\eps(\tau \sigma)  u^2 (\tau\sigma)d\tau d\mathcal{H}^{d-1}(\sigma) = (d-1)\rho^{-1} \int_{B_r\setminus B_\rho}\omega^a_\eps u^2  dz\,,
\]
and, using the H\"older's and Young's inequalities, 
\[
\begin{aligned}
I_3 &\leq 2 \int_{\S^{d-1}}\Big( \rho^{-1}\int^r_\rho \tau^{d-1}\omega^a_\eps(\tau \sigma) u^2 (\tau\sigma)d\tau\Big)^{1/2}\Big(\rho \int^r_\rho \tau^{d-1}\omega^a_\eps(\tau \sigma)|\nabla u|^2(\tau \sigma)d\tau\Big) ^{1/2}d\mathcal{H}^{d-1}(\sigma)\\
&\leq \rho^{-1}\int_{B_r \setminus B_\rho} \omega^a_\eps u^2 dz + \rho \int_{B_r \setminus B_\rho}\omega^a_\eps|\nabla u|^2dz\,.
\end{aligned}
\]
To estimate $I_2$, let us first introduce, for $i = 1, \ldots, n$, the projections $\pi_i: \R^d \to \R$ such that $\pi_i z = y_i$. Given $z = r\sigma$, it is easy to see that $y_i = r\pi_i \sigma$. Using this notations, we see that
\[
\frac{d}{d\tau}\omega^a_\eps(\tau \sigma) = \frac{d}{d\tau}\prod_{j=1}^n (\eps_j^2 + \tau^2|\pi_j \sigma|^2)^{a_j/2} = \omega^a_\eps(\tau\sigma)\sum_{i=j}^n a_j \frac{\tau|\pi_j \sigma|^2}{\eps_j^2 + \tau^2|\pi_j \sigma|^2}\,.
\]
Since
\[
\frac{\tau|\pi_j \sigma|^2}{\eps_j^2 + \tau^2|\pi_j \sigma|^2} = \tau^{-1}\frac{\tau^2|\pi_j \sigma|^2}{\eps_j^2 + \tau^2|\pi_j \sigma|^2}\leq \tau^{-1}\,,
\]
we readily see that
\[
\Big|\frac{d}{d\tau}\omega^a_\eps(\tau \sigma)\Big| \leq c \tau^{-1}\omega^a_\eps(\tau\sigma)\,,
\]
where $c = \sum_{j=1}^n|a_j|$. Therefore
\[
I_2 \leq c\int_{\S^{d-1}}\int^r_\rho \tau^{d-2}\omega^a_\eps(\tau \sigma)  u^2 (\tau\sigma)d\tau d \mathcal{H}^{d-1}(\sigma) \leq c\rho^{-1}\int_{B_r \setminus B_\rho} \omega^a_\eps u^2 dz \,.
\]
Thanks to previous estimates, \eqref{eq:trace1} becomes 
\[
\begin{aligned}
\int_{\partial B_r}\omega^a_\eps  u^2  d\mathcal{H}^{d-1}  &\leq c\rho^{-1}\int_{B_r \setminus B_\rho} \omega^a_\eps u^2 dz
+ \rho \int_{B_r \setminus B_\rho}\omega^a_\eps|\nabla u|^2dz \\&+ \int_{\S^{d-1}}\rho^{d-1}\omega^a_\eps(\rho \sigma)  u^2 (\rho\sigma) d\mathcal{H}^{d-1}(\sigma)\,,
\end{aligned}
\]
hence, it immediately follows
\[
\begin{aligned}
\rho\int_{\partial B_r}\omega^a_\eps  u^2  d\mathcal{H}^{d-1}  &\leq c\int_{B_R} \omega^a_\eps u^2 dz + \rho^2 \int_{B_R}\omega^a_\eps|\nabla u|^2dz +r\int_{\S^{d-1}}\rho^{d-1}\omega^a_\eps(\rho \sigma)  u^2 (\rho\sigma) d\mathcal{H}^{d-1}(\sigma)\,.
\end{aligned}
\]
To compete the proof, simply integrate previous inequality over $\rho \in (0, r)$.
\end{proof}

Define $L^{2,a, \eps}(\partial B_R):=L^2(\partial B_R, \omega^a_\eps d\mathcal{H}^{d-1})$, and introduce the Sobolev space 
\[
H^{1,a, \eps}_0(B_R) = \text{the completion of $C^\infty_c(B_R)$ with respect to $\| \cdot\|_{H^{1,a, \eps}(B_R)}$}\,.
\]
\begin{Lemma}\label{thmtrace}
Let $a \in (-1, \infty)^n$, $\eps \in [0,1]^n$ and $R>0$. There exists a unique bounded linear operator
   \[
        T\,: \, H^{1,a, \eps}(B_R)\rightarrow L^{2,a, \eps}(\partial B_R)
   \]
    such that 
    \[
    T u= u_{\vert \partial B_R}\qquad \text{for }u\in C^\infty(\overline{B_R})\,.
   \]
Moreover, the following characterization holds
\begin{equation}\label{charH0}
    H^{1,a,\eps}_0(B_R)=\{u\in H^{1,a, \eps}(B_R) \mid T u=0\}\,.
\end{equation}
\end{Lemma}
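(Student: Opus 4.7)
The plan is to split the argument into the construction of the trace operator $T$ and the characterization \eqref{charH0} of the zero-trace subspace.

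For the existence and uniqueness of $T$, I would apply Lemma \ref{L:traceineq} with $r = R$ to any $\varphi \in C^\infty(\overline{B_R})$, obtaining the bound $\|\varphi_{\vert \partial B_R}\|_{L^{2,a,\eps}(\partial B_R)} \leq c(a,d,R)\,\|\varphi\|_{H^{1,a,\eps}(B_R)}$. Since $C^\infty(\overline{B_R})$ is dense in $H^{1,a,\eps}(B_R)$ by construction, the restriction map extends uniquely, by completion, to a bounded linear operator $T$ on the whole space, coinciding with the pointwise restriction on smooth functions.

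For the characterization, the inclusion $H^{1,a,\eps}_0(B_R) \subseteq \{Tu = 0\}$ is immediate by continuity of $T$ applied to any approximating sequence in $C^\infty_c(B_R)$. For the reverse inclusion I would implement a cutoff-and-mollify scheme. Fix $\zeta_\delta \in C^\infty_c(B_R)$ with $\zeta_\delta \equiv 1$ on $B_{R-\delta}$, $\zeta_\delta \equiv 0$ outside $B_{R-\delta/2}$, and $|\nabla \zeta_\delta| \leq c/\delta$. The core of the argument is a boundary Hardy-type estimate, valid whenever $Tu = 0$:
\[
\int_{B_R \setminus B_{R-\delta}} \omega^a_\eps u^2 \,dz \;\leq\; c\,\delta^2 \int_{B_R \setminus B_{R-\delta}} \omega^a_\eps |\nabla u|^2 \,dz.
\]
To prove this I would start from the radial identity $\varphi(r\sigma) = \varphi(R\sigma) - \int_r^R \partial_t \varphi(t\sigma)\,dt$ for smooth $\varphi$, square it using Cauchy--Schwarz, and integrate against $\omega^a_\eps(r\sigma)\, r^{d-1}\,d\sigma\,dr$ over $(R-\delta, R)\times \S^{d-1}$, swapping the order of integration in the gradient term. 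The key auxiliary fact I would verify, directly from the explicit product structure of $\omega^a_\eps$, is the quasi-constancy of the weight along the radial direction across thin annular shells, namely $\omega^a_\eps(r\sigma) \simeq \omega^a_\eps(s\sigma)$ for $r,s \in [R-\delta, R]$ with constants depending only on $a$ and uniform in $\eps \in [0,1]^n$. Passing to the limit along a smooth approximating sequence of $u$ in $H^{1,a,\eps}(B_R)$ and using $Tu = 0$ in $L^{2,a,\eps}(\partial B_R)$ eliminates the boundary term and yields the displayed estimate.

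With this estimate in hand, a direct computation of $\|u - u\zeta_\delta\|_{H^{1,a,\eps}(B_R)}^2$ gives $u\zeta_\delta \to u$: the terms not involving $|\nabla \zeta_\delta|$ vanish by absolute continuity of the integral, while the dangerous cutoff term $\delta^{-2}\int_{B_R\setminus B_{R-\delta}}\omega^a_\eps u^2$ is precisely controlled by the Hardy-type estimate. Since each $u\zeta_\delta$ has compact support in $B_R$, I would then approximate it by $C^\infty_c(B_R)$ functions by borrowing the strategy from the proof of Lemma \ref{thmH=W}: first multiply by the cutoff $1 - \eta_h$ from \eqref{eq:degcapfunction} to remove the superdegenerate set $\Sigma_{\rm sd}$, obtaining a function supported in a compact subset of $B_R \setminus \Sigma_{\rm sd}$ where $\omega^a_\eps$ is locally equivalent to an $\mathcal{A}_2$ weight, so that classical mollification (as in Kilpel\"ainen \cite{Kil94}) produces the required smooth compactly supported approximants. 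A diagonal argument then yields the desired $C^\infty_c(B_R)$ approximation of $u$, completing the proof of \eqref{charH0}. The principal technical obstacle I anticipate is the boundary Hardy-type estimate, as it must be uniform in $\eps$ and accommodate simultaneously the degenerate ($a_i > 0$) and singular ($a_i \in (-1, 0)$) components of $\omega^a_\eps$; once the radial quasi-constancy of the weight is established, however, the remainder reduces to routine weighted integration.
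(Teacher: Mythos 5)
Your construction of $T$ is exactly the paper's: Lemma \ref{L:traceineq} with $r=R$ plus density of $C^\infty(\overline{B_R})$. For the characterization \eqref{charH0}, however, you take a genuinely different route. The paper extends $u$ by zero to $B_{2R}$, verifies (via integration by parts, where $Tu=0$ kills the boundary term) that the extension lies in $W^{1,1}_{\loc}(B_{2R}\setminus\Sigma_{\rm sd})$, invokes Lemmas \ref{lemH=W}--\ref{thmH=W} to conclude $\bar u\in H^{1,a,\eps}(B_{2R})$, and then dilates, $u_\lambda(z)=\bar u(\lambda z)$, to produce functions vanishing near $\partial B_R$ which are pushed into $H^{1,a,\eps}_0(B_R)$ by multiplying smooth approximants by a fixed cutoff and finally letting $\lambda\to1^+$. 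You instead stay inside $B_R$ and prove a boundary Hardy inequality $\int_{B_R\setminus B_{R-\delta}}\omega^a_\eps u^2\le c\,\delta^2\int_{B_R\setminus B_{R-\delta}}\omega^a_\eps|\nabla u|^2$ for $Tu=0$, using the radial fundamental-theorem identity and the radial quasi-constancy $\omega^a_\eps(r\sigma)\le c(a)\,\omega^a_\eps(s\sigma)$ for $r,s\in[R/2,R]$ -- which is indeed true, uniformly in $\eps$ and $\sigma$, since each factor ratio is bounded by $\max\{1,(r/s)^{2}\}^{|a_i|/2}$; this is the same mechanism behind the derivative bound $|\tfrac{d}{d\tau}\omega^a_\eps(\tau\sigma)|\le c\tau^{-1}\omega^a_\eps(\tau\sigma)$ in the paper's proof of Lemma \ref{L:traceineq}. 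Your cutoff argument then closes correctly, and the trade-off is clear: you avoid the zero-extension, the $H=W$ machinery at this stage, and the dilation-continuity step, at the price of the annulus Hardy estimate (which you correctly identify must be $\eps$-uniform). Two small remarks: your final step, approximating $u\zeta_\delta$ by $C^\infty_c(B_R)$ functions via the superdegenerate cutoff $\eta_h$ and $\mathcal A_2$-mollification, is more elaborate than needed and, as sketched, silently relies on the $L^\infty$-truncation used in Lemma \ref{lemH=W} to make $\int\omega^a_\eps(u\zeta_\delta)^2|\nabla\eta_h|^2\to0$; it is simpler to take smooth approximants $\varphi_k\to u$ from the very definition of $H^{1,a,\eps}(B_R)$ and observe that $\zeta_\delta\varphi_k\in C^\infty_c(B_R)$ converges to $u\zeta_\delta$ -- precisely the multiplication-by-cutoff trick the paper uses at the end of its own proof.
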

\begin{proof}
Boundedness and uniqueness of $T$ follow from Lemma \ref{L:traceineq}. It remains to prove \eqref{charH0}. Since one inclusion is immediate ($\subseteq$), we focus on the reverse one ($\supseteq$). Let $u\in H^{1,a, \eps}(B_R)$ with $Tu=0$. We extend $u$ to $B_{2R}$ by setting 
\[
\bar{u}:=
\begin{cases}
    u(z) \quad &\text{if }\,z\in B_R\\
    0\quad &\text{if }\,z\in B_{2R}\setminus B_{R}\,.
\end{cases}
\]
We claim that $\bar{u}\in H^{1,a, \eps}(B_{2R})$.

Let $\phi\in C^\infty_c(B_{2R}\setminus \Sigma_{\rm sd})$, and let $\varphi_k \in C^\infty(\overline{B_R})$ be a sequence approximating $u$ in $H^{1,a, \eps}(B_R)$. Then
\[
    \int_{B_{2R}}\bar u\nabla \phi \, dz=\int_{B_R}u \nabla \phi \, dz=\lim_{k\rightarrow \infty}\int_{B_R}\varphi_k \nabla \phi \,dz.
\]
Since $\varphi_k$ is smooth, integration by parts gives 
\[
   \int_{B_R}\varphi_k \nabla \phi\, dz=-\int_{B_R}\phi\nabla \varphi_k\, dz+\int_{\partial B_R} \varphi_k \phi \,\nu \,d\mathcal{H}^{d-1}\,,
\]
where $\nu$ denotes the outward unit normal to $\partial B_R$.
By the assumption $Tu=0$ we have 
\[
\int_{\partial B_R} \omega^a_\eps \varphi_k^2 \, d \mathcal{H}^{d-1}\to 0\,,
\]
so the boundary term vanishes in the limit. Thus
\[
\lim_{k\rightarrow \infty}\int_{B_R}\varphi_k \nabla \phi \,dz=-\lim_{k\rightarrow \infty}\int_{B_R}\phi\nabla \varphi_k\, dz=-\int_{B_R}\phi\nabla u \,dz\,.
\]
This shows that $\bar u\in W^{1,1}_{\loc}(B_{2R}\setminus \Sigma_{\rm sd})$, with weak gradient
\[
   \nabla \bar u(z):=
\begin{cases}
 \nabla u(z) \quad &\text{if }\,z\in B_R\\
 0\quad &\text{if }\, z\in B_{2R}\setminus B_R\,.
\end{cases}
\]
Since $\|\bar u\|_{H^{1,a, \eps}(B_{2R})}=\| u\|_{H^{1,a, \eps}(B_{R})}<\infty$, Lemma \ref{lemH=W} and Lemma \ref{thmH=W} imply that $\bar u\in H^{1,a, \eps}(B_{2R})$, as claimed.

Fix now $\lambda \in (1,2)$ and define $u_\lambda: B_R \to \R$ as 
\[
u_\lambda (z) =\bar u(\lambda z)\,.
\]
By construction, $u_\lambda(z) = 0$ for $z \in B_R \setminus B_{R_\lambda}$, where $R_\lambda =R/\lambda$. Moreover, using the change of variables $z = \lambda^{-1}\xi$ and since $\omega_\eps^a(\lambda^{-1}\xi )\leq 2\omega_\eps^a(\xi)$ for $\lambda\in (1,2)$, we see that $u_\lambda \in H^{1,a, \eps}(B_R)$. We now show that $u_\lambda \in H_0^{1, a, \eps}(B_R)$. Let $\varphi_k \in C^\infty_c(\R^d)$ be such that $\varphi_k \to u_\lambda $ in $H^{1,a,\varepsilon}(B_R)$. Consider a cutoff function $\eta\in C^\infty_c(B_R)$ such that $\eta \equiv 1$ in $B_{R_\lambda}$. Then $\eta \varphi_k\in C^\infty_c(B_R)$ and, since $u_\lambda = 0$ in $B_R \setminus B_{R_\lambda}$, for a.e. $z \in B_R$ it holds
\[
\begin{aligned}
&|u_\lambda - \eta\varphi_k| \leq  |u_\lambda - \varphi_k|\,,\\
&|\nabla u_k - \nabla(\eta \varphi_k)| \leq |\nabla u_k - \nabla \varphi_k| + \|\nabla \eta\|_{L^\infty(B_R)}|u_\lambda - \varphi_k|\,.
\end{aligned}
\]
Thus, there exists a constant $c>0$ such that 
\[
\|u_\lambda - \eta \varphi_k\|_{H^{1,a, \eps}(B_{R})} \leq c\|u_\lambda - \varphi_k\|_{H^{1,a, \eps}(B_{R})} \to 0\,.
\]
This implies that $u_\lambda \in H^{1,a, \eps}_0(B_{R})$ for each $\lambda\in (1,2)$. Since $\|u - u_\lambda\|_{H^{1,a, \eps}(B_{R})} \to 0$ as $\lambda \to 1^+$, and $H^{1,a,\eps}_0(B_R)$ is complete, it follows that $u\in H^{1,a, \eps}_0(B_R)$, completing the proof.
\end{proof}

Define $\partial ^+B_R^* = \R^d_*\cap \partial B_R^*$, and consider the weighted $L^2$ space
\[
L^{2,a, \eps}(\partial^+ B_R^*):=L^2(\partial^+ B_R^*, \omega^a_\eps d\mathcal{H}^{d-1})\,,
\]
as well as the weighted Sobolev space
\[
\hat H^{1,a, \eps}(B_R^*) = \text{the completion of $C^\infty_c(\overline{B_R^*}\setminus\partial^+ B_R^*)$ with respect to $\| \cdot\|_{H^{1,a, \eps}(B_R^*)}$ }\,.
\]
Thanks to Lemma \ref{L:Hevenid}, Lemma \ref{L:traceineq} and Lemma \ref{thmtrace}, we obtain the following result.
\begin{Lemma}\label{thmeqtrace*}
Let $a \in (-1, \infty)^n$, $\eps \in [0,1]^n$ and $R>0$. Then the following hold:
\begin{itemize}
\item[$i)$] there exists a constant $c = c(a, d)>0$ such that for every $0<r\leq R$ and for each $u\in C^\infty(\overline{B^*_R})$, it holds
\[
    c\int_{\partial^+ B_r^*}\omega^a_\eps u^2 d\mathcal{H}^{d-1}\leq r\int_{B_R^*}\omega^a_\eps|\nabla u|^2 dz+r^{-1}\int_{B_R^*}\omega^a_\eps  u^2  dz\,;
\]
\item[$ii)$] there exists a unique bounded linear operator
    \[
        T\,: \, H^{1,a, \eps}(B_R^*)\to L^{2,a, \eps}(\partial^+ B_R^*)
    \]
    such that 
    $$
    T u= u_{\vert \partial^+ B_R^*}\qquad \text{for }u\in C^\infty(\overline{B^*_R})\,;
    $$
\item[$iii)$] the following characterization holds
\[
    \hat H^{1,a,\eps}(B_R^*)=\{u\in H^{1,a, \eps}(B_R^*) \mid T u=0\}\,.
\]
\end{itemize}
\end{Lemma}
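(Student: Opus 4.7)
The strategy is to transfer each of the three statements from the full ball $B_R$ to the orthant $B_R^*$ via the even symmetrization map of Lemma \ref{L:Hevenid}. Write $\bar u(x,y)=u(x,|y_1|,\dots,|y_n|)$, and note that the sphere $\partial B_R$ decomposes into $2^n$ isometric pieces, each a reflection of $\partial^+B_R^*$. Since $\omega^a_\eps$ is even in every $y_i$ and $\bar u^2$ likewise, summation over these $2^n$ pieces yields, for every $u\in C^\infty(\overline{B_R^*})$ (hence $\bar u\in H^{1,a,\eps}_{\rm e}(B_R)$ by Lemma \ref{L:Hevenid}),
\[
\int_{\partial B_r}\omega^a_\eps \bar u^2\,d\mathcal H^{d-1}=2^n\!\!\int_{\partial^+B_r^*}\!\!\omega^a_\eps u^2\,d\mathcal H^{d-1},\qquad \int_{B_R}\omega^a_\eps\bar u^2\,dz=2^n\!\!\int_{B_R^*}\!\!\omega^a_\eps u^2\,dz,
\]
and the same for $|\nabla\bar u|^2$ vs.\ $|\nabla u|^2$. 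Inserting these identities into the trace inequality of Lemma \ref{L:traceineq} applied to $\bar u$ in $B_R$ and cancelling the factor $2^n$ proves $i)$.

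For $ii)$, given $u\in H^{1,a,\eps}(B_R^*)$, apply the trace operator $\bar T$ of Lemma \ref{thmtrace} to $\bar u\in H^{1,a,\eps}_{\rm e}(B_R)$ and set $Tu:=(\bar T\bar u)_{|\partial^+B_R^*}$. Boundedness from $H^{1,a,\eps}(B_R^*)$ into $L^{2,a,\eps}(\partial^+B_R^*)$ follows from the boundedness of $\bar T$ combined with the identity $\|\bar u\|_{H^{1,a,\eps}(B_R)}=2^n\|u\|_{H^{1,a,\eps}(B_R^*)}$. To see that $Tu=u_{|\partial^+B_R^*}$ for $u\in C^\infty(\overline{B_R^*})$, one uses the construction in Lemma \ref{L:Hevenid}: $\bar u$ is the $H^{1,a,\eps}(B_R)$-limit of even functions in $C^\infty(\overline{B_R})$ whose traces on $\partial B_R$ converge to $\bar u_{|\partial B_R}$ in $L^{2,a,\eps}(\partial B_R)$; restricting to $\partial^+B_R^*$ yields $u_{|\partial^+B_R^*}$. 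Uniqueness then follows from density of $C^\infty(\overline{B_R^*})$ in $H^{1,a,\eps}(B_R^*)$ together with the trace inequality in $i)$.

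For $iii)$, the inclusion $\hat H^{1,a,\eps}(B_R^*)\subseteq \{Tu=0\}$ is the easy direction: if $\varphi_k\in C_c^\infty(\overline{B_R^*}\setminus\partial^+B_R^*)$ approximates $u$, then the even extension $\bar\varphi_k$ has support compact in $B_R$ (it touches possibly $\Sigma_0$ but stays away from $\partial B_R$). A standard mollification combined with the $H=W$ theorem (Lemma \ref{thmH=W}) produces a sequence in $C_c^\infty(B_R)$ approximating $\bar\varphi_k$ in $H^{1,a,\eps}(B_R)$; hence $\bar u\in H^{1,a,\eps}_0(B_R)$ and $\bar T\bar u=0$ by Lemma \ref{thmtrace}. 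For the reverse inclusion, if $Tu=0$ then $\bar T\bar u=0$, so by Lemma \ref{thmtrace} there exist $\psi_k\in C_c^\infty(B_R)$ with $\psi_k\to\bar u$ in $H^{1,a,\eps}(B_R)$. The restrictions $\psi_k{}_{|B_R^*}$ are smooth with support compact in $\overline{B_R^*}\setminus\partial^+B_R^*$, and clearly $\psi_k{}_{|B_R^*}\to u$ in $H^{1,a,\eps}(B_R^*)$, proving $u\in\hat H^{1,a,\eps}(B_R^*)$.

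The only mildly delicate point is verifying in $iii)$ that the even extension of a compactly supported smooth function in $\overline{B_R^*}\setminus\partial^+B_R^*$, which is merely Lipschitz across each $\Sigma_i$, can be approximated by $C_c^\infty(B_R)$ functions in the weighted norm; this is where one invokes the Muckenhoupt-weight part of the argument in the proof of Lemma \ref{thmH=W}, exactly as done there on $\supp(u_h)\subset\Omega\setminus\Sigma_{\rm sd}$.
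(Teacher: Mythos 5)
Your proof is correct and is essentially the route the paper intends: the paper omits the proof of this lemma, asserting that it follows from Lemmas \ref{L:Hevenid}, \ref{L:traceineq} and \ref{thmtrace}, which is precisely the even-reflection transfer you carry out. The only points to tidy are routine: extend Lemma \ref{L:traceineq} to the merely Lipschitz reflection $\bar u$ by a mollification/density remark, and in the forward inclusion of $iii)$ note that $\supp\varphi_k$ may touch $\partial B_R\cap\Sigma_0$ (so the reflected support need not stay away from $\partial B_R$); it is cleaner to deduce $Tu=0$ directly from $T\varphi_k=\varphi_k\vert_{\partial^+ B_R^*}=0$ together with the continuity of $T$ established in $ii)$.
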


\subsection{Sobolev inequality}

Set $\langle a^+\rangle=\sum_{i=1}^n (a_i)^+$. When $  d + \langle a^+\rangle -2 >0 $, we define the critical exponent
\begin{equation}\label{eq:sobexp}
2^*_a = \frac{2(d + \langle a^+\rangle)}{d + \langle a^+\rangle - 2}.
\end{equation}
The aim of this subsection is to prove the following result. 

\begin{Lemma}[$\eps$-stable Sobolev embeddings]\label{L:Sobolev}
Let $a \in (-1, \infty)^n$ and $R>0$. Then there exists a constant $c= c(a, R)>0$ such that for every $\eps \in [0,1]^n$, the following hold:
\begin{itemize}
\item[$i)$] if $d + \langle a^+\rangle-2 >0$, then for all $2 \leq q \leq 2^*_a$ and $u \in H^{1,a,\eps}_0(B_R)$, we have
\[
c\, \Big( \int_{B_R}\omega^a_\e | u |^{q} \,dz \Big)^{\frac{1}{q}}\leq \Big(\int_{B_R}\omega^a_\e |\nabla u|^2 \,dz\Big)^\frac{1}{2}\,.
\]
\item[$ii)$] if $d + \langle a^+\rangle -2 = 0$, the above inequality holds for $q\in(2,\infty)$, with the constant $c>0$ depending also on $q$.
\end{itemize}
\end{Lemma}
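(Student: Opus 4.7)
The strategy is to reduce the $\eps$-regularized inequality to known weighted Sobolev embeddings (pure monomial and Muckenhoupt), and then ensure constants that remain uniform in $\eps \in [0,1]^n$. By Lemma \ref{thmH=W} the space $H^{1,a,\eps}_0(B_R)$ coincides with the closure of $C^\infty_c(B_R)$ in the $H^{1,a,\eps}$-norm, so by density and Fatou's lemma it suffices to prove the inequality for $u \in C^\infty_c(B_R)$ extended by zero to $\R^d$.

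First I would settle the unregularized case $\eps = 0$. When $a \in [0,\infty)^n$, the inequality is the Cabr\'e-Ros-Oton Sobolev embedding with monomial weights \cite{CabRos13a,CabRos13b}, proved via a sharp weighted isoperimetric inequality, yielding precisely the critical exponent $2^*_{a^+}=2(d+\langle a^+\rangle)/(d+\langle a^+\rangle-2)$. For general $a\in(-1,\infty)^n$, decompose $\omega^a=\omega^{a^+}\prod_{i:\,a_i<0}|y_i|^{a_i}$. Since $|a_i|<1$ on the negative indices, each one-dimensional factor $|y_i|^{a_i}$ is a Muckenhoupt $A_2$ weight; combining the $\omega^{a^+}$ inequality with H\"older's inequality against these $A_2$ factors (or equivalently invoking a Stein-Weiss type bound) produces the desired embedding with weight $\omega^a$ and the same critical exponent $2^*_{a^+}$.

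Next I would transfer the inequality to the regularized weight $\omega^a_\eps$, uniformly in $\eps$. The key pointwise bounds, valid on $B_R$ for $\eps_i\in[0,1]$, are
\[
|y_i|^{a_i}\le \rho_{\eps_i}^{a_i}(y_i)\le C_1(a_i)\bigl(|y_i|^{a_i}+1\bigr)\quad(a_i\ge 0),\qquad (1+R^2)^{a_i/2}\le \rho_{\eps_i}^{a_i}(y_i)\le |y_i|^{a_i}\quad(a_i<0).
\]
In particular, for $a_i<0$ the regularized factor stays pinched between a positive constant and the monomial. Splitting $B_R$ along $\{|y_i|\gtreqless \eps_i\}$ (on the region $|y_i|>\eps_i$ one has $\rho_{\eps_i}^{a_i}(y_i)\sim|y_i|^{a_i}$, while on $\{|y_i|<\eps_i\}$ one has $\rho_{\eps_i}^{a_i}(y_i)\sim \eps_i^{a_i}$ and the integrand is essentially constant in $y_i$), the inequality established for $\eps=0$ applies to each piece with constants depending only on $a$ and $R$. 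The lower exponents $q\in[2,2^*_{a^+})$ then follow from the critical-exponent case by H\"older's inequality on $B_R$, using that $\int_{B_R}\omega^a_\eps\,dz$ is bounded uniformly in $\eps\in[0,1]^n$. The borderline case $d+\langle a^+\rangle-2=0$ (which forces $d=2$ and $a_i\in(-1,0)$ for every $i$) is handled directly by the Fabes-Kenig-Serapioni theory \cite{FabKenSer82}, since each $\rho_{\eps_i}^{a_i}$ is $A_2$ with bounds uniform in $\eps_i\in[0,1]$, yielding embedding into every $L^q$ with $q\in(2,\infty)$.

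The main obstacle I expect is the $\eps$-uniform step: $\omega^a_\eps$ and $\omega^a$ fail to be pointwise comparable when some $a_i<0$ and the corresponding $\eps_i>0$, since $\omega^a_\eps$ stays bounded near $\{y_i=0\}$ while $\omega^a$ blows up there. A naive reduction $\omega^a \lesssim\omega^a_\eps$ is therefore impossible, and one must instead carry out the domain-splitting described above and track all constants carefully, verifying that no power of $\eps_i$ is lost when integrating over the thin slab $\{|y_i|<\eps_i\}$ and using that its weighted measure vanishes as $\eps_i\to 0$.
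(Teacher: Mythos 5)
Your plan takes a genuinely different route from the paper: you propose to first settle $\eps=0$ by appealing to external results (Cabr\'e--Ros-Oton for $a\ge 0$, plus $A_2$/Stein--Weiss to absorb negative exponents) and then transfer to $\omega^a_\eps$ via pointwise comparisons and a split of $B_R$ along $\{|y_i|=\eps_i\}$. The paper does something structurally different and, crucially, self-contained: it first proves an $\eps$-stable one-dimensional $L^1$-Nash inequality (Lemma~\ref{L:Sob11}), whose proof integrates by parts against the antiderivative $G_{a,\eps}(y)=\int_0^y\rho_\eps^a$ of the weight; this choice of integrating factor is exactly what makes the constant $\eps$-independent with no casework in $\eps$. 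It then gets the $d$-dimensional $\eps$-stable $L^1$-Sobolev inequality (Lemma~\ref{L:SobL1}) by induction on dimension via a product-of-measures argument in the spirit of Coulhon--Grigor'yan--Levin, and finally bootstraps from $L^1$ to $L^2$ via the standard substitution $v=|u|^{q/r}$.

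There is a genuine gap in the transfer step of your plan, and you half-acknowledge it yourself. When $a_i<0$ and $\eps_i>0$, $\omega^a_\eps$ and $\omega^a$ are not pointwise comparable, so you split $B_R$ along $\{|y_i|\gtreqless \eps_i\}$ and claim that ``the inequality established for $\eps=0$ applies to each piece with constants depending only on $a$ and $R$.'' That step does not go through as stated. A function $u\in C^\infty_c(B_R)$ restricted to one of the pieces does \emph{not} vanish on the artificial interface $\{|y_i|=\eps_i\}$, so neither the zero-boundary Sobolev inequality nor the extension-domain version applies directly; you would need to introduce a cutoff adapted to the slab $\{|y_i|<\eps_i\}$, whose gradient is of size $\eps_i^{-1}$, or control a trace on the interface, and in either case the constants degenerate precisely in the regime you must keep uniform. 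Moreover, the Sobolev inequality on the thin slab itself (constant weight $\sim\eps_i^{a_i}$) has a geometry-dependent constant that blows up as the slab flattens, so ``track all constants carefully'' is not a filled-in argument but the whole problem. The secondary step is also unjustified: for mixed-sign $a$ with $\eps=0$, the claim that the Cabr\'e--Ros-Oton bound for $\omega^{a^+}$ can be combined with H\"older against the $A_2$ factors $|y_i|^{a_i}$ ($a_i<0$) to produce the full $\omega^a$-inequality with exponent $2^*_{a^+}$ is asserted but not derivable by a simple H\"older step, since the singular factor appears with the same power on both sides. Finally, a minor slip: $d+\langle a^+\rangle=2$ forces $d=2$ and $a_i\in(-1,0]$ (so $a_i=0$ is allowed), not $a_i\in(-1,0)$.
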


We begin with two preliminary results. The first one gives an $\eps$-stable $L^1$-Caffarelli-Kohn-Nirenberg type inequality in the case $ d = n = 1 $, for weights of the form $\rho_\eps^a$. 

\begin{Lemma}\label{L:Sob11}
Let $a >-1$. Then there exists a constant $c = c(a)  >0$ such that for every $\eps \in [0,1]$, for every $q \geq 1$ with $q a \leq1+a$, and for every $u \in C^{0,1}_c(\R)$, it holds:
\[
c\, \Big( \int_{\R}\rho^a_\eps(y) |u |^{q} \,dy \Big)^{\frac{1}{q}}\leq \int_{\R}\rho^{\frac{1+a}{q}}_\eps(y) |u'| \,dy\,.
\]
\end{Lemma}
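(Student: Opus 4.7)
The plan is to establish the inequality via a fundamental-theorem-of-calculus argument on the half-line $(0, \infty)$, combined with Fubini's theorem, and then use the symmetry between $y>0$ and $y<0$. A preliminary observation is that the scaling $y = \eps z$ gives $\rho_\eps(y) = \eps \rho_1(z)$, and a direct substitution shows that both sides of the inequality pick up exactly the same factor $\eps^{(1+a)/q}$; hence the whole family $\eps \in [0,1]$ can be treated by one uniform argument, without having to separate the cases $\eps = 0$ and $\eps > 0$.

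First, I would establish a pointwise bound on $|u|$. Since $u \in C_c^{0,1}(\R)$, for $y\geq 0$ the FTC gives $|u(y)| \leq \int_y^\infty |u'(t)|\,dt$. Because $a > -1$, we have $(1+a)/q > 0$, so $\rho_\eps^{(1+a)/q}$ is non-decreasing on $[0,\infty)$. Extracting $\rho_\eps^{(1+a)/q}(y)^{-1}$ from the integral yields
\[
|u(y)| \leq M\, \rho_\eps^{-(1+a)/q}(y), \qquad M := \int_\R \rho_\eps^{(1+a)/q}(t)\,|u'(t)|\,dt,
\]
and raising to the $(q-1)$-th power gives $|u(t)|^{q-1} \leq M^{q-1}\, \rho_\eps^{-(1+a)(q-1)/q}(t)$.

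Next, FTC applied to the Lipschitz function $|u|^q$ yields, for $y\geq 0$, the bound $|u(y)|^q \leq q \int_y^\infty |u(t)|^{q-1}|u'(t)|\,dt$. Multiplying by $\rho_\eps^a(y)$, integrating on $(0,\infty)$, and swapping the order of integration by Fubini gives
\[
\int_0^\infty \rho_\eps^a(y)\, |u(y)|^q\,dy \leq q\int_0^\infty |u(t)|^{q-1}|u'(t)| \Big(\int_0^t \rho_\eps^a(y)\,dy\Big)\,dt.
\]
The key technical input here is the uniform-in-$\eps$ weight estimate $\int_0^t \rho_\eps^a(y)\,dy \leq C(a)\, t\, \rho_\eps^a(t)$ for every $t \geq 0$ and $\eps \in [0,1]$. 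This is immediate for $a \geq 0$ by monotonicity of $\rho_\eps^a$; for $-1 < a < 0$ it follows from a short case analysis splitting $t \leq \eps$ (where $\rho_\eps(y)$ is comparable to $\eps$ on $[0,t]$) from $t \geq \eps$ (where $\rho_\eps^a(y) \leq y^a$ on $[\eps, t]$, and one integrates the power weight directly).

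Substituting the pointwise bound for $|u(t)|^{q-1}$ and using the elementary inequality $t \leq \rho_\eps(t)$, the powers of $\rho_\eps$ in the integrand telescope via $1 + a - (1+a)(q-1)/q = (1+a)/q$, so
\[
\int_0^\infty \rho_\eps^a |u|^q\,dy \leq q\, C(a)\, M^{q-1} \int_0^\infty \rho_\eps^{(1+a)/q}(t)|u'(t)|\,dt \leq q\, C(a)\, M^q.
\]
The symmetric estimate on $(-\infty, 0)$ and a final $q$-th root deliver the inequality with constant $c$ depending only on $a$ (using that $q^{1/q}$ is bounded for $q\geq 1$). The main obstacle is the uniform weight estimate of the previous paragraph, which is where the geometry of $\rho_\eps$ near the transition $|y| \sim \eps$ has to be handled carefully; once that is in hand, the rest of the argument is exponent bookkeeping.
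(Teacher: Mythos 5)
Your proof is correct, and it follows the same general strategy as the paper (a pointwise weighted sup bound for $u$, plus an estimate on the primitive of the weight, combined through the fundamental theorem of calculus for $|u|^q$), but the two key estimates are implemented differently, and this buys you something. The paper first proves the pure-power sup bound $\||y|^\delta u\|_{L^\infty}\le 2\int_\R |y|^\delta|u'|\,dy$, then integrates by parts against the antiderivative $G_{a,\eps}(y)=\int_0^y\rho_\eps^a$, splitting the cases $a>0$ and $-1<a\le 0$; converting between $|y|$ and $\rho_\eps$ there forces the exponent $\beta=(1+a-qa)/q$ to be nonnegative, which is exactly where the hypothesis $qa\le 1+a$ enters. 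You instead work on each half-line and extract $\rho_\eps^{-(1+a)/q}(y)$ directly by monotonicity of $\rho_\eps$ on $[0,\infty)$, and your Fubini step (equivalent to the paper's integration by parts) only needs the uniform primitive bound $\int_0^t\rho_\eps^a\le C(a)\,t\,\rho_\eps^a(t)$, which indeed holds for all $a>-1$ and $\eps\in[0,1]$ by the case analysis $t\le\eps$ versus $t\ge\eps$ (for $t\ge\eps$ you should also absorb the piece $\int_0^\eps\rho_\eps^a\le \eps^{1+a}\le t^{1+a}$, a trivial addition to your sketch). The exponent bookkeeping $1+a-(1+a)(q-1)/q=(1+a)/q$ and the uniformity of $(2qC(a))^{1/q}$ in $q\ge1$ are fine, so the constant depends only on $a$, as required. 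Notably, your argument never uses $qa\le 1+a$, so it proves the inequality for every $q\ge 1$, a slightly stronger statement than the lemma; the paper's route needs that restriction only as an artifact of its $|y|^\delta$-based sup bound. One cosmetic caveat: your opening remark that the scaling $y=\eps z$ lets you treat the whole family $\eps\in[0,1]$ at once is not quite a reduction (it maps $\eps>0$ to $\eps=1$ but says nothing about $\eps=0$); since your actual argument is uniform in $\eps$ including $\eps=0$, this aside is harmless and could simply be dropped.
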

\begin{proof}
We first show that for every $u \in C^{0,1}_c(\R)$ and for every $\delta >0$, it holds 
\begin{equation}\label{eq:linf11}
\||\cdot|^\delta |u|\|_{L^\infty(\R)} \leq 2 \int_\R |y|^\delta |u'|\,dy\,.
\end{equation}
To see this, fix $y \in \R$ and use the fundamental theorem of calculus to get  
\[
|y|^\delta|u| = \int_{-\infty}^y \frac{d}{ds}(|s|^\delta |u|)\,ds \leq \delta \int_\R |s|^{\delta-1}|u|\,ds + \int_\R |s|^{\delta}|u'|\,ds\,.
\]
Integrating by parts we find
\[
\delta \int_\R |s|^{\delta-1}|u|\,ds =  \int_\R \frac{d}{ds}(|s|^{\delta-1}s)|u|\,ds = - \int_\R |s|^{\delta-1}s|u|^{-1} u u' \,ds \leq \int_\R |s|^{\delta}|u'|\,ds\,,
\]
Combining the two estimates, we get \eqref{eq:linf11}.

Next, define 
\[
G_{a, \eps}(y) = \int_{0}^y \rho_\eps^a(s)\, ds\,.
\]
We distinguish to cases. Suppose first that $a >0$. Since $\rho_\eps^a$ is increasing on $(0, \infty)$, we have 
\[
|G_{a, \eps}(y)| \leq \int_0^{|y|}\rho_\eps^a(s)\, ds \leq \rho_\eps^a(y) |y|\,. 
\]
Thus, integration by parts gives
\[
\int_\R \rho_\eps^a |u|^q\, dy = \int_\R \frac{d}{dy}G_{a,\eps} |u|^{q}\,dy \leq q \int_\R |G_{a,\eps}| |u|^{q-1}|u'| \,dy \leq q \int_\R\rho_\eps^a (|y|^{a+\beta} |u|)^{q-1}|y|^\beta|u'| \,dy\,,
\]
where we set $\beta = (1+a-qa)/q$. Applying \eqref{eq:linf11} with $\delta = a+ \beta >0$, we obtain
\[
\int_\R \rho_\eps^a |u|^{q}\,dy \leq q 2^{q-1}\Big(\int_\R |y|^{a+\beta} |u'| dy\Big)^{q-1}\int_\R |y|^\beta\rho_\eps^a|u'| \,dy \,.
\]
The assumption $qa \leq 1+a$ ensures $\beta \geq 0$. Thus, using that $|y| \leq \rho_\eps$, $a>0$, and $a + \beta = \frac{1+a}{q}$, we get 
\[
\int_\R \rho_\eps^a |u|^{q}\,dy \leq q 2^{q-1}\Big(\int_\R \rho_\eps^{\frac{1+a}{q}} |u'| \,dy\Big)^{q}\,.
\]
Since $(q 2^{q-1})^{1/q}$ is uniformly bounded for $q \geq 2$, this case is complete. 

Now let $-1 < a \leq 0$. In this case we use that $\rho_\eps^a(y) \leq |y|^a$, obtaining
\[
|G_{a,\eps}(y)| \leq  \int_{0}^{|y|} s^a ds  = \frac{|y|^{a+1}}{a+1}\,. 
\]
It follows that
\[
\int_{\R} \rho_\eps^a |u|^q dy \leq q\int_{\R} |G_{a, \eps}| |u|^{q-1}|u'| dy \leq \frac{q}{a+1}\int_{\R} |y|^{a+1} |u|^{q-1}|u'| dy \,.
\] 
Applying \eqref{eq:linf11} with $\delta = \frac{a+1}{q}>0$ we get 
\[
\int_{\R} \rho_\eps^a |u|^q dy \leq \frac{q}{a+1} \int_{\R} (|y|^{\frac{a+1}{q}} |u|)^{q-1}|y|^{\frac{a+1}{q}}|u'| dy \leq \frac{q2^{q-1}}{a+1}\Big( \int_{\R} |y|^{\frac{a+1}{q}}|u'| dy\Big)^q\,,
\]
and to conclude we use that $|y| \leq \rho_\eps$ and $a+1>0$. This completes the proof.
\end{proof}

Next preparatory lemma is a $L^1$ version of Lemma \ref{L:Sobolev}

\begin{Lemma}[$\eps$-stable $L^1$ Sobolev embeddings]\label{L:SobL1}
Let $a \in (-1, \infty)^n$ and $R>0$. Then there exists a constant $c= c(d, a, R)>0$ such that for every $\eps \in [0,1]^n$, for every $q \geq 1$ with $q(d + \langle a^+ \rangle -1) \leq d + \langle a^+\rangle$, and for every $u \in C^{0,1}_c(B_R)$,  it holds
\begin{equation}\label{eq:Sobolev}
c\, \Big( \int_{B_R}\omega^a_\e |u |^{q} dz \Big)^{\frac{1}{q}}\leq \int_{B_R}\omega^a_\e |\nabla u| dz\,.
\end{equation}
\end{Lemma}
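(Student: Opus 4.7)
The plan is to prove the result via a weighted Gagliardo--Nirenberg--Sobolev scheme, using the one-dimensional Lemma~\ref{L:Sob11} as a building block. First, by H\"older's inequality on $B_R$ together with the uniform bound $\int_{B_R}\omega^a_\eps\,dz\le c(a,R)$, valid for every $\eps\in[0,1]^n$ since each $a_i>-1$, the statement reduces to the endpoint exponent $q=q^*:=D/(D-1)$, where $D:=d+\langle a^+\rangle$. Indeed, for $1\le q< q^*$, H\"older with exponents $q^*/q$ and its conjugate gives $\int\omega^a_\eps|u|^q\le(\int\omega^a_\eps|u|^{q^*})^{q/q^*}(\int\omega^a_\eps)^{1-q/q^*}$, reducing everything to the endpoint.

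Next, for each coordinate direction $k\in\{1,\dots,d\}$ I would derive a one-dimensional pointwise bound $|u(z)|^{\alpha_k}\le B_k(z^{(k)})$ with exponents $\alpha_k$ summing to $D$. In an unweighted $x_j$-direction, or in a weighted $y_i$-direction with $a_i\le 0$, one takes $\alpha_k=1$ and $B_k=\int_\R|\partial_k u|\,dz_k$, via the fundamental theorem of calculus. In a weighted $y_i$-direction with $a_i>0$, one takes $\alpha_k=1+a_i$ and applies the FTC to $s\mapsto|u|^{1+a_i}$, obtaining $B_k=(1+a_i)\int_\R|u|^{a_i}|\partial_{y_i}u|\,dy_i$. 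Since $\sum_k\alpha_k=d+\langle a^+\rangle=D$, multiplying the $d$ pointwise inequalities yields
\[
|u(z)|^{q^*}\le \prod_{k=1}^d B_k(z^{(k)})^{1/(D-1)}.
\]

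Multiplying by $\omega^a_\eps=\prod_i\rho_{\eps_i}^{a_i}(y_i)$ and integrating over $B_R$, the key observation is that each factor $\rho_{\eps_i}^{a_i}$ depends only on $y_i$ and is therefore constant in the arguments of $B_k$ for every $k\ne y_i$. Distributing these factors among the $d$ terms $B_k^{1/(D-1)}$ and then applying an iterated (Loomis--Whitney-type) H\"older inequality, adjusted via the uniform bound on $\int_{B_R}\omega^a_\eps$ to compensate for the sum of exponents $d/(D-1)\le 1$, produces the target inequality. The extra factor $|u|^{a_i}$ appearing inside $B_{y_i}$ when $a_i>0$ is controlled by substituting the pointwise bound $|u|\le B_k$ from another direction $k\ne y_i$, which replaces $|u|^{a_i}$ with a product of 1D integrals of $|\partial_k u|$ and restores the Loomis--Whitney structure.

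The main technical obstacle is precisely the control of these $|u|^{a_i}$ factors while keeping the final constant independent of $\eps$: this requires the weight to be distributed in a specific way among the $B_k$'s and the Hölder exponents to be chosen so that every surviving 1D integral has the form $\int\omega^a_\eps|\partial_k u|\,dz$ (up to bounded factors absorbed by $\int_{B_R}\omega^a_\eps\le c$). A separate edge case arises when all $d$ directions are weighted with $a_i>0$, so no "free" direction is available to substitute the pointwise bound on $|u|$: in that situation one replaces the Gagliardo substitution step by a nested iteration of Lemma~\ref{L:Sob11} combined with Minkowski's integral inequality, yielding the same conclusion.
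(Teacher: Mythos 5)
There is a genuine gap, and it sits exactly at the heart of the lemma: the weighted directions with $a_i>0$. In the target inequality every component of the gradient, in particular $|\partial_{y_i}u|$, must carry the \emph{full} weight, including the factor $\rho_{\eps_i}^{a_i}(y_i)$. Since that factor depends only on $y_i$, it can only be generated from inside the $y_i$-slice integral; but your slice bound in such a direction, obtained from the fundamental theorem of calculus applied to $|u|^{1+a_i}$, is $B_{y_i}=(1+a_i)\int_\R|u|^{a_i}|\partial_{y_i}u|\,dy_i$, which contains no weight in $y_i$, and in the Loomis--Whitney step the factor $\rho_{\eps_i}^{a_i}(y_i)$ can only be attached to the terms $B_k$ with $k\ne i$, never to $B_{y_i}$. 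The substitution $|u|\le B_k$ from another direction does not help, because it replaces $|u|^{a_i}$ by quantities that are again unweighted in $y_i$, and there is no pointwise comparison $|u|^{a_i}\lesssim\rho_{\eps_i}^{a_i}(y_i)$ that is uniform in $\eps$ (functions need not vanish on $\Sigma_i$). Concretely, take $d=2$, $n=1$, $a=1$, $\eps=0$ (so $q^*=3/2$, $1/(D-1)=1/2$), and $u_j(x,y)=\phi(x)\,j\max\{0,1-j|y|\}$ with $\phi$ a fixed cutoff: then $\int_{B_R}|y|\,|\nabla u_j|\,dz\simeq1$ while $\Phi_2:=\iint|u_j||\partial_y u_j|\,dz\simeq j^{2}$ and $\Phi_1:=\iint|y|\,|\partial_x u_j|\,dz\simeq j^{-1}$. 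Your scheme bounds the left-hand side by $c\,\Phi_1^{1/2}\Phi_2^{1/2}$ (up to constants depending on $a,R$ from the compensating H\"older steps), and to conclude you must dominate this by $C\big(\int_{B_R}|y|\,|\nabla u_j|\,dz\big)^{3/2}\simeq1$; but $\Phi_1^{1/2}\Phi_2^{1/2}\simeq j^{1/2}\to\infty$. The inequality itself is of course not contradicted (the left-hand side also degenerates for this family), but your chain of estimates passes through a bound that is false, and no redistribution of the weight or re-choice of H\"older exponents removes $\Phi_2$-type quantities from the estimate, since in your scheme that is the only place where $\partial_{y_i}u$ appears.

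The fallback you mention for the all-weighted case, namely a nested iteration of Lemma \ref{L:Sob11} with Minkowski's inequality, is in fact what is needed whenever \emph{some} $a_i>0$, not only in that edge case, and it is left unexecuted; carried out properly it is essentially the paper's proof. The paper never uses pointwise slice bounds: Lemma \ref{L:Sob11} transfers the weight onto the derivative within each one-dimensional slice by integrating by parts against the primitive $G_{a,\eps}$ of $\rho_\eps^{a}$ (together with the $L^\infty$ bound \eqref{eq:linf11}), producing a slice-wise $L^q$ inequality rather than a pointwise one, and these one-dimensional inequalities are then combined across coordinates by induction through the product-space composition of Sobolev-type ($F$-Sobolev) inequalities of Coulhon--Grigor'yan--Levin, via the intermediate inequality \eqref{eq:L1diseqdugn} and \cite[Lemmas 3.9--3.10]{CorFioVit25}, before specializing the exponents on $B_R$. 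Your endpoint reduction by H\"older and your treatment of the directions with $a_i\le0$ (where $\rho_{\eps_i}^{a_i}\ge(1+R^2)^{a_i/2}$ on $B_R$, so the unweighted slice integral is dominated by the weighted one) are fine; the positive powers are precisely where a different mechanism is required, and that mechanism is missing from the proposal.
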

\begin{proof}
It suffices to prove the result for $d = n$. When $n< d$, the inequality follows form the case $d = n$ by extending the parameters to $\tilde a = (0,\ldots, 0, a) \in (-1, \infty)^d$ and $\tilde \eps= (0,\ldots, 0, \eps) \in [0,1]^d$.

Inequality \eqref{eq:Sobolev} is a consequence of the following, more general, inequality. 
Given $a \in (-1, \infty)^d$, there exists $c_a >0$ such that for every $\beta \in \R^d$ such that $\beta_i \geq 1$ with $\beta_ia_i \leq 1 + a_i$ and all $u \in C^{0,1}_c(\R^d)$, it holds 
\begin{equation}\label{eq:L1diseqdugn}
\int_{\R^{d}}\omega_\eps^a|u|H_\beta\Big( \frac{|u|}{\|u\|_{L^{1,a,\eps}(\R^{d})}}\Big) \,dy\leq \int_{\R^{d}}\omega^a_\eps\Big(\sum_{i=1}^{d}\rho_{\varepsilon_i}(y_i)^{\frac{1 + a_i}{\beta_i} - a_i}\Big)|\nabla u| \,dy\,,
\end{equation}
where 
\begin{equation*}
H_\beta(s):=c_a s^{1-\frac{1}{\gamma_\beta}}\,,
\end{equation*}
and the exponent $\gamma_\beta$ satisfies
\[
\begin{aligned}
&\gamma_\beta = 1  & &\text{ if }\beta_i =1 \text{ for some }i=1, \ldots, d\,,\\
&\gamma'_\beta = \sum_{i = 1 }^d \beta_i'  & &\text{ if }\beta_i \neq 1 \text{ for all }i=1, \ldots, d\,.
\end{aligned}
\]
Assume that \eqref{eq:L1diseqdugn} holds, and let $u \in C_c^{0,1}(B_R)$. Then by \cite[Lemma 3.9]{CorFioVit25}, we have
\[
c_a\Big(\int_{B_R}\omega_\eps^a|u|^{\gamma_\beta} \,dy\Big)^{\frac{1}{\gamma_\beta}}\leq \int_{B_R}\omega^a_\eps\Big(\sum_{i=1}^{d}\rho_{\varepsilon_i}(y_i)^{\frac{1 + a_i}{\beta_i}-a_i}\Big)|\nabla u| \,dy\,.
\]
Using that $\rho_{\eps_i} \leq \sqrt{1+R^2}$ for $y\in B_R$, and noting that $0 \leq \frac{1+a_i}{\beta_i}-a_i\leq 1$, we obtain 
\[
\sum_{i=1}^{d}\rho_{\varepsilon_i}(y_i)^{\frac{1 + a_i}{\beta_i}- a_i} \leq d\sqrt{1 + R^2}\,.
\]
Moreover, the assumptions on $\beta_i$ imply that $\beta'_i \geq 1 + a^+_i$, which in turn gives $\gamma_\beta' \geq d + \langle a^+ \rangle$. Therefore, 
\[
\gamma_\beta \geq 1 \quad \text{ and } \quad \gamma_\beta(d + \langle a^+ \rangle -1) \leq d + \langle a^+\rangle\,.
\]
Hence, to conclude it suffices to relabel $\gamma_\beta$ as $q$. 

Let us now prove \eqref{eq:L1diseqdugn}. We proceed by induction. The base case $d=1$ is a straightforward consequence of Lemma \ref{L:Sob11} and \cite[Lemma 3.9]{CorFioVit25}, noting that $\omega^a_\eps = \rho^a_\eps$ in this case. Let now fix $a \in (-1, \infty)^d$, $\eps \in[0,1]^d$ and $\beta \in \R^d$ such that $\beta_i \geq 1$ with $\beta_ia_i \leq 1 + a_i$. Moreover, let us denote $\hat a = (a_1, \ldots, a_{d-1})$, $\hat \eps = (\eps_1, \ldots, \eps_{d-1})$ and $\hat \beta = (\beta_1, \ldots, \beta_{d-1})$\,. By inductive assumption, for all $v \in C^{0,1}_c(\R^{d-1})$, it holds 
\begin{equation}\label{eq:L1diseqdugnind}
\int_{\R^{d-1}}\omega_{\hat\eps}^{\hat a}|v|H_{\hat\beta}\Big( \frac{|v|}{\|v\|_{L^{1,{\hat a},{\hat \eps}}(\R^{d-1})}}\Big) \,dy\leq \int_{\R^{d-1}}\omega^{\hat a}_{\hat \eps}\Big(\sum_{i=1}^{d-1}\rho_{\varepsilon_i}(y_i)^{\frac{1 + a_i}{\beta_i} - a_i}\Big)|\nabla v| \,dy\,.
\end{equation}
Moreover, by Lemma \ref{L:Sob11} and \cite[Lemma 3.9]{CorFioVit25}, for every $w \in C^{0,1}_c(\R)$  we have
\begin{equation}\label{eq:FGSob}
\int_{\R}\rho_{\eps_d}^{a_d}|w|F\Big( \frac{|w|}{\|w\|_{L^{1,a_d,\eps_d}(\R)}}\Big) \,dy_d\leq \int_{\R}\rho_{\eps_d}^{\frac{1 + a_d}{\beta_d}}|\nabla w| \,dy_d\,,
\end{equation}
where $F(s) = c_d s^{1-\frac{1}{\beta_d}}$ and the constant $c_d$ only depends on $a_d$. 

Employing \eqref{eq:L1diseqdugnind}, \eqref{eq:FGSob} and adapting the proof of \cite[Proposition 3.3]{CouGriLev03} (see also \cite[Lemma 3.10]{CorFioVit25} and \cite{Grig85}), we deduce that, for every $u \in C^{0,1}_c(\R^d)$, the following $G$-Sobolev inequality holds
\begin{equation*}
\int_{\R^{d}}\omega_\eps^a|u|G\Big( \frac{|u|}{\|u\|_{L^{1,a,\eps}(\R^{d})}}\Big) \,dy\leq \int_{\R^{d}}\omega^a_\eps\Big(\sum_{i=1}^{d}\rho_{\varepsilon_i}(y_i)^{\frac{1 + a_i}{\beta_i} - a_i}\Big)|\nabla u| \,dy\,,
\end{equation*}
with
\[
G(r):=\inf_{st = r}[H_{\hat \beta}(s) + F(t)]
\]
After some standard computations, we see that
\[
G(r) \geq \min\{c_{\hat a}, c_d\} r^{\frac{1}{\gamma_{\hat\beta}' + \beta_d'}}\,,
\]
and \eqref{eq:L1diseqdugn} follows, completing the proof.
\end{proof}

\begin{proof}[Proof of Lemma \ref{L:Sobolev}]
Let $q \geq 2$, $(d + \langle a^+ \rangle-2)q \leq 2(d + \langle a^+ \rangle)$ be fixed. Let us denote
\[
r = \frac{2q}{q+2}\,,
\]
and notice that $ 1 \leq r$, $(d + \langle a^+ \rangle-1)r \leq d + \langle a^+ \rangle$.

We fix $ u \in C^\infty_c (B_R)$ and define $v = |u|^{\frac{q}{r}}$. Since $q >r$, then $v \in C^1_c(B_R)$ and we can compute
\[
\int_{B_R}\omega^a_\eps |u|^q dz = \int_{B_R}\omega^a_\eps |v|^r dz \leq c \Big( \int_{B_R}\omega_\eps^a|\nabla v| dz \Big)^{r},
\]
where the last inequality holds true thanks to Lemma \ref{L:SobL1}

Next we use that $|\nabla v| = \frac{q}{r}|u|^{\frac{q-r}{r}}|\nabla u|$ and Holder inequality to obtain 
\[
c \Big(\int_{B_R}\omega^a_\eps |u|^q dz\Big)^{\frac{1}{r}} \leq \Big( \int_{B_R}\omega^a_\eps|\nabla u|^2 dz \Big)^{\frac{1}{2}} \Big( \int_{B_R}\omega^a_\eps|u|^{\frac{2(q-r)}{r}} dz \Big)^{\frac{1}{2}},
\]
and since 
\[
\frac{2(q-r)}{r} = q \quad \text{ and }\quad \frac{1}{r} - \frac{1}{2} = \frac{1}{q}\,,
\]
we readily conclude. 
\end{proof}

\subsection{Hardy and Poincar\'e-type inequalities}
\begin{Lemma}[$\eps$-stable Hardy inequality]\label{L:Hardy}

Let $a \in (-1, \infty)^n$, $\eps \in [0,1]^n$, and $R>0$. For each $i=1, \ldots, n$, there exists a constant $c = c(a_i)>0$ such that, for all $u \in C^\infty_c(\R^d)$, it holds
\begin{equation}\label{Hardyineq}
c \int_{B_R}\omega^a_\eps  u^2  \,dz \leq \frac{1}{R} \int_{\partial B_R}\omega^{a+2e_i}_\eps u^2 \,d\mathcal{H}^{d-1}  + \int_{B_R}\omega^{a+2e_i}_\eps \Big( \frac{\partial u}{\partial y_i}\Big)^2 \,dz\,.
\end{equation}
\end{Lemma}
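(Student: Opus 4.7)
The strategy is to test the identity against a vector field whose divergence reproduces the target weight. A natural choice is
\[
V(z)\;:=\;y_i\,\omega^a_\eps(y)\,e_{y_i},
\]
for which a direct computation gives
\[
\dive V \;=\; \partial_{y_i}\bigl(y_i\,\omega^a_\eps\bigr)\;=\;\omega^a_\eps\Bigl(1+a_i\frac{y_i^2}{\rho_{\eps_i}^2}\Bigr)\;\geq\;c_{a_i}\,\omega^a_\eps,\qquad c_{a_i}:=\min(1,1+a_i)>0,
\]
where the strict positivity of $c_{a_i}$ is precisely the assumption $a_i>-1$ (and uses $0\leq y_i^2/\rho_{\eps_i}^2\leq 1$).

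Applying the divergence theorem to $u^2 V$ on $B_R$ and expanding $\dive(u^2 V)=u^2 \dive V+2u\,V\cdot\nabla u$, one obtains
\[
c_{a_i}\!\int_{B_R}\!\omega^a_\eps u^2\,dz \;\le\; \int_{\partial B_R}\!u^2\,V\cdot\nu\,d\mathcal H^{d-1}\;-\;2\!\int_{B_R}\!u\,y_i\,\omega^a_\eps\,\partial_{y_i}u\,dz.
\]
On $\partial B_R$ the outward normal is $\nu=z/R$, so $V\cdot\nu=y_i^2\omega^a_\eps/R\le \rho_{\eps_i}^2\omega^a_\eps/R=\omega^{a+2e_i}_\eps/R$, which is exactly the boundary term in \eqref{Hardyineq}. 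For the interior cross term, Young's inequality together with $y_i^2\le\rho_{\eps_i}^2$ gives
\[
2\bigl|u\,y_i\,\omega^a_\eps\,\partial_{y_i}u\bigr|\;\le\;\tfrac{c_{a_i}}{2}\,\omega^a_\eps u^2 \;+\; \tfrac{2}{c_{a_i}}\,\omega^{a+2e_i}_\eps(\partial_{y_i}u)^2,
\]
and absorbing the $u^2$ contribution into the left-hand side yields the stated inequality, with a constant $c$ depending only on $a_i$ (a harmless rescaling is applied to normalize the coefficient of the derivative term to $1$).

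The one delicate point is justifying the divergence theorem when $\eps_i=0$ and $a_i<0$: here $V$ is singular along $\Sigma_i$ and is not classically $C^1$ across $\{y_i=0\}$. This is addressed by an approximation. Pick a smooth cutoff $\phi_\delta$ with $\phi_\delta\equiv 0$ on $\{|y_i|<\delta\}$, $\phi_\delta\equiv 1$ on $\{|y_i|>2\delta\}$ and $|\phi_\delta'|\le c/\delta$, apply the divergence theorem to $u^2\phi_\delta V$ (now smooth on its support), and let $\delta\to 0^+$. The only new term is
\[
\int_{B_R}\!u^2\,V\cdot\nabla\phi_\delta\,dz\;\le\;\frac{c}{\delta}\!\int_{\{\delta<|y_i|<2\delta\}\cap B_R}\!|y_i|\,\omega^a_\eps u^2\,dz\;\le\;c\!\int_{\{\delta<|y_i|<2\delta\}\cap B_R}\!\omega^a_\eps u^2\,dz,
\]
which vanishes as $\delta\to 0^+$ since $\omega^a_\eps u^2\in L^1_\loc$ (again by $a_i>-1$) and the strip has Lebesgue measure tending to zero. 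Alternatively, one can avoid the approximation by invoking Fubini, freezing the variables $(y_j)_{j\ne i}$, and carrying out the integration by parts in the single variable $y_i$ on the interval $\{y_i:(z',y_i)\in B_R\}$; the surface-measure conversion factor $R/y_i$ on the upper/lower hemispheres of $\partial B_R$ then reassembles the boundary term precisely in the form $R^{-1}\int_{\partial B_R}\omega^{a+2e_i}_\eps u^2\,d\mathcal H^{d-1}$.
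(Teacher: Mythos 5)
Your argument is correct and essentially coincides with the paper's proof: testing against the field $y_i\,\omega^a_\eps e_{y_i}$ is exactly the integration by parts that yields the paper's identity \eqref{eq:hardypass}, followed by the same Young-inequality absorption, and your pointwise bound $1+a_i y_i^2/\rho_{\eps_i}^2\ge\min(1,1+a_i)$ simply merges the paper's two cases $a_i>0$ and $a_i\in(-1,0)$ into one. A minor imprecision: when some other factor $\rho_{\eps_j}^{a_j}$ with $j\ne i$ and $\eps_j=0$ is not smooth across $\Sigma_j$, the truncated field $u^2\phi_\delta V$ is still not smooth on its support, but since $V$ is parallel to $e_{y_i}$ a cutoff in those variables produces no additional term (and your Fubini/slicing alternative handles this cleanly anyway), so the justification stands.
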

\begin{proof}
Fix $u \in C^\infty_c(\R^d)$. Using the identity
\[
\frac{\partial^2 \omega_\eps^{a + 2e_i}}{\partial y_i^2} = (a_i + 2)(a_i +1)\omega_{\eps}^a - (a_i+2)a_i\eps_i^2 \omega_\eps^{a-2e_i}
\]
and integrating by parts, we obtain 
\begin{equation}\label{eq:hardypass}
(a_i+1)\int_{B_R}\omega^{a}_\eps  u^2  \,dz= \frac{1}{R}\int_{\partial B_R}\omega^{a}_\eps |y_i|^{2}  u^2  \,d\mathcal{H}^{d-1} - 2 \int_{B_R}\omega^{a}_\eps y_i u \frac{\partial u}{\partial y_i}\,dz + \eps_i^2a_i\int_{B_R}\omega_{\eps}^{a - 2e_i} u^2 \,dz\,.
\end{equation}
By H\"older's and Young's inequalities, for every $\delta >0$ we have
\[
\Big|2 \int_{B_R}\omega^{a}_\eps y_i u \frac{\partial u}{\partial y_i}\,dz\Big| \leq  \delta\int_{B_R}\omega^{a}_\eps  u^2  \,dz
+ \frac{1}{\delta} \int_{B_R}\omega^{a+ 2e_i}_\eps \Big(\frac{\partial u}{\partial y_i}\Big)^2\,dz\,.
\]
Substituting this into \eqref{eq:hardypass} gives
\[
(a_i+1-\delta)\int_{B_R}\omega^{a}_\eps  u^2  \,dz\leq \frac{1}{R}\int_{\partial B_R}\omega^{a + 2e_i}_\eps   u^2  \,d\mathcal{H}^{d-1}  +\frac{1}{\delta} \int_{B_R}\omega^{a+ 2e_i}_\eps \Big(\frac{\partial u}{\partial y_i}\Big)^2\,dz +  \eps_i^2a_i\int_{B_R}\omega_{\eps}^{a - 2e_i}|u^2|\,dz\,.
\]
We now distinguish two cases. If $a_i >0$, we take $\delta = 1/2$ and use that $\eps_i^2 \leq \rho_{\eps_i}^{2}$ to get
\[
\frac{1}{4}\int_{B_R}\omega^{a}_\eps  u^2  \,dz\leq \frac{1}{2R}\int_{\partial B_R}\omega^{a + 2e_i}_\eps   u^2  \,d\mathcal{H}^{d-1} + \int_{B_R}\omega^{a+ 2e_i}_\eps \Big(\frac{\partial u}{\partial y_i}\Big)^2\,dz\,.
\]
If $-1 < a_i < 0$, we drop the last term on the right-hand side of \eqref{eq:hardypass} and put $\delta = (a_i+1)/2$ to get 
\[
\Big(\frac{a_i+1}{2}\Big)^2\int_{B_R}\omega^{a}_\eps  u^2 \,dz\leq \frac{a_i+1}{2R}\int_{\partial B_R}\omega^{a + 2e_i}_\eps   u^2  \,d\mathcal{H}^{d-1}  +\int_{B_R}\omega^{a+ 2e_i}_\eps \Big(\frac{\partial u}{\partial y_i}\Big)^2\,dz\,.
\]
In both cases, inequality \eqref{Hardyineq} follows with a constant $c = c(a_i) > 0$.
\end{proof}

\begin{remark}\label{Hardyrmrk}
Let $a \in (-1, \infty)^{n-1}\times (-\infty, -1)$ and $\eps \in [0,1]^n$ with $\eps_n = 0$. Then inequality \eqref{Hardyineq} remains valid (and can even be strengthened) for smooth functions $u$ vanishing in a neighbourhood of $\Sigma_n$. In particular,
\[
\frac{(a_n + 1)^2}{4} \int_{B_R}\omega^a_\eps  u^2  dz \leq  \int_{B_R}\omega^{a+2e_n}_\eps \Big( \frac{\partial u}{\partial y_n}\Big)^2 dz\,, \qquad \text{for all }u \in C^\infty_c(\R^d \setminus \Sigma_n)\,.
\]
Indeed, when $\eps_n = 0$ and $a_n < -1$, identity \eqref{eq:hardypass} gives
  \[
  |a_n+1|\int_{B_R}\omega^{a}_\eps  u^2  \,dz\leq 2 \int_{B_R}\omega^{a}_\eps y_n u \frac{\partial u}{\partial y_n}\,dz \,,
  \]
and the desired inequality follows directly by H\"older's inequality.
\end{remark}

As a consequence of Lemma \ref{L:Hardy} (see also Lemma \ref{L:Sobolev} with $q=2$), we obtain an $\varepsilon$-stable Poincar\'e inequality.  
Since its proof is straightforward, we omit it.
\begin{Lemma}\label{lemRpoinc}
Let $a \in (-1, \infty)^n$, $\eps \in [0,1]^n$ and $R>0$. There exists a constant $c = c(a)>0$ such that for all $u \in C^\infty_c(B_R)$ it holds
\[
\frac{c}{\sqrt{1 + R^2}} \int_{B_R}\omega^a_\eps  u^2  dz \leq  \int_{B_R}\omega^{a}_\eps |\nabla u|^2 dz\,.
\]
\end{Lemma}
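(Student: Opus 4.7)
The plan is to deduce the Poincar\'e-type inequality directly from the $\eps$-stable Hardy inequality in Lemma \ref{L:Hardy}. Two observations about the test class $u \in C^\infty_c(B_R)$ make the argument essentially immediate: since $u$ vanishes in a neighborhood of $\partial B_R$, the boundary integral in \eqref{Hardyineq} is identically zero; and the weight shift $\omega^{a+2e_i}_\eps = \rho_{\eps_i}^2\,\omega^a_\eps$ introduced by Hardy is uniformly absorbable on $B_R$.

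First, I would fix any index $i \in \{1,\ldots,n\}$ and apply \eqref{Hardyineq}; dropping the boundary term yields
\[
c(a_i)\int_{B_R}\omega^a_\eps u^2\,dz \;\le\; \int_{B_R}\omega^{a+2e_i}_\eps \Big(\tfrac{\partial u}{\partial y_i}\Big)^2\,dz.
\]
Then I would use the uniform estimate $\rho_{\eps_i}(y_i)^2 = \eps_i^2 + y_i^2 \le 1+R^2$ on $B_R$ (valid for every $\eps \in [0,1]^n$, since $\eps_i \le 1$ and $|y_i|\le R$) together with the trivial pointwise bound $(\partial_{y_i}u)^2 \le |\nabla u|^2$ to conclude
\[
c(a_i)\int_{B_R}\omega^a_\eps u^2\,dz \;\le\; (1+R^2)\int_{B_R}\omega^a_\eps |\nabla u|^2\,dz,
\]
which, upon relabeling the constant, gives the claimed Poincar\'e-type inequality. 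The same conclusion can be reached via Lemma \ref{L:Sobolev} with $q=2$, where the $R$-dependence enters naturally through $\rho_{\eps_i}\le \sqrt{1+R^2}$ in the underlying $L^1$-Sobolev estimate of Lemma \ref{L:SobL1}; a Cauchy-Schwarz argument applied to $v = u^2$ then produces the Poincar\'e estimate with the stated scaling.

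The argument does not present any substantive obstacle. The Hardy constant $c(a_i)$ is $\eps$-independent by Lemma \ref{L:Hardy}, and the pointwise bound on $\rho_{\eps_i}^2$ is uniform in $\eps \in [0,1]^n$; setting $c(a) = \min_i c(a_i) > 0$ then yields a constant depending only on $a$. The only mild point to keep in mind is that the Hardy approach above uses a single coordinate direction $y_i$, so minimising in $i$ is what converts the index-dependent constant $c(a_i)$ into the desired $c(a)$.
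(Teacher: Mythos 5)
Your Hardy-based derivation is the route the paper indicates (it states the lemma follows from Lemma~\ref{L:Hardy}, with Lemma~\ref{L:Sobolev} at $q=2$ as an alternative), and the individual steps --- dropping the boundary term since $u\in C^\infty_c(B_R)$, absorbing $\omega^{a+2e_i}_\eps=\rho_{\eps_i}^2\omega^a_\eps$ via $\rho_{\eps_i}^2\le 1+R^2$, and using $(\partial_{y_i}u)^2\le|\nabla u|^2$ --- are all correct and uniform in $\eps\in[0,1]^n$.

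The one real issue is your closing claim. What you obtain is
\[
\frac{c(a)}{1+R^2}\int_{B_R}\omega^a_\eps u^2\,dz \le \int_{B_R}\omega^a_\eps|\nabla u|^2\,dz\,,
\]
while Lemma~\ref{lemRpoinc} as printed has $c(a)/\sqrt{1+R^2}$, and these cannot be reconciled by ``relabeling the constant'': $c$ is only allowed to depend on $a$, and $(1+R^2)/\sqrt{1+R^2}=\sqrt{1+R^2}$ is unbounded in $R$. In fact the printed form fails for large $R$: take $n=1$, $a=0$, $\eps=0$ (so $\omega^a_\eps\equiv1$) and a radial bump $u\in C^\infty_c(B_R)$ with $u\equiv1$ on $B_{R/2}$ and $|\nabla u|\lesssim 1/R$; then $\int u^2/\int|\nabla u|^2\gtrsim R^2$, which exceeds the claimed bound $\sqrt{1+R^2}/c\sim R/c$. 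Your $1+R^2$ is therefore the correct power, and the square root in the printed lemma is evidently a misprint --- the alternative Sobolev route also delivers $1+R^2$ (apply Lemma~\ref{L:SobL1}, whose $R$-dependence is $\sqrt{1+R^2}$, to $v=u^2$, then use Cauchy--Schwarz and square). Your argument is otherwise complete; just don't present the two forms as interchangeable.
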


\subsection{De Giorgi-Nash-Moser theory for \texorpdfstring{$2$}{2}-admissible weights}\label{s:degiorgi}

In this section, we show that our weight $\omega^a$ is $2$-admissible \cite{HeiKilMar06}. This leads to the validity of the De Giorgi-Nash-Moser theory for the associated degenerate equation, by applying \cite{FabKenSer82,HeiKilMar06}.
A nonnegative, locally integrable weight $\pi$ is said to be $2$-admissible if it satisfies four properties: the doubling condition of the associated measure $d\mu=\pi dz$, the well-definitness of the weak gradient, a Sobolev inequality and the Poincar\'e-Wirtinger inequality.
Let us remark that our weight $\omega^a(y)$ satisfies the four properties above in the orthant $\R^d_*$. The Sobolev inequality for $\varepsilon=0$ is contained in Lemma \ref{L:Sobolev}.
The weak gradient of functions in $H^{1,a}(B_R^*)$ is indeed well defined on the full-measure set $B_R^* \setminus \Sigma_{\rm sd}$, see Lemma \ref{lemH=W} and Lemma \ref{thmH=W}. Moreover, the other two conditions are provided below.

\begin{Lemma}[Doubling property]
    Let $a \in (-1, \infty)^n$. Then the measure $d\mu(z)=\omega^a(y)dz$ is doubling; that is, there exists a constant $C>0$ depending only on $a,n,d$ such that for any $z_0\in\R^d_*$ and any $r>0$
\begin{equation*}
    \int_{B_{2r}(z_0)\cap\R^d_*}\omega^a(y)dz\leq C\int_{B_{r}(z_0)\cap\R^d_*}\omega^a(y)dz.
\end{equation*}
\end{Lemma}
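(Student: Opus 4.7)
The plan is to exploit the product structure of the weight and reduce the doubling estimate on Euclidean balls to a one-dimensional doubling estimate for each factor $t^{a_i}$ on $(0,\infty)$. First, since $\omega^a$ depends only on $y$, the measure $d\mu=\omega^a(y)dz$ is translation-invariant in the $x$-variables, so without loss of generality we may take $x_0=0$. Writing $z_0=(x_0,y_0)\in\R^d_*$ with $y_0=(y_0^1,\dots,y_0^n)\in(0,\infty)^n$, and using the standard cube-ball inclusions
\[
Q_{r/\sqrt{d}}(z_0)\subset B_r(z_0)\subset Q_r(z_0),
\]
where $Q_\rho(z_0)=\prod_{\ell=1}^d(z_0^\ell-\rho,z_0^\ell+\rho)$ is the open cube of side $2\rho$, we sandwich $\mu(B_r(z_0)\cap\R^d_*)$ between the corresponding cube measures. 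By Fubini, the cube measures factorize as
\[
\mu(Q_\rho(z_0)\cap\R^d_*)=(2\rho)^{d-n}\prod_{i=1}^n\int_{\max\{0,\,y_0^i-\rho\}}^{y_0^i+\rho}t^{a_i}\,dt.
\]

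With this reduction, it suffices to establish, for each $i$, the one-dimensional doubling
\[
\int_{\max\{0,\,y_0^i-2r\}}^{y_0^i+2r}t^{a_i}\,dt \;\leq\; C_{a_i,d}\int_{\max\{0,\,y_0^i-r/\sqrt{d}\}}^{y_0^i+r/\sqrt{d}}t^{a_i}\,dt.
\]
Since $a_i>-1$, the primitive $t^{a_i+1}/(a_i+1)$ is well-defined on $[0,\infty)$, and I would show directly that for any $\alpha>0$ of order unity,
\[
\int_{\max\{0,\,y_0^i-\alpha r\}}^{y_0^i+\alpha r}t^{a_i}\,dt \;\simeq\; r\,\max\{y_0^i,r\}^{a_i},
\]
with implicit constants depending only on $a_i$ and $\alpha$. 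This I would prove by splitting into the two regimes $y_0^i\geq 2\alpha r$ (where $t^{a_i}$ is comparable to $(y_0^i)^{a_i}$ throughout the interval) and $y_0^i<2\alpha r$ (where the integral is computed explicitly via the primitive and is of order $r^{a_i+1}$, using $a_i>-1$ to handle both signs of $a_i$). Since $\max\{y_0^i,2r\}\leq 2\max\{y_0^i,r\}$, the ratio of the two one-dimensional integrals is then controlled by a constant depending only on $a_i$ and $d$.

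Combining the one-dimensional bound across $i=1,\dots,n$ with the cube sandwich, I obtain
\[
\frac{\mu(B_{2r}(z_0)\cap\R^d_*)}{\mu(B_r(z_0)\cap\R^d_*)} \;\leq\; (2\sqrt{d})^{d-n}\prod_{i=1}^n C_{a_i,d} \;=:\; C(a,n,d),
\]
which is the desired doubling estimate. The only delicate point is the case analysis in the one-dimensional bound when some $y_0^i$ is comparable to $r$: the asymmetric truncation $\max\{0,y_0^i-\alpha r\}$ combined with the possibly singular ($a_i<0$) or degenerate ($a_i>0$) behaviour of $t^{a_i}$ near the origin must be handled carefully. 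The assumption $a_i>-1$ is precisely what ensures local integrability of $t^{a_i}$ at $0$ and makes the estimate uniform in the relative position of $y_0^i$ with respect to $r$.
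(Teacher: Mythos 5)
Your proof is correct and follows essentially the same route as the paper: sandwich the ball between product sets (you use cubes, the paper uses a product of a $(d-n)$-dimensional ball with $n$ intervals), factorize the measure by Fubini, and reduce to a one-dimensional doubling estimate for each factor $t^{a_i}$ on $(0,\infty)$. The only difference is that you spell out the one-dimensional comparison $\int \simeq r\,\max\{y_0^i,r\}^{a_i}$, which the paper leaves to the reader as an easy check.
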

\begin{proof}
    Let $z_0=(x_0,y_0)$, with $y_0=(y_0^1,\dots,y_0^n)$.
    Observe that 
    \[
    B_{2r}(z_0)\subset B^{d-n}_{2r}(x_0)\times_{i=1}^nB^1_{2r}(y_0^i) \qquad \text{ and } \qquad B^{d-n}_{r/\sqrt{n+1}}(x_0)\times_{i=1}^nB^1_{r/\sqrt{n+1}}(y_0^i)\subset B_r(z_0)\,.
    \]
On the other hand, the measure of a product set decomposes as 
\[
\mu\Big(B^{d-n}_{\rho}(x_0)\times_{i=1}^nB^1_{\rho}(y_0^i)\cap\R^d_*\Big) = |B^{d-n}_\rho(x_0)|_{d-n}\prod_{i=1}^n\mu_i(B^1_\rho(y_0^i)\cap(0,\infty))\,,
\]
where $|\cdot|_{d-n}$ is the $(d-n)$-dimensional Lebesgue measure and
\[
\mu_i(B^1_\rho(y_0^i)\cap(0,\infty))=\int_{\max\{0,y^i_0-\rho/2\}}^{y^i_0+\rho/2}y_i^{a_i}dy\,.
\]
To conclude, one can now easily check the doubling condition for each $\mu_i$.
\end{proof}

The following result holds true only in the orthant, since whenever $a_i\geq1$ for some $i=1,\dots,n$, elements in the relevant weighted Sobolev space may jump across $\Sigma_i$, thus providing a counterexample to the Poincar\'e-Wirtinger inequality (see \cite[Example 1.4]{SirTerVit21a}).

\begin{Lemma}[Poincar\'e-Wirtinger inequality]
    Let $a \in (-1, \infty)^n$ and $R>0$. Then there exists a positive constant $c>0$ depending only on $d,n,a$ such that for any $u\in H^{1,a}(B_R^*)$
    \begin{equation*}
    c\int_{B^*_R}\omega^a|u-\langle u\rangle_R^a|^2 \,dz\leq R^2\int_{B^*_R}\omega^a|\nabla u|^2 \,dz
\end{equation*}
where
\begin{equation*}
    \langle u\rangle_R^a:=\frac{1}{\mu_a(B^*_R)}\int_{B^*_R}\omega^a u \,dz,\qquad \mathrm{and}\qquad  \mu_a(B^*_R)=\int_{B^*_R}\omega^a \,dz.
\end{equation*}
\end{Lemma}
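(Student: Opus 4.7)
The plan is to combine a scaling reduction with a standard compactness-contradiction argument. First I would reduce to $R=1$: setting $v(\tilde z):=u(R\tilde z)\in H^{1,a}(B_1^*)$, the homogeneity $\omega^a(Ry)=R^{\langle a\rangle}\omega^a(y)$ together with the change of variables $z=R\tilde z$ yields
\[
\int_{B_R^*}\omega^a|u-\langle u\rangle_R^a|^2\,dz = R^{d+\langle a\rangle}\int_{B_1^*}\omega^a|v-\langle v\rangle_1^a|^2\,d\tilde z,
\]
while $\int_{B_R^*}\omega^a|\nabla u|^2\,dz = R^{d+\langle a\rangle-2}\int_{B_1^*}\omega^a|\nabla v|^2\,d\tilde z$. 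The factor $R^2$ appears precisely as in the statement, so it suffices to prove the inequality for $R=1$.

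Next I would argue by contradiction: if the $R=1$ inequality fails, there exists a sequence $u_k\in H^{1,a}(B_1^*)$ with $\langle u_k\rangle_1^a=0$, $\|u_k\|_{L^{2,a}(B_1^*)}=1$, and $\|\nabla u_k\|_{L^{2,a}(B_1^*)}\to 0$. Such a sequence is bounded in $H^{1,a}(B_1^*)$ and, up to subsequence, converges weakly to some $u$. Once strong $L^{2,a}$-convergence is secured, the argument closes routinely: $\|u\|_{L^{2,a}(B_1^*)}=1$ and $\langle u\rangle_1^a=0$ pass to the limit, and by lower semicontinuity $\int\omega^a|\nabla u|^2=0$. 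Since $B_1^*\cap\Sigma_{\rm sd}=\emptyset$, the weak gradient of $u$ is defined on all of $B_1^*$ and vanishes a.e.\ there; as $B_1^*$ is connected, $u$ is a.e.\ constant, and the zero average then forces $u\equiv 0$, contradicting $\|u\|_{L^{2,a}(B_1^*)}=1$.

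The hard part will be establishing the compact embedding $H^{1,a}(B_1^*)\hookrightarrow L^{2,a}(B_1^*)$. I would split $B_1^*$ into an interior region $K_\delta:=\{z\in B_1^*\mid\dist(z,\Sigma_0\cup\partial B_1)>\delta\}$ and its complement $N_\delta$. On $K_\delta$ the weight $\omega^a$ is bounded above and below by positive constants depending on $\delta$, so classical Rellich-Kondrachov yields strong $L^2$- (hence $L^{2,a}$-) convergence of a subsequence on $K_\delta$. To control $N_\delta$ uniformly in $k$, I would apply the Hardy inequality (Lemma \ref{L:Hardy}) combined with suitable cutoffs in the strips $\{0<y_i<\delta\}$ to obtain
\[
\int_{\{0<y_i<\delta\}\cap B_1^*}\omega^a u_k^2\,dz \leq C\delta^2\bigl(\|u_k\|^2_{L^{2,a}(B_1^*)}+\|\nabla u_k\|^2_{L^{2,a}(B_1^*)}\bigr),
\]
which is $O(\delta^2)$ uniformly in $k$; the layer near $\partial B_1$ is handled analogously via the trace inequality (Lemma \ref{L:traceineq}). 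A standard diagonal extraction as $\delta\to 0$ then delivers strong convergence of a subsequence in $L^{2,a}(B_1^*)$, completing the argument.
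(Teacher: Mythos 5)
Your overall strategy (reduce to $R=1$ by scaling, then run the compactness--contradiction argument of Evans, Section 5.8.1) is exactly what the paper intends: its proof is a one-line reference to that classical argument, with the real content being the compact embedding $H^{1,a}(B_1^*)\hookrightarrow L^{2,a}(B_1^*)$. The scaling step, the contradiction scheme, the interior Rellich--Kondrachov on $K_\delta$, and the boundary layer via the trace inequality (integrating Lemma \ref{L:traceineq} in $r\in(1-\delta,1)$ gives an $O(\delta)$ bound) are all fine.

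The genuine gap is the claimed strip estimate
\[
\int_{\{0<y_i<\delta\}\cap B_1^*}\omega^a u_k^2\,dz \;\leq\; C\delta^2\bigl(\|u_k\|^2_{L^{2,a}(B_1^*)}+\|\nabla u_k\|^2_{L^{2,a}(B_1^*)}\bigr),
\]
which is false whenever $a_i<1$: testing with $u\equiv 1$ gives a left-hand side of order $\delta^{1+a_i}$ against a right-hand side of order $\delta^2$, and $\delta^{1+a_i}\gg\delta^2$ as $\delta\to0$. Moreover, the proposed derivation does not produce even a correct weaker bound as sketched: Lemma \ref{L:Hardy} gains the factor $y_i^2$ only on the $\partial_{y_i}u$ term and carries a boundary integral over $\partial B_1$ that is not small in $\delta$, and if you localize with a cutoff $\chi_\delta$ supported in $\{y_i<2\delta\}$, the term $u\,\chi_\delta'$ produces $C\int_{\{\delta<y_i<2\delta\}}\omega^a u_k^2\,dz$ with a constant independent of $\delta$ -- i.e.\ a piece of the very tail you are trying to control. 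What you actually need (and all the argument requires) is uniform integrability with some modulus $\epsilon(\delta)\to0$; this is true, with $\epsilon(\delta)\sim\delta^{\min(1+a_i,2)}$ (logarithmic correction at $a_i=1$), and can be obtained either by a one-dimensional slicing argument in the $y_i$-variable (fundamental theorem of calculus for $a_i<1$, a one-dimensional Hardy inequality for $a_i>1$, then averaging in the remaining variables), or by H\"older's inequality combined with the $\varepsilon$-stable Sobolev inequality of Lemma \ref{L:Sobolev} applied to $\eta u_k$ for a suitable cutoff $\eta$, using that $\mu_a(\{0<y_i<\delta\}\cap B_1^*)\to0$. With the $\delta^2$ claim replaced by such an estimate, your proof closes correctly.
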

\begin{proof}
  The proof is classical and can be easily adapted from \cite[Section 5.8.1]{Eva10}.
\end{proof}

As a consequence of the $2$-admissibility of the weight $\omega^a(y)$ one can adapt the theory developed in \cite{FabKenSer82} and \cite[Sections 3 and 6]{HeiKilMar06} to obtain the following regularity result.

\begin{Proposition}
    Let $a \in (-1, \infty)^n$, $p>\frac{d+\langle a^+\rangle}{2}$, $q>d+\langle a^+\rangle$. Let $f\in L^{p,a}(B_1^*)$, $F\in (L^{q,a}(B_1^*))^d$ and $A$ satisfies \[
\|A\|_{L^\infty(B_1)}\le \Lambda,\quad  A(z)\xi\cdot \xi \ge \lambda|\xi|^2,\quad \text{ for a.e. }z\in B_1, \text{ for any }\xi\in\R^d,
\]
for some constants $0<\lambda\leq\Lambda$. Let $u$ be a weak solution to \eqref{eq:degenerate:equation:B*} in the sense of \eqref{eq5.1}. Then, there exists a constant $\alpha^*\in(0,1)$ depending on $d,\lambda,\Lambda,a$ such that for every $\alpha\in(0,\alpha^*]\cap(0,2-\frac{d+\langle a^+\rangle}{p}]\cap(0,1-\frac{d+\langle a^+\rangle}{q}]$ there exists a constant $C>0$ depending only on $d,\lambda,\Lambda,a,p,q,\alpha$ such that
\[
            \|u\|_{C^{0,\alpha}(B_{1/2}^*)}\le C \big(
            \|u\|_{L^{2,a}(B_1^*)}+ \|f\|_{L^{p,a}(B_1^*)} +\|F\|_{L^{q,a}(B_1^*)}
            \big).
\]
\end{Proposition}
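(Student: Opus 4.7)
The plan is to reduce the problem to a De Giorgi--Nash--Moser setting on the full ball $B_1$ via an even reflection across each hyperplane $\Sigma_i$, and then invoke the regularity theory for $2$-admissible weights as developed in \cite{FabKenSer82} and \cite[Sections 3 and 6]{HeiKilMar06}.

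\emph{Step 1: Reflection to $B_1$.} Using Lemma \ref{L:Hevenid} iteratively, starting from $u \in H^{1,a}(B_1^*)$, we construct $\bar u \in H^{1,a}_{\rm e}(B_1)$ defined by $\bar u(x, y) = u(x, |y_1|, \ldots, |y_n|)$. For the coefficients and data, we reflect across each $\Sigma_i$ following the rule in Remark \ref{rem:even:reflection} composed $n$ times: for every $i = 1, \ldots, n$, across $\Sigma_i$ we conjugate $A$ by the diagonal matrix $J_i$ with $-1$ in the entry associated with $e_{y_i}$ and $+1$ elsewhere (thereby flipping the sign of the $i$-th row and column of $A$ except for the diagonal entry), we flip the sign of the $i$-th $y$-component of $F$, and we extend $f$ evenly. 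The resulting $\bar A$ is only $L^\infty$, but it still satisfies the same ellipticity bounds $\lambda$ and $\Lambda$; the extended data $\bar f$ and $\bar F$ belong to $L^{p,a}(B_1)$ and $L^{q,a}(B_1)$ respectively with norms controlled by the originals (since $\omega^a$ is even in each $y_i$). The conormal boundary condition on $\partial B_1^* \cap \Sigma_i$ ensures that $\bar u$ is a global weak solution of
\[
-\dive(\omega^a \bar A \nabla \bar u) = \omega^a \bar f + \dive(\omega^a \bar F) \qquad \text{in } B_1.
\]

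\emph{Step 2: $2$-admissibility and intrinsic H\"older regularity.} The lemmas in Section \ref{S:ff}, together with the doubling and Poincar\'e--Wirtinger lemmas of Section \ref{s:degiorgi}, show that $\omega^a$ is a $2$-admissible weight on $B_1$ (after exploiting its symmetry to pass from the orthant to the full ball). The effective dimension of the weighted space is $d + \langle a^+ \rangle$, as reflected in the Sobolev exponent \eqref{eq:sobexp}. Applying the Moser iteration and oscillation decay arguments of \cite[Theorem 2.3.12, Theorem 2.3.15]{FabKenSer82} and \cite[Chapters 3, 6]{HeiKilMar06} to $\bar u$, we obtain an exponent $\alpha^* \in (0, 1)$, depending only on $d, \lambda, \Lambda$ and on the $2$-admissibility constants of $\omega^a$ (which in turn depend on $a$), such that
\[
\|\bar u\|_{C^{0,\alpha^*}(B_{1/2})} \leq C \bigl( \|\bar u\|_{L^{2,a}(B_1)} + \|\bar f\|_{L^{p,a}(B_1)} + \|\bar F\|_{L^{q,a}(B_1)} \bigr),
\]
provided $p > (d + \langle a^+ \rangle)/2$ and $q > d + \langle a^+ \rangle$. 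The thresholds on $p$ and $q$ are precisely those for the Morrey-type embeddings inherited from Lemma \ref{L:Sobolev}.

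\emph{Step 3: Dependence of the H\"older exponent on $p,q$.} To obtain the improved range for $\alpha$ stated in the proposition, we combine the intrinsic exponent $\alpha^*$ from the homogeneous problem with the explicit Morrey--Campanato contribution from the right hand sides. Writing $\bar u = v + w$, where $v$ solves the homogeneous equation with $\bar u$ as boundary datum and $w$ absorbs the inhomogeneity, one iterates the Campanato-type growth estimates for both $v$ and $w$ on dyadic balls, using the doubling of $\omega^a$ and the Sobolev inequality. The iteration yields H\"older decay of the $L^{2,a}$ mean oscillation with exponent $\min\bigl\{\alpha^*, \, 2 - (d+\langle a^+\rangle)/p, \, 1 - (d+\langle a^+\rangle)/q\bigr\}$, and the corresponding Campanato characterization transfers this decay into a $C^{0,\alpha}$ estimate. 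Restricting to $B_{1/2}^*$ gives the stated bound for $u$.

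The main technical obstacle is the careful verification in Step 1 that the reflection rule produces weak solutions on the full ball; this requires checking that the sign changes in the off-diagonal entries of $A$ and in $F$ exactly compensate for the normal derivative jumps, so that the conormal condition guarantees vanishing of the boundary terms across every $\Sigma_i$. Once this is in place, the remaining arguments are routine applications of \cite{FabKenSer82, HeiKilMar06} in the $2$-admissible setting.
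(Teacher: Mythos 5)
Your Step 1 (even reflection of $u$, $f$, $F$ and conjugation of $A$ across each $\Sigma_i$) is sound and matches Remark \ref{rem:even:reflection} together with Lemma \ref{L:Hevenid}, and your Step 3 is the standard Campanato interpolation. The genuine gap is in Step 2: the claim that $\omega^a$ is $2$-admissible \emph{on the full ball $B_1$}, ``after exploiting its symmetry to pass from the orthant to the full ball,'' is false whenever some $a_i\geq 1$. The Poincar\'e--Wirtinger inequality in Section \ref{s:degiorgi} is stated and proved only on $B_R^*$, and the paper explicitly notes that it \emph{fails} on the full ball in the superdegenerate range: by Lemma \ref{lemH=W}, a function such as $\mathrm{sign}(y_i)$ belongs to $H^{1,a}(B_1)$ with vanishing weighted gradient (its distributional gradient only needs to be controlled away from $\Sigma_{\rm sd}$), so the mean-oscillation cannot be bounded by the gradient on balls centered on $\Sigma_i$ (cf.\ the counterexample in \cite[Example 1.4]{SirTerVit21a}). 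Consequently $\omega^a$ is not a $2$-admissible weight on $B_1$ for $a_i\geq 1$ (it is only when all $a_i\in(-1,1)$, the $\mathcal A_2$ range), and the black-box invocation of \cite{FabKenSer82,HeiKilMar06} on the reflected problem is not legitimate: those theorems would apply to \emph{all} weak solutions of the full-ball equation, and in the superdegenerate case there exist discontinuous ones, so their hypotheses cannot hold there.

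The evenness of $\bar u$ does not rescue the argument, since the constants in the cited theory are derived from admissibility of the weight on the whole function space, not on a symmetry class; if you tried to redo the De Giorgi/Moser iteration within the even class using the orthant Poincar\'e--Wirtinger inequality, you would in effect be working in $B_1^*$ from the start. That is exactly the route of the paper: it verifies the four $2$-admissibility properties (doubling, well-posedness of the weak gradient via Lemmas \ref{lemH=W}--\ref{thmH=W}, the Sobolev inequality of Lemma \ref{L:Sobolev}, and the Poincar\'e--Wirtinger inequality) directly in the orthant, and then adapts \cite{FabKenSer82} and \cite[Sections 3 and 6]{HeiKilMar06} to the conormal problem on $B_1^*$ without any reflection. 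To repair your proof, drop the passage to $B_1$ and run Steps 2--3 on the metric measure space $(\overline{B_1^*},\omega^a\,dz)$, treating the conormal condition as part of the weak formulation (test functions in $C^\infty_c(B_1)$ restricted to the orthant), which is where the paper's admissibility lemmas actually apply.
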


\section{Weak solutions and approximation results}\label{S:solution}

Let us introduce the weighted problems considered in this work, together with their notion of weak solution.
\begin{itemize}
\item[$-$] For $R>0$, we say that $u$ is a weak solution to
\begin{align}\label{eq5.1}
\begin{cases}
    -\div(\omega^a A\nabla u)=\omega^af+\div(\omega^a F) & \text{in }B^*_R\,,\\
    \omega^a(A\D u +F)\cdot e_{y_i} = 0,  & \text{on } \partial B^*_R \cap \Sigma_i, \text{ for } i=1,\dots,n,
    \end{cases}
\end{align}
if and only if $u \in H^{1,a}(B_R^*)$ satisfies 
\[
\int_{B^*_R}\omega^a A\nabla u \cdot \nabla \phi \,dz = \int_{B^*_R}\omega^a f \phi \,dz - \int_{B^*_R}\omega^a F \cdot \nabla \phi \,dz\,, \qquad \text{ for all }\phi \in C^\infty_c(B_R)\,.
\]
\item[$-$] We say that $u$ is a weak entire solution to 
\[
\begin{cases}
    -\div(\omega^a A\nabla u)=\omega^af+\div(\omega^a F) & \text{in }\R^d_*\,,\\
    \omega^a(A\D u +F)\cdot e_{y_i} = 0,  & \text{on } \partial \R^d_* \cap \Sigma_i, \text{ for } i=1,\dots,n,
    \end{cases}\]
if and only if $u \in H^{1,a}_\loc(\R^d_*)$ is a weak solution to \eqref{eq5.1} for every $R>0$. 

\item[$-$]  For $R>0$ we say that $u$ is a weak solution to
\begin{equation}\label{eq:ballprob}
    -\div(\omega^a A\nabla u)=\omega^af+\div(\omega^a F) \qquad  \text{in }B_R\,,
\end{equation}
if and only if $u \in H^{1,a}(B_R)$ satisfies
\[
\int_{B_R}\omega^a A\nabla u \cdot \nabla \phi \,dz = \int_{B_R}\omega^a f \phi \,dz - \int_{B_R}\omega^a F \cdot \nabla \phi \,dz\,, \qquad  \text{ for all }\phi \in C^\infty_c(B_R)\,.
\]
\item[$-$] We say that $u$ is a weak entire solution to 
\[
    -\div(\omega^a A\nabla u)=\omega^af+\div(\omega^a F) \qquad  \text{in }\R^d\,,
\]
if and only if $u \in H^{1,a}_\loc(\R^d)$ is a weak solution to \eqref{eq:ballprob} for every $R>0$. 
\end{itemize}

Goal of the remainder part of this section is to show that solutions to \eqref{eq5.1} and \eqref{eq:ballprob} can be obtained as limits of solutions to suitable regularized problems. To this end, let $\eps_k$ be a sequence such that 
\begin{equation}\label{eqepsdef}
\eps_k \in [0,1]^n\,, \qquad \text{ and }\qquad |\eps_k| \to 0 \quad \text{as } k \to \infty\,.
\end{equation}
We also allow sequences for which $(\varepsilon_k)_j=0$ for all $k$, for some fixed index $j$. The associated regularized weight is defined by
\begin{align}\label{eqomkdef}
\omega^a_k:= \omega_{\eps_k}^a = \prod_{i=1}^n((\varepsilon_k)_i^2+y_i^2)^\frac{a_i}{2}\,.
\end{align}
We begin with the case of the whole ball $B_R$. Let $u_k$ be a weak solution to 
\begin{align}\label{eq5.2}
    -\div(\omega^a_k A\nabla u_k)=\omega^a_kf_k+\div(\omega^a_kF_k) \qquad \text{in }B_R\,.
\end{align}
Our first result shows that any bounded sequence of solutions to \eqref{eq5.2} converges to a solution to \eqref{eq:ballprob}.
\begin{Lemma}\label{lemA.1}
Let $a\in (-1,\infty)^n$, $p,q\geq 2$, and let $\eps_k$ and $\omega_k$ be as in \eqref{eqepsdef} and \eqref{eqomkdef}, respectively. Let $A$ satisfy Assumption \eqref{Ass:matrix}, $i)$, and suppose $f_k \in L^{p, a, \eps_k}(B_R)$, $F_k \in (L^{q, a, \eps_k}(B_R))^d$ with
    \[
    \|f_k\|_{L^{p, a, \eps_k}(B_R)}+\|F_k\|_{L^{q,a, \eps_k}(B_R)}\leq c.
    \] 
    Let $u_k\in H^{1,a,\eps_k}(B_R)$ be a sequence of weak solutions to \eqref{eq5.2} satisfying 
    \[
    \|u_k\|_{H^{1, a, \eps_k}(B_R)}\leq c\,.
    \]
    Assume further that there exist $f\in L^{p,a}(B_R)$ and $F\in (L^{q,a}(B_R))^d$ such that
    \[
    f_k\rightarrow f \quad\text{in }L^p_{\loc}(B_R\setminus \Sigma_0)\,, \qquad F_k\rightarrow F \quad\text{in }(L^q_{\loc}(B_R\setminus \Sigma_0))^d\,.
    \]
    Then, up to a subsequence, 
    \[
    u_k\rightarrow u \quad\text{in }H^1_{\loc}(B_R\setminus \Sigma_0)\,,
    \]
    where $u\in H^{1,a}(B_R)$ is a weak solution to \eqref{eq:ballprob}.
Moreover, if $u_k\in H^{1,a}_0(B_R)$, then $u\in H^{1,a}_0(B_R)$.
\end{Lemma}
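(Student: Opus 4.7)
I would structure the proof of Lemma \ref{lemA.1} in four stages, the third being the main obstacle. The first is a compactness extraction away from $\Sigma_0$. On any compact $K \subset B_R \setminus \Sigma_0$ one has $|y_i| \geq c_K > 0$ for each $i$, so $c_K^{-1} \leq \omega^a_k \leq c_K$ uniformly in $k$, and in fact $\omega^a_k \to \omega^a$ uniformly on $K$. Hence $\|u_k\|_{H^1(K)}$ is uniformly bounded; exhausting $B_R \setminus \Sigma_0$ by compacts and combining Rellich-Kondrachov with a diagonal argument produces a subsequence with $u_k \rightharpoonup u$ in $H^1_{\loc}(B_R \setminus \Sigma_0)$ and $u_k \to u$ in $L^2_{\loc}(B_R \setminus \Sigma_0)$.

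The second stage shows $u \in H^{1,a}(B_R)$. For each compact $K \subset B_R\setminus\Sigma_0$, the uniform convergence $(\omega^a_k)^{1/2} \to (\omega^a)^{1/2}$ together with the weak $L^2(K)$ convergence of $\nabla u_k$ identify $(\omega^a)^{1/2}\nabla u$ as the weak $L^2(K)$ limit of $(\omega^a_k)^{1/2}\nabla u_k$; weak lower semicontinuity of the $L^2$-norm then yields
\[
\int_K \omega^a|\nabla u|^2 \, dz \leq \liminf_k \int_K \omega^a_k |\nabla u_k|^2 \, dz \leq c.
\]
Exhausting $B_R\setminus\Sigma_0$ and using $|\Sigma_0| = 0$, monotone convergence gives $\|\nabla u\|_{L^{2,a}(B_R)} \leq c$, and the analogous argument (with strong $L^2$-convergence) gives $\|u\|_{L^{2,a}(B_R)} \leq c$. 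Since $u \in W^{1,1}_{\loc}(B_R \setminus \Sigma_{\rm sd})$, Lemmas \ref{lemH=W} and \ref{thmH=W} conclude that $u \in H^{1,a}(B_R)$.

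The third and hardest stage is the passage to the limit in the weak formulation for arbitrary $\phi \in C^\infty_c(B_R)$, whose support may meet $\Sigma_0$. I would split each of the three integrals as $\int_{B_R \setminus U_\delta} + \int_{U_\delta}$, where $U_\delta = B_R \cap \bigcup_i \{|y_i| < \delta\}$ is a boundary layer around $\Sigma_0$. On $B_R \setminus U_\delta$, the uniform convergence of $\omega^a_k$ combined with the weak $L^2$-convergence of $\nabla u_k$ and the $L^p_{\loc}$, $L^q_{\loc}$ convergences of $f_k$, $F_k$ yield convergence of each integral to its $\omega^a$-counterpart. On $U_\delta$, Cauchy-Schwarz together with the uniform bounds on $\|u_k\|_{H^{1,a,\eps_k}}$, $\|f_k\|_{L^{p,a,\eps_k}}$ and $\|F_k\|_{L^{q,a,\eps_k}}$ reduces the entire estimate to showing that $\int_{U_\delta}\omega^a_k\,dz \to 0$ as $\delta \to 0$ uniformly in $k$. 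By Fubini this follows from the one-dimensional bound
\[
\int_{-\delta}^\delta (\eps_k^2 + t^2)^{a_i/2}\,dt \leq c_{a_i}\, \delta^{\min\{1,\,a_i+1\}},
\]
valid uniformly in $\eps_k \in [0,1]$ precisely because $a_i > -1$. Sending first $k \to \infty$ and then $\delta \to 0$ (using absolute continuity of $\omega^a$-integrals to handle the limit $u$) produces the weak formulation for $u$.

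Finally, for the zero-trace preservation, I would extend each $u_k \in H^{1,a,\eps_k}_0(B_R)$ by zero to $B_{2R}$, obtaining $\tilde u_k \in H^{1,a,\eps_k}(B_{2R})$ with the same uniform bound; applying the first three stages on $B_{2R}$ produces a limit $\tilde u \in H^{1,a}(B_{2R})$ that coincides with $u$ on $B_R$ and vanishes on $B_{2R}\setminus\overline{B_R}$. The trace characterization of Lemma \ref{thmtrace} then yields $u \in H^{1,a}_0(B_R)$.
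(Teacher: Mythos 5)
Your overall architecture is reasonable and, with one exception, the steps you propose are sound. Stage~1 (local equi-ellipticity off $\Sigma_0$, Rellich--Kondrachov, diagonalisation), your Fatou-type bound $\|u\|_{H^{1,a}(B_R)}\le c$, stage~3 (splitting $B_R = (B_R\setminus U_\delta)\cup U_\delta$ and using uniform smallness of $\int_{U_\delta}\omega^a_k\,dz$, which indeed follows from the one-dimensional bound you state since $a_i>-1$), and stage~4 (extend by zero to $B_{2R}$ and apply the trace characterization of Lemma~\ref{thmtrace}) all work. Stages~3 and~4 in fact take a slightly different route from the paper, which passes to the limit via Vitali's theorem and proves zero-trace preservation by comparing the two trace operators $T:H^{1,a}(B_1)\to L^{2,a}(\partial B_1)$ and $T^+:H^{1,a^+}(B_1)\to L^{2,a^+}(\partial B_1)$; your versions are legitimate alternatives.

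The gap is in stage~2, at the sentence ``Since $u\in W^{1,1}_{\loc}(B_R\setminus\Sigma_{\rm sd})$, Lemmas~\ref{lemH=W} and~\ref{thmH=W} conclude that $u\in H^{1,a}(B_R)$.'' This membership is asserted, not proved. What you have actually established is $u\in H^1_{\loc}(B_R\setminus\Sigma_0)$ together with the finite weighted integral $\int_{B_R}\omega^a(u^2+|\nabla u|^2)\,dz<\infty$, where $\nabla u$ is the distributional gradient on $B_R\setminus\Sigma_0$. Since $\Sigma_{\rm sd}\subsetneq\Sigma_0$ in general, this does not by itself say that the pointwise-defined $\nabla u$ is the distributional gradient of $u$ across $\Sigma_i$ for $i\notin J_{\rm sd}$; in other words, you have not excluded jumps of $u$ across the non-superdegenerate hyperplanes. (Note that the hypothesis $a_i>-1$ is what makes the Fatou bound finite, but it does not prevent jumps on its own: the argument must exploit that the uniform bound on $u_k$ is in the $k$-dependent norm $H^{1,a,\eps_k}$, which for $a_i<1$ is strong enough to rule out concentration across $\Sigma_i$.) The paper handles precisely this point by observing that $\omega^{a^+}\le c\,\omega^a_k$ uniformly on $B_R$, so $u_k$ is uniformly bounded in the \emph{fixed} Hilbert space $H^{1,a^+}(B_R)$; weak compactness there gives $u\in H^{1,a^+}(B_R)=W^{1,a^+}(B_R)$, and Lemma~\ref{lemH=W} applied with exponent $a^+$ (which has the same superdegenerate index set $J_{\rm sd}$ as $a$) yields $u\in W^{1,1}_{\loc}(B_R\setminus\Sigma_{\rm sd})$. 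Only after this step can one combine with the Fatou bound and Lemma~\ref{thmH=W} to conclude $u\in H^{1,a}(B_R)$. You should insert this $H^{1,a^+}$-comparison argument into your stage~2 (and it is also needed when you rerun stages~1--2 on $B_{2R}$ in stage~4), after which the proof is complete.
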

\begin{proof}
We refer to \cite[Lemma 2.12]{SirTerVit21a} and \cite[Lemma 4.2]{AudFioVit24} for a proof in a similar context.
After a suitable rescaling, we may assume without loss of generality that $R=1$.
For every compact set $K \subset B_1\setminus \Sigma_0$, it holds $\|u_k\|_{H^1(K)}\leq \|u_k\|_{H^{1,a,\eps_k}(K)}\leq c$. Thus, there exists $u \in H^1_\loc(B_1 \setminus \Sigma_0)$ such that $u_k\rightharpoonup u$ in $H^1_{\loc}(B_1\setminus \Sigma_0)$ and $u_k\rightarrow u$ in $L^2_{\loc}(B_1\setminus \Sigma_0)$. Let $\phi \in C^\infty_c(B_1\setminus \Sigma_0)$ and test \eqref{eq5.2} with $\phi^2(u_k-u)$. After standard computations, one finds that $\nabla u_k \rightarrow \nabla u$ in $L^2_{\loc}(B_1\setminus \Sigma_0)$. Hence, up to a subsequence, $u_k\rightarrow u$ and $\nabla u_k\rightarrow \nabla u$ almost everywhere in $B_1$. Fatou's lemma then gives
\begin{equation}\label{eq5.3}
    \|u\|_{H^{1,a}(B_1)}\leq \|u_k\|_{H^{1,a,\eps_k}(B_1)}\leq  c.
\end{equation}
On the other hand, since $\omega^{a^+}\leq c\,\omega_k^a$ on $B_1$, we have
\[
     \|u_k\|_{H^{1, a^+}(B_1)}\leq  c\|u_k\|_{H^{1, a, \eps_k}(B_1)}\leq  c.
\]
Thus, weak convergence implies that $u\in H^{1, a^+}(B_1)$. Applying Lemma \ref{thmH=W} together with Lemma \ref{lemH=W}, we infer that $u\in W^{1,1}_{\loc}(B_1\setminus \Sigma_{\rm sd})$. Then \eqref{eq5.3} allows us to conclude that $u\in H^{1,a}(B_1)$, again employing Lemma \ref{thmH=W} and Lemma \ref{lemH=W}. Finally, Vitali's theorem tells us that $u$ is weak solution to \eqref{eq:ballprob}. 

Suppose now that $u_k\in H^1_0(B_1,\omega^a_k dz)$. By the previous argument, this implies $u\in H^{1, a^+}_0(B_1)$. Hence, $u\in H^{1, a^+}_0(B_1)\cap H^{1,a}(B_1)$. By Lemma \ref{thmtrace}, there exist two trace operators 
\begin{align*}
    T\,:\,H^{1,a}(B_1)&\rightarrow L^{2,a}(\partial B_1)\\
    T^+:\,H^{1,a^+}(B_1)&\rightarrow L^{2,a^+}(\partial B_1)
\end{align*}
and moreover, $T^+ u=0$ $d\mathcal{H}^{d-1}$-a.e. in $\partial B_1$. 

Next, let $\{u_j\}\subset C^\infty(\overline{B_1})$ be such that $\|u_j -u\|_{H^{1,a}(B_1)}\rightarrow 0$. Since $\omega^{a^+}\leq \omega^a$ in $B_1$, we also have $\|u_j -u\|_{H^{1,a^+}(B_1)}\rightarrow 0$. Therefore $\|T^+u_j -T^+u\|_{L^{2,a^+}(\partial B_1)}\rightarrow 0$ as well as $\|Tu_j -Tu\|_{L^{2,a}(\partial B_1)}\rightarrow 0$. Thus, up to a subsequence, $Tu_j \rightarrow T u$ and $T^+u_j \rightarrow T^+ u$ $d\mathcal{H}^{d-1}$ a.e. in $\partial B_1$. Since each $u_j$ is smooth, we have $T^+ u_j=Tu_j=u_{j}{\vert}_{\partial B_1}$, thus $Tu=0$ $d\mathcal{H}^{d-1}$ a.e. in $\partial B_1$. By Lemma \ref{thmtrace} it follows that $u\in H^{1,a}_0(B_1)$, which completes the proof of the lemma.
\end{proof}

We are now ready to prove the main approximation lemma. 
\begin{Lemma}\label{lemA.2}
Let $a\in (-1,\infty)^n$ and $p,q \geq 2$. Suppose that $A$ satisfies Assumption \eqref{Ass:matrix}, $i)$, and that $f \in L^{p, a}(B_R)$, $F \in (L^{q, a}(B_R))^d$, and let $u$ be solution to \eqref{eq:ballprob}.

Then, for every $0<r<r'\leq R$ and for every sequence $\eps_k$ as in \eqref{eqepsdef}, there exist sequences $\tilde f_k \in L^{2, a, \eps_k}(B_{r'})$, $\tilde F_k \in (L^{2,a,\eps_k}(B_{r'}))^d$, together with solutions $u_k\in H^{1,a,\eps_k}_0(B_{r'})$ to \eqref{eq5.2} in $B_{r'}$, such that
\begin{equation}\label{eq6.5}
\begin{aligned}
&\|\tilde f_k\|_{L^{2, a, \eps_k}(B_{r'})}+\|\tilde F_k\|_{L^{2,a, \eps_k}(B_{r'})} \leq c\Big( \|u\|_{H^{1, a}(B_{r'})}+\|f\|_{L^{p,a}(B_{r'})}+\|F\|_{L^{q,a}(B_{r'})} \Big)\,,\\
&\|u_k\|_{H^{1, a, \eps_k}(B_{r'})}\leq c\Big(\|u\|_{H^{1, a}(B_{r'})}+\|f\|_{L^{p,a}(B_{r'})}+\|F\|_{L^{q,a}(B_{r'})}\Big),
\end{aligned}
\end{equation}
where $c>0$ is a constant depending only on $d,a,\Lambda, \lambda, r',r$.

Moreover, up to a subsequence, 
\begin{equation}
\begin{aligned}\label{eq6.890}
&u_k\rightarrow u \quad\text{in } H^1_{\loc}(B_r\setminus \Sigma_0)\,,\\
&\tilde f_k\rightarrow f\quad \text{in }L^p_{\loc}(B_r\setminus \Sigma_0), \quad & &\|\tilde f_k\|_{L^{p, a, \eps_k}(B_r)}\leq c\|f\|_{L^{p, a}(B_r)}\,,\\
&\tilde F_k\rightarrow F\quad \text{in }(L^q_{\loc}(B_r\setminus \Sigma_0))^d, & & \|\tilde F_k\|_{L^{q, a, \eps_k}(B_r)}\leq c\|F\|_{L^{q, a}(B_r)}\,.
\end{aligned}
\end{equation}
If $F\in C^{0,\alpha}(B_R)$, one may choose $\tilde F_k$ so that $\tilde F_k\in C^{0,\alpha}(B_r)$, $\tilde F_k\vert_{B_r}=F$ and
\begin{equation}\label{eq6.5b}
   \|u_k\|_{H^{1,a, \eps_k}(B_{r'})}\leq c(\|u\|_{H^{1,a}(B_{r'})}+\|f\|_{L^{p,a}(B_{r'})}+\|F\|_{C^{0,\alpha}(B_{r'})}). 
\end{equation}
An analogous result holds if $f\in C^{0,\alpha}(B_R)$.

In addition, if $f=F=0$ and $r = r'/2$, then 
\begin{equation}\label{eq6.5c}
    \int_{B_{r'}}\omega^a_k|\nabla u_k|^2\,dz\leq c\Big(\int_{B_{r'}}\omega^a|\nabla u|^2 \, dz+(r')^{-2}\int_{B_{r'}}\omega^a u^2  \, dz \Big)\,,
\end{equation}
where $c>0$ is independent of $r'$. 
\end{Lemma}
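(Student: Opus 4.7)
The plan is to localize $u$ to $B_{r'}$ via a smooth cutoff (so that the localized function has vanishing trace), to read off the equation it satisfies, and to solve the corresponding regularized Dirichlet problem in $H^{1,a,\eps_k}_0(B_{r'})$ by Lax--Milgram. Convergence back to $u$ on $B_r$ will then follow from Lemma~\ref{lemA.1} together with uniqueness for the limit problem.

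First I would fix $\eta\in C^\infty_c(B_{r'})$ with $\eta\equiv 1$ on $B_r$ and $|\nabla\eta|\le c(r'-r)^{-1}$, and introduce truncations $\hat f_k:=f\chi_{E_k}$ and $\hat F_k:=F\chi_{E_k}$, where $E_k=\{z\in B_{r'}:|y_i|\ge \eps_{k,i}^{1/2}\text{ for all } i\text{ with }\eps_{k,i}>0\}$. On $E_k$ one has $\omega_k^a\le c\,\omega^a$ with constant independent of $k$, which together with dominated convergence yields both $\hat f_k\to f$ in $L^p_\loc(B_r\setminus\Sigma_0)$ with $\|\hat f_k\|_{L^{p,a,\eps_k}(B_r)}\le c\|f\|_{L^{p,a}(B_r)}$, and the analogous statements for $F$. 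I would then define
\begin{align*}
\tilde f_k&:=\eta\hat f_k - A\nabla u\cdot\nabla\eta - \hat F_k\cdot\nabla\eta,\\
\tilde F_k&:=\eta\hat F_k - u\,A\nabla\eta,
\end{align*}
so that $\tilde f_k|_{B_r}=\hat f_k|_{B_r}$, $\tilde F_k|_{B_r}=\hat F_k|_{B_r}$, and a direct computation shows that $v:=\eta u\in H^{1,a}_0(B_{r'})$ weakly solves $-\dive(\omega^a A\nabla v)=\omega^a \tilde f+\dive(\omega^a\tilde F)$ in $B_{r'}$, where $\tilde f,\tilde F$ are defined analogously from the untruncated data.

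Next I would establish the bounds \eqref{eq6.5}. For the main pieces $\eta\hat f_k$, $\eta\hat F_k$ I would apply H\"older's inequality together with the local integrability of $\omega_k^a$ (uniform in $k$) and the bounds on $\hat f_k,\hat F_k$. For the correction terms $A\nabla u\cdot\nabla\eta$ and $uA\nabla\eta$, supported in the annulus $\supp\nabla\eta\subset B_{r'}\setminus B_r$, I would compare $\omega_k^a$ to $\omega^a$ by splitting the integral: on $E_k\cap\supp\nabla\eta$ the two weights are equivalent, while the leftover set is controlled through the $\eps$-stable Poincar\'e and Hardy inequalities of Section~\ref{S:ff}, eventually obtaining a bound by $\|u\|_{H^{1,a}(B_{r'})}$. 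With these estimates, the bilinear form $(v,w)\mapsto\int \omega_k^a A\nabla v\cdot\nabla w$ is coercive on $H^{1,a,\eps_k}_0(B_{r'})$ by Lemma~\ref{lemRpoinc} and uniform ellipticity, so Lax--Milgram yields a unique $u_k$ solving \eqref{eq5.2}, and testing with $u_k$ itself gives the second bound in \eqref{eq6.5}. Lemma~\ref{lemA.1} then produces, up to subsequence, $u_k\to u_\star$ in $H^1_\loc(B_{r'}\setminus\Sigma_0)$ with $u_\star\in H^{1,a}_0(B_{r'})$ solving the limit problem; since $v=\eta u$ is another solution in the same space and solutions there are unique, one has $u_\star=\eta u$, hence $u_k\to u$ on $B_r$.

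For the H\"older refinement, if $F\in C^{0,\alpha}(B_R)$ I would take $\hat F_k\equiv F$: then $\tilde F_k|_{B_r}=F$ and \eqref{eq6.5b} follows from the energy estimate together with $\|F\|_{L^{2,a,\eps_k}(B_{r'})}\le \|F\|_{L^\infty(B_{r'})}\|\omega_k^a\|^{1/2}_{L^1(B_{r'})}\le c\|F\|_{C^{0,\alpha}(B_{r'})}$. For \eqref{eq6.5c} with $f=F=0$ and $r=r'/2$, only the correction terms survive and the energy identity for $u_k$, combined with the refined comparison of $\omega_k^a$ and $\omega^a$ on $\supp\nabla\eta$, yields the claim. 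I expect the main obstacle to be the uniform-in-$k$ control of the correction terms in $L^{2,a,\eps_k}$: since $\omega_k^a$ is \emph{not} pointwise dominated by $\omega^a$ near the portions of $\Sigma_0$ where some $a_i>0$, a naive dominated convergence argument is unavailable. The truncation by $E_k$ is designed precisely to reduce matters to the region where the required pointwise comparison holds with constant uniform in $k$, and the residual set is handled by the $\eps$-stable inequalities developed in Section~\ref{S:ff}.
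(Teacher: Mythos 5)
Your overall architecture coincides with the paper's: localize by a cutoff, solve regularized Dirichlet problems in $H^{1,a,\eps_k}_0(B_{r'})$ by Lax--Milgram with Lemma \ref{lemRpoinc} for coercivity, pass to the limit via Lemma \ref{lemA.1}, and identify the limit with $\eta u$ by uniqueness. Your device for making the data admissible uniformly in $k$ (truncation of $f,F$ to $E_k=\{|y_i|\ge \eps_{k,i}^{1/2}\}$, where indeed $\omega^a_k\le c\,\omega^a$ uniformly) differs from the paper's, which instead multiplies the data by the factor $(\omega^{a^+}/\omega_k^{a^+})^{\theta}\le 1$ and uses the elementary inequality $\omega^{a^+}\omega_k^{a-a^+}\le\omega^a$; for $f$ and $F$ both devices work and give \eqref{eq6.890}.

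The genuine gap is in your treatment of the correction terms $A\nabla u\cdot\nabla\eta$ and $u\,A\nabla\eta$, which you leave unmodified. These are supported in the annulus $B_{r'}\setminus B_r$, which still meets every $\Sigma_i$, and near $\Sigma_i$ with $a_i>0$ one has $\omega^a_k\gtrsim \eps_k^{a_i}\prod_{j\ne i}(\cdot)$ while $\omega^a$ vanishes; since $|\nabla u|^2$ and $u^2$ are only known to be $\omega^a$-integrable, they need not be Lebesgue-integrable near $\Sigma_i$ at all when $a_i$ is large (e.g. $u\sim y_i^{-\beta}$ with $2\beta<a_i-1$ lies in $H^{1,a}$). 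Hence for fixed $k$ the quantities $\|A\nabla u\cdot\nabla\eta\|_{L^{2,a,\eps_k}(B_{r'})}$ and $\|u\,A\nabla\eta\|_{L^{2,a,\eps_k}(B_{r'})}$ can be infinite, the Lax--Milgram functional is then unbounded on $H^{1,a,\eps_k}_0(B_{r'})$, and no bound of the form \eqref{eq6.5} (nor \eqref{eq6.5c}) can be extracted. Your proposed remedy -- splitting the annulus into $E_k$ and its complement and invoking the $\eps$-stable Poincar\'e and Hardy inequalities on the residual strip -- does not close this: those inequalities compare $u$ with $\nabla u$ within the \emph{same} $\omega^a_\eps$-weighted norms, whereas here the problem is to convert an $\omega^a$-bound on $\nabla u$ (and $u$) into an $\omega^a_{\eps_k}$-bound, which fails pointwise on the strip by an unbounded factor. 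The fix is to apply your own device also to the correction terms (truncate them by $\chi_{E_k}$), or, as the paper does, to multiply them by $(\omega^{a^+}/\omega_k^{a^+})^{1/2}$; either way they then lie in $L^{2,a,\eps_k}(B_{r'})$ with norm $\le c\|u\|_{H^{1,a}(B_{r'})}+c\|F\|_{L^{q,a}(B_{r'})}$, converge in $L^2_{\rm loc}(B_{r'}\setminus\Sigma_0)$ to $-A\nabla u\cdot\nabla\eta-F\cdot\nabla\eta$ and $-u\,A\nabla\eta$, and the rest of your argument goes through as written.
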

\begin{proof}
We refer to \cite[Lemma 4.3]{AudFioVit24} and \cite[Lemma 4.7]{CorFioVit25} for a proof in a similar context.
Let $\xi\in C^\infty_c(B_{r'})$ be such that 
\begin{equation}\label{eq:cutoffpropapprox}
\xi\equiv 1 \text{ in } B_r\,, \qquad 0 \leq \xi \leq 1\,, \qquad  |\nabla \xi|\leq c|r'-r|^{-1}\,.
\end{equation}
Define $\tilde u:=\xi u\in H^{1,a}_0(B_{r'})$. Then $\tilde u$ is the unique solution to
\begin{equation}\label{eq6.6}
\begin{cases}
    -{\rm div}(\omega^a A \nabla \tilde u)=\omega^a (\tilde f+ \tilde g)+{\rm div}(\omega ^a (\tilde F+\tilde G)) & \text{ in }B_{r'}\\
    u= 0 & \text{ on }\partial B_{r'} 
    \end{cases}
\end{equation}
where 
\[
\tilde f:=f \xi\,, \qquad \tilde{g}:=-F\cdot\nabla \xi -A\nabla u\cdot \nabla \xi\,,\qquad \tilde F:=F \xi\,, \qquad \tilde G:=-u A\nabla \xi\,.
\]
Now fix any sequence $\eps_k$ as in \eqref{eqepsdef}, and let $\omega^a_k$ be as in \eqref{eqomkdef}. Define 
\[
    f_k:=\left(\frac{\omega^{a^+}}{\omega^{a^+}_k}\right)^\frac{1}{p} \tilde f\,,\qquad g_k:= \left(\frac{\omega^{a^+}}{\omega^{a^+}_k}\right) ^\frac{1}{2} \tilde g\,,\qquad F_k:= \left(\frac{\omega^{a^+}}{\omega^{a^+}_k}\right) ^\frac{1}{q} \tilde F\,, \qquad G_k:=\left(\frac{\omega^{a^+}}{\omega^{a^+}_k}\right) ^\frac{1}{2} \tilde G\,,
\]
and let $u_k\in H^{1,a, \eps_k}_0(B_{r'})$ denote the unique solution to 
\begin{align}\label{eq6.8}
    \begin{cases}
        -{\rm div}(\omega^a_kA\nabla u_k)=\omega_k^a\tilde f_k+{\rm div}(\omega_k^a\tilde F_k) & \text{ in }B_{r'}\\
        u_k= 0 & \text{ on }\partial B_{r'} \,.
    \end{cases}
\end{align}
where
\[
\tilde{f}_k:=f_k+g_k\,,\qquad \tilde F_k:=F_k+G_k\,.
\]
Next we check that $u_k$, $\tilde f_k$ and $\tilde F_k$ satisfy the assumptions of Lemma \ref{lemA.1} with $p = q = 2$. First of all, by H\"older inequality, \eqref{eq:cutoffpropapprox} and the elementary inequality $\omega^{a^+}\omega_k^{a-a^+} \leq \omega^a$, we obtain
\begin{align}\label{eq6.11}
\begin{split}
    &\|f_k\|_{L^{2,a,\eps_k}(B_{r'})}\leq c (d,a,r')\|f_k\|_{L^{p,a,\eps_k}(B_{r'})}\leq c (d,a,r')\|f\|_{L^{p,a}(B_{r'})}\,,\\
    &\|g_k\|_{L^{2,a,\eps_k}(B_{r'})} \leq \|\tilde g\|_{L^{2,a}(B_{r'})}\leq c(d,a, r')\|F\|_{L^{q,a}(B_{r'})}+c(\Lambda)|r'-r|^{-1}\|\nabla u\|_{L^{2,a}(B_{r'})},\\
    &\|F_k\|_{L^{2,a,\eps_k}(B_{r'})}\leq c (d,a, r')\|F_k\|_{L^{q,a,\eps_k}(B_{r'})}\leq c (d,a, r')\|F\|_{L^{q,a}(B_{r'})}\,,\\
    &\|G_k\|_{L^{2,a,\eps_k}(B_{r'})}\leq \|\tilde G\|_{L^{2,a}(B_{r'})}\leq c(\Lambda)|r'-r|^{-1}\|u\|_{L^{2,a}(B_{r'})}\,.
\end{split}
\end{align}
Thus, $\tilde f_k \in L^{2,a,\eps_k}(B_r')$ and $\tilde F_k \in (L^{2,a,\eps_k}(B_r'))^d$, and the first inequality in \eqref{eq6.5} holds. Moreover, it is immediate to see that 
 \[
    \tilde f_k\rightarrow \tilde f + \tilde g \quad\text{in }L^2_{\loc}(B_{r'}\setminus \Sigma_0)\,, \qquad \tilde F_k\rightarrow \tilde F + \tilde G \quad\text{in }(L^2_{\loc}(B_{r'}\setminus \Sigma_0))^d\,.
 \]

Now, test \eqref{eq6.8} with $u_k\in H^{1,a, \eps_k}_0(B_{r'})$. By Assumption \eqref{Ass:matrix} and Young's inequality we get
\[
    \int_{B_{r'}}\omega_k^a|\nabla u_k|^2 dz\leq \frac{1}{\lambda^2\delta}\|\tilde f_k\|^2_{L^{2,a,\eps_k}(B_r')} +\frac{1}{\lambda^2}\|\tilde F_k\|^2_{L^{2,a,\eps_k}(B_r')}+\frac{\delta}{4}\int_{B_{r'}}\omega^a_k|u_k|^2 dz+\frac{1}{4}\int_{B_{r'}}\omega^a_k|\nabla u_k|^2  dz\,,
\]
for all $\delta>0$. Choosing $\delta = C_P^2$, where $C_P$ is the constant of the weighted Poincar\'e inequality (see Lemma \ref{lemRpoinc}), gives
\begin{align}\label{eq6.10}
    \int_{B_{r'}}\omega^a_k|\nabla u_k|^2  dz&\leq c(a, \lambda)\left((r')^2\|\tilde f_k\|^2_{L^{2,a,\eps_k}(B_r')} +\|\tilde F_k\|^2_{L^{2,a,\eps_k}(B_r')}\right)\,,
\end{align}
Using again Lemma \ref{lemRpoinc}, together with \eqref{eq6.11} and \eqref{eq6.10}, we obtain the second estimate in \eqref{eq6.5}.

We can apply Lemma \ref{lemA.1} and deduce that there exists $\overline u\in H^{1,a}_0(B_{r'})$ solution to \eqref{eq6.6} in $B_{r'}$ such that $u_k \rightarrow \overline u$ in $H^1_{\loc}(B_{r'}\setminus \Sigma_0)$. By uniqueness of solutions to the Dirichlet problem, $\overline{u}=\tilde u$. But by definition, $\tilde u=u$ in $B_r$, hence $u_k \rightarrow u$ in $H^1_{\loc}(B_{r}\setminus \Sigma_0)$, as needed. To establish the final convergence statements and estimates in \eqref{eq6.890}, it is sufficient to remark that $\tilde f_k=f_k$ and $\tilde F_k=F_k$ in $B_r$.

If $F\in C^{0,\alpha}(B_R)$ (or $f\in C^{0,\alpha}(B_R)$) the same proof, with minor modifications, gives \eqref{eq6.5b}, simply take $F_k = \tilde F$ (or $f_k = \tilde f$). 
To conclude the proof we note that, if $f = F= 0$ then \eqref{eq6.5c} follows directly from \eqref{eq6.10} and \eqref{eq6.11}. 
\end{proof}

Next, we present the counterparts of Lemma \ref{lemA.1} and \ref{lemA.2} on $B_R^*$. Let $\eps_k$ and $\omega_k$ be defined as in \eqref{eqepsdef} and \eqref{eqomkdef}, respectively, and consider the equation
\begin{align}\label{eq5.2*}
\begin{cases}
    -\div(\omega^a_k A\nabla u_k)=\omega^a_kf_k+\div(\omega^a_kF_k)\quad &\text{in }B_R^*\\
   \omega^a_k(A\D u_k +F_k)\cdot e_{y_i} = 0,  & \text{on }\partial B^*_R \cap \Sigma_i, \text{ for } i=1,\dots,n,
\end{cases}
\end{align} 
Then the following approximation results hold. Their proofs are identical to those of Lemmas \ref{lemA.1} and \ref{lemA.2}, respectively, and are therefore omitted. Indeed, it suffices to replace $B_R$ with $B_R^*$ throughout the arguments and to invoke Lemma \ref{thmeqtrace*} and Lemma \ref{L:Hevenid} when needed. 
\begin{Lemma}
Let $a\in (-1,\infty)^n$, $p,q\geq 2$, and let $\eps_k$ and $\omega_k$ be as in \eqref{eqepsdef} and \eqref{eqomkdef}, respectively. Let $A$ satisfy Assumption \eqref{Ass:matrix}, $i)$, and suppose $f_k \in L^{p, a, \eps_k}(B_R^*)$, $F_k \in (L^{q, a, \eps_k}(B_R^*))^d$ with
    \[
    \|f_k\|_{L^{p, a, \eps_k}(B^*_R)}+\|F_k\|_{L^{q,a, \eps_k}(B^*_R)}\leq c.
    \] 
    Let $u_k\in H^{1,a,\eps_k}(B^*_R)$ be a sequence of weak solutions to \eqref{eq5.2*} satisfying 
    \[
    \|u_k\|_{H^{1, a, \eps_k}(B_R^*)}\leq c\,.
    \]
    Assume further that there exist $f\in L^{p,a}(B^*_R)$ and $F\in (L^{q,a}(B^*_R))^d$ such that
    \[
    f_k\rightarrow f \quad\text{in }L^p_{\loc}(B^*_R)\,, \qquad F_k\rightarrow F \quad\text{in }(L^q_{\loc}(B^*_R))^d\,.
    \]
    Then, up to a subsequence, 
    \[
    u_k\rightarrow u \quad\text{in }H^1_{\loc}(B^*_R)\,,
    \]
    where $u\in H^{1,a}(B^*_R)$ is a weak solution to \eqref{eq5.1}.
Moreover, if $u_k\in \hat H^{1,a}(B^*_R)$, then $u\in \hat H^{1,a}(B^*_R)$.

\end{Lemma}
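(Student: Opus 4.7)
The plan is to follow the strategy of Lemma \ref{lemA.1} verbatim, replacing the full ball $B_R$ by the orthant ball $B_R^*$ and invoking the orthant-adapted results Lemma \ref{thmeqtrace*} and the even-extension identification Lemma \ref{L:Hevenid} where the analogous tools for $B_R$ were used. A simplifying feature of the orthant setting is that the characteristic set $\Sigma_0$ lies entirely on $\partial B_R^*$, so every compact $K \subset B_R^*$ is separated from $\Sigma_0$ by a positive distance; on such $K$ the regularized weights $\omega_k^a$ are bounded above and below by positive constants uniformly in $k$, depending only on $\dist(K,\Sigma_0)$, $R$, $a$. After rescaling to $R=1$, this weight comparison together with the uniform $\|u_k\|_{H^{1,a,\eps_k}(B_1^*)}\le c$ bound yields $\|u_k\|_{H^1(K)}\le c$, so Rellich--Kondrachov delivers a subsequence with $u_k\rightharpoonup u$ in $H^1_\loc(B_1^*)$ and $u_k\to u$ in $L^2_\loc(B_1^*)$.

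To upgrade to strong local $H^1$ convergence, I would test the weak formulation of \eqref{eq5.2*} with $\phi^2(u_k-u)$ for $\phi\in C_c^\infty(B_1^*)$. Uniform ellipticity of $A$ and H\"older's inequality (legitimate since the weights are comparable to constants on $\supp\phi$), combined with the assumed $L^p_\loc$ and $L^q_\loc$ convergence of $f_k$ and $F_k$, force $\nabla u_k\to\nabla u$ in $L^2_\loc(B_1^*)$; a further subsequence gives a.e.\ convergence of both $u_k$ and $\nabla u_k$. Fatou in the weighted norm then produces $\|u\|_{H^{1,a}(B_1^*)}\le c$. Exploiting the pointwise inequality $\omega^{a^+}\le c\,\omega_k^a$ on $B_1^*$, weak lower semicontinuity places $u$ in $H^{1,a^+}(B_1^*)$. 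Transferring to the even extension $\bar u\in H^{1,a^+}_{\rm e}(B_1)$ through Lemma \ref{L:Hevenid}, Lemma \ref{lemH=W} and Lemma \ref{thmH=W} yield $\bar u\in W^{1,1}_\loc(B_1\setminus\Sigma_{\rm sd})$; restricting back, $u\in W^{1,1}_\loc(B_1^*\setminus\Sigma_{\rm sd})$, and together with the finite weighted norm this gives $u\in H^{1,a}(B_1^*)$. A Vitali passage to the limit in the weak formulation, carried out on compact subsets of $B_1^*$ where all weights and data are controlled, then shows that $u$ is a weak solution to \eqref{eq5.1}.

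For the zero-trace addendum, assume $u_k\in\hat H^{1,a,\eps_k}(B_R^*)$. Using again $\omega^{a^+}\le\omega_k^a$, the same subsequential reasoning yields $u\in\hat H^{1,a^+}(B_R^*)$, so the trace operator from Lemma \ref{thmeqtrace*} (applied at exponent $a^+$) produces $T^+u=0$ on $\partial^+B_R^*$. A smooth approximating sequence $\varphi_j\in C^\infty(\overline{B_R^*})$ to $u$ in $H^{1,a}(B_R^*)$ also approximates $u$ in $H^{1,a^+}(B_R^*)$ because $\omega^{a^+}\le\omega^a$ on $\overline{B_R^*}$; along a common subsequence both $T\varphi_j$ and $T^+\varphi_j$ converge a.e.\ on $\partial^+B_R^*$, and since the two traces coincide for smooth functions, one deduces $Tu=0$, hence $u\in\hat H^{1,a}(B_R^*)$. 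The main obstacle, identical to the one encountered in Lemma \ref{lemA.1}, is precisely this bookkeeping between the two weighted spaces $H^{1,a}$ and $H^{1,a^+}$: the superdegenerate part of $\omega^a$ does not carry enough integrability to apply the trace theorem directly, forcing the detour through the regular weight $\omega^{a^+}$.
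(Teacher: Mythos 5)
Your proposal is correct and follows essentially the same route as the paper, which simply declares this lemma's proof identical to that of Lemma \ref{lemA.1} with $B_R$ replaced by $B_R^*$ and with Lemma \ref{thmeqtrace*} and Lemma \ref{L:Hevenid} invoked in place of their full-ball counterparts (your detour through $H^{1,a^+}$ and the even extension is harmless, though in the orthant $\Sigma_{\rm sd}\cap B_R^*=\emptyset$ makes it nearly automatic). One caution: in the final Vitali step the limit must be taken against arbitrary $\phi\in C^\infty_c(B_R)$ with the integrals over all of $B_R^*$ -- uniform integrability of $\omega_k^a$ near $\Sigma_0$ is exactly what Vitali provides -- since restricting to compact subsets of $B_R^*$, as your wording suggests, would only yield the interior equation and lose the conormal condition on each $\Sigma_i$.
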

\begin{Lemma}\label{lemA.2*}
Let $a\in (-1,\infty)^n$ and $p,q \geq 2$. Suppose that $A$ satisfies Assumption \eqref{Ass:matrix}, $i)$, and that $f \in L^{p, a}(B^*_R)$, $F \in (L^{q, a}(B^*_R))^d$, and let $u$ be solution to \eqref{eq:ballprob}.

Then, for every $0<r<r'\leq R$ and for every sequence $\eps_k$ as in \eqref{eqepsdef}, there exist sequences $\tilde f_k \in L^{2, a, \eps_k}(B^*_{r'})$, $(\tilde F_k \in L^{2,a,\eps_k}(B^*_{r'}))^d$, together with solutions $u_k\in \hat H^{1,a,\eps_k}(B^*_{r'})$ to \eqref{eq5.2} in $B^*_{r'}$, such that
\begin{equation*}
\begin{aligned}
&\|\tilde f_k\|_{L^{2, a, \eps_k}(B^*_{r'})}+\|\tilde F_k\|_{L^{2,a, \eps_k}(B^*_{r'})} \leq c\Big( \|u\|_{H^{1, a}(B^*_{r'})}+\|f\|_{L^{p,a}(B^*_{r'})}+\|F\|_{L^{q,a}(B^*_{r'})} \Big)\,,\\
&\|u_k\|_{H^{1, a, \eps_k}(B^*_{r'})}\leq c\Big(\|u\|_{H^{1, a}(B^*_{r'})}+\|f\|_{L^{p,a}(B^*_{r'})}+\|F\|_{L^{q,a}(B^*_{r'})}\Big),
\end{aligned}
\end{equation*}
where $c>0$ is a constant depending only on $d,a,\Lambda, \lambda, r',r$.

Moreover, up to a subsequence, 
\begin{equation*}
\begin{aligned}
&u_k\rightarrow u \quad\text{in } H^1_{\loc}(B^*_r)\,,\\
&\tilde f_k\rightarrow f\quad \text{in }L^p_{\loc}(B^*_r), \quad & &\|\tilde f_k\|_{L^{p, a, \eps_k}(B^*_r)}\leq \|f\|_{L^{p, a}(B^*_r)}\,,\\
&\tilde F_k\rightarrow F\quad \text{in }L^q_{\loc}(B^*_r), & & \|\tilde F_k\|_{L^{q, a, \eps_k}(B^*_r)}\leq \|F\|_{L^{q, a}(B^*_r)}\,.
\end{aligned}
\end{equation*}
If $F\in C^{0,\alpha}(B^*_R)$, one may choose $\tilde F_k$ so that $\tilde F_k\in C^{0,\alpha}(B^*_r)$, $\tilde F_k\vert_{B^*_r}=F$ and
\begin{equation*}
   \|u_k\|_{H^{1,a, \eps_k}(B^*_{r'})}\leq c(d,a,\Lambda, \lambda, r',r)(\|u\|_{H^{1,a}(B^*_{r'})}+\|f\|_{L^{p,a}(B^*_{r'})}+\|F\|_{C^{0,\alpha}(B^*_{r'})}).  
\end{equation*}
An analogous result holds if $f\in C^{0,\alpha}(B^*_R)$.

In addition, if $f=F=0$ and $r = r'/2$, then 
\begin{equation*}
    \int_{B^*_{r'}}\omega^a_k|\nabla u_k|^2\,dz\leq c\Big(\int_{B^*_{r'}}\omega^a|\nabla u|^2 \, dz+(r')^{-2}\int_{B^*_{r'}}\omega^a u^2  \, dz \Big)\,,
\end{equation*}
where $c>0$ is independent of $r'$.
\end{Lemma}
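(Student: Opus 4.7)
The plan is to mirror the proof of Lemma~\ref{lemA.2} almost verbatim, replacing the full ball $B_R$ by the orthant $B_R^*$ and the Dirichlet space $H^{1,a,\varepsilon_k}_0(B_{r'})$ by the mixed-boundary space $\hat H^{1,a,\varepsilon_k}(B_{r'}^*)$, in which the conormal condition on $\partial B_{r'}^*\cap\Sigma_i$ is imposed weakly and a zero trace is enforced only on the spherical part $\partial^+B_{r'}^*$. The $\varepsilon_k$-stable analytic machinery needed on this mixed space (trace, Poincar\'e, solvability) will be supplied by Lemma~\ref{L:Hevenid} (even reflection identifying $H^{1,a,\varepsilon_k}(B_R^*)$ with $H^{1,a,\varepsilon_k}_{\mathrm{e}}(B_R)$), Lemma~\ref{thmeqtrace*}, and Lemma~\ref{lemRpoinc}.

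First I would localize: pick $\xi\in C^\infty_c(B_{r'})$ with $\xi\equiv 1$ on $B_r$ and $|\nabla\xi|\le c/(r'-r)$, and set $\tilde u=\xi u\in\hat H^{1,a}(B_{r'}^*)$, which solves the conormal-Dirichlet problem in $B_{r'}^*$ with data $\tilde f+\tilde g$ and $\tilde F+\tilde G$ defined exactly as in the proof of Lemma~\ref{lemA.2}. Next I would regularize the data by setting
\[
f_k:=\Bigl(\tfrac{\omega^{a^+}}{\omega^{a^+}_k}\Bigr)^{1/p}\tilde f,\ \
g_k:=\Bigl(\tfrac{\omega^{a^+}}{\omega^{a^+}_k}\Bigr)^{1/2}\tilde g,\ \
F_k:=\Bigl(\tfrac{\omega^{a^+}}{\omega^{a^+}_k}\Bigr)^{1/q}\tilde F,\ \
G_k:=\Bigl(\tfrac{\omega^{a^+}}{\omega^{a^+}_k}\Bigr)^{1/2}\tilde G,
\]
and $\tilde f_k:=f_k+g_k$, $\tilde F_k:=F_k+G_k$. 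The elementary bound $\omega^{a^+}\omega_k^{a-a^+}\le\omega^a$ and H\"older's inequality yield the first estimate in the statement, while the fact that $\xi\equiv 1$ on $B_r^*$ gives $\tilde f_k=f_k$ and $\tilde F_k=F_k$ there, hence the $L^{p,a,\varepsilon_k}$ and $L^{q,a,\varepsilon_k}$ bounds together with $L^p_\loc$ and $L^q_\loc$ convergence on $B_r^*$ by dominated convergence. I would then obtain $u_k$ by Lax--Milgram in $\hat H^{1,a,\varepsilon_k}(B_{r'}^*)$, where coercivity comes from the $\varepsilon_k$-stable Poincar\'e inequality on this space (derived from Lemma~\ref{L:Hevenid} combined with Lemma~\ref{lemRpoinc}, or directly from Lemma~\ref{thmeqtrace*}); testing with $u_k$ itself and using Young's inequality reproduces the analogue of \eqref{eq6.10} and thus the energy bound on $\|u_k\|_{H^{1,a,\varepsilon_k}(B_{r'}^*)}$. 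The H\"older refinement corresponding to \eqref{eq6.5b} is obtained by choosing $F_k:=\tilde F$ (resp.\ $f_k:=\tilde f$), and the homogeneous improvement \eqref{eq6.5c} in the case $f=F=0$ with $r=r'/2$ follows at once, since then $\tilde f_k,\tilde F_k$ are supported where $|\nabla\xi|\ne 0$ and bounded by $c(r')^{-1}$.

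To pass to the limit I would invoke the starred version of Lemma~\ref{lemA.1} on $B_{r'}^*$ to extract, up to a subsequence, a limit $\bar u\in\hat H^{1,a}(B_{r'}^*)$ with $u_k\to\bar u$ in $H^1_\loc(B_{r'}^*)$, which solves the limit conormal-Dirichlet problem with data $\tilde f+\tilde g$, $\tilde F+\tilde G$; uniqueness of that problem forces $\bar u=\tilde u$, and since $\tilde u\equiv u$ on $B_r^*$ the claimed convergence $u_k\to u$ in $H^1_\loc(B_r^*)$ follows. The main subtlety --- and essentially the only place the argument departs from Lemma~\ref{lemA.2} --- is setting up the functional framework on $\hat H^{1,a,\varepsilon_k}(B_{r'}^*)$: one must verify that the $\varepsilon_k$-stable Poincar\'e and trace estimates survive the mixed boundary conditions and that weak solutions interact correctly with the test class $C^\infty_c(B_{r'})$ used to encode the conormal condition on the $\Sigma_i$. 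This is precisely what Lemma~\ref{L:Hevenid} and Lemma~\ref{thmeqtrace*} are designed to provide, so once they are invoked the remainder of the proof is a routine transcription of the arguments already used for Lemma~\ref{lemA.2}.
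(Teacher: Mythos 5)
Your proposal is correct and coincides with the paper's intended argument: the paper itself omits the proof of this lemma, stating that it is identical to that of Lemma \ref{lemA.2} once $B_R$ is replaced by $B_R^*$ and Lemma \ref{thmeqtrace*} and Lemma \ref{L:Hevenid} are invoked where needed, which is exactly the transcription you carry out (cutoff localization, weight-ratio regularization of the data, Lax--Milgram in $\hat H^{1,a,\eps_k}(B_{r'}^*)$ with the $\eps$-stable Poincar\'e inequality obtained via even reflection, and the starred version of Lemma \ref{lemA.1} plus uniqueness to identify the limit with $u$ on $B_r^*$).
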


\section{Liouville Theorem for weighted equations in the orthant}\label{S:liouville}

The goal of this section is to prove the following theorem.

\begin{teo}[Liouville Theorem in the orthant]\label{T:Liouville}
Let $a \in (-1, \infty)^n$, $\eps \in [0,1]^n$, and let $\omega^a_\eps$ be as in \eqref{eq:regweight}.
Consider a constant uniformly elliptic matrix of the form
\[
    A:= \left(\begin{array}{cc}
       P & Q  \\ 
       R & S\\ 
  \end{array}\right),
\]
where $P$ is $(d-n)\times (d- n)$-dimensional, $Q$ is $(d-n)\times n$-dimensional, $R$ is $n\times (d- n)$-dimensional, and $S$ is $n\times n$-dimensional and diagonal.

Let $u$ be a weak entire solution to
\begin{equation}\label{eq:entire:liouville}
\begin{cases}
-\dive(\omega_\e^a A\D u) = 0, & \text{in }\R^d_* \\
\omega_\e^a A\D u\cdot e_{y_i} = 0,  & \text{on } \partial\R^d_* \cap \Sigma_i\,, \ \text{ for } \ i=1,\dots,n,
\end{cases}
\end{equation}
satisfying the growth condition
\begin{equation}\label{eq:growth}
    |u(z)|\le C(1+|z|^\gamma) \qquad  \text{for a.e. $z \in \R^d$}\,,
\end{equation}
for some constants $C>0$ and $\gamma\in [0,2)$. Then $u$ is an affine function.
Moreover, if $\gamma\in [0,1)$, then $u$ is constant.
\end{teo}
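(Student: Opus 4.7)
The plan is to prove the theorem by a blow-down/compactness scheme: rescale $u$ at large scales, extract a $\gamma$-homogeneous entire limit, classify such limits, and finally transfer the conclusion back to $u$. Since $A$ is constant and the weight depends only on $y$, for every $R>0$ the rescaled function $v_R(z):= R^{-\gamma}u(Rz)$ is again a weak entire solution of the conormal problem
\[
\begin{cases}
-\dive(\omega^a_{\eps/R} A\D v_R) = 0 & \text{in } \R^d_*\,,\\
\omega^a_{\eps/R} A\D v_R \cdot e_{y_i} = 0 & \text{on } \partial \R^d_*\cap\Sigma_i\,,\ i=1,\dots,n\,,
\end{cases}
\]
and the growth hypothesis \eqref{eq:growth} gives $|v_R(z)|\le C(1+|z|^\gamma)$ uniformly in $R$.

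To take a limit I would invoke the De Giorgi-Nash-Moser $C^{0,\alpha^*}$ estimate for the weighted conormal problem proved at the end of Section \ref{s:degiorgi}. The key observation is that the $2$-admissibility constants of $\omega^a_\eta$ (doubling, the weighted Sobolev inequality of Lemma \ref{L:Sobolev}, and Poincar\'e-Wirtinger) are stable as $\eta$ varies in $[0,1]^n$, so the H\"older estimate on $v_R$ is uniform in $R$. Choosing dyadic scales $R_k = 2^k$, Arzel\`a-Ascoli combined with the approximation result of Lemma \ref{lemA.2*} allows me to pass to the limit in the weak formulation, producing a subsequence $v_{R_k}$ converging locally uniformly to an entire weak solution $v_\infty$ of the unregularized problem with weight $\omega^a$, still satisfying the growth bound. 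The dyadic identity $v_{R_{k+1}}(z) = 2^{-\gamma}v_{R_k}(2z)$ transfers to the limit, and together with the constant-coefficient structure forces $v_\infty(\lambda z) = \lambda^\gamma v_\infty(z)$ for every $\lambda>0$, so that $v_\infty$ is $\gamma$-homogeneous.

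The analytic heart of the proof is then the classification of such homogeneous entire solutions. Writing $v_\infty(r\sigma) = r^\gamma \psi(\sigma)$, the profile $\psi$ satisfies an eigenvalue problem for a weighted Laplace-Beltrami-type operator on $\partial B_1\cap\overline{\R^d_*}$ with conormal boundary conditions on each $\Sigma_i\cap\partial B_1$. Two explicit eigenfamilies are visible: the constants (corresponding to homogeneity $\gamma=0$), and the linear functions $z\mapsto \beta\cdot z$ with $A\beta\cdot e_{y_i} = 0$ for every $i$ (corresponding to $\gamma=1$). A direct energy-monotonicity argument, exploiting the diagonality of $S$ to separate the quadratic form in the $y$-variables and the weighted Hardy and Poincar\'e inequalities of Section \ref{S:ff}, rules out every other eigenvalue corresponding to a homogeneity in $[0,2)$. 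Hence $v_\infty$ must be affine (and constant when $\gamma<1$). To transfer the conclusion from the blow-down $v_\infty$ to $u$ itself, I would introduce the excess $\Theta(r):= r^{-\gamma}\inf_P\|u-P\|_{L^{2,a,\eps}(B_r^*)}$ with $P$ ranging over the finite-dimensional space of affine solutions of the equation; the growth bound makes $\Theta$ bounded, the blow-down analysis forces $\Theta(R_k)\to 0$ along the subsequence, and a standard Campanato-type dichotomy then yields $\Theta\equiv 0$ and the claim.

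The main obstacle is the spectral classification in the third paragraph. The higher-codimension intersections $\Sigma_i\cap\Sigma_j$ generate corner singularities on the spherical cross-section $\partial B_1\cap\R^d_*$, which preclude a direct application of classical Sturm-Liouville theory, and the proof of the spectral gap below the $\gamma=2$ threshold must crucially combine the diagonality of $S$ with the $\eps$-stable weighted functional inequalities of Section \ref{S:ff}.
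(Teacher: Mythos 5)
Your scheme has two genuine gaps, each at a load-bearing point. First, the homogeneity of the blow-down limit is not justified: extracting a subsequence $v_{R_k}\to v_\infty$ and using $v_{R_{k+1}}(z)=2^{-\gamma}v_{R_k}(2z)$ does not make $v_\infty$ $\gamma$-homogeneous, because the index $k+1$ need not belong to the subsequence and different subsequences may produce different limits; homogeneity of blow-downs normally requires an Almgren/Weiss-type monotonicity formula, which you neither state nor prove for this weighted conormal problem (and which is delicate precisely because of the corner set $\Sigma_i\cap\Sigma_j$ and the $\eps$-regularized weight). The same issue reappears at the end: knowing that the limit along the sparse sequence $R_k=2^k$ is affine gives $\Theta(R_k)\to0$ along that subsequence at best, and a ``standard Campanato-type dichotomy'' does not upgrade decay along one diverging sequence of scales to $\Theta\equiv0$; without a scale-by-scale decay (improvement of flatness) or a monotone quantity, the conclusion for $u$ itself does not follow. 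There is also a circularity risk in your compactness step: the only $\eps$-stable H\"older estimates in the paper (Theorem \ref{thmC0stable}) are proved \emph{using} this Liouville theorem, and the De Giorgi--Nash--Moser statement of Section \ref{s:degiorgi} is formulated for $\eps=0$; if you want uniform-in-$\eps$ interior H\"older bounds you must verify uniform $2$-admissibility constants for $\omega^a_{\eps/R}$ independently, which you assert but do not check.

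Second, the step you yourself call the analytic heart --- the classification of $\gamma$-homogeneous solutions for $\gamma\in[0,2)$ --- is only asserted (``a direct energy-monotonicity argument \dots rules out every other eigenvalue''). This is exactly where the diagonality of $S$ must be used quantitatively: the paper's remark after Theorem \ref{L:Liouville:UE} exhibits, for non-diagonal $S$, homogeneous solutions $u_\theta$ of homogeneity $\pi/\theta\in(1,2)$, so the claimed spectral gap is not a soft fact and cannot be waved through; moreover for $\eps>0$ the rescaled weights $\omega^a_{\eps/R}$ are not homogeneous, so the separation of variables $v_\infty=r^\gamma\psi(\sigma)$ only makes sense after the limit, and the eigenvalue problem on the corner cross-section is precisely what you leave unproved. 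The paper avoids both difficulties by an entirely different mechanism: Lemma \ref{L:x:derivatives} shows $u$ is polynomial in $x$, Lemma \ref{L:y:derivate:liouville} shows that the weighted second derivative $\rho_{\eps_i}^{-a_i}\partial_{y_i}(\rho_{\eps_i}^{a_i}\partial_{y_i}u)$ solves the same equation with the decay estimate \eqref{eq:double:derivate:liouvlle}, and iterating this together with the growth bound kills all higher-order terms, reducing the problem to explicit ODEs in each $y_i$. As written, your proposal replaces that argument with two unproved claims (homogeneity of the blow-down and the spectral gap) plus an invalid transfer back to $u$, so it does not constitute a proof.
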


We begin with a few auxiliary results. At first, let us state a weighted Caccioppoli inequality, whose proof is standard.
\begin{Lemma}\label{lemwecacc}
  Let $a \in (-1, \infty)^n$, and $\varepsilon\in [0,1]^n$. Let $R>0$, and let $u_\varepsilon$ be a weak solution to 
\[
\begin{cases}
-\dive(\omega_\e^a A\D u_\varepsilon) = \omega_\varepsilon^af_\varepsilon+\div(\omega_\varepsilon^a F_\varepsilon), & \text{in }B_R^* \\
\omega_\e^a (A\D u_\varepsilon+F_\varepsilon)\cdot e_{y_i} = 0,  & \text{on } \partial B_R^* \cap \Sigma_i\,, \ \text{ for } \ i=1,\dots,n\,.
\end{cases}
\]
Then there exists a constant $c >0$, depending only on $d,\lambda,\Lambda, R$ such that for every $r \in (0, R)$ it holds
\begin{equation}\label{eqwecacc}
      \int_{B_r^*}\omega^a_\varepsilon |\nabla u_\varepsilon|^2  dz \leq C\Big(\frac{1}{(R-r)^2}\int_{B_R^*}\omega_\varepsilon^a u_\varepsilon^2  dz +\int_{B_R^*}\omega_\varepsilon^a f_\varepsilon^2 \,dz +\int_{B^*_R}\omega_\varepsilon^a|F_\varepsilon|^2dz \Big)\,.
\end{equation}
Moreover, an analogous estimate holds for weak solutions to 
\[
-\dive(\omega_\e^a A\D u_\varepsilon) = \omega_\varepsilon^af_\varepsilon+\div(\omega_\varepsilon^a F_\varepsilon) \qquad  \text{in }B_R\,.
\]
\end{Lemma}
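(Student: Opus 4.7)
The plan is a routine weighted energy estimate obtained by testing the weak formulation against $\eta^2 u_\varepsilon$, where $\eta$ is a suitable cutoff. The main point to keep in mind is that, thanks to the conormal boundary condition on $\partial B_R^*\cap \Sigma_i$, the class of admissible test functions is $C_c^\infty(B_R)$ (not $C_c^\infty(B_R^*)$), so cutoffs need only vanish near $\partial B_R$ and no boundary correction appears along the hyperplanes $\Sigma_i$.

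First I would choose $\eta \in C_c^\infty(B_R)$ with $0\le \eta\le 1$, $\eta\equiv 1$ on $B_r$, and $|\nabla \eta|\le c/(R-r)$. Since $u_\varepsilon\in H^{1,a,\varepsilon}(B_R^*)$ and $\eta^2$ is smooth and compactly supported in $B_R$, the product $\eta^2 u_\varepsilon$ lies in $H^{1,a,\varepsilon}(B_R^*)$ and its trivial extension to $B_R$ belongs (after the identification in Lemma \ref{L:Hevenid}) to $H^{1,a,\varepsilon}(B_R)$ with support inside $B_R$. By the density statement $H^{1,a,\varepsilon}=W^{1,a,\varepsilon}$ (Lemma \ref{thmH=W}) combined with a standard truncation-mollification argument analogous to the one in Lemma \ref{thmtrace}, such a function can be approximated in norm by elements of $C_c^\infty(B_R)$, so it is an admissible test function.

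Testing the weak formulation with $\phi=\eta^2 u_\varepsilon$ yields
\[
\int_{B_R^*}\omega_\varepsilon^a\, \eta^2\, A\nabla u_\varepsilon\cdot \nabla u_\varepsilon \,dz
= -2\int_{B_R^*}\omega_\varepsilon^a\, \eta\, u_\varepsilon\, A\nabla u_\varepsilon\cdot \nabla \eta \,dz
+\int_{B_R^*}\omega_\varepsilon^a\, \eta^2 u_\varepsilon f_\varepsilon \,dz
-\int_{B_R^*}\omega_\varepsilon^a F_\varepsilon\cdot \nabla(\eta^2 u_\varepsilon)\,dz.
\]
Using uniform ellipticity on the left hand side, $\|A\|_{L^\infty}\le \Lambda$ on the mixed term, expanding $\nabla(\eta^2 u_\varepsilon)=2\eta u_\varepsilon \nabla \eta+\eta^2 \nabla u_\varepsilon$ in the $F_\varepsilon$ term, and applying Cauchy–Schwarz followed by Young's inequality with a small parameter $\delta>0$, one absorbs the $\eta^2|\nabla u_\varepsilon|^2$ contributions into the left hand side. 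The remaining terms are bounded by a constant (depending on $\lambda,\Lambda,\delta$) times
\[
\int_{B_R^*}\omega_\varepsilon^a |\nabla \eta|^2 u_\varepsilon^2\,dz
+\int_{B_R^*}\omega_\varepsilon^a \eta^2 u_\varepsilon^2\,dz
+\int_{B_R^*}\omega_\varepsilon^a f_\varepsilon^2\,dz
+\int_{B_R^*}\omega_\varepsilon^a |F_\varepsilon|^2\,dz.
\]
Since $|\nabla \eta|\le c/(R-r)$ and $|B_R^*|$ is bounded, the middle term is controlled by $\|u_\varepsilon\|_{L^{2,a,\varepsilon}(B_R^*)}^2$ which, up to enlarging the constant absorbing $R$, is already accounted for by the first term, and \eqref{eqwecacc} follows.

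I do not expect a real obstacle: the only subtle point is the admissibility of $\eta^2 u_\varepsilon$ as test function, which reduces to the density results of Section \ref{S:ff}. The interior version on the ball $B_R$ (without conormal boundary condition) is obtained by exactly the same argument, now testing against $\eta^2 u_\varepsilon\in H^{1,a,\varepsilon}_0(B_R)$, which is admissible since the class of test functions is $C_c^\infty(B_R)$.
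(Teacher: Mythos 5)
Your argument is correct and is precisely the standard Caccioppoli estimate that the paper itself omits as ``standard'': test with $\eta^2 u_\varepsilon$ (admissible by multiplying the approximating functions $\varphi_k\in C^\infty(\overline{B_R^*})$ by $\eta^2\in C_c^\infty(B_R)$, so no reflection or extension to $B_R$ is actually needed), use ellipticity and Young's inequality, and absorb the $\eta^2 u_\varepsilon^2$ term coming from $f_\varepsilon$ into the $(R-r)^{-2}$ term since the constant may depend on $R$. The observation that the conormal condition is already encoded in the test class $C_c^\infty(B_R)$, so no boundary terms arise along the $\Sigma_i$, is exactly the right point, and the same reasoning gives the interior version in $B_R$.
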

\begin{remark}\label{R:cacsupsing}
Let us remark that Lemma \ref{lemwecacc} remains valid even if some of the factors of $\omega^a_\eps$ are in the form $\rho^{a_i}_{\eps_i}$ with $a_i \leq -1$, provided that $\varepsilon_i>0$.
\end{remark}

 The next result concerns the variables with respect to which the weight is translation invariant, namely the $x$-variables. Its proof, which is standard and therefore omitted, relies on the method of difference quotients and proceeds via an iterative application of the Caccioppoli inequality \eqref{eqwecacc} (see \cite[Corollary 4.2, Lemma 4.3]{TerTorVit24}).
\begin{Lemma}\label{L:x:derivatives}
    Let $u$ be an entire solution to \eqref{eq:entire:liouville}. Then, for every $j=1,\dots,d-n$ the function $\partial_{x_j}u$ is itself an entire solution to the same problem. Moreover, if $u$ satisfies \eqref{eq:growth} for some $\gamma\ge0$, then $u$ is polynomial in the $x$-variable with degree at most $\lfloor \gamma \rfloor$.
\end{Lemma}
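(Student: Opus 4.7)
The plan rests on the translation invariance of problem \eqref{eq:entire:liouville} in the $x$-directions: the weight $\omega^a_\eps(y)$ and the matrix $A$ depend only on the $y$-variables, and each hyperplane $\Sigma_i = \{y_i = 0\}$ is preserved by shifts in $x$. Consequently, for any $h \in \R$ and any $j \in \{1, \ldots, d-n\}$, the translate $z \mapsto u(z + h e_{x_j})$ is again an entire solution to \eqref{eq:entire:liouville}.

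For the first claim, I would use the method of difference quotients. Fix $j$ and, for $h \ne 0$ small, set
\[
u_h(z) := \frac{u(z + h e_{x_j}) - u(z)}{h}.
\]
By linearity and the invariance just recalled, $u_h$ itself solves \eqref{eq:entire:liouville} weakly in $\R^d_*$. The standard difference-quotient inequality gives $\int_{B^*_R}\omega^a_\eps u_h^2\,dz \le \int_{B^*_{R+|h|}}\omega^a_\eps (\partial_{x_j} u)^2\,dz$, and Lemma \ref{lemwecacc} applied to $u$ bounds the right-hand side in terms of the $L^{2,a,\eps}$-norm of $u$ on a slightly larger ball, a quantity finite by \eqref{eq:growth}. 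A second application of Lemma \ref{lemwecacc}, now to $u_h$, produces a uniform $H^{1,a,\eps}(B^*_R)$-bound on the family $\{u_h\}_{|h|\le 1}$. Weak compactness extracts a subsequence converging weakly to $\partial_{x_j}u$, and stability of weak solutions under weak $H^1$-convergence identifies $\partial_{x_j}u$ as an entire solution.

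For the polynomial structure, the idea is to iterate the first step, so that $\partial^\alpha_x u$ is an entire solution for every multi-index $\alpha \in \N^{d-n}$, and then apply Lemma \ref{lemwecacc} successively. Telescoping over a chain of radii $r_m = r(1 + m/k)$, $m = 0, \dots, k = |\alpha|$, and peeling off one $x$-derivative at each step (via $|\partial^\beta_x v|^2 \le |\nabla v|^2$ after each Caccioppoli application), one obtains the quantitative bound
\[
\int_{B^*_r}\omega^a_\eps (\partial^\alpha_x u)^2\,dz \le \Big(\frac{C k^2}{r^2}\Big)^{k}\int_{B^*_{2r}}\omega^a_\eps u^2\,dz.
\]
Since $\int_{B^*_{2r}}\omega^a_\eps\,dz \le C r^{d+\langle a\rangle}$ for $r \ge 1$, and $u^2 \le C(1 + |z|^{2\gamma})$ by \eqref{eq:growth}, the right-hand side is dominated by $C_k\, r^{d + \langle a \rangle + 2\gamma - 2k}$, which vanishes as $r \to \infty$ provided $k > \gamma + (d + \langle a \rangle)/2$. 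Monotonicity of the left-hand side in $r$ then forces $\partial^\alpha_x u \equiv 0$ for all such multi-indices $\alpha$; by the classical distributional characterization of polynomials, $u(\cdot, y)$ is polynomial in $x$ of some finite degree $N < \infty$ for a.e. $y \in (0, \infty)^n$.

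To sharpen the degree bound down to $\lfloor \gamma \rfloor$, I would invoke the pointwise growth condition. Writing $u(x, y) = \sum_{|\alpha| \le N} c_\alpha(y)\, x^\alpha$, for a.e. fixed $y$ the polynomial $x \mapsto u(x, y)$ satisfies $|u(x, y)| \le C_y(1 + |x|^\gamma)$, and a polynomial on $\R^{d-n}$ with such growth has degree at most $\lfloor \gamma \rfloor$, so $c_\alpha \equiv 0$ for $|\alpha| > \lfloor \gamma \rfloor$. The main technical delicacy lies in the difference-quotient step: one has to verify that shifts in $x$ are compatible with the weighted Sobolev framework of Section \ref{S:ff}, and that the Caccioppoli estimate of Lemma \ref{lemwecacc} applies uniformly in $h$ despite the singular/degenerate behaviour of $\omega^a_\eps$ on the corner set $\Sigma_0$.
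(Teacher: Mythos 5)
Your proposal is correct and follows exactly the strategy the paper indicates for this (omitted) proof: difference quotients in the translation-invariant $x$-directions to show $\partial_{x_j}u$ solves the same problem, followed by an iterated Caccioppoli estimate over a telescoping chain of radii combined with the growth condition to kill all $x$-derivatives of sufficiently high order, and finally the pointwise bound to reduce the degree to $\lfloor\gamma\rfloor$. The only cosmetic slip is the exponent $d+\langle a\rangle$ in the volume bound (the natural uniform-in-$\eps$ bound is $r^{d+\langle a^+\rangle}$ up to $\eps$-dependent constants), which is immaterial since any polynomial rate suffices once $k$ is taken large.
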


The final preparatory lemma addresses the degenerate
$y$-variables. It shows that the second-order weighted derivative of a solution satisfies the same equation as the solution itself (see \cite[Lemma 5.2]{AudFioVit24} for a proof in a similar context).
\begin{Lemma}\label{L:y:derivate:liouville}
Let $a \in (-1, \infty)^n$ and $\eps \in [0,1]^n$. Suppose $u$ is an entire solution to 
\begin{equation}\label{eq:entire:liouville:y}
-\dive(\omega_\e^a \D u) = 0 \qquad \text{in }\R^d\,.
\end{equation}
Then, for each $i = 1, \ldots, n$, the function 
\[
w:=\rho_{\eps_i}^{-a_i}\partial_{y_i} (\rho_{\eps_i}^{a_i} \partial_{y_i} u)
\]
is also an entire solution to \eqref{eq:entire:liouville:y}. Moreover, for every $R>0$ there exists a constant $c>0$, depending only on $d$ and $a$, such that 
\begin{equation}\label{eq:double:derivate:liouvlle}
    \int_{B_R} \omega_\e^a  w^2 dz\le \frac{c}{R^4} \int_{B_{8R}}\omega_\e^a   u^2  dz\,.
\end{equation}
\end{Lemma}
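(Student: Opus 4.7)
My plan is to split the proof in two steps: first, identify $w$ as a weak solution of \eqref{eq:entire:liouville:y} via a commutation argument; second, derive \eqref{eq:double:derivate:liouvlle} by integration by parts in $y_i$ combined with iterated Caccioppoli estimates.

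For the first step, introduce the weighted one-dimensional Laplacian
\[
L_j \,:=\, \rho_{\eps_j}^{-a_j}\partial_{y_j}\bigl(\rho_{\eps_j}^{a_j}\partial_{y_j}\,\cdot\,\bigr) \,=\, \partial_{y_j}^2 + \tfrac{a_j y_j}{\rho_{\eps_j}^2}\partial_{y_j},
\]
so that \eqref{eq:entire:liouville:y} is equivalent, in its strong form, to $\sum_k\partial_{x_k}^2 u + \sum_j L_j u = 0$. The operators $\partial_{x_k}^2$ and $L_j$ act on mutually disjoint coordinates and therefore commute pairwise, so applying $L_i$ to the equation yields $\sum_k\partial_{x_k}^2 w + \sum_j L_j w = 0$, and hence $w$ solves the same problem. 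When all $\eps_i>0$ the weight is smooth and strictly positive, so $u\in C^\infty(\R^d)$ by classical elliptic regularity and the identity holds pointwise. When some $\eps_i=0$, I would first regularize the weight as in Section~\ref{S:solution}, verify the claim in the smooth case, and then pass to the limit using Lemma~\ref{lemA.2} together with the uniform Caccioppoli bounds of Lemma~\ref{lemwecacc}.

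For the estimate, the pivotal identity is
\[
\omega_\eps^a \, w \,=\, \partial_{y_i}\bigl(\omega_\eps^a \partial_{y_i}u\bigr),
\]
which enables an integration by parts in $y_i$: for three nested radii $R_1<R_2<R_3$ and a cutoff $\eta\in C_c^\infty(B_{R_2})$ with $\eta\equiv 1$ on $B_{R_1}$ and $|\nabla\eta|\le c(R_2-R_1)^{-1}$,
\[
\int_{\R^d}\omega_\eps^a w^2\eta^2\,dz \,=\, -\int_{\R^d}\omega_\eps^a\,\partial_{y_i}u\,\partial_{y_i}w\,\eta^2\,dz - 2\int_{\R^d}\omega_\eps^a\,\partial_{y_i}u\,w\,\eta\,\partial_{y_i}\eta\,dz.
\]
Applying Cauchy--Schwarz and Young's inequality with a free parameter to both integrals, absorbing $\tfrac12\int\omega_\eps^a w^2\eta^2$ on the left-hand side, and invoking Lemma~\ref{lemwecacc} for both $u$ and $w$ (both solutions, by Step~1) between $B_{R_2}\subset B_{R_3}$ with $R_2=(R_1+R_3)/2$, I obtain the one-step decay
\[
\int_{B_{R_1}}\omega_\eps^a w^2\,dz \,\le\, \tfrac{1}{4}\int_{B_{R_3}}\omega_\eps^a w^2\,dz + \frac{c}{(R_3-R_1)^4}\int_{B_{R_3}}\omega_\eps^a u^2\,dz.
\]
The estimate \eqref{eq:double:derivate:liouvlle} then follows from the standard Giaquinta--Campanato iteration lemma applied on dyadic radii in $[R,8R]$, which absorbs the $w^2$-term on the right.

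The main technical obstacle I anticipate is the rigorous implementation of Step~1 in the singular regime, where $u$ need not possess classical second-order $y_i$-derivatives and the commutation of $L_i$ with the equation's operator is only formal. The approximation-and-pass-to-the-limit procedure described above (via Section~\ref{S:solution} and Lemma~\ref{lemA.2}) is precisely what closes this gap, since in the regularized problem the weight is smooth and the commutation is available pointwise. Once $w$ is identified as a weak solution of the same equation, Step~2 is a routine iterated-Caccioppoli estimate.
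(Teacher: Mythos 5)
Your overall strategy is essentially the paper's: regularize the degenerate factors of the weight via Section \ref{S:solution} and Lemma \ref{lemA.2}, use that for smooth weights the conjugated second derivative solves the same equation (your commutation of $L_i$ with the operator is the same structural fact that the paper exploits), derive an $\eps$-uniform weighted $L^2$ bound by Caccioppoli-type inequalities, and pass to the limit. The only genuine difference is how the bound is produced: you integrate $\omega^a_\eps w^2\eta^2$ by parts in $y_i$ and close with Caccioppoli for both $u$ and $w$ plus a hole-filling iteration over dyadic radii, whereas the paper introduces the intermediate function $v=\rho_{\eps_i}^{a_i}\partial_{y_i}u$, which solves the equation with the flipped weight $\omega^a_\eps\rho_{\eps_i}^{-2a_i}$ (Remark \ref{R:cacsupsing} makes the Caccioppoli inequality available for it after regularization), and gets the estimate from two successive Caccioppoli inequalities, for $v$ and for $u$, in one chain without any absorption argument. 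Both computations are legitimate at the regularized level and give the same $R^{-4}$ decay with $B_{8R}$ on the right-hand side.

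The point you must repair is the ordering of your two steps, which as written is circular. Your Step 2 is performed on the limiting $w$ and absorbs the term $\tfrac14\int_{B_{R_3}}\omega^a_\eps w^2$; this is only legitimate once you already know $w\in L^2_{\loc}(\omega^a_\eps)$ with locally finite weighted energy (and that $w\eta^2$ is an admissible test function, which in the superdegenerate case requires the $H=W$ machinery of Section \ref{S:ff}), none of which is available a priori. Conversely, your Step 1 limit passage invokes ``uniform Caccioppoli bounds'', but the Caccioppoli inequality for $w_k$ only controls $\nabla w_k$ in terms of $w_k$ itself; the uniform bound of $\int\omega^a_k w_k^2$ in terms of $u_k$ -- which is what gives compactness and identifies the limit as a weak solution with finite weighted norms -- is exactly the content of your Step 2. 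So the two steps must be interleaved: run your integration-by-parts and iteration argument on the smooth regularized solutions $w_k$, where every quantity is finite, obtain the bound with constants independent of the regularization parameter, and then pass to the limit via Lemma \ref{lemA.1} and Fatou's lemma, which simultaneously yields that $w$ solves \eqref{eq:entire:liouville:y} and satisfies \eqref{eq:double:derivate:liouvlle}. This is precisely how the paper organizes the proof; with that reordering your argument goes through.
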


\begin{proof}
Fix $R>0$ and choose a sequence $\tau_k \in [0,1]^n$ such that 
\[
\tau_k \to 0 \text{ as }k \to \infty\,, \qquad \qquad (\tau_k)_i >0 \text{ if and only if } \eps_i = 0\,.
\]
Let $\tilde \omega_\eps^a$ denote the product of the factors in $\omega^a_\eps$ corresponding to indices $i$ with $\eps_i >0$. 
Applying Lemma \ref{lemA.2} with $\eps_k = \tau_k$, $r' = 4R$, $r = 2R$, $A = \tilde\omega^a_\eps  I_d$, and $f= F = 0$, we regularize the degenerate monomials $y_i^{a_i}$ in the weight $\omega^a_\eps$. We obtain a family of solutions $u_{k}$ to    
    \[
-\dive(\omega_{k}^a \D u_k) = 0, \quad \text{in } B_{{2R}},
\]
where 
\[
\omega_{k}^a=\omega^a_{\eps + \tau_k} =  \prod_{i=1}^n \rho_{(\eps + \tau_k)_i}(y_i)^{a_i}\,.
\]
Moreover, $u_k\to u$ in $H^{1}_\loc(B_{2R}\setminus \Sigma_0)$, and by \eqref{eq6.5c}, we have
\begin{equation}\label{eq:y:energy:liouville}
    \int_{B_{2R}}\omega_{k}^a 
|\D u_k|^2 dz
\le c\Big(
\int_{B_{4R}} \omega_\e^a
|\D u|^2 \,dz+ R^{-2} \int_{B_{4R}} \omega_\e^a
  u^2  dz\Big),
\end{equation}
with $c>0$ independent of $R$.

Without loss of generality, assume $i = n$, and set $\rho_k = \rho_{\eps_n + (\tau_k)_n}$. Since weak solutions to the regularized problem are smooth in $B_{4R}$ (the equation is uniformly elliptic with smooth coefficients), the function
\[
v_k:=\rho_{k}^{a_n} \partial_{y_n} u_k
\]
is a weak solution to 
\[
-\div(\omega_{k}^a \rho_{k}^{-2a_n} \D v_k) = 0, \quad \text{ in } B_{2R}\,.
\]
The Caccioppoli inequality \eqref{eqwecacc} gives
\begin{equation}\label{eq:y:caccioppoli:1}
    \int_{B_{3R/2}}\omega_{k}^a \rho_{k}^{-2a_n} |\D v_k|^2 \,dz \le \frac{c}{R^2}  \int_{B_{2R}}\omega_{k}^a \rho_{k}^{-2a_n} v_k^2 \, dz\le  \frac{c}{R^2}  \int_{B_{2R}} \omega_{k}^a  |\D u_k|^2.
\end{equation}
Repeating the argument and exploiting the smoothness of the coefficients, we deduce that
\[
w_k:= \rho_{k}^{-a_n} \partial_{y_n} v_k = \rho_{k}^{-a_n}\partial_{y_n} (\rho_{k}^{a_n} \partial_{y_n} u_k)
\]
is a smooth solution to the same equation as $u_k$, namely 
\[
-\dive(\omega_{k}^a \D w_k) = 0, \quad \text{in } B_{2R}.
\]
Using \eqref{eq:y:energy:liouville}, \eqref{eq:y:caccioppoli:1} and the Caccioppoli inequality \eqref{eqwecacc}, we obtain
\[
\begin{aligned}
\int_{B_{3R/2}}\omega_{k}^a  w_k^2 \,dz & \leq \int_{B_{3R/2 }}\omega_{k}^a \rho_k^{-2a_n} |\nabla v_k|^2 \,dz\le
\frac{c}{R^{2}}
\int_{B_{2R}} \omega_{k}^a  |\D u_k|^2 dz\\
&\le c\Big(\frac{1}{R^{2}}
 \int_{B_{4R}} \omega_\e^a
|\D u|^2 dz+ \frac{1}{R^{4}} \int_{B_{4R}} \omega_\e^a
  u^2 dz\Big)
\le
\frac{c}{R^{4}} \int_{B_{8R}}   \omega_{\e}^a u^2\,.
\end{aligned}
\]
The above estimate, together with \eqref{eqwecacc}, also gives $\|w_k\|_{H^{1, a ,\eps+ \tau_k}(B_{R})}\le C_R$. Hence, 
by Lemma \ref{lemA.1} we deduce that $w_k \to \tilde w$ in $H^{1}_\loc(B_R \setminus \Sigma_0)$, where $\tilde w$ solves
\[
-\div(\omega_\e^a \D \tilde  w) = 0 ,\quad \text{ in }B_{R},
\]
and, by Fatou's lemma, 
\[
\int_{B_R}\omega_\e^a  |\tilde w|^2\, dz \le \liminf_{k\to\infty} 
\int_{B_R}\omega_{k}^a  w_k^2\,dz
\le 
\frac{c}{R^{4}} \int_{B_{8R}}   \omega_{\e}^a  u^2 \,dz\,.
\]
Finally, since $w_k \to\rho_{\eps_n}^{-a_n}\partial_{y_n} (\rho_{\eps_n}^{a_n} \partial_{y_n} u)$ a.e. in $B_R$, then $\tilde w = \rho_{\eps_n}^{-a_n}\partial_{y_n} (\rho_{\eps_n}^{a_n} \partial_{y_n} u)$. The claim follows.
\end{proof}

\begin{proof}[Proof of Theorem \ref{T:Liouville}]
Let $u$ be an entire solution to \eqref{eq:entire:liouville}. Since $\gamma \in [0,2)$ in \eqref{eq:growth}, Lemma \ref{L:x:derivatives} implies that $u$ is linear in the unweighted $x$-variables. Therefore, $u$ can be expressed as
\[
u(x,y)= u_0(y)+\sum_{j=1}^{d-n} u_j(y)x_j\,,
\]
where the coefficients $u_j=u_j(y)$ depend only on $y$. Moreover, each $u_j$ satisfies the same growth condition \eqref{eq:growth} with exponent $\gamma$. Indeed, we have
\[
|u_0(y)|=|u(0,y)|\le C(1+|y|^\gamma),
\]
and, for every $j=1,\dots,d-n$, 
\[
|u_j(y)| = |u(e_{x_j},y)-u_0(y)|\le |u(e_{x_j},y)|+|u_0(y)| \le C (1+|y|^\gamma).
\]

By Lemma \ref{L:x:derivatives}, each component $u_j = \partial_{x_j} u$, $j=1, \ldots, d-n$, is itself an entire solution to \eqref{eq:entire:liouville}. Since $u_j$ depends only on $y$, it satisfies
\[
\begin{cases}
-\dive(\omega_\e^a S\D u_j) = 0, & \text{in }\R^n_* \\
\omega_\e^a S\D u_j\cdot e_{y_i} = 0,  & \text{on } \partial\R^n_* \cap \Sigma_i\,, \ \text{ for } \ i=1,\dots,n\,.
\end{cases}
\]
Using Remark \ref{rem:even:reflection}, we can extend $u_j$ evenly across each $y_i$-coordinate hyperplane. Denoting this extension again by $u_j$, we then have
\[
-\div(\omega_\e^a S \D u_j) = 0, \qquad \text{in }\R^{n}.
\]
Finally, performing the change of variables $y'=S^{1/2}y$, where $S^{1/2}$ denotes the square root of $S$, we reduce the problem to the case $S=I_d$.

Let us write $ y = (y', y_n) \in \mathbb{R}^{n-1} \times \mathbb{R} $. By Lemma \ref{L:y:derivate:liouville}, the function  
\[
w_1 := \rho_{\eps_n}^{-a_n} \partial_{y_n}(\rho_{\eps_n}^{a_n} \partial_{y_n} u_j),
\]
is an entire solution to the same equation as $ u_j $, and satisfies the decay estimate \eqref{eq:double:derivate:liouvlle}. We can then apply Lemma \ref{L:y:derivate:liouville} iteratively by defining
\[
\begin{cases}
w_0 := u_j, \\
w_k := \rho_{\eps_n}^{-a_n} \partial_{y_n}(\rho_{\eps_n}^{a_n} \partial_{y_n} w_{k-1}), \quad \text{for } k \geq 1,
\end{cases}
\]
so that each $ w_k $ is a weak solution to the same equation as $ u_j $. Moreover, for every $R>0$ we have
\[
\int_{B_R} \omega_\varepsilon^a w_k^2 dz\leq \frac{c}{R^4} \int_{B_{8R}} \omega_\varepsilon^a w_{k-1}^2 dz \leq \ldots \leq \frac{c}{R^{4k}} \int_{B_{8^kR}} \omega_\varepsilon^a u_j^2 dz\,.
\]
Let $R\geq 1$. Since $u_j$ satisfies the growth condition \eqref{eq:growth} and $\omega^a_\eps \leq c(\eps) R^{\langle a^+\rangle}$ in $B_R$, we obtain 
\[
\int_{B_R} \omega_\varepsilon^a w_k^2  dz\leq c R^{2\gamma + d + \langle a^+ \rangle - 4k}.
\]
Choosing $ k $ sufficiently large and letting $ R \to \infty $, it follows that $ w_k \equiv 0 $. Hence
\begin{equation*}
\rho_{\eps_n}^{-a_n} \partial_{y_n}(\rho_{\eps_n}^{a_n} \partial_{y_n} w_{k-1}) = 0\,.
\end{equation*}
We now distinguish two cases. If $\eps_n = 0$, this is an ordinary differential equation in the variable $y_n$, which can be solved explicitly. Recalling that $w_{k-1} \in H^{1,a, \eps}_\loc(\R^n)$ by Lemma \ref{L:x:derivatives}, and that $w_{k-1}$ is even in $y_n$ by construction, we conclude that the only admissible solution is the one that does not depend on $y_n$. Hence,
\[
w_{k-1}(y) = g_{k-1}(y')\,,
\]
for some function $g_{k-1}:\R^{n-1} \to \R$. By iteratively solving the corresponding equations for $ w_{k-2}, \ldots, w_0 $ and imposing the evenness condition at each step, we get
\[
u_j(y) = \sum_{\ell=0}^{k-1} g_{\ell}(y')\, |y_n|^{2\ell}\,,
\]
where each $g_\ell$ depends only on $y'$.
It then follows from \eqref{eq:growth} that $ g_\ell = 0 $ for all $ \ell \geq 1 $, and hence $ u_j(y) = g_0(y') $, meaning that $ u_j $ does not depend on $ y_n $.

If $\eps_n >0$, arguing as \cite[Theorem 1.2]{AudFioVit24}, we obtain an expansion of the form
\[
u_j(y):= \sum_{\ell=0}^{k-1} 
g_{\ell}(y') \psi_{\ell}(y_n)\,,
\]
where the functions $ \psi_\ell $ are defined recursively by
\[
\begin{cases}\displaystyle
\displaystyle \psi_0(y_n) := 1\,,\\
   \displaystyle \psi_\ell(y_n):= \int_0^{y_n} \rho_{\e_n}^{-a_n}(s)
    \Big(\int_0^s \rho_{\e_n}^{a_n}(t) \psi_{\ell-1}(t) dt\Big)
    ds, & \text{for } \ell\ge 1.
\end{cases}
\]
Using that $y_n^{-1}\rho_{\eps_n} \to 1$ as $y_n\to \infty$, and applying de l'H\v{o}pital's rule twice, we find that $\psi_\ell$ is asymptotic to $|y_n|^{2\ell}$ as $|y_n| \to \infty$. Hence, by \eqref{eq:growth}, also in this case we conclude that $ u_j $ cannot contain any nontrivial term in $ y_n $. Repeating this argument for each index $i$, we conclude that $u_j$ is constant.

As a consequence, each $ u_j $ is constant, and therefore the original function $ u $ takes the form
\[
u(x, y) = u_0(y) + \beta \cdot x\,,
\]
for some $ \beta\in \mathbb{R}^{d-n}$.
Since the affine function $\beta\cdot x + \delta \cdot y$, with $\delta = -S^{-1}R\beta \in \R^n$, is itself an entire solution to \eqref{eq:entire:liouville}, by setting
\[
\bar u (y) := u(x, y) - \beta\cdot x - \delta\cdot y = u_0(y) - \delta \cdot y\,,
\]
we obtain another solution to \eqref{eq:entire:liouville} that satisfies the same growth condition \eqref{eq:growth}, and depends only on $y$. Applying to $\bar u$ the same argument previously used for $u_j$, we conclude that $\bar u$ is constant. Therefore, 
\[
u(x, y) = \alpha + \beta\cdot x + \delta \cdot y\,, 
\]
for some constants $\alpha \in \R$, $\beta \in \R^{d-n}$, $\delta \in \R^n$, that is, $u$ is an affine function. Finally, if $\gamma \in [0,1)$, the growth condition \eqref{eq:growth} forces $\beta = \delta = 0$. 
\end{proof}

\section{Regularity estimates}\label{S:regularity}

In this section, we first establish $\varepsilon$-stable estimates for weak solutions to the problem 
\begin{align}\label{eqeps}
\begin{cases}
-\div( \omega_\varepsilon^a A\nabla u_\varepsilon)=\omega^a_\varepsilon f_\varepsilon+\div(\omega_\varepsilon^aF_\varepsilon )\qquad &\text{in }B_1^*\\
\omega^a_\varepsilon(A\D u_\varepsilon +F_\varepsilon)\cdot e_{y_i} = 0,  & \text{on }\partial B^*_1 \cap \Sigma_i, \text{ for } i=1,\dots,n.
\end{cases}
\end{align}
We then prove the main theorem by combining these estimates with the approximation results established in Section \ref{S:solution}.

\subsection{Stable \texorpdfstring{$L^\infty$}{L} bounds}

We begin by proving $\varepsilon$-stable $L^2 \to L^\infty$ estimates.

\begin{Lemma}\label{thmdegiorgi}
Let $a \in (-1, \infty)^n$ and $\varepsilon\in [0,1]^n$, and let $u_\varepsilon$ be a family of weak solutions to \eqref{eqeps}. Assume that $p>\tfrac{d+\langle a^+ \rangle}{2}$ and $q>d+\langle a^+ \rangle$.
Suppose that $A$ satisfies Assumption \eqref{Ass:matrix}, $i)$. Then,
there exists a constant $c>0$, depending only on $d,p,q,\lambda,\Lambda,a$, such that
\[
\|u_\varepsilon\|_{L^\infty(B^*_{1/2})}\leq c\big(\|u_\varepsilon\|_{L^{2,a,\eps}(B^*_1)}+ \|f_\varepsilon\|_{L^{p, a, \eps}(B^*_1)}+\|F_\varepsilon\|_{L^{q,a,\eps}(B^*_1)}\big)\,.
\]
\end{Lemma}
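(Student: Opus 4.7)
The plan is to combine even reflection across the coordinate hyperplanes with a standard Moser iteration built on the $\varepsilon$-stable inequalities proved in Section \ref{S:ff}. First I would extend $u_\varepsilon$ by even reflection across each $\Sigma_i$ to obtain a function $\bar u_\varepsilon \in H^{1,a,\varepsilon}_{\mathrm{e}}(B_1)$. By Lemma \ref{L:Hevenid} the weighted Sobolev norm is only enlarged by a universal factor, while the conormal condition at each $\Sigma_i \cap \partial B_1^*$ guarantees (by successive application of the reflection rule in Remark \ref{rem:even:reflection}) that $\bar u_\varepsilon$ is a weak solution, in the sense of \eqref{eq:ballprob}, of
\[
-\div(\omega^a_\varepsilon \bar A \nabla \bar u_\varepsilon) = \omega^a_\varepsilon \bar f_\varepsilon + \div(\omega^a_\varepsilon \bar F_\varepsilon) \qquad \text{in } B_1,
\]
where $\bar A \in L^\infty(B_1)$ has the same ellipticity constants $\lambda,\Lambda$ as $A$ (the weight $\omega^a_\varepsilon$ being already even in each $y_i$), and the reflected data satisfy $\|\bar f_\varepsilon\|_{L^{p,a,\varepsilon}(B_1)} \lesssim \|f_\varepsilon\|_{L^{p,a,\varepsilon}(B_1^*)}$ and likewise for $\bar F_\varepsilon$.

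Next I would set $K := \|\bar f_\varepsilon\|_{L^{p,a,\varepsilon}(B_1)} + \|\bar F_\varepsilon\|_{L^{q,a,\varepsilon}(B_1)}$ and $w_\pm := (\pm\bar u_\varepsilon)_+ + K$, pick a cutoff $\phi \in C^\infty_c(B_1)$, and test the equation with $\phi^2\, w_\pm^{\beta}$ for $\beta \geq 1$. Uniform ellipticity and Young's inequality yield, after standard manipulations, a Caccioppoli-type inequality
\[
\int_{B_1}\omega^a_\varepsilon \bigl|\nabla(\phi w_\pm^{(\beta+1)/2})\bigr|^2 dz \leq c(\beta)\int_{B_1}\omega^a_\varepsilon \Bigl(|\nabla\phi|^2\, w_\pm^{\beta+1} + \phi^2 K^{-2}(\bar f_\varepsilon^2 + |\bar F_\varepsilon|^2)\, w_\pm^{\beta+1}\Bigr)\,dz,
\]
where I have used $w_\pm \geq K$ to homogenize the source terms. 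Since $\phi w_\pm^{(\beta+1)/2} \in H^{1,a,\varepsilon}_0(B_1)$, Lemma \ref{L:Sobolev} upgrades the left-hand side to a norm at the critical exponent $2^*_a$ (or at any chosen exponent $>2$ in the borderline case $d+\langle a^+\rangle = 2$), producing a bootstrap factor $\kappa = 2^*_a/2 > 1$. The conditions $p > (d+\langle a^+\rangle)/2$ and $q > d+\langle a^+\rangle$ are precisely what make Hölder's inequality turn the source integrals into a strictly subcritical multiple of $\int \omega^a_\varepsilon \phi^2 w_\pm^{\beta+1}$, so that, once we rescale by $K$, these lower-order contributions can be absorbed.

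The estimate is then closed by the usual Moser iteration on a dyadic sequence of concentric balls shrinking from $B_1$ to $B_{1/2}$, with exponents $\beta_j + 1 = 2\kappa^j$. Tracking the $\beta_j$-dependence of the constants and summing the resulting geometric series gives $\|w_\pm\|_{L^\infty(B_{1/2})} \leq c \|w_\pm\|_{L^{2,a,\varepsilon}(B_1)}$, which after unpacking yields the desired bound on both $(\bar u_\varepsilon)_+$ and $(-\bar u_\varepsilon)_+$. The main obstacle is ensuring that every multiplicative constant in the iteration is independent of $\varepsilon$; this is guaranteed because the three tools underpinning the scheme, namely the Sobolev embedding (Lemma \ref{L:Sobolev}), the Caccioppoli inequality (Lemma \ref{lemwecacc}), and the doubling of the measure $\omega^a_\varepsilon\, dz$, are all $\varepsilon$-stable by construction, including in the superdegenerate regime where the weight falls outside the Muckenhoupt $\mathcal A_2$ class. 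If one prefers to avoid bookkeeping of the $\beta_j$-constants, a De Giorgi level-set iteration on $(\bar u_\varepsilon - k)_+$ would produce the same $L^2\to L^\infty$ conclusion with essentially identical inputs.
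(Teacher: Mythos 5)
Your proposal is sound and, at its core, runs on the same engine as the paper's proof: the paper also reduces the lemma to a classical $L^2\to L^\infty$ iteration whose only non-standard ingredient is the $\eps$-stable Sobolev inequality of Lemma \ref{L:Sobolev}. The difference is in the packaging. The paper performs a De Giorgi level-set iteration (a ``no spike lemma'' with levels $b_k=1-2^{-k}$ and shrinking radii) directly in $B_1^*$, which is possible because the weak formulation of \eqref{eqeps} already admits test functions in $C^\infty_c(B_1)$ and the Caccioppoli and Sobolev tools are available in the orthant; your preliminary even reflection is therefore not needed, but it is legitimate: since only Assumption \ref{Ass:matrix}, $i)$ is in force, the reflected matrix $JAJ$ may jump across the $\Sigma_i$'s but keeps the constants $\lambda,\Lambda$, the weight is even in each $y_i$, and Lemma \ref{L:Hevenid} controls the norms, so the computation behind Remark \ref{rem:even:reflection} carries over verbatim to the weighted setting (the paper itself reflects weighted solutions in Section \ref{S:liouville}). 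Moser iteration on powers versus De Giorgi truncations is an equivalent choice, as you yourself note.

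One concrete correction to your displayed Caccioppoli inequality: the $f$-term must be homogenized linearly, not quadratically. From $w_\pm\ge K$ one gets $|\bar f_\eps|\,\phi^2 w_\pm^{\beta}\le K^{-1}|\bar f_\eps|\,\phi^2 w_\pm^{\beta+1}$, and H\"older with exponent $p$ followed by interpolation against the Sobolev term requires exactly $p'<2^*_a/2$, i.e. $p>\tfrac{d+\langle a^+\rangle}{2}$, which is the hypothesis. With the term $K^{-2}\bar f_\eps^2\, w_\pm^{\beta+1}$ as written you would need $\bar f_\eps^{\,2}\in L^{p/2,a,\eps}$ with $p/2>\tfrac{d+\langle a^+\rangle}{2}$, i.e. $p>d+\langle a^+\rangle$, which is stronger than assumed; the quadratic homogenization is correct only for $F$, where $q>d+\langle a^+\rangle$ is available. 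Two routine points to make explicit when writing it up: test with truncated powers (or invoke the density and $H=W$ results of Section \ref{S:ff}), since $u_\eps$ is not yet known to be bounded, and keep the constant $c(\beta)$ polynomial in $\beta$ so that the dyadic iteration sums.
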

\begin{proof}
The argument follows the classical De Giorgi method, and we therefore omit some of the details. We emphasize that the $\varepsilon$-stability of the estimate follows directly from the $\varepsilon$-stable Sobolev inequality (see Lemma \ref{L:Sobolev}). 

We first establish Caccioppoli-type inequalities for the truncated functions 
\[
v_\varepsilon=:(u_\varepsilon-b)_+ \quad \text{ and }\quad w_\varepsilon:=(u_\varepsilon-b)_-\,,
\]
where $b\in \R$. Since $u_\varepsilon\in H^{1, a, \eps}(B^*_1)$, it follows that $v_\varepsilon,w_\varepsilon\in H^{1, a, \eps}(B^*_1)$ . Moreover, we have
\[
\nabla v_\eps = 
\begin{cases}
\nabla u_\eps & v_\eps >0\,,\\
0 & v_\eps = 0\,,
\end{cases}
\quad \text{ and } \quad 
\nabla w_\eps = 
\begin{cases}
-\nabla u_\eps & w_\eps >0\,,\\
0 & w_\eps = 0\,.
\end{cases}
\]
Let $0<r<\rho\leq 1$ be fixed, and take a cutoff function $\eta\in C^\infty_c(B_\rho)$ such that 
\[
\eta\equiv1  \text{ in }B_r\,,\qquad  |\nabla \eta|\leq 2(\rho-r)^{-1}.
\]
Testing equation \eqref{eqeps} with $\eta^2 v_\varepsilon$ and $\eta^2 w_\varepsilon$, and performing standard computations, we obtain the inequalities
\begin{equation}\label{eqcactrunc}
c\int_{B^*_\rho}\omega_\varepsilon^a|\nabla(\eta v_\varepsilon)|^2  dz\leq \int_{B^*_\rho} \omega^a_\varepsilon  v_\varepsilon^2 |\nabla \eta|^2 dz + \int_{B^*_\rho }\omega^a_\varepsilon |v_\varepsilon||f_\varepsilon| \eta^2  dz + \int_{B^*_\rho \cap \{v_\varepsilon>0\}}\omega^a_\varepsilon|F_\varepsilon|^2  dz\,,
\end{equation}
and 
\[
c\int_{B^*_\rho}\omega_\varepsilon^a|\nabla(\eta w_\varepsilon)|^2  dz \leq \int_{B^*_\rho} \omega^a_\varepsilon w_\varepsilon^2 |\nabla \eta|^2 dz + \int_{B^*_\rho} \omega^a_\varepsilon|w_\varepsilon||f_\varepsilon| \eta^2  dz + \int_{B^*_\rho \cap \{w_\varepsilon>0\}}\omega^a_\varepsilon|F_\varepsilon|^2  dz\,.
\]
We now prove the so-called \emph{no spike lemma}. Assume that 
\begin{equation}\label{eq:spikehp}
\|f_\varepsilon\|_{L^{p, a, \eps}(B^*_1)}+\|F_\varepsilon\|_{L^{q, a, \eps}(B^*_1)}\leq 1.
\end{equation}
We claim that there exists $\delta\in (0,1)$ such that
\[
\begin{aligned}
&(i) & &\text{  if }\int_{B^*_1}\omega_\varepsilon^a(u_\varepsilon)_+^2  dz\leq \delta\,, \qquad \text{then } u_\varepsilon\leq 1 \text{ almost everywhere in }B^*_{1/2}\,,\\
&(ii) & &\text{  if }\int_{B^*_1}\omega_\varepsilon^a(u_\varepsilon)_-^2  dz\leq \delta\,, \qquad\text{then } u_\varepsilon\geq -1 \text{ almost everywhere in }B^*_{1/2}\,.
\end{aligned}
\]
We prove only $(i)$, since the argument for $(ii)$ is analogous. For each integer $k \geq 0$, set
\[
b_k=1-2^{-k}, \qquad r_k=2^{-1}(2^{-k}+1)\,.
\]
Then $b_0=0$, $b_k \nearrow b_\infty=1$, $r_0=1$ and $r_k\searrow r_\infty=\frac{1}{2}$, with $r_k-r_{k+1}=2^{-(k+2)}$.
Define the nested domains $D_k=B^*_{r_k}$, which satisfy
\[
D_{k+1}\subset B^*_{\rho_k}\subset D_k \qquad \text{where }\ \rho_k:=\frac{r_k+r_{k+1}}{2}\,.
\]
Next, introduce the truncated functions and their corresponding energies
\[
v_{\varepsilon,k}:=(u_\varepsilon-b_k)_+ \quad\text{and} \quad E_k=\int_{D_k}\omega^a_\varepsilon v_{\varepsilon,k}^2 dz\,.
\]
Since $v_{\varepsilon,k+1}\leq v_{\varepsilon, k}$, we have 
\[
E_{k+1}\leq E_k\leq \dots\leq E_0\leq \delta\,,
\]
where $\delta\in (0,1)$ is a constant to be chosen later. 
Finally, for each $k\geq 0$, let $\eta_k\in C^\infty_c(B_{\rho_k})$ be such that
\[
0\leq \eta_k\leq 1\,, \quad \eta_k\equiv 1 \text{ in } D_{k+1}\,,\quad  \text{ and } \quad |\nabla \eta_k|\leq 2(\rho_k-r_{k+1})^{-1}\leq c2^{k+1}\,. 
\]
Applying the Caccioppoli-type inequality \eqref{eqcactrunc} with $v_\varepsilon=v_{\varepsilon,k+1}$, $\eta=\eta_k$, $r=r_{k+1}$, $\rho=\rho_k$ we obtain
\[
    \int_{B^*_{\rho_k}}\omega_\varepsilon^a|\nabla (\eta_k v_{\varepsilon,k+1})|^2  dz \leq c(I_k^1+I_k^2+I_k^3)\,,
\]
where
\[
I^1_k = \int_{B^*_{\rho_k}}\omega_\varepsilon^av^2_{\varepsilon,k+1}|\nabla \eta_k|^2 dz, \qquad I^2_k= \int_{B^*_{\rho_k}}\omega_\varepsilon^av_{\varepsilon, k+1}|f_\varepsilon |\eta_k^2 dz\,,\qquad 
I^3_k = \int_{B^*_{\rho_k}\cap\{v_{\varepsilon,k+1}>0\}}\omega_\varepsilon^a |F_\varepsilon|^2 dz\,.
\]
Since $v_{\varepsilon,k+1}\leq v_{\varepsilon,k}$ and $B_{\rho_k}^*\subset D_k$, it follows immediately that 
\[
I^1_k\leq c 2^{2(k+1)}E_k\,.
\]
Next, we fix $\tau>2$ by setting
\[
\tau = 
\begin{cases}
2^*_a &\text{if }d+\langle a^+ \rangle> 2\,,\\
\text{any }\tau>\max\left\{ \frac{2q}{q-2},\frac{2p}{p-1}\right\} &\text{if }d+\langle a^+ \rangle=2\,,
\end{cases}
\] 
where $2^*_a$ is as in \eqref{eq:sobexp}. 
By H\"older's inequality, Lemma \ref{L:Sobolev}, \eqref{eq:spikehp}, and Young's inequality, we obtain
\[
\begin{aligned}
    I^2_k&\leq \|f_\varepsilon\|_{L^{p,a, \eps}(B^*_{\rho_k})}\Big( \int_{B^*_{\rho_k}}|\eta_k^2 v_{\varepsilon,k+1}|^\tau \omega^a_\varepsilon dz\Big)^\frac{1}{\tau}\Big( \int_{B^*_{\rho_k}} \chi _{\{v_{\varepsilon,k+1}>0\}} \omega^a_\varepsilon dz \Big)^{1-\frac{1}{\tau}-\frac{1}{p}}\\
    &\leq t \int_{B^*_{\rho_k}}|\nabla(\eta_kv_{\varepsilon,k+1})|^2\omega^a_\varepsilon dz+\frac{c}{t}\Big( \int_{B^*_{\rho_k}} \chi _{\{v_{\varepsilon,k+1}>0\}} \omega^a_\varepsilon dz \Big)^{2-\frac{2}{\tau}-\frac{2}{p}}\,,
\end{aligned}
\]
for some $t >0$ to be specified later. Since
\[
\{v_{\varepsilon,k+1}>0\}=\{u_\varepsilon-b_{k+1}>0\}=\{u_\varepsilon-b_k>b_{k+1}-b_k\}=\{v_{\varepsilon,k}>2^{-(k+1)}\}=\{v_{\varepsilon,k}^2>2^{-2(k+1)}\}\,,
\]
it follows that
\begin{equation}\label{eq:1functionest}
\int_{B^*_{\rho_k}} \chi _{\{v_{\varepsilon,k+1}>0\}} \omega^a_\varepsilon dz=\int_{B^*_{\rho_k}} \chi_{\{v_{\varepsilon,k}^2>2^{-2(k+1)}\}} \omega^a_\varepsilon dz\leq 2^{2(k+1)}E_k\,.
\end{equation}
Therefore, there exists a constant $C_1>1$, depending only on $a, p,\tau$, such that 
\[
I_k^2\leq t\int_{B^*_{\rho_k}}|\nabla(\eta_kv_{\varepsilon,k+1})|^2\omega^a_\varepsilon dz+\frac{C_1^{k+1}}{t}E_k^{2-\frac{2}{\tau}-\frac{2}{p}}\,.
\]
Finally, we use H\"older's inequality, \eqref{eq:spikehp} and \eqref{eq:1functionest} to get
\[
    I^3_k \leq \|F_\varepsilon\|^2_{L^{q, a, \eps}(B^*_{\rho_k})}\Big(\int_{B^*_{\rho_k}} \chi_{\{v_{\eps, k+1}>0\}}\omega_\varepsilon^adz\Big)^{1-\frac{2}{q}}\leq C_2^{k+1}E_k^{1-\frac{2}{q}}\,.
\]
Combining these estimates and choosing $t>0$ sufficiently small, we obtain that there exists a constant $C_0>1$ (depending only on $a, p,q,\tau$) such that 
\[
\int_{B^*_{\rho_k}}|\nabla(\eta_k v_{\varepsilon,k+1})|^2  \omega_\varepsilon^a dz \leq C_0^{k+1}(E_k+E_k^{2-\frac{2}{\tau}-\frac{2}{p}}+E_k^{1-\frac{2}{q}})\,.
\]
Furthermore, using again H\"older's inequality, Lemma \ref{L:Sobolev}, and \eqref{eq:1functionest}, we get
\[
\begin{aligned}
    E_{k+1}&\leq \Big(\int_{D_{k+1}}|v_{\eps, k+1}|^\tau \omega^a_\varepsilon dz\Big)^\frac{2}{\tau}\Big( \int_{D_{k+1}} \chi_{\{v_{\eps, k+1}>0\}}\omega^a_\varepsilon dz\Big)^{1-\frac{2}{\tau}}\\
        &\leq \Big(\int_{B^*_{\rho_k}}|\eta_k v_{\eps, k+1}|^\tau \omega^a_\varepsilon dz\Big)^\frac{2}{\tau}C_3^{k+1}E_k^{1-\frac{2}{\tau}}\\
        &\leq \tilde{C}^{k+1}E_k(E_k^{1-\frac{2}{\tau}}+E_k^{2-\frac{4}{\tau}-\frac{2}{p}}+E_k^{1-\frac{2}{\tau}-\frac{2}{q}})\,.
\end{aligned}
\]
Setting
\[
\gamma:=\min \Big\{1-\frac{2}{\tau}, 2-\frac{4}{\tau}-\frac{2}{p},1-\frac{2}{\tau}-\frac{2}{q}\Big\}>0
\]
and using that $E_k\leq E_0=\delta<1$, we find
\[
 E_{k+1}\leq \tilde{C}^{k+1}E_k^{1+\gamma}\,.
\]
Iterating this inequality gives
\[
E_{k}\leq (\tilde{C}^{\sum_{i=0}^k\frac{i}{(1+\gamma)^i}}\delta)^{(1+\gamma)^k}\,.
\]
As the series $S=\sum_{i=0}^\infty\frac{i}{(1+\gamma)^i}$ converges, we may choose $\delta\in (0,1)$ such that  $\tilde{C}^S\delta<1$,
which ensures that $E_k\rightarrow 0$ as $k\rightarrow \infty$. By the dominated convergence theorem, it follows that
\[
\lim_{k\rightarrow\infty}\int_{B^*_1}\omega_\varepsilon^a(u_\varepsilon-b_k)^2_+\chi_{D_k} dz=\int_{B^*_{1/2}}\omega_\varepsilon^a(u_\varepsilon-1)_+^2  dz=0
\]
which implies $u_\varepsilon\leq 1$ almost everywhere in $B^*_{1/2}$, proving the \emph{no-spike lemma}.

To conclude the proof, define
\[
v_\varepsilon:=\theta u_\varepsilon \quad \text{where }\quad \theta=\frac{\sqrt{\delta}}{\|u_\varepsilon\|_{L^{2, a, \eps}(B^*_1)}+ \|f_\varepsilon\|_{L^{p, a, \eps}(B^*_1)}+\|F_\varepsilon\|_{L^{q, a, \eps}(B^*_1)}}\,,
\]
with the same constant $\delta$ as in the no-spike lemma. Then $v_\eps$ satisfies the same equation as $u_\eps$ with data scaled by $\theta$, and hence meets the assumptions of the no-spike lemma. It follows that $|v_\varepsilon|\leq 1$ in $B^*_{1/2}$, and rescaling back gives
\[
\|u_\varepsilon\|_{L^\infty(B^*_{1/2})}\leq \frac{1}{\sqrt{\delta}}\left(\|u_\varepsilon\|_{L^{2, a, \eps}(B^*_1)}+ \|f_\varepsilon\|_{L^{p, a, \eps}(B^*_1)}+\|F_\varepsilon\|_{L^{q, a, \eps}(B^*_1)}\right).
\]
This completes the proof.
\end{proof}

\subsection{Stable H\"older estimates}

We now prove $\eps$-stable $C^{0,\alpha}$-regularity estimates for solutions to \eqref{eqeps} with $\eps_i >0$. Recall that the regularity of such solutions follows from Theorem \ref{thm1} with $a=0$ (see Section \ref{S:unif:ell:reg}); the aim of the following theorem is to show that the associated constant remains uniformly bounded as $\varepsilon \to 0$. For notational simplicity, and since this suffices for our purposes, we restrict to the case $\varepsilon_i = \varepsilon  > 0$ for all $i = 1, \ldots, n$.

\begin{teo}\label{thmC0stable}
Let $a\in (-1,\infty)^n$, $\eps \in (0,1]$ and define $\omega_\varepsilon^a:=\prod_{i=1}^{n}(\varepsilon^2+y_i^2)^{\frac{a_i}{2}}
$. Let $u_\varepsilon$ be a family of solutions to \eqref{eqeps}. Suppose that $p>\frac{d+\langle a^+\rangle}{2}$, $q>d+\langle a^+\rangle$ and $ \alpha\in (0,2-\frac{d+\langle a^+\rangle}{p}]\cap(0,1-\frac{d+\langle a^+\rangle}{q}]$.
Assume that $A$ satisfies \eqref{Ass:matrix}, that $A\in C^{0,\sigma(\cdot)}(B_1^*)$ for some modulus of continuity $\sigma$ and that $\|A\|_{C^{0,\sigma(\cdot)}(B_1^*)}\leq  L $. 
Then, there exists a constant $C$ which depends on $d,\alpha,p,q, L ,a,\lambda,\Lambda$ such that
\begin{equation}\label{eqC0alphaeps}
\|u_\varepsilon\|_{C^{0,\alpha}(B^*_{1/2})}\leq C\left(\|u_\varepsilon\|_{L^{2,a,\eps}(B^*_1)}+ \|f_\varepsilon\|_{L^{p,a,\eps}(B^*_1)}+\|F_\varepsilon\|_{L^{q,a,\eps}(B^*_1)}\right)\,.
\end{equation}
\end{teo}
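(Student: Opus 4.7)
The plan is to argue by contradiction using the classical blow-up scheme of Simon \cite{Sim97} and reduce the limiting profile to a setting where Theorem \ref{T:Liouville} forces it to be constant, contradicting a normalization of the H\"older seminorm. First I would set up the contradiction: if \eqref{eqC0alphaeps} failed, one could extract sequences $\eps_k\in(0,1]$, matrices $A_k$ satisfying Assumption \ref{Ass:matrix} with $\|A_k\|_{C^{0,\sigma(\cdot)}(B_1^*)}\le L$, data $f_k,F_k$ with uniformly bounded weighted $L^p$ and $L^q$ norms, and corresponding solutions $u_k$ of \eqref{eqeps} with $\|u_k\|_{L^{2,a,\eps_k}(B_1^*)}\le 1$, but satisfying $[u_k]_{C^{0,\alpha}(B^*_{1/2})}\to\infty$. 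Lemma \ref{thmdegiorgi} then pins down $\|u_k\|_{L^\infty(B^*_{1/2})}$ uniformly, so the blow-up takes place purely at the level of the H\"older seminorm.

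Next, I would implement the blow-up through a Campanato-type quantity
\[
\Theta_k(z_0,r):=\frac{1}{r^\alpha}\inf_{c\in\R}\|u_k-c\|_{L^\infty(B_r^*(z_0)\cap B^*_{1/2})},
\]
and, via a standard selection/concentration lemma, choose points $z_k=(x_k,y_k)\in B^*_{1/2}$ and radii $r_k\to 0$ that realize $\theta_k:=\sup\Theta_k\to\infty$ up to a fixed factor. The rescaled functions $v_k(z):=\theta_k^{-1}r_k^{-\alpha}\bigl(u_k(z_k+r_k z)-c_k\bigr)$ then have normalized H\"older seminorm, vanish at the origin, and solve a problem of the same form, with rescaled matrix $\tilde A_k(z):=A_k(z_k+r_k z)$, rescaled weight
\[
\tilde\omega^a_k(y):=\prod_{i=1}^n\Bigl(\bigl(\tfrac{\eps_k}{r_k}\bigr)^{2}+\bigl(\tfrac{(y_k)_i}{r_k}+y_i\bigr)^2\Bigr)^{a_i/2},
\]
and right-hand sides whose weighted $L^p$ and $L^q$ norms tend to zero on compact sets, thanks precisely to the scaling-critical bounds $\alpha\le 2-(d+\langle a^+\rangle)/p$ and $\alpha\le 1-(d+\langle a^+\rangle)/q$.

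Passing to subsequences, I would track the limits $\tilde\eps_k:=\eps_k/r_k\to\mu_0\in[0,\infty]$, $(y_k)_i/r_k\to\mu_i\in[0,\infty]$, and $\tilde A_k(0)=A_k(z_k)\to A_\infty$, producing a constant matrix $A_\infty$ which inherits the structural conditions of Assumption \ref{Ass:matrix} at the relevant limit corner. The limiting geometry is obtained from $\R^d_*$ by sending to infinity the hyperplanes $\Sigma_i$ with $\mu_i=\infty$, translating the surviving ones by $\mu_i$, and either keeping the monomial weight (if $\mu_0=0$), replacing it by a regularized weight (if $\mu_0\in(0,\infty)$), or reducing it to a constant one (if $\mu_0=\infty$). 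Uniform weighted Caccioppoli estimates (Lemma \ref{lemwecacc}) on compact sets, together with the approximation tools of Section \ref{S:solution} and Arzel\`a-Ascoli, would produce a weak entire solution $v_\infty$ on the limiting domain with $|v_\infty(z)|\le C(1+|z|^\alpha)$. When $\mu_0\in(0,\infty]$, a preliminary flattening / linear change of variables reduces $v_\infty$ to the uniformly elliptic case on the surviving orthant (extended to the whole space through the even reflections of Remark \ref{rem:even:reflection}); when $\mu_0=0$, Theorem \ref{T:Liouville} applies directly. Since $\alpha\in(0,1)$, the sub-linear growth forces $v_\infty$ to be a constant, contradicting the normalization $\Theta_\infty(0,1)\ge c>0$ inherited from the blow-up selection.

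The main obstacle will be the case analysis of the limiting configurations and the verification that $A_\infty$ meets exactly the structural hypotheses required by Theorem \ref{T:Liouville} (diagonality of the $S$-block on the intersection of the surviving hyperplanes, and the symmetry $R=Q^\top$ on each surviving hyperplane that enables the even reflection across a disappearing one). This is where Assumption \ref{Ass:matrix}, evaluated at the limit of $z_k$ on the corresponding stratum of $\Sigma_0$, pays off: the limiting corner always sits at the intersection of the surviving hyperplanes, so the diagonality and symmetry conditions pass to the limit for free. A second, more technical, point is to certify the vanishing of the rescaled source and field terms in the correct weighted norms on arbitrary compacta, which is a direct consequence of the scaling computation and the explicit range of $\alpha$. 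Once these compatibility checks are completed, the Liouville step closes the argument.
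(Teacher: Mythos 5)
Your proposal reproduces the paper's own strategy: contradiction, blow-up after subtracting a normalization, tracking the regime of $\varepsilon_k/r_k$ and $(y_k)_i/r_k$, and closing with the Liouville theorem in the orthant. The differences are cosmetic: you use a Campanato-type concentration quantity and then translate the surviving hyperplanes to the origin, whereas the paper selects near-maximizing pairs of points and pre-shifts the blow-up center $\hat z_k$ so that the surviving hyperplanes already sit at $\{y_i=0\}$; both are standard and equivalent.

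One inaccuracy worth flagging: in the regime $\mu_0\in(0,\infty]$ you propose reducing to the uniformly elliptic case via the even reflection of Remark \ref{rem:even:reflection}. That remark requires the full last row and column of the matrix (beyond the $S$-block diagonality) to vanish on $\Sigma_n$, i.e.\ $Q_{j,n}=0$ on $\Sigma_n$, which Assumption \ref{Ass:matrix} does not give; it is exactly to repair this that the paper's proof of Theorem \ref{L:schuader:unifell} introduces the diffeomorphism $\Phi_\delta$. In your blow-up limit, however, this detour is unnecessary: Theorem \ref{T:Liouville} is stated for regularized weights $\omega^a_\eps$ with $\eps\in[0,1]^n$, and the limiting weight $\prod_{i\in H}(\overline\eps^2+\overline r^{\,2}y_i^2)^{a_i/2}$ is (after a harmless dilation) precisely of that form for every value of $\mu_0$, including the constant-weight case $\overline r=0$. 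Applying Theorem \ref{T:Liouville} directly, as the paper does, removes the gap and shortens the case analysis.
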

\begin{proof}
  By Theorem \ref{L:schuader:unifell}, applied with $\omega^a_\eps A$ in place of $A$, inequality \eqref{eqC0alphaeps} holds with a constant that may, a priori, blow up as $\eps\to 0$. To show that this does not occur, we argue by contradiction. Suppose that there exists a sequence $\varepsilon_k \to 0$ such that 
\begin{equation}\label{eq:C0:contr}
    \|u_{\varepsilon_k}\|_{C^{0,\alpha}(B^*_{1/2})}\geq k\left(\|u_{\varepsilon_k}\|_{L^{2, a, \eps_k}(B^*_1)}+ \|f_{\varepsilon_k}\|_{L^{p, a, \eps_k}(B^*_1)}+\|F_{\varepsilon_k}\|_{L^{q, a, \eps_k}(B^*_1)}\right)=:k I_k 
\end{equation}
For simplicity, set $u_k:=u_{\varepsilon_k}$, $f_k:= f_{\eps_k}$, $F_k:= F_{\eps_k}$ and $\omega^a_k:=\omega^a_{\varepsilon_k}$. By Lemma \ref{thmdegiorgi}, we have 
\[
\|u_k\|_{L^\infty(B^*_{{3/4}})}\leq cI_k\,,
\]
and therefore, for $k$ sufficiently large,
\[
[u_k]_{C^{0,\alpha}(B^*_{1/2})}\geq \frac{k}{2}I_k\,.
\]
Next, let us consider consider a cutoff function $\eta\in C^\infty_c(B_{3/4})$ such that $\eta\equiv 1$ in $B_{1/2}$. Then, $\eta u_k\in C^{0,\alpha}(B^*_1)\cap \hat H^{1, a, \eps_k}(B^*_1)$ and it holds
\begin{equation}\label{eq:IkMk}
    M_k:=[\eta u_k]_{C^{0,\alpha}(B^*_1)}\geq[\eta u_k]_{C^{0,\alpha}(B^*_{1/2})}= [u_k]_{C^{0,\alpha}(B^*_{1/2})}\geq \frac{k}{2}I_k.
\end{equation}
By definition, there exist points $z_k,\zeta_k\in B^*_1$, $z_k\neq \zeta_k$, such that 
\[
    \frac{|\eta u_k(z_k)-\eta u_k(\zeta_k)|}{|z_k-\zeta_k|^\alpha}\geq \frac{M_k}{2}\,.
\]
Without loss of generality, we may further assume that $z_k, \zeta_k \in {B^*_{3/4}}$.

Set $r_k:=|z_k-\zeta_k|$. It holds
\[
    \|u_k\|_{L^\infty(B^*_{3/4})}\leq cI_k\leq c \frac{M_k}{k}\leq c \frac{|\eta u_k(z_k)-\eta u_k(\zeta_k)|}{k|z_k-\zeta_k|^\alpha}\leq  c \frac{\|u_k\|_{L^\infty(B^*_{3/4})}}{k r_k^\alpha}\,.
\]
Therefore,
\[
r_k\leq \frac{c}{k^{1/\alpha}}
\to 0, \quad \text{as }k\to\infty\,.
\]
Write $z_k=(x_k,y_k)\in \R^{d-n}\times\R^{n}$, and define 
\[
H:=\Big\{i\in \{1,\dots,n\} \ \mid \ \frac{(y_{k})_i}{r_k} \,\,\text{is bounded as } k \to \infty\Big\}\,.
\]
Set 
\begin{equation}\label{eq:hatzk}
  \hat{z}_k:=(x_k,  \hat{y}_k)\in \R^{d-n}\times \R^{n}\,, \quad \text{where}\quad  (\hat{y}_{k})_i:=
    \begin{cases}
        (y_{k})_i \quad &\text{if }\quad i \not\in H\,,\\
        0 \quad &\text{if }\quad  i \in H\,.
    \end{cases} 
\end{equation}
We then define the rescaled domains $\Omega_k$ and the limit blow-up domain $\Omega_\infty$ as
\[
\Omega_k:=\frac{B^*_1-\hat z_k}{r_k}\,, \qquad \Omega_\infty = \lim_{k\rightarrow \infty} \Omega_k\,.
\]
Since $r_k\rightarrow 0$, it follows that 
\begin{equation*}
    \Omega_\infty=
        \bigcap_{i\in H}\{y_i>0\}\,.
\end{equation*}
Finally, on each domain $\Omega_k$ we define two sequences of rescaled functions 
\[
    v_k(z) :=\frac{\eta u_k(\hat z_k+r_k z)-\eta u_k(\hat z_k)}{M_k r_k^\alpha}\,,\qquad w_k(z):=\eta(\hat z_k)\frac{u_k(\hat z_k+r_kz)-u_k(\hat z_k)}{M_kr_k^\alpha}\,.
\]

Let $z, \zeta \in \Omega_k$. By the definition of $M_k$, we have
\[
    |v_k(z)-v_k(\zeta)|=\frac{|\eta u_k(\hat z_k+r_k z)-\eta u_k(\hat z_k+r_k \zeta)|}{M_k r_k^\alpha}\leq |z-\zeta|^\alpha\,.
\]
As an immediate consequence, 
\[
|v_k(z)|=|v_k(z)-v_k(0)|\leq |z|^\alpha\,.
\]
Therefore, for every compact set $K\subset \R^d$ it holds $\|v_k\|_{C^{0,\alpha}(K\cap \Omega_\infty)}\leq C(K)$. By the Arzel\'a-Ascoli Theorem, it follows that $v_k$ converges uniformly on compact sets to a function $w\in C^{0,\alpha}_{\loc}(\Omega_\infty)$ satisfying the sublinear growth condition
\begin{equation}\label{eq:wgrowth}
    |w(z)|\leq |z|^\alpha\qquad \text{ for all } z\in \Omega_\infty.
\end{equation}
Moreover, 
\begin{equation}\label{eqvkn0}
    \Big|v_k\left(\frac{z_k-\hat z_k}{r_k}\right)-v_k\left(\frac{\zeta_k-\hat z_k}{r_k}\right)\Big|=\frac{|\eta u_k(z_k)-\eta u_k( \zeta_k)|}{M_k r_k^\alpha}\geq \frac{1}{2}.
\end{equation}
By the definition of $\hat z_k$, the sequences $\frac{z_k-\hat z_k}{r_k}$ and $\frac{\zeta_k-\hat z_k}{r_k}$ are bounded, and hence converge (up to a subsequence) to some points $\xi_1, \xi_2$ with $\xi_1\neq \xi_2$. Passing to the limit in \eqref{eqvkn0}, we deduce
\begin{equation*}
|w(\xi_1)-w(\xi_2)|\geq \frac{1}{2}\,,
\end{equation*}
that is, $w$ is not-constant. 

 In what follows, we show that $w$ must in fact be constant by invoking a Liouville-type theorem, thereby reaching a contradiction. First, for every $z\in K\subset \Omega_\infty$ we have
\begin{equation*}
\begin{aligned}
|v_k(z)-w_k(z)|&=\frac{|u_k(\hat z_k+r_kz)||\eta(\hat z_k+r_k z)-\eta(\hat z_k)|}{M_k r_k^\alpha}\\
&\leq c\frac{\|u_k\|_{L^\infty(B_{7/8})}}{M_k}r_k^{1-\alpha}\leq c\frac{ I_k }{M_k} r_k^{1-\alpha}\leq c\frac{r_k^{1-\alpha}}{k}\rightarrow 0\,.
\end{aligned}
\end{equation*}
Therefore, $v_k$ and $w_k$ converge to the same limit function $w$. As we shall see, the limit function $w$ satisfies a new (possibly weighted) boundary value problem. To state this problem precisely, we first introduce the relevant notation and the limiting weight. Define  
\[
\theta_{i,k}:=|(\varepsilon_k,r_k,(\hat y_k)_i)|\,,
\]
and, for $\beta \in \R^n$, set
\[
\Theta_k^\beta := \prod_{i=1}^{n}\theta_{i,k}^{\beta_i}\,.
\]
Moreover, let us call 
\[
\overline \eps = \lim_{k \to \infty}\frac{\eps_k}{|(\eps_k, r_k)|}\,, \qquad \overline r = \lim_{k \to \infty}\frac{r_k}{|(\eps_k, r_k)|}\,.
\]
With this notation, we define the rescaled weight
\[
\overline{\omega}_k^a(z):=\Theta_k^{-a}\omega_k^a(\hat z_k+r_k z)\,,
\]
and the limit weight 
\[
\overline{\omega}^a(z):=\prod_{i \in H}(\overline{\varepsilon}^2+\overline{r}^2y_i^2)^\frac{a_i}{2}\,,
\]
noticing that $\overline{\omega}_k^a\to \overline{\omega}^a$ a.e. in $\Omega_\infty$  and that $\overline \omega^a \in L^1_\loc(\Omega_\infty)$.

Fix $R>0$ and $\phi\in C^\infty_c(B_R)$. A standard computation shows that, for $k$ sufficiently large, 
\begin{equation}\label{eq:wkeq}
\int_{\Omega_k}\overline{\omega}_k^a (z)A(\hat z_k+r_k z)\nabla w_k(z)\cdot \nabla \phi(z)  \,dz=  J^1_k + J^2_k\,,
\end{equation}
where
\[
\begin{aligned}
&J^1_k = \frac{r_k^{2-\alpha}\eta (\hat z_k)}{M_k}\int_{\Omega_k}\overline \omega^a_k(z)f_k(\hat z_k+r_kz)\,\phi(z)\, \, dz\,,\\
&J^2_k = \frac{r_k^{1-\alpha}\eta(\hat z_k)}{M_k}\int_{\Omega_k}\overline \omega_k^a(z)F_k(\hat z_k+r_k z)\cdot \nabla \phi(z)\, dz\,.
\end{aligned}
\]
Our next goal is to show that $J^1_k,J^2_k\rightarrow 0$ as $k \to \infty$.

Using that $\overline{\omega}_k^a \in L^1_\loc(\R^d)$, by H\"older's inequality we have
\[
\begin{aligned}
\Big|\int_{\Omega_k}\overline \omega^a_k(z)f_k(\hat z_k+r_kz)\phi(z)\,dz\Big|\leq
&\, c\, \|\phi\|_{L^\infty(\Omega_k)}\Big(\int_{\Omega_k}\overline \omega^a_k(z)|f_k(\hat z_k+r_kz)|^p  dz\Big)^{\frac{1}{p}}\\
=&\, c\, \|\phi\|_{L^\infty(\Omega_k)}r_k^{-\frac{d}{p}}\Theta_k^{-\frac{a}{p}}\Big(\int_{B^*_1}\omega^a_k|f_k|^p \,dz\Big)^{\frac{1}{p}}\leq  \, c\, \|\phi\|_{L^\infty(\Omega_k)}r_k^{-\frac{d}{p}} \Theta_k^{-\frac{a}{p}}I_k\,,
\end{aligned}
\]
where $I_k$ is as in \eqref{eq:C0:contr}. As a consequence, using also \eqref{eq:IkMk} we get
\[
|J^1_k|\leq \, c\,\|\phi\|_{L^\infty(\Omega_k)}\frac{I_k}{M_k} r_k^{2-\alpha-\frac{d}{p}} \Theta_k^{-\frac{a}{p}}\leq \frac{c}{k} \|\phi\|_{L^\infty(\Omega_k)}r_k^{2-\alpha-\frac{d+\langle a^+ \rangle}{p}}\prod_{i=1}^{n}\Bigg(\frac{r_k^{a_i^+}}{\theta_{i,k}^{a_i}}\Bigg)^\frac{1}{p}\,.
\]
Since $0<r_k\leq \theta_{i,k}\leq c$ for some $c>0$, and $\alpha\leq 2-\frac{d+\langle a^+ \rangle}{p}$, it follows that
\[
|J^1_k|\leq \delta_k \|\phi\|_{L^\infty(\Omega_k)}\,,
\]
for some sequence $\delta_k\to 0$ as $k\to \infty$.

We now turn to the term $J^2_k$. By similar computations as above we get
\begin{align*}
\left|\int_{\Omega_k}\overline \omega_k^a(z)F_k(\hat z_k+r_k z)\cdot \nabla \phi(z)\,  dz\right|&\leq \|\nabla \phi \|_{L^2(\Omega_k, \overline\omega_k^a(z) dz) }\left( \int_{\Omega_k \cap B_R}\overline\omega_k^a(z)|F_k(\hat z_k+r_k z)|^2 \, dz \right)^\frac{1}{2}\\
& \leq \, c\, \|\nabla \phi \|_{L^2(\Omega_k, \overline\omega_k^a(z)dz) }\left( \int_{\Omega_k}\overline\omega_k^a(z)|F_k(\hat z_k+r_k z)|^q \,dz \right)^\frac{1}{q}\\
&= \, c\, \|\nabla \phi \|_{L^2(\Omega_k, \overline\omega_k^a(z) dz) } r_k^{-\frac{d}{q}}\Theta_k^{-\frac{a}{q}}\left(\int_{B^1_*}\omega_k|F_k|^q \,dz \right)^\frac{1}{q}\\
&\leq \, c\, \|\nabla \phi \|_{L^2(\Omega_k, \overline\omega_k^a(z) dz) } r_k^{-\frac{d}{q}}\Theta_k^{-\frac{a}{q}}I_k\,.
\end{align*}
Therefore,
\[
|J^2_k|\leq \frac{c}{k} \|\nabla \phi\|_{L^2(\Omega_k,\overline{\omega}^a_k(z)dz)}r_k^{1-\alpha-\frac{d+\langle a^+ \rangle}{q}}\prod_{i=1}^{n}\left(\frac{r_k^{a_i^+}}{\theta_{i,k}^{a_i}}\right)^\frac{1}{q}\,,
\]
and hence, also in this case,
\[
|J^2_k|\leq \delta'_k \|\nabla \phi\|_{L^2(\Omega_k,\overline{\omega}^a_k(z)dz)}\,,
\]
for some sequence $\delta'_k\to0$ as $k\to\infty$. It is worth emphasizing that, by the preceding discussion, one has that, for every $\phi \in C^\infty_c(B_R)$, it holds
\begin{equation}\label{eq3.31}
\Big|\int_{\Omega_k}\overline{\omega}_k^a (z) A(\hat z_k+r_k z)\nabla w_k(z)\cdot \nabla \phi(z) \, dz\Big|\leq o(1)(\|\phi\|_{L^\infty(\Omega_k)}+\|\nabla\phi\|_{L^2(\Omega_k,\overline{\omega}^a_k(z)dz)})\,.
\end{equation}
Next, we aim to show that 
\begin{equation}\label{eq3.38}
\int_{\Omega_k}\overline{\omega}_k^a (z)A(\hat z_k+r_k z)\nabla w_k(z)\cdot \nabla \phi(z) \, dz\rightarrow \int_{\Omega_\infty}\overline \omega^a (z){A}_\infty\nabla w(z) \cdot \nabla \phi (z) \, dz\,,
\end{equation}
where ${A}_\infty := \lim_{k \to \infty}A(\hat z_k + r_k z )$. Since $A$ admits a modulus of continuity, the Arzel\`a-Ascoli theorem guarantees that the convergence $A(\hat z_k+r_k z) \to A_\infty$ is locally uniformly, and that ${A}_\infty$ is a constant matrix.

To prove \eqref{eq3.38}, first we establish a local uniform bound for the weighted $L^2$-norm of $\nabla w_k$. Fix $0<r<R$, and let $\eta\in C^\infty_c(B_R)$ be a cutoff function such that $\eta\equiv1$ on $B_r$.  Taking $\phi=\eta^2 w_k$ in \eqref{eq3.31} and performing standard computations, we obtain 
\begin{equation}\label{eq3.55}
\|\nabla w_k\|_{L^2(B_r\cap \Omega_k,\overline{\omega}^a_k(z)dz)}\leq C_{r,a,p,q},
\end{equation}
where we have also used that $\|w_k\|_{{L^\infty(B_R \cap \Omega_k)}} \leq c$, since $w_k \to w$, and $w$ is locally bounded. In addition, by standard Rellich-Kondrakov we deduce that $w_k$ weakly converges to $w$ in $H^1_{\loc}(\Omega_\infty)$ and strongly in $L^2_{\loc}(\Omega_\infty)$.

Fix a compact set $K\subset \Omega_\infty$, and take $\eta \in C^{\infty}_c(K)$. Recall that $\overline \omega^a\in L^1_\loc(\Omega_\infty)$. Then we can test \eqref{eq:wkeq} with $\eta^2(w_k-w) \in H^1(K, \overline \omega^a_k (z)dz)$ and, after standard computations, infer that $\nabla w_k \to \nabla w$ strongly in $L^2_{\loc}(\Omega_\infty)$. In particular, $\nabla w_k\rightarrow \nabla w$ a.e. in $\Omega_\infty$. Therefore, by \eqref{eq3.55} and Fatou's lemma, we obtain 
\[
\|\nabla w\|_{L^2(B_r\cap \Omega_\infty,\overline{\omega}^a(z)dz)}< \infty\,.
\]
Since $w \in H^1_{\loc}(\Omega_\infty) \subset W^{1,1}_{\loc}(\Omega_\infty)$, Lemma \ref{lemH=W} and Lemma \ref{thmH=W} imply that $w \in H^1_\loc(\Omega_\infty, \overline \omega^a(z) dz)$. Finally, recalling that $\overline{\omega}^a_k$ is uniformly integrable in $L^1(K)$ for every compact set $K \subset \overline\Omega_\infty$, by Vitali's convergence theorem we get \eqref{eq3.38}.

Putting together \eqref{eq3.31} and \eqref{eq3.38} we conclude that for all $R>0$ and all $\phi\in C^\infty_c(B_R)$ it holds
\[
\int_{\Omega_\infty} \overline \omega^a A_\infty\nabla w \cdot \nabla \phi  \,dz=0\,,
\]
that is, $w$ is an entire solution to 
\begin{equation*}
\begin{cases}
-\dive(\overline \omega^a A_\infty\D w) = 0, & \text{ in }\Omega_\infty \\
\overline\omega^a A_\infty\D w\cdot e_{y_i} = 0,  & \text{ on } \Sigma_i\,, \ \text{ for } \ i\in H\,.
\end{cases}
\end{equation*}
Let $n_* = \#H$. If $n_*=0$, it follows that $\Omega_\infty=\R^d$ and $\overline\omega^a = cost$. Then, by \eqref{eq:wgrowth} and the classical Liouville theorem in $\R^d$, we deduce that $w$ must be constant. If $n_* > 0$, note that $A_\infty = A(\hat z_\infty)$, where $\hat z_\infty = \lim_{k \to \infty} \hat z_k$. By definition of $\hat z_k$, it follows that $\hat z_\infty \in \cap_{i \in H}\Sigma_i$, that is, $\hat z_\infty$ lies at the intersection of $n_*$ hyperplanes.  Hence, by Assumption \eqref{Ass:matrix}, and after reordering the variables if necessary, we may write $A_\infty$ in the block form 
\[
A_\infty = 
\begin{pmatrix}
    P_\infty & Q_\infty\\
    Q_\infty^\top & S_\infty\,,
\end{pmatrix}
\]
where $P_\infty$ is a $(d-n_*) \times (d-n_*)$ matrix, $Q$ is $n_* \times (d-n_*)$, and $S$ is an $n_*\times n_*$ diagonal matrix. That is, we obtain that $w$ is an entire solution to a problem of the type \eqref{eq:entire:liouville}. Hence, by Theorem \ref{T:Liouville}, $w$ must again be constant. Since we have already observed that $w$ is non-constant, this gives a contradiction and completes the proof.
\end{proof}

\subsection{Stable Schauder estimates}

We now establish the $\varepsilon$-stable $C^{1,\alpha}$ estimates. Maintaining the same notation as before, we have the following result.
\begin{teo}\label{thmc1stab}
Let $a\in (-1,\infty)^n$, $\eps \in (0,1]$ and define $\omega_\varepsilon^a:=\prod_{i=1}^{n}(\varepsilon^2+y_i^2)^{\frac{a_i}{2}}
$. Let $u_\varepsilon$ be a family of solutions to \eqref{eqeps}. Suppose that $p>d+\langle a^+\rangle$ and $ \alpha\in (0,1-\frac{d+\langle a^+\rangle}{p}]$.
Assume that $A$ satisfies \eqref{Ass:matrix}, that $A\in C^{0,\alpha}(B_1^*)$ and that $\|A\|_{C^{0,\alpha}(B_1^*)}\leq  L $. 
Then, there exists a constant $C$ which depends on $d,\alpha,p, L ,a,\lambda,\Lambda$ such that
\begin{equation}\label{eqC1alphaeps}
\|u_\varepsilon\|_{C^{1,\alpha}(B_{1/2}^*)}\leq C\left(\|u_\varepsilon\|_{L^{2,a, \eps}(B_1^*)}+ \|f_\varepsilon\|_{L^{p,a, \eps}(B_1^*)}+\|F_\varepsilon\|_{C^{0,\alpha}(B_1^*)}\right)\,.
\end{equation}
\end{teo}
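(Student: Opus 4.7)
The strategy mirrors that of Theorem \ref{thmC0stable}, but now we perform a \emph{first order} blow-up in which both the value and the gradient at the base point are subtracted; the limit will then be forced to be affine by the Liouville Theorem \ref{T:Liouville} applied with growth exponent $\gamma = 1+\alpha < 2$. Suppose \eqref{eqC1alphaeps} fails along a sequence $\eps_k \to 0$, namely, there exist data $(A_k,f_k,F_k)$ satisfying the hypotheses uniformly and solutions $u_k$ to \eqref{eqeps} with
\[
[u_k]_{C^{1,\alpha}(B^*_{1/2})} \geq k I_k, \qquad I_k := \|u_k\|_{L^{2,a,\eps_k}(B^*_1)}+\|f_k\|_{L^{p,a,\eps_k}(B^*_1)}+\|F_k\|_{C^{0,\alpha}(B^*_1)}.
\]
Theorem \ref{thmC0stable} gives $\|u_k\|_{C^{0,\alpha}(B^*_{3/4})} \leq cI_k$, while Theorem \ref{L:schuader:unifell} applied with $\omega^a_{\eps_k} A_k$ in place of $A$ ensures $u_k \in C^{1,\alpha}(B^*_{1/2})$, so that the seminorm is attained. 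Fix $\eta \in C^\infty_c(B_{3/4})$ with $\eta \equiv 1$ on $B_{1/2}$, set $M_k := [\eta u_k]_{C^{1,\alpha}(B^*_1)} \geq k I_k/2$, and pick $z_k, \zeta_k \in B^*_{3/4}$, $r_k := |z_k - \zeta_k|$, realizing $M_k$ up to a factor $1/2$. A standard interpolation between the $C^{0,\alpha}$-bound on $u_k$ and the seminorm $M_k$, together with $M_k/I_k \to \infty$, forces $r_k \to 0$.

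Introduce $\hat z_k$, the index set $H$, the rescaled domains $\Omega_k \uparrow \Omega_\infty = \bigcap_{i \in H}\{y_i>0\}$, the normalizing constants $\theta_{i,k}$, and the rescaled weight $\overline \omega^a_k \to \overline\omega^a$ exactly as in the proof of Theorem \ref{thmC0stable}. Since $\hat z_k \in \bigcap_{i \in H}\Sigma_i$ and $u_k \in C^{1,\alpha}(B^*_{1/2})$, the pointwise conormal boundary condition of \eqref{eqeps} combined with Assumption \ref{Ass:matrix} yields
\[
\bigl(A_k(\hat z_k)\,b_k + F_k(\hat z_k)\bigr)\cdot e_{y_i}=0 \qquad \text{for every }i\in H, \qquad b_k := \nabla u_k(\hat z_k).
\]
Set $P_k(z):=u_k(\hat z_k)+ b_k \cdot (z-\hat z_k)$ and define the two blow-ups
\[
v_k(z):= \frac{\eta u_k(\hat z_k + r_k z) - \eta u_k(\hat z_k) - r_k\,\nabla(\eta u_k)(\hat z_k)\cdot z}{M_k r_k^{1+\alpha}}, \qquad w_k(z):=\eta(\hat z_k)\frac{u_k(\hat z_k + r_k z) - P_k(\hat z_k + r_k z)}{M_k r_k^{1+\alpha}}.
\]
By construction $v_k(0)=|\nabla v_k(0)|=0$, $[v_k]_{C^{1,\alpha}(\Omega_k)}\leq 1$, $|v_k(z)|\leq c(1+|z|^{1+\alpha})$, while $\|v_k - w_k\|_{C^1_\loc(\Omega_\infty)}\to 0$ by smoothness of $\eta$ and the $C^{0,\alpha}$-bound on $u_k$. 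Hence, up to a subsequence, $v_k,w_k \to w \in C^{1,\alpha}_\loc(\overline{\Omega_\infty})$ with $w(0)=|\nabla w(0)|=0$ and $|w(z)|\leq c(1+|z|^{1+\alpha})$. Moreover, the choice of $z_k,\zeta_k$ gives
\[
\Bigl|\nabla v_k\bigl(\tfrac{z_k-\hat z_k}{r_k}\bigr) - \nabla v_k\bigl(\tfrac{\zeta_k-\hat z_k}{r_k}\bigr)\Bigr| \geq \tfrac{1}{2},
\]
with both arguments in a bounded subset of $\overline{\Omega_\infty}$, so that in the limit $\nabla w$ is not constant.

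The main technical step is the identification of the PDE solved by $w$. In weak form, the equation for $u_k-P_k$ replaces $F_k$ by $F_k + A_k b_k$; a Campanato-type estimate of the form $|b_k| \leq c(I_k/\rho + M_k \rho^\alpha)$, optimized in $\rho$, yields $|b_k|/M_k \to 0$. Splitting $F_k + A_k b_k = G_k + F^{(0)}_k$, with $G_k := (F_k - F_k(\hat z_k)) + (A_k - A_k(\hat z_k))b_k$ and constant residue $F^{(0)}_k := F_k(\hat z_k) + A_k(\hat z_k)b_k$, the $G_k$-contribution vanishes in the rescaled weak limit thanks to $\|A_k\|_{C^{0,\alpha}},\|F_k\|_{C^{0,\alpha}}\leq L+I_k$, $|b_k|/M_k \to 0$ and $I_k/M_k \to 0$. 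For the constant residue, the compatibility identity above shows that $F^{(0)}_k$ is tangential to every $\Sigma_i$ with $i \in H$; since $\overline\omega^a$ is independent of the variables $y_i$ with $i \notin H$ and of the $x$-variables, an integration by parts -- noting that the boundary pieces of $\Omega_k$ coming from $\Sigma_i$ with $i \notin H$ escape to infinity in rescaled coordinates, while those with $i \in H$ carry vanishing normal component -- implies that the rescaled $F^{(0)}_k$-contribution also vanishes in the limit. Handling the $f_k$-term and the matrix oscillation $A_k(\hat z_k + r_k \cdot) \to A_\infty$ exactly as in Theorem \ref{thmC0stable}, we conclude that $w$ is a weak entire solution on $\Omega_\infty$ to the homogeneous problem with constant matrix $A_\infty := A(\hat z_\infty)$, whose $n_*\times n_*$ lower-right block (with $n_* = \#H$) is diagonal by Assumption \ref{Ass:matrix}, as required by Theorem \ref{T:Liouville}. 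The growth $|w(z)|\leq c(1+|z|^{1+\alpha})$ with $1+\alpha<2$ then forces $w$ to be affine, contradicting the non-triviality of $\nabla w$ and closing the argument.
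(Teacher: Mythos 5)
Your overall scheme is the same as the paper's: contradiction, first--order blow-up at the projected points $\hat z_k$, identification of a limiting constant-coefficient problem in $\Omega_\infty$, and Theorem \ref{T:Liouville} with $\gamma=1+\alpha<2$. Two of your steps genuinely deviate from the paper and are in fact correct (and arguably simpler): (a) you get $r_k\to0$ by interpolation, $\|\nabla(\eta u_k)\|_{L^\infty}\le c\big(\|u_k\|_{L^\infty}^{\alpha/(1+\alpha)}M_k^{1/(1+\alpha)}+\|u_k\|_{L^\infty}\big)$, together with $M_k\ge ckI_k$ and $\|u_k\|_{L^\infty(B^*_{3/4})}\le cI_k$; the paper instead observes that $r_k\to0$ is \emph{not} immediate and rules out $r_k\to\bar r>0$ by showing the blow-up limit would be affine. (b) Your Campanato-type bound, which really controls $\eta(\hat z_k)|\nabla u_k(\hat z_k)|\le c\big(I_k^{\alpha/(1+\alpha)}M_k^{1/(1+\alpha)}+I_k\big)=o(M_k)$ (note: only with the factor $\eta(\hat z_k)$, not $|b_k|$ itself, but that is all you use), makes the oscillation terms $(A-A(\hat z_k))\nabla u_k(\hat z_k)$ and $F_k-F_k(\hat z_k)$ harmless in one stroke, whereas the paper handles the first of these ($T^3_k$) by a two-step bootstrap (prove the theorem for $A\in C^{0,\alpha'}$, $\alpha'>\alpha$, then use the resulting gradient bound).

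The genuine gap is in the constant residue term $F^{(0)}_k=F_k(\hat z_k)+A(\hat z_k)\nabla u_k(\hat z_k)$, i.e. the paper's $T^4_k$. After integrating by parts, the boundary pieces are indeed harmless (tangentiality on $\Sigma_i$, $i\in H$; escape to infinity for $i\notin H$), but the surviving bulk terms are $\frac{r_k^{-\alpha}\eta(\hat z_k)}{M_k}\int_{\Omega_k}\phi\,\big(F^{(0)}_k\cdot e_{y_i}\big)\,\partial_{y_i}\overline\omega^a_k\,dz$ for $i\notin H$, and your justification that they vanish is only qualitative ("$\overline\omega^a$ is independent of $y_i$, $i\notin H$" -- true only for the \emph{limit} weight). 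Quantitatively, your Campanato bound gives $\eta(\hat z_k)|F^{(0)}_k|=o(M_k)$, hence a prefactor $o(r_k^{-\alpha})$, while $\int_{\Omega_k\cap B_R}|\partial_{y_i}\overline\omega^a_k|\,dz\le c\,r_k/(y_k)_i$ (cf. \eqref{eq4.97}); the product is $o(1)\,r_k^{1-\alpha}/(y_k)_i$, which need \emph{not} tend to zero: $(y_k)_i/r_k\to\infty$ at an uncontrolled rate while $r_k^{-\alpha}\to\infty$ (take e.g. $(y_k)_i\sim\sqrt{r_k}$ with $r_k$ decaying much faster than any power of $1/k$, and $\alpha>1/2$). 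The missing ingredient is the paper's estimate \eqref{eq:6.66}: for $i\notin H$ one compares with the conormal condition at the projection $\xi^i_k=\hat z_k-(y_k)_ie_{y_i}\in\Sigma_i$ and uses $[\nabla(\eta u_k)]_{C^{0,\alpha}}\le M_k$, $\|u_k\|_{C^{0,\alpha}(B^*_{3/4})}\le cI_k$, $[F_k]_{C^{0,\alpha}}\le I_k$ and $A\in C^{0,\alpha}$ to obtain the \emph{quantitative near-tangentiality} $\eta(\hat z_k)|(A\nabla u_k+F_k)(\hat z_k)\cdot e_{y_i}|\le cM_k\,(y_k)_i^\alpha$, which turns the surviving bulk terms into $O\big((r_k/(y_k)_i)^{1-\alpha}\big)\to0$ as in \eqref{eq4.92}--\eqref{eq4.97}. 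Without this refinement the limiting equation for $w$ is not justified, and the contradiction via Theorem \ref{T:Liouville} does not close.
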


\begin{proof}

By Theorem \ref{L:schuader:unifell}, applied with $\omega^a_\eps A$ in place of $A$, inequality \eqref{eqC1alphaeps} holds with a constant that may, a priori, blow-up as $\eps\to 0$. To show that this does not occur, we argue by contradiction. Suppose that there exists a sequence $\varepsilon_k \to 0$ such that 
\begin{equation*}
    \|u_{\varepsilon_k}\|_{C^{1,\alpha}(B^*_{1/2})}\geq k\left(\|u_{\varepsilon_k}\|_{L^{2,a, \eps_k}(B^*_1)}+ \|f_{\varepsilon_k}\|_{L^{p, a, \eps_k}(B^*_1)}+\|F_{\varepsilon_k}\|_{C^{0,\alpha}(B^*_1)}\right)=:k I_k \,.
\end{equation*}
For simplicity, set $u_k:=u_{\varepsilon_k}$, $f_k:= f_{\eps_k}$, $F_k:= F_{\eps_k}$ and $\omega^a_k:=\omega^a_{\varepsilon_k}$. By Theorem \ref{thmC0stable} we have 
\[
\|u_k\|_{C^{0,\alpha}(B^*_{1/2})}\leq c I_k \,.
\]
Hence, for $k$ sufficiently large,
\[
\|\nabla u_k\|_{L^\infty(B^*_{1/2})}+[\nabla u_k]_{C^{0,\alpha}(B^*_{1/2})}\geq \frac{k}{2}I_k\,.
\]
Moreover, by definition there exists $\xi_k\in B^*_{1/2}$ such that
\[
|\nabla u_k(\xi_k)|\geq \frac{\|\nabla u_k\|_{L^\infty(B^*_{1/2})}}{2},
\]
and 
\begin{align*}
    |\nabla u_k(\xi_k)|^2&\leq  \frac{1}{\int_{B^*_{1/2}}\omega_k^a(z)\,dz}\int_{B^*_{1/2}}\omega^a_k(z)|\nabla u_k(\xi_k)|^2 dz \\
    &\leq c\Big(\int_{B^*_{1/2}} \omega_k^a(z)|\nabla u_k(\xi_k)-\nabla u_k(z)|^2  dz +\int_{B^*_{1/2}} \omega_k^a|\nabla u_k|^2 dz\Big)\leq c( [\nabla u_k]^2_{C^{0,\alpha}(B^*_{1/2})}+I^2_k)\,,
\end{align*}
where in the last inequality we used the Caccioppoli inequality \eqref{eqwecacc}. Collecting the previous estimates, we conclude that for $k$ large enough,
\[
[\nabla u_k]_{C^{0,\alpha}(B^*_{1/2})}\geq c k I_k.
\]
Let us consider a cutoff function $\eta\in C^\infty_c(B_{3/4})$ such that $\eta\equiv 1$ in $B_{1/2}$. Then, $\eta u_k\in C^{0,\alpha}(B^*_1)\cap \hat H^1(B^*_1, \omega^a_k(z)dz)$ and 
\begin{equation*}
    M_k:=[\nabla (\eta u_k)]_{C^{0,\alpha}(B^*_1)} \geq [\nabla (\eta u_k)]_{C^{0,\alpha}(B^*_{1/2})}= [\nabla u_k]_{C^{0,\alpha}(B^*_{1/2})}\geq ckI_k.
\end{equation*}
By definition, there exist points $z_k,\zeta_k\in B^*_1$ with $z_k\neq\zeta_k$ such that 
\begin{equation*}
    \frac{|\nabla (\eta u_k)(z_k)-\nabla (\eta u_k)(\zeta_k)|}{|z_k-\zeta_k|^\alpha}\geq \frac{M_k}{2}\,,
\end{equation*}
and without loss of generality we may assume that $\zeta_k, z_k\in B^*_{3/4}$. 

Set $r_k :=|z_k- \zeta_k|$, $\hat z_k$ as in \eqref{eq:hatzk}, $\Omega_k:=\frac{B^*_1-\hat{z}_k}{r_k}$ and $\Omega_\infty=\lim_{k\to\infty}\Omega_k$. For $z\in\Omega_k$ let us define the functions
\begin{align*}
    v_k(z):=\frac{\eta(\hat{z}_k+r_k z)[u_k(\hat{z}_k+r_k z)-u_k(\hat{z}_k)]-r_k\eta(\hat{z}_k)\nabla u_k(\hat{z}_k)\cdot z}{M_k r_k^{1+\alpha}}\,,\\
    w_k(z):=\frac{\eta(\hat{z}_k)[u_k(\hat{z}_k+r_k z)-u_k(\hat{z}_k)]-r_k\eta(\hat{z}_k)\nabla u_k(\hat{z}_k)\cdot z}{M_k r_k^{1+\alpha}}\,.
\end{align*} 
Note that, in contrast with the proof of Theorem \ref{thmC0stable}, we cannot ensure at this stage that $r_k \to 0$.

Let us observe that, for every $z, \zeta \in \Omega_k$,
\[
   \begin{aligned}
    |\nabla v_k(z)-\nabla v_k(\zeta)|\leq&\frac{1}{M_k r_k^\alpha}|\nabla (\eta u_k)(\hat z_k+r_k z)-\nabla (\eta u_k)(\hat{ z}_k+r_k\zeta)|\\
    +&\frac{1}{M_k r_k^\alpha}|u_k(\hat{z}_k)||\nabla \eta (\hat z_k+r_k z)-\nabla\eta (\hat z_k+r_k \zeta)|\\
    \leq &|z-\zeta|^\alpha+\frac{I_k}{M_k}c |z-\zeta|^\alpha\leq |z-\zeta|^\alpha(1+\frac{c}{k})\leq 2|z-\zeta|^\alpha\,.
    \end{aligned}
\]
Since $v_k(0)=0$ and $\nabla v_k (0)=0$, it follows that 
\begin{equation}
\label{eq:vk:1alpha:estimate}
\begin{aligned}
&|\nabla v_k(z)|=|\nabla v_k(z) - \nabla v_k(0)| \leq 2|z|^\alpha\,,\\ 
    &|v_k(z)|=\Big|\int_0^1\nabla v_k(tz)\cdot z \,dt \Big|\leq \int_0^1 |\nabla v_k (tz)-\nabla v_k(0)||z|dt\leq \frac{2}{1+\alpha}|z|^{1+\alpha}\,.
\end{aligned}
\end{equation}
As a consequence, for every compact set $K\subset \R^d$, we have $\|v_k\|_{C^{1,\alpha}(K\cap \Omega_\infty)}\leq c(K)$. Hence, by the Arzel\'a-Ascoli theorem, $
v_k\rightarrow v$ in $C_{\loc}^{1}(\Omega_\infty)$. Moreover, passing to the limit in \eqref{eq:vk:1alpha:estimate}, we obtain the subquadratic growth condition
\begin{equation}\label{eq:subquadgrowth}
|v(z)| \leq \frac{2}{1+\alpha}|z|^{1+\alpha}\,.
\end{equation}
Next, we prove that $v$ has non constant gradient in $\Omega_\infty$. Let us observe that
\begin{align*}
\Big| \nabla v_k\left(\frac{z_k - \hat z_k}{r_k}\right)-\nabla v_k\left(\frac{\zeta_k -\hat z_k}{r_k}\right)\Big|=&\big| \nabla (\eta u_k)(z_k)-\nabla (\eta u_k)(\zeta_k)-u_k(\hat z_k)(\nabla \eta (z_k)-\nabla \eta (\zeta_k))\big|\frac{1}{M_k r_k ^\alpha}\\
\geq & \big| |\nabla (\eta u_k)(z_k)-\nabla (\eta u_k)(\zeta_k)|-|u_k(\hat z_k)(\nabla \eta (z_k)-\nabla \eta (\zeta_k))|\big|\frac{1}{M_k r_k ^\alpha}\\
\geq &\frac{1}{2}-r_k^{1-\alpha}\frac{1}{k}\geq \frac{1}{4}\,.
\end{align*}
Passing to the limit in the previous estimate we get that there exists two points $\xi_1\not=\xi_2$ such that $|\D v(\xi_1)-\D v(\xi_2)|\ge 1/4$, hence, $\D v$ is not constant. Summing up, the limit function $v$ has non-constant gradient and satisfies a sub-quadratic growth condition. We will apply a suitable Liouville-type theorem to obtain a contradiction.

We now show that $r_k\rightarrow 0$. Define 
\[
\ell_k := \frac{\eta(\hat z_k)\nabla u_k(\hat z_k)}{M_k r_k^{\alpha}}\,.
\]
Since $\eta\equiv 0$ in $B_1\setminus B_{3/4}$, 
\[
\|\nabla (\eta u_k)\|_{L^\infty(B^*_1)}\leq M_k\,.
\]
Consequently
\begin{align}\label{eq3.81}
    |\eta (\hat z_k)\nabla u_k(\hat z_k)|=|\nabla (\eta u_k)(\hat z_k)-u_k(\hat z_k)\nabla \eta (\hat z_k)|\leq M_k+c\|u_k\|_{L^\infty(B^*_{3/4})}\leq M_k+\frac{c}{k}M_k\leq  2 M_k\,.
\end{align}
Hence, $\ell_k \leq cr_k^{-\alpha}$. 

Assume now by contradiction that $r_k\rightarrow \bar r >0$. Then, up to subsequences, $\ell_k\to \ell$ for some $\ell \in \R^d$. Let $\hat z = \lim_{k\to \infty}\hat z_k$; then $\Omega_\infty=\frac{B^*_1-\hat z}{\bar r}$. Note that, since $r_k \not \to 0$, it follows that $\hat z_k = (x_k, 0)$ (see \eqref{eq:hatzk}). Therefore, for every $b \in (0,1)$ and all sufficiently large $k$,
\[
\frac{B^*_{b}-\hat z}{\bar r}\subset \Omega_k\cap \Omega_\infty\,.
\]
If $z\in \frac{B^*_{b}-\hat z}{\bar r}$, then
\[
|\hat z_k +r_k z|\leq |\hat z_k-\hat z+(r_k-\bar r)z|+|\hat z+\bar rz|\leq o(1)+b\leq  \frac{1+b}{2}\,.
\]
Thus, for all such $z$,
\[
\begin{aligned}
\left| v_k(z)+\ell_k \cdot z \right|&=\frac{|\eta(\hat z_k+r_k z)||u_k(\hat z_k+r_k z)-u_k(\hat z_k)|}{M_k r_k^{1+\alpha}}\leq c\frac{\|u_k\|_{L^\infty(B^*_{\max\{(1+b)/2, 3/4\}})}}{I_kk}\leq c\frac{1}{k}\rightarrow 0\,,
\end{aligned}
\]
where the last inequality follows from Lemma \ref{thmdegiorgi}. Thus, we infer that $v(z)=\ell\cdot z$ in $\frac{B^*_{b}-\hat z}{\bar r}$. Since $b \in (0,1)$ is arbitrary, $v$ is an affine function in $\Omega_\infty$, contradicting the fact that $v$ has non-constant gradient in $\Omega_\infty$. Therefore $r_k\rightarrow 0$ and, using the same notations as in Theorem \ref{thmC0stable}, we have  
\begin{equation*}
    \Omega_\infty=
        \bigcap_{i\in H}\{y_i>0\} \,.
\end{equation*}

Now fix a compact set $K\subset \Omega_\infty$. For $z\in K$ we have
\begin{align*}
|v_k(z)-w_k(z)|=\frac{|\eta(\hat z_k+r_k z)-\eta (\hat z_k)||u_k(\hat z_k+r_k z)-u_k(\hat{z}_k)|}{M_k r_k^{1+\alpha}}\leq c\frac{r_k^{\beta-\alpha}}{k}\rightarrow 0
\end{align*}
where we used the $C^{0,\beta}$ regularity of $u_k$ for every $\beta \in (0,1)$ and the smoothness of $\eta $. Therefore, $v_k$ and $w_k$ converge to the same limit function $w$. As we shall see, the limit function $w$ satisfies a new (possibly weighted) boundary value problem.

To establish this, we proceed as in the proof of Theorem \ref{thmC0stable}, to which we refer for the relevant notation. Fix $R>0$ and $\phi\in C^\infty_c(B_R)$. For $k$ sufficiently large we have  
\begin{align}
\begin{split}\label{eq3.86}
    \int_{\Omega_k}\overline\omega^a_k(z)A(\hat z_k+r_kz)\nabla w_k(z)\cdot\nabla \phi (z) dz= T_k^1-T_k^2-T_k^3-T_k^4\,,
\end{split}
\end{align}
where
\[
\begin{aligned}
&T_k^1:=\frac{r_k^{1-\alpha} \eta (\hat z _k)}{M_k}\int_{\Omega_k}\overline\omega^a_k(z)f_k(\hat{z}_k+r_k z)\phi(z) \,dz\,,\\
&T_k^2:=\frac{r_k^{-\alpha} \eta (\hat z _k)}{M_k} \int_{\Omega_k} \overline\omega^a_k(z)(F_k(\hat z_k+r_k z)-F_k(\hat z_k)) \cdot \nabla \phi(z)\,  dz\,,\\
& T_k^3:=\frac{r_k^{-\alpha} \eta (\hat z _k)}{M_k} \int_{\Omega_k} \overline\omega^a_k(z)(A(\hat z_k+r_k z)-A(\hat z_k))\nabla  u_k(\hat z_k)\cdot\nabla \phi (z) \, dz\,,\\
& T_k^4:=\frac{r_k^{-\alpha}\eta (\hat z _k)}{M_k} \int_{\Omega_k}\overline\omega^a_k(z)(A(\hat{z}_k)\nabla u_k(\hat z_k)+F_k(\hat z_k))\cdot \nabla \phi(z)  \,dz\,.
\end{aligned}
\]
Arguing as in the proof of Theorem \ref{thmC0stable} and using the assumption on $p$, we obtain
\[
|T_k^1|\leq \delta_k \|\phi\|_{L^\infty(\Omega_k)},\qquad \text{with }\delta_k\rightarrow 0.
\]
Next, since $F_k \in C^{0,\alpha}(B_1^*)$, we have
\[
\begin{aligned}
\Big| \int_{\Omega_k} \overline\omega^a_k(z)(F_k(\hat z_k+r_k z)- F_k(\hat z_k))\cdot\nabla \phi(z)\, dz\Big|\leq c I_k r_k^\alpha \|\nabla \phi\|_{L^2(\Omega_k,\overline \omega^a_k(z)dz)}\,.
\end{aligned}
\]
Recalling that $I_k \leq cM_k/k$, it follows that
\[
|T_k^2|\leq \delta'_k \|\nabla \phi\|_{L^2(\Omega_k,\overline \omega^a_k(z)dz)},\qquad \text{with }\delta'_k\rightarrow 0.
\]

We now estimate $T_k^4$. For each $i \in H$, we have $(\hat y_k)_i = 0$, that is, $\hat z_k \in \Sigma_i$. Thus, by the Neumann boundary condition,
\[
\eta(\hat z_k)(A\nabla u_k+F_k)(\hat z_k)\cdot e_{y_i} = 0 \quad \text{ for all }i \in H\,.
\]
Then, using that $\Omega_k \to \Omega_\infty$ and the characterization of $\Omega_\infty$, we get that, for $k$ sufficiently large, 
\[
\phi\,\overline\omega^a_k(z)\eta(\hat z_k)(A\nabla u_k+F_k)(\hat z_k) \cdot\nu = 0 \qquad \text{ on }\partial \Omega_k\,,
\]
where $\nu$ is the outward unit normal to $\partial \Omega_k$. Consequently, integration by parts gives
\begin{align}\label{eq4.92}
\begin{split}
    \int_{\Omega_k}\overline\omega^a_k(z)\eta(\hat z_k)&(A\nabla u_k+F_k)(\hat z_k) \cdot \nabla \phi(z) \, dz\\
    &=\sum_{i\not \in H}\int_{\Omega_k}     \phi\,\eta(\hat z_k)(A\nabla u_k+F_k)(\hat z_k)\cdot e_{y_i}\, \partial_{y_i}\overline\omega^a_k(z) \, dz\,.
\end{split}
\end{align}
Fix now $i\notin H$, recall that $(\hat y_k)_i = (y_k)_i$ in this case, and define $\xi_k^i = \hat z_k - (y_k)_i e_{y_i}$ the unique projection of $\hat z_k$ onto $\Sigma_i = \{y_i = 0\}$. Then by the Neumann boundary condition 
\[
\eta(\hat z_k)(A\nabla u_k+F_k)(\hat z_k)\cdot e_{y_i} = \eta(\hat z_k)(A\nabla u_k+F_k)(\hat z_k)\cdot e_{y_i} - \eta(\xi^i_k)(A\nabla u_k+F_k)(\xi^i_k)\cdot e_{y_i}.
\]
Thus, using that $A \in C^{0,\alpha}(B^*_1)$ and Theorem \ref{thmC0stable} in $B^*_{3/4}$ we get 
\begin{equation}\label{eq:6.66}
\begin{split}
|\eta(\hat z_k)(A\nabla u_k+F_k)(\hat z_k)\cdot e_{y_i}|&\\
\leq  |A\nabla (\eta u_k)(\hat z_k)-A\nabla&(\eta u_k)(\xi^i_k)|+|u_k A \nabla \eta(\hat{z}_k)-u_kA\nabla \eta(\xi^i_k)|+ |\eta F_k(\hat z_k)-\eta F_k(\xi^i_k)|\\
 \leq c(M_k(y_k)_i^\alpha& + \|u_k\|_{C^{0,\alpha}(B^*_{3/4})}(y_k)_i + I_k(y_k)_i^\alpha) \leq cM_k(y_k)_i^\alpha\,.
\end{split}
\end{equation}
Next, we notice that, for $i \not \in H$ and $z \in B_R$, 
\[
    |\partial_{y_i}\overline \omega^a_k| = \overline\omega^a_k\frac{|(y_k)_i + r_k y_i|}{\eps_k^2 + ((y_k)_i + r_k y_i)^2}r_k\leq c \overline \omega^a_k \frac{r_k}{(y_k)_i}\,.
\]
Thus
\begin{align}\label{eq4.97}
 \int_{\Omega_k}  |\phi| |\partial_{y_i}\overline \omega^a_k|\,dz\leq  \|\phi\|_{L^\infty(\Omega_k)}\int_{\Omega_k\cap B_R}  |\partial_{y_i}\overline \omega^a_k|\,dz\leq c \|\phi\|_{L^\infty(\Omega_k)} \frac{r_k}{(y_k)_i}.
\end{align}
By \eqref{eq4.92}, \eqref{eq:6.66} and \eqref{eq4.97} we find that
\[
    \Big|\int_{\Omega_k}\overline\omega^a_k(z)\eta(\hat z_k)(A\nabla u_k+F_k)(\hat z_k) \cdot \nabla \phi(z) \, dz \Big|\leq cM_k\|\phi\|_{L^\infty(\Omega_k)}\frac{r_k}{((y_k)_i)^{1-\alpha}}\,,
\]
and thus,
\[
    |T_k^4|\leq c\|\phi\|_{L^\infty(\Omega_k)}\sum_{i\notin H}\left(\frac{r_k}{(y_k)_i}\right)^{1-\alpha}=c\delta''_k\|\phi\|_{L^\infty(\Omega_k)}\,, \qquad \text{with }\delta''_k \to 0\,.
\]
It remains to estimate $T_k^3$. We claim that 
\begin{equation}\label{eq:c1claim}
    |T_k^3|\leq c\delta'''_k\|\nabla \phi\|_{L^2(\Omega_k,\overline \omega^a_k dz)}\,, \quad \text{ for some }\delta'''_k \to 0\,.
\end{equation}
We postpone the proof of this claim and first complete the argument. Combining all estimates in \eqref{eq3.86}, we obtain
\[
    \Big|\int_{\Omega_k}\overline\omega^a_k(z)A(\hat z_k+r_kz)\nabla w_k(z)\cdot \nabla \phi (z) dz \Big|\leq \tilde\delta_k(\|\phi\|_{L^\infty(\Omega_k)}+\|\nabla \phi\|_{L^2(\Omega_k,\overline \omega_k^a dz)}).
\]
for some $\tilde\delta_k \to 0$. Proceeding exactly as in the final part of the proof of Theorem \ref{thmC0stable} we infer that
\[
\int_{\Omega_\infty} \overline \omega^a A_\infty\nabla w \cdot \nabla \phi  \,dz=0\,, \qquad \text{ for all } R>0 \text{ and }\phi\in C^\infty_c(B_R)\,,
\]
that is, $w$ is an entire solution to 
\begin{equation*}
\begin{cases}
-\dive(\overline \omega^a A_\infty\D w) = 0, & \text{ in }\Omega_\infty \\
\overline\omega^a A_\infty\D w\cdot e_{y_i} = 0,  & \text{ on } \partial \Omega_\infty \cap \Sigma_i\,, \ \text{ for } \ i\in H\,.
\end{cases}
\end{equation*}
Here, $A_\infty$ is a uniformly elliptic constant matrix. Let now set $n_* = \#H$. 
If $n_*=0$, then $\Omega_\infty=\R^d$ and $\overline\omega^a = cost$. By \eqref{eq:subquadgrowth} and the classical Liouville theorem in $\R^d$, it follows that $w$ must be affine. If $n_* > 0$ the same conclusion follows by applying Theorem \ref{T:Liouville}, since $A_\infty$ satisfies its assumptions (see the proof of Theorem \ref{thmC0stable}). In both cases, $w$ is affine. However, we have already observed that $\nabla w$ is non-constant, which leads to a contradiction and thus completes the proof.

Finally, we prove the claim \eqref{eq:c1claim}. First, suppose that the matrix $A$ is slightly more regular, that is, assume $A \in C^{0,\alpha'}(B^*_1)$ for some $\alpha<\alpha'<1$. In this case, we easily obtain (see also \eqref{eq3.81})
\[
    |T_k^3|\leq c\|\nabla \phi\|_{L^2(\Omega_k,\overline \omega^a_kdz)}r_k^{\alpha'-\alpha}\rightarrow 0\,,
\]
which proves the claim under this stronger assumption. Consequently, Theorem \eqref{thmc1stab} holds when $A$ satisfies the suboptimal regularity condition $A \in C^{0,\alpha'}(B^*_1)$. Let us now return to the original assumption $A \in C^{0,\alpha}(B^*_1)$. We can apply the suboptimal regularity estimate just established to deduce that, for all $\beta \in (0,\alpha)$,
\[
\|u_k\|_{C^{1,\beta}(B_{3/4}^*)}\leq C\left(\|u_k\|_{L^{2,a, \eps}(B_1^*)}+ \|f_k\|_{L^{p,a, \eps}(B_1^*)}+\|F_\varepsilon\|_{C^{0,\alpha}(B_1^*)}\right)\,.
\]
As a consequence, 
\[
\|\nabla u_k\|_{L^\infty(B_{3/4}^*)} \leq c I_k \leq c \frac{M_k}{k}\,.
\]
Therefore,
\[
    |T_k^3|\leq c\|\nabla \phi\|_{L^2(\Omega_k,\overline \omega^a_k dz)}\frac{1}{k} \to 0\,,
\]
and the claim follows also in this case, completing the proof of the theorem.
\end{proof}

\subsection{H\"older and Schauder estimates when \texorpdfstring{$\varepsilon=0$}{e=0}}

Finally, by combining the results established in the previous sections, we obtain the main theorem of the work.

\begin{proof}[Proof of Theorem \ref{thm1}]
We prove statement (ii); the proof of the other statement is analogous. Throughout the proof, the constants involved depend only on the parameters specified in the statement. Let $u$ be a weak solution to \eqref{eq:degenerate:equation:B*} in $B_1^*$. By Lemma \ref{lemA.2*}, there exists a sequence of solutions $u_{\e_k}$ to 
\[
\begin{cases}
-\div( \omega_{\varepsilon_k}^a A\nabla u_{\varepsilon_k})=\omega^a_{\varepsilon_k} f_{k}+\div(\omega_{\varepsilon_k}^aF_{k} )\qquad &\text{in }B_{3/4}^*\\
\omega^a_{\varepsilon_k}(A\D u_{\varepsilon_k} +F_{k})\cdot e_{y_i} = 0,  & \text{on }\partial B^*_{3/4} \cap \Sigma_i, \text{ for } i=1,\dots,n,
\end{cases}
\]
where $\e_k\to0$, $\omega_{\varepsilon_k}^a:=\prod_{i=1}^{n}({\varepsilon_k}^2+y_i^2)^{\frac{a_i}{2}}
$ and
\[
\|u_k\|_{L^{2,a, \eps_k}(B_{3/4}^*)}  + \|f_k\|_{L^{p,a, \eps_k}(B_{3/4}^*)}+
\|F_k\|_{C^{0,\alpha}(B_{3/4}^*)} 
\le C 
\big(\|u\|_{L^{2,a}(B_1^*)}  +
\|f\|_{L^{p,a}(B_1^*)}+
\|F\|_{C^{0,\alpha}(B_1^*)}\big).
\]
Moreover, $u_k\to u$ in $H^{1}(B_{3/4}^*)$.

By combining Theorem \ref{thmc1stab} with the estimate above, we obtain
\[
\|u_{\e_k}\|_{C^{1,\alpha}(B_{1/2}^*)} \le C \big(\|u\|_{L^{2,a}(B_1^*)}  +
\|f\|_{L^{p,a}(B_1^*)}+
\|F\|_{C^{0,\alpha}(B_1^*)}\big). 
\]
By the Arzel\'a-Ascoli theorem, we have that $u_{\varepsilon_k} \to v$ in $C^1(B_{1/2}^*)$. Since  $u_{\varepsilon_k} \to u$ in $H^1(B_{3/4}^*)$, we have $v = u$, and \eqref{eq:1alpha} follows by passing to the limit in the above estimate.

Furthermore, the convergence $u_{\varepsilon_k} \to u$ in $C^1(B_{1/2}^*)$ implies that $u$ satisfies the boundary condition \eqref{eq:BC:thm1} and the proof is complete.
\end{proof}

\begin{remark}\label{R:drift}
In view of the higher-order regularity results of the next section, it is worth noting that the conclusions of Theorem \ref{thm1} remain valid even if the right-hand side of equation \eqref{eq:degenerate:equation:B*} contains an additional drift term of the form $\omega^a b \cdot \nabla u$, where $b$ is a vector filed satisfying the (suboptimal) assumption $b\in L^p(B_1, \omega^a dz)$ for some $p>d+\langle a^+\rangle$. The modifications required in the proofs are straightforward and follow standard arguments, so we omit them.
\end{remark}

\section{Higher-order regularity in some simple cases}\label{S:CKN}

\subsection{An application to CKN inequalities with monomial weights}

We now apply the previous results to establish higher-order regularity for a class of model equations associated with Caffarelli--Kohn--Nirenberg (CKN) inequalities. In particular, we 
consider the boundary value problem
\begin{equation}\label{eq7.2}
    \begin{cases}
-\dive(\omega^a h\D u) = \omega^a f & \text{in }B_1^* \\
\omega^a \partial_{y_i}  u  = 0  & \text{on }\partial B^*_1 \cap \Sigma_i, \text{ for } i=1,\dots,n,
\end{cases}
\end{equation}
where $h,f$ are scalar functions satisfying, for some $\alpha \in (0,1)$ and for every $i=1,\dots,n$
\begin{equation}\label{eq7.3}
\begin{cases}
h\in C^{2,\alpha}({B_1^*})\,, \\
h\ge c>0\,, \\
\partial_{y_i}h=0 \quad \text{on }\partial B^*_1 \cap \Sigma_i, \\ 
\end{cases}
\quad 
\begin{cases}
f\in C^{1,\alpha}({B_1^*})\,, \\
\partial_{y_i}f=0 \quad \text{on }\partial B^*_1 \cap \Sigma_i\,,\\ 
\displaystyle\frac{\partial_{y_i}f}{y_i} \in L^{p_i}(B_1^*,\omega^{a+2e_i}), \ \text{where } p_i=\frac{d+\langle (a+2e_i)^+\rangle}{1-\alpha}
\end{cases}
\end{equation}

\begin{remark}\label{R:supsing}
In the proof of the next theorem, we may encounter weights having one \emph{supersingular} component.
We provide here a brief overview of the results needed to handle such weights. 

Let us write $z = (x, y', \tau) \in \R^{d-n}\times\R^{n-1}\times \R$. Let $\hat a \in (-1, \infty)^{n-1} \times\{0\}$, $a_n \in \R$, and call $a = \hat a + a_ne_n$. Moreover, let $\eps \in [0,1]$. We define the norm
\[
\|u\|^2_{H^{1,a, \eps}(B_R)} = \int_{B_R}\omega^{ \hat a}(\eps^2 + \tau^2)^{a_n/2} ( u^2  + |\nabla u|^2) dz\,,
\]
and introduce the weighted Sobolev space 
\[
\tilde H^{1, a,\eps}(B_R) = \text{ the completion of }C^\infty(\overline{B_R}\setminus \Sigma_n) \text{ with respect to }\|\cdot\|_{H^{1,a, \eps}(B_R)}\,.
\] 
We recall some key properties of these spaces.

\noindent $-$ \emph{Inclusions and density:} 
Define
\[
H^{1, a, \eps}(B_R) = \text{ the completion of }\big\{u \in C^\infty(\overline{B_R}) \mid \|u\|_{H^{1,a, \eps}(B_R)}< \infty\big\} \text{ with respect to }\|\cdot\|_{H^{1,a, \eps}(B_R)}\,.
\] 
When either $a_n \in (-1, \infty)$ or $\eps >0$, this is exactly the Sobolev space we introduced in Section \ref{S:ff}, and one has
\[
\tilde H^{1, a, \eps}(B_R) \subsetneq H^{1, a, \eps}(B_R)\,.
\]
On the other hand, if $a_n \in (-\infty, -1]$ and $\eps = 0$, equality holds
\[
\tilde H^{1, a}(B_R) = H^{1, a}(B_R)\,.
\]
Moreover, if $u \in C^0(B_R) \cap C^1(B_R\setminus \Sigma_n)$ satisfies $\|u\|_{H^{1,a,\eps}(B_R)}< \infty$ and $u = 0$ in $\Sigma_n$, then $ u \in \tilde H^{1, a, \eps}(B_R)$. The proof follows by adapting the argument of \cite[Proposition 2.5]{CorFioVit25}.

\noindent $-$ \emph{Isometric transformation:} Let $a_n < 1$ and $\eps = 0$.  If $ u \in \tilde H^{1, a}(B_R)$, then $v:=|\tau|^{a_n/2}u \in \tilde H^{1, \hat a}(B_R)$. In particular, $\tilde H^{1, a}(B_R)$ is isometric to $\tilde H^{1, \hat a}(B_R)$ endowed with the (equivalent) norm
\[
\int_{B_R}\omega^{\hat a} (v^2 + |\nabla v|^2) dz +\frac{a_n(a_n-2)}{4} \int_{B_R}\omega^{\hat a}|\tau|^{-2}v^2  dz - \frac{a_n}{2}\int_{\partial B_1}\omega^{\hat a}v^2\, d \mathcal{H}^{n-1}\,. 
\]
To verify the equivalence of this norm, one can argue as in \cite[Appendix B]{SirTerVit21b}, using also the results from Section \ref{S:ff} (see in particular Remark \ref{Hardyrmrk}).

We say that $u$ is a weak solution to 
\[
\begin{cases}
- \dive(\omega^{\hat a}|\tau|^{a_n}A \nabla u)  = \omega^{\hat a}|\tau|^{a_n}f + \div(\omega^{\hat a}|\tau|^{a_n}F) & \text{ in }B_R\\
u = 0 & \text{ on }B_R \cap \Sigma_n\,,
\end{cases}
\]
if and only if $ u \in \tilde H^{1, a}(B_R)$ satisfies
\[
\int_{B_R}\omega^{\hat a}|\tau|^{a_n}\nabla u \cdot \nabla \phi\, dz = \int_{B_R} \omega^{\hat a}|\tau|^{a_n}f\phi \,dz- \int_{B_R} \omega^{\hat a}|\tau|^{a_n}F \cdot \nabla\phi \,dz\,, \, \text{ for every }\phi \in C^\infty_c(B_R \setminus \Sigma_n)\,. 
\]
Finally, we observe that Lemma \ref{lemA.1} remains valid for weights with one supersingular component. The proof can be adapted with minor changes: the only difference is that, in this setting, the $H = W$ property is not known to hold.
To overcome this issue, one may argue as in the proof of \cite[Lemma 2.12]{SirTerVit21a}, making use of the isometry described above.
\end{remark}

We are now in a position to prove the higher regularity result for solutions to \eqref{eq7.2}.

\begin{Theorem}\label{thm7.1}
Let $a\in (-1,\infty)^n$, $u$ be a weak solution to \eqref{eq7.2} and assume that \eqref{eq7.3} holds for some $\alpha\in(0,1)$. Then, $u\in C^{3,\alpha}(B_r^*)$ for every $r\in(0,1)$.
\end{Theorem}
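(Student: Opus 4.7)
The proof is a bootstrap on the order of differentiability, applying Theorem~\ref{thm1} iteratively to equations satisfied by derivatives of $u$. Since $A=hI_d$ is scalar, it trivially satisfies Assumption~\ref{Ass:matrix}, so Theorem~\ref{thm1}(ii) applied to \eqref{eq7.2} already yields $u\in C^{1,\alpha}(B_r^*)$ for every $r\in(0,1)$.

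For the tangential derivatives, differentiating \eqref{eq7.2} in $x_j$ produces the conormal problem
\[
\begin{cases}
-\dive(\omega^a h\nabla\partial_{x_j}u)=\omega^a\partial_{x_j}f+\dive(\omega^a\partial_{x_j}h\,\nabla u)&\text{in }B_1^*,\\
\omega^a\bigl(h\nabla\partial_{x_j}u+\partial_{x_j}h\,\nabla u\bigr)\cdot e_{y_i}=0&\text{on }\partial B_1^*\cap\Sigma_i,
\end{cases}
\]
whose data belong to $C^{0,\alpha}$ thanks to $h\in C^{2,\alpha}$, $f\in C^{1,\alpha}$ and $u\in C^{1,\alpha}$, so a further application of Theorem~\ref{thm1}(ii) yields $\partial_{x_j}u\in C^{1,\alpha}(B_r^*)$.

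For the normal derivatives, the key idea is to exploit the structural hypotheses $\partial_{y_i}h=\partial_{y_i}f=0$ on $\Sigma_i$ to factorize $\partial_{y_i}h=y_ih_i$ with $h_i\in C^{1,\alpha}(\overline{B_1^*})$ and $\partial_{y_i}f=y_if_i$, and to introduce the substitution $w_i:=\partial_{y_i}u/y_i$, which absorbs the singular $1/y_i$-coefficients produced by differentiating the weight $\omega^a$. Writing \eqref{eq7.2} in the pointwise form $-h\Delta u-\nabla h\cdot\nabla u-h\sum_j\frac{a_j}{y_j}\partial_{y_j}u=f$, differentiating in $y_i$ and substituting $\partial_{y_i}u=y_iw_i$, a direct computation (in which every singular factor is cancelled either by the extra $y_i$ produced by the substitution or by the vanishing of $\partial_{y_i}h$ on $\Sigma_i$) shows that $w_i$ satisfies
\[
\begin{cases}
-\dive\bigl(\omega^{a+2e_i}h\nabla w_i\bigr)=\omega^{a+2e_i}\Bigl[2h_iw_i+f_i-\tfrac{h_i}{h}(f+\nabla h\cdot\nabla u)+\nabla h_i\cdot\nabla u\Bigr]&\text{in }B_1^*,\\
\omega^{a+2e_i}\partial_{y_j}w_i=0&\text{on }\partial B_1^*\cap\Sigma_j,\; j=1,\dots,n,
\end{cases}
\]
where the Neumann condition on $\Sigma_i$ comes for free from the evenness of $w_i$ in $y_i$ (since $\partial_{y_i}u$ is odd under even reflection of $u$ across $\Sigma_i$). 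The new weight $\omega^{a+2e_i}$ is still a monomial weight with exponents in $(-1,\infty)$, the diffusion $hI_d$ still satisfies Assumption~\ref{Ass:matrix}, and the identity $\omega^{a+2e_i}|w_i|^2=\omega^a|\partial_{y_i}u|^2$ gives $w_i\in L^2(B_1^*,\omega^{a+2e_i})$ for free. A Moser-type iteration (along the lines of Section~\ref{s:degiorgi}), treating the zero-order term $2h_iw_i$ as a bounded lower-order coefficient, first boosts $w_i$ to $L^\infty_{\loc}(B_1^*)$; with this bound the full right-hand side belongs to $L^{p_i}(B_1^*,\omega^{a+2e_i})$ -- here the hypothesis $f_i\in L^{p_i}$ from \eqref{eq7.3} is essential, as the choice of $p_i$ yields exactly the Hölder exponent $\alpha$ through the relation $1-(d+\langle(a+2e_i)^+\rangle)/p_i=\alpha$ -- and Theorem~\ref{thm1}(ii) applied to the equation for $w_i$ delivers $w_i\in C^{1,\alpha}(B_r^*)$. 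Then $\partial_{y_i}u=y_iw_i\in C^{1,\alpha}(B_r^*)$, which combined with the tangential step yields $u\in C^{2,\alpha}(B_r^*)$.

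To reach $C^{3,\alpha}$ we iterate once more: the equations for $\partial_{x_j}u$ and $w_i$ are of the same structural type as \eqref{eq7.2} (with weights $\omega^a$ and $\omega^{a+2e_i}$, respectively, both in the admissible class, and with right-hand sides now of class $C^{1,\alpha}$ up to the zero-order perturbation), so repeating the two preceding steps on these equations produces $\partial_{x_j}u,w_i\in C^{2,\alpha}(B_r^*)$; hence $\partial_{y_i}u=y_iw_i\in C^{2,\alpha}(B_r^*)$ and therefore $u\in C^{3,\alpha}(B_r^*)$. The central difficulty is concentrated in the normal-derivative step: rigorously deriving the weak equation for $w_i$ with the new weight $\omega^{a+2e_i}$ (which is precisely where the integrability assumption $\partial_{y_i}f/y_i\in L^{p_i}(B_1^*,\omega^{a+2e_i})$ enters), and handling the zero-order term $2h_iw_i$ by a Moser-type boost to $L^\infty$ before Theorem~\ref{thm1}(ii) becomes applicable.
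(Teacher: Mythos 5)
Your overall strategy coincides with the paper's: first $C^{1,\alpha}$ regularity from Theorem \ref{thm1}, then the quotient $w_i=\partial_{y_i}u/y_i$, which solves an equation with the shifted (still admissible) weight $\omega^{a+2e_i}$, then Theorem \ref{thm1} again and a final bootstrap (the paper's unknown $w=\Phi_{a_n}^{-1}|y_n|^{a_n}\partial_{y_n}u$ differs from your $w_i$ only by the bounded $C^{2,\alpha}$ factor $\varphi$). The genuine gap is the sentence ``differentiating in $y_i$ and substituting \dots a direct computation shows that $w_i$ satisfies\dots''. At that stage $u$ is only known to be $C^{1,\beta}$, so you may not differentiate the equation pointwise; moreover you have not shown $w_i\in H^{1,a+2e_i}$ (the weighted $L^2$ bound is indeed free, but the bound on $\nabla w_i$ is not), so the weak formulation with the new weight is asserted rather than proved --- and proving it is essentially equivalent to the second-order differentiability you are trying to establish, so the argument as written is circular. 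This is exactly where the paper's technical work lies: it regularizes the single factor $|y_n|^{a_n}$ into $\rho_\eps^{a_n}$, argues by induction on the number $n$ of degenerate variables so that the regularized solutions $u_\eps$ are $C^{3,\alpha}$ and can legitimately be differentiated, derives the equation for $\rho_\eps^{a_n}\partial_{y_n}u_\eps$ (whose weight $\rho_\eps^{-a_n}$ becomes supersingular in the limit, forcing the $\tilde H$-framework of Remark \ref{R:supsing}, the Hardy inequality of Remark \ref{Hardyrmrk} and the adapted Lemma \ref{lemA.1}), proves $\eps$-uniform Caccioppoli bounds, and only then passes to the limit and divides by the characteristic odd solution $\Phi_{a_n}$. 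Without this regularization--induction--limit scheme (or an equivalent substitute, e.g.\ weighted difference quotients with uniform estimates), the key step of your proof is missing.

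Three secondary points. First, your right-hand side contains $\nabla h_i\cdot\nabla u$ with $h_i=\partial_{y_i}h/y_i$; since $h\in C^{2,\alpha}$ with $\partial_{y_i}h=0$ on $\Sigma_i$ only gives $h_i\in C^{0,\alpha}$ (cf.\ \cite[Lemma 2.3]{TerTorVit24}, as used in the paper), $\nabla h_i$ need not exist: this term must be kept in divergence form, as the paper does, where $\mathcal{G}h$ only ever appears undifferentiated, inside the field $\tilde F$ or as a $C^{0,\alpha}$ coefficient. Second, Theorem \ref{thm1} admits no zero-order term, so $2h_iw_i$ must be absorbed into the datum $f$; your Moser boost to $L^\infty$ presupposes the (missing) weak formulation, whereas the paper obtains the needed bound directly from the $C^{1,\beta}$ estimate and the pointwise Neumann condition $\partial_{y_i}u=0$ on $\Sigma_i$, giving $|w_i|\le c|y_i|^{\beta-1}\in L^{p_i}(B_1^*,\omega^{a+2e_i})$, with the drift handled by Remark \ref{R:drift}. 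Third, your passage from $C^{2,\alpha}$ to $C^{3,\alpha}$ repeats the normal-derivative step on the equation for $w_i$, which would require re-verifying the structural hypotheses \eqref{eq7.3} for the new data; the paper avoids this by observing that once $\mathcal{G}_iu\in C^{1,\alpha}$ for all $i$, the function $u$ solves the uniformly elliptic equation $-\div(h\nabla u)=f+h\sum_i a_i\mathcal{G}_iu$ with $C^{1,\alpha}$ right-hand side, and classical Schauder theory yields $C^{3,\alpha}$ in one stroke.
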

\begin{proof}

We begin by observing that, in view of \eqref{eq7.3}, if $u \in H^{1,a}(B_1^*)$ is a weak solution to \eqref{eq7.2}, then
\[
\bar u (x, y) = u(x, |y_1|, \ldots, |y_n|)\,
\]
belongs to $H^{1,a}_{\rm e}(B_1)$ (see Lemma \ref{L:Hevenid}) and weakly satisfies
\begin{equation}\label{eq:balleq}
-\dive(\omega^a \bar h\D \bar u) = \omega^a \bar f \qquad \text{in }B_1\,
\end{equation}
where 
\begin{equation}\label{eq:hofh}
\begin{cases}
\bar h\in C^{2,\alpha}(B_1)\,, \\ 
\bar h\ge c>0\,, \\
\bar h (x, y) = h(x, |y_1|, \ldots, |y_n|)\,,\\ 
\end{cases}
\quad 
\begin{cases}
\bar f\in C^{1,\alpha}(B_1)\,, \\
\bar f (x, y) = f(x, |y_1|, \ldots, |y_n|)\,.\\ 
\displaystyle\frac{\partial_{y_i}\bar f}{y_i} \in L^{p_i}(B_1,\omega^{a+2e_i}), \ \text{where } p_i=\frac{d+\langle (a+2e_i)^+\rangle}{1-\alpha}
\end{cases}
\end{equation}
Conversely, if $ u \in H^{1,a}_{\rm e}(B_1)$ is a weak solution to \eqref{eq:balleq}, then its restriction $\bar u = u_{|B_1^*} \in H^{1,a}(B_1^*)$ is a solution to \eqref{eq7.2} with $h = \bar h_{|B_1^*}$ and $f = \bar f_{|B_1^*}$. 

Hence, it suffices to prove the following equivalent statement: every solution $u$ that is even in each variable $y_i$ to 
\[
-\dive(\omega^a h\D u) = \omega^a f \qquad \text{in }B_1
\]
with $h, f$ satisfying \eqref{eq:hofh}, is such that  $u\in C^{3,\alpha}(B_r)$ for every $r\in(0,1)$.

Let $k \in \N$. Throughout the proof, we set $d = k + n$, where $k$ denotes the number of unweighted $x$-variables. The proof is divided into several steps and argue by induction on $n$. When $n = 0$, the conclusion follows directly for all $k \in \N$ from standard elliptic regularity.
Assume now that the theorem holds for $n-1$ and for all $k$, and let $\bar u = u_{|B^*_1}$. By applying Theorem \ref{thm1} with $A = h {I}_d$, we obtain that $\bar u\in C^{1,\beta}(B_r^*)$ for all $\beta\in (0,1)$ and all $r\in(0,1)$. Moreover, $\bar u$ satisfies pointwise the boundary condition $\partial_{y_i} \bar u =0$ in $\partial B_r^* \cap \Sigma_i$ for all $i=1,\dots,n$. Thanks to this, we infer that
$u\in C^{1,\beta}(B_r)$, for all $\beta\in (0,1)$ and for all $r\in(0,1)$.

Now, fix one variable $y_i$ for $i=1,\dots,n$; without loss of generality, let us take $y_n$, and for simplicity set $y_n = \tau$. Call $\hat a = a - a_n e_n \in (-1, \infty)^{n-1}\times \{0\}$, so that $\omega^a = \omega^{\hat a }|\tau|^{a_n}$. Fix $\eps \in (0,1]$ and define
\begin{align*}
    \omega^a_\varepsilon= \omega^{\hat a} \rho_\varepsilon^{a_n} =  \omega^{\hat a} (\eps^2 + \tau^2)^{a_n/2}\,, \qquad \partial_\tau^{\varepsilon, a_n}u:=\rho^{a_n}_\varepsilon \partial_\tau u, \qquad \partial_\tau^{a_n}u:=|\tau|^{a_n}\partial _\tau u, \qquad\mathcal{G}u:=\tau^{-1}\partial_\tau u\,.
\end{align*}
Fix $0<r'<r<1$. By Lemma \ref{lemA.2}, there exists a sequence $u_\varepsilon$ of weak solutions to 
\begin{equation}\label{eq7.2eps}
    -\div(\omega_\varepsilon^a h \nabla u_\varepsilon)=\omega^a_\varepsilon f \quad\text{in }B_r
\end{equation}
Observe that we took $f_\varepsilon=f$, using that $f\in C^{1,\alpha}(B_1)$. Moreover, since $u$, $h$, and $f$, are all even in the $y$-variables, the same holds for $u_\eps$ (to see this, simply take $\xi$ to be radial in \eqref{eq:cutoffpropapprox}, and recall that $u_\eps$ solves \eqref{eq6.8}). Furthermore, by the inductive hypothesis on $n$, we have that $u_\varepsilon\in C^{3,\alpha}_{\loc}(B_r)$. As a consequence, we infer that $\partial^{\eps, a_n}_\tau u_\eps \in \tilde H^{1,a-a_ne_n,\eps}(B_r)$ (see Remark \ref{R:supsing}).

Next, fix $\phi\in C^\infty_c(B_r)$ and set 
\[
\psi:=\rho_\varepsilon^{-a_n}\partial_\tau \phi\,.
\]
Integrating by parts, we obtain
\begin{align*}
    &\int_{B_r} \omega^a_\varepsilon h\nabla u_\varepsilon\cdot \nabla \psi \, dz =\int_{B_r} \omega^{\hat a} h \nabla u_\varepsilon\cdot \nabla (\partial_\tau \phi)\,dz-a\int_{B_r}  \omega^{\hat a} \rho_\varepsilon^{-2} h \partial_\tau u_\varepsilon \partial_\tau \phi\, \tau\, dz\\
    &=-\int_{B_r}  \omega^{\hat a} \partial_\tau h \nabla u_\varepsilon\cdot \nabla \phi\, dz-\int_{B_r} \omega^{\hat a} h \nabla (\partial_\tau u_\varepsilon)\cdot \nabla \phi\,dz -a\int_{B_r}\omega^{\hat a} \rho_\varepsilon^{-2} h \partial_\tau u_\varepsilon \partial_\tau \phi\,\tau\,dz\\
    &=-\int_{B_r} \omega^{\hat a} \rho_\varepsilon^{-a_n}\partial_\tau^{\varepsilon,a_n}h \nabla u_\varepsilon \cdot \nabla \phi\, dz-\int_{B_r} \omega^{\hat a} \rho_\varepsilon^{-a_n} h\nabla(\partial_\tau^{\varepsilon,a_n} u_\varepsilon)\cdot \nabla \phi\, dz\,,
\end{align*}
and 
\begin{align*}
    \int_{B_r}\omega^a _\varepsilon f \psi\, dz=\int_{B_r}  \omega^{\hat a} f \partial_\tau \phi\, dz=-\int_{B_r}  \omega^{\hat a} \rho_\varepsilon^{-a_n}  \partial_\tau^{\varepsilon,a_n}  f\phi\, dz\,.
\end{align*}
Therefore, testing \eqref{eq7.2eps} with $\psi$ we deduce that $\partial_\tau^{\varepsilon,a_n} u_\varepsilon$ weakly satisfies
\begin{equation*}
\begin{cases}
    -\div( \omega^{\hat a} \rho_\varepsilon^{-a_n} h \nabla(\partial_\tau^{\varepsilon,a_n} u_\varepsilon))=\omega^{\hat a} \rho_\varepsilon^{-a_n}\partial_\tau^{\varepsilon,a_n}  f  + \div( \omega^{\hat a} \rho_\varepsilon^{-a_n}\partial_\tau^{\varepsilon,a_n}h \nabla u_\varepsilon) & \text{in }B_r\,,\\
    \partial_\tau^{\varepsilon,a_n} u_\varepsilon = 0 & \text{on }B_r \cap \Sigma_n\,.
    \end{cases}
\end{equation*}
We have the following estimates
\begin{align*}
\begin{split}
    &\int_{B_r}\omega^{\hat a}\rho^{-a_n}_\varepsilon (\partial_\tau^{\varepsilon,a_n}  f)^2\, dz=\int_{B_r} \omega^a_\varepsilon (\partial_\tau f)^2\, dz\leq c\,,\\
    &\int_{B_r}\omega^{\hat a}\rho^{-a_n}_\varepsilon(\partial_\tau^{\varepsilon,a_n}h )^2\, dz=\int_{B_r} \omega^a_\varepsilon (\partial _\tau h)^2\, dz\leq c\,,\\
    &\int _{B_r}\omega^{\hat a}\rho^{-a_n}_\varepsilon(\partial_\tau^{\varepsilon,a_n} u_\varepsilon)^2\, dz= \int_{B_r}\omega^a_\eps (\partial_\tau u_\varepsilon)^2\, dz\leq   \int_{B_r}\omega^a_\eps |\nabla u_\varepsilon|^2\,dz\leq c\,,
\end{split}
\end{align*}
where the last inequality is true by Lemma \ref{lemA.2}. Using these bounds together with the Caccioppoli inequality \eqref{eqwecacc} (see also Remark \eqref{R:cacsupsing}), we obtain
\[
\int _{B_{r'}}\omega^{\hat a}\rho^{-a_n}_\varepsilon((\partial_\tau^{\varepsilon,a_n} u_\varepsilon)^2 + |\nabla \partial_\tau^{\varepsilon,a_n} u_\varepsilon|^2)\, dz \leq c\,.
\]
We can then apply Lemma \ref{lemA.1}, in the version stated in Remark \ref{R:supsing}, and pass to the limit as $\eps \to 0$. We deduce that $\partial^{a_n}_\tau u\in \tilde H^{1, \hat a - a_ne_n}(B_{r'})$, is odd with respect to $\tau$, and satisfies
\begin{equation*}
\begin{cases}
    -\div( \omega^{\hat a} |\tau|^{-a_n} h \nabla(\partial_\tau^{a_n} u))=\omega^{\hat a} |\tau|^{-a_n}\partial_\tau^{a_n}  f + \div(\omega^{\hat a} |\tau|^{-a_n}(\partial_\tau^{a_n}h) \nabla u)  & \text{ in }B_{r'}\,,\\
    \partial_\tau^{a_n} u = 0 & \text{ on }B_{r'}\cap \Sigma_n\,.
\end{cases}
\end{equation*}
We now introduce the characteristic odd solution with respect to the variable $\tau$. Define
\begin{equation*}
    \Phi_{a_n}:=(1+a_n)\int_0^\tau|s|^{a_n}(h( \hat{z},s))^{-1}ds\,,
\end{equation*}
where $\hat{z}=(x,y_1,\dots,y_{n-1})$. It is straightforward to verify that $\Phi_{a_n}\in C^{0}(B_1) \cap C^1(B_1 \setminus \Sigma_n)$, that it is odd with respect to $\tau$, and that $\|\Phi_{a_n}\|_{H^{1,\hat a -a_ne_n}(B_1)}< \infty$. Hence $\Phi_{a_n} \in \tilde H^{1,\hat a-a_ne_n}(B_1)$. Furthermore, $\Phi_{a_n}$ satisfies
\begin{equation*}
\begin{cases}
    -\div( \omega^{\hat a} |\tau|^{-a_n} h \nabla \Phi_{a_n})=\div(\omega^{\hat a} |\tau|^{-a_n}h\nabla_{\hat z}\,\Phi_{a_n}) & \text{ in }B_1\,,\\
    \Phi_{a_n} = 0 & \text{ on }B_1\cap \Sigma_n\,,
    \end{cases}
\end{equation*}
where $\nabla_{\hat z}\,\Phi_{a_n}:=\nabla\Phi_{a_n}-(\partial_\tau \Phi_{a_n})e_\tau $. Next, define the quotient 
\[
w:=\Phi_{a_n}^{-1}\partial_\tau^{a_n} u\,.
\]
Let $\phi_j \in C^\infty_c(B_{r'}\setminus\Sigma_n)$ be such that $\phi_j \to \partial_\tau^{a_n} u$ in $\tilde H^{1,\hat a -a_ne_n}(B_{r'})$. 
From \eqref{eq7.3} we have the estimates
\begin{equation}\label{eq:Phibound}
0 < c|\tau|^{a_n+1} \leq |\Phi_{a_n}| \leq C|\tau|^{a_n + 1}\,, \qquad  |\nabla \Phi_{a_n}| \leq c |\tau|^{a_n}\,.
\end{equation}
Thus we have
\[
\begin{aligned}
\|w- \Phi_{a_n}^{-1}\phi_j\|_{H^{1,\hat a + (2 + a_n)e_n}(B_{r'})}^2 \qquad \qquad \qquad& \\
\leq c\int_{B_{r'}}\omega^{\hat a}|\tau|^{-a_n}\Big((\partial_\tau^{a_n} u& - \phi_j)^2 + |\nabla(\partial_\tau^{a_n} u - \phi_j)|^2 +|\tau|^{-2}(\partial_\tau^{a_n} u - \phi_j)^2\Big)\,dz \,.
\end{aligned}
\]
From Remark \ref{Hardyrmrk} it follows that 
\[
\|w - \Phi_{a_n}^{-1}\phi_j\|_{H^{1,\hat a + (2 + a_n)e_n}(B_{r'})} \leq c\|\partial_\tau^{a_n} u - \phi_j\|_{ H^{1,\hat a -a_ne_n}(B_{r'})}\to 0\,.
\]
Thus, up to a standard mollification argument, we conclude that $w \in \tilde H^{1,\hat a + (2+  a_n)e_n}(B_{r'})$.

Define 
\[
\varphi:=(\tau|\tau|^{a_n})^{-1}\Phi_{a_n}\,.
\]
Then $w\in \tilde H^{1, \hat a + (2 + a_n)e_n}(B_{r'})$ is an even weak solution to 
\begin{align}\label{eq7.12}
\begin{split}
    -\div(&\omega^{\hat a} |\tau|^{2+a_n} \varphi^2 h \nabla w)=\div(\omega^{\hat a} |\tau|^{2+a_n}[\varphi \,(\mathcal{G}h)\nabla u-h (\mathcal{G}u)(\tau|\tau|^{a_n})^{-1}\nabla_{\hat z}\,\Phi_{a_n}]\\
    +&\omega^{\hat a} |\tau|^{2+a_n}[\varphi \mathcal{G}f-(\mathcal{G}h)(\tau|\tau|^{a_n})^{-1}\nabla u \cdot \nabla \Phi_{a_n}+h(\tau|\tau|^{a_n})^{-2} \nabla_{\hat z}\,\Phi_{a_n}\cdot \nabla_{\hat z}(\partial_\tau^{a_n}u)]\qquad \text{in }B_{r'}.
\end{split}
\end{align}
To verify this, argue as in \cite[Lemma 2.4]{SirTerVit21b}. In particular, since the weight $|\tau|^{2 + a_n}$ is superdegenerate, Lemma \ref{lemH=W} ensures that it suffices to test  \eqref{eq7.12} against functions $\phi\in C^\infty_c(B_{r'}\setminus \Sigma_n)$.

Using the elementary identities
\[
\begin{aligned}
&(\tau|\tau|^{a_n})^{-1}\nabla_{\hat z}\Phi_{a_n} = \nabla_{\hat z}\varphi\,,\\
&(\tau|\tau|^{a_n})^{-1} \nabla_{\hat z} (\partial_\tau^{a_n}u) = \nabla_{\hat z} (w \varphi) = \varphi \nabla_{\hat z}w + w \nabla_{\hat z}\varphi\,,\\
&(\tau|\tau|^{a_n})^{-1}\nabla u \cdot \nabla \Phi_{a_n}=\nabla_{\hat z} u  \cdot \nabla_{\hat z}\,\varphi+h^{-1}\mathcal{G}u\,,
\end{aligned}
\]
we can rewrite \eqref{eq7.12} in the more compact form
\begin{align}\label{eq7.15}
    \begin{split}
 -\div( \omega^{\hat a} &|\tau|^{2+a_n} \varphi^2 h \nabla w)=\div(\omega^{\hat a} |\tau|^{2+a_n} \tilde F)+   \omega^{\hat a} |\tau|^{2+a_n} \tilde f+ \omega^{\hat a} |\tau|^{2+a_n} b \cdot \nabla w \quad \text{in }B_{r'}\,,
    \end{split}
\end{align}
where
\[
\begin{aligned}
&\tilde f = \varphi \mathcal{G}f-(\mathcal{G}h)\nabla_{\hat z} u  \cdot \nabla_{\hat z}\,\varphi -h^{-1}(\mathcal{G}u)(\mathcal{G}h) +hw|\nabla_{\hat z}\varphi|^2\,, \\
&\tilde F =  \varphi \,(\mathcal{G}h)\nabla u-h (\mathcal{G}u)\nabla_{\hat z}\,\varphi\,,\\
&b = h\varphi\nabla_{\hat z}\varphi\,.
\end{aligned}
\]
By \cite[Lemma 2.4]{TerTorVit24}, we have $\varphi\in C^{2,\alpha}(B_1)$. Moreover, in view of \eqref{eq:Phibound}, there exists constants $0 < c \leq C$ such that $c\leq \varphi\leq C$ in $B_1$.
Since $h$ is even in $\tau$, \cite[Lemma 2.3]{TerTorVit24} implies that $\mathcal{G}h\in C^{0,\alpha}(B_1)$. Furthermore, as $u$ belong to $C^{1,\beta}_\loc(B_1)$ for all $\beta\in(0,1)$ and is even in $\tau$, it follows that
\[
|\mathcal{G}u|\leq c|\tau|^{\beta-1} \qquad \text{for all } \beta\in (0,1).
\]
In fact, since $w = \varphi^{-1}\mathcal{G} u$, the same estimate also holds for $w$. Therefore, also in view of hypotheses \eqref{eq:hofh} on $f$, 
\[
\mathcal{G}f, \mathcal{G}u, w\in L^{p_n}(B_{r'},\omega^{\hat a} |\tau|^{2+a_n}dz), \qquad \text{with }\,\,p_n=\frac{d+\langle (a+2e_n)^+\rangle}{1-\alpha}
\]
Finally, as $\varphi \in C^{2,\alpha}(B_1)$, we have $\nabla_{\hat z}\varphi\in C^{1,\alpha}(B_1)$, and hence
\[
\tilde f \in L^{p_n}(B_{r'},\omega^{\hat a} |\tau|^{2+a_n}dz), \qquad \tilde F\in (L^{p_n}(B_{r'},\omega^{\hat a} |\tau|^{2+a_n}dz) )^d, \qquad b \in (C^{1,\alpha}(B_1))^d\,.
\]
Then, applying Theorem \ref{thm1} (see also Remark \ref{R:drift}) to equation \eqref{eq7.15}, we obtain $w\in C^{0,\alpha}_{\loc}(B_{r'})$, and therefore $\mathcal{G}u=w\varphi\in  C^{0,\alpha}_{\loc}(B_{r'})$. As a consequence, the datum $\tilde F$ in \eqref{eq7.15} inherits the same regularity, i.e., $\tilde F \in C^{0,\alpha}_\loc(B_{r'})$. A second application of Theorem \ref{thm1} then gives $w,\mathcal{G}u\in C^{1,\alpha}_\loc(B_{r'})$.

Repeating this argument for each coordinate $y_i$, we get that $\mathcal{G}_iu:={\partial_{y_i}u}/{y_i}\in C^{1,\alpha}_{\loc}(B_{r'})$ for all $i=1,\dots,n$. Now, for $\eps \in (0,1]$ define
\[
\omega_\eps^a = \prod_{i=1}^n (\eps^2 + y_i^2)^{a_i/2}.
\]
Let us fix $\phi \in C^\infty_c(B_{r'})$, and test equation \eqref{eq:balleq} with $\omega^{-a}_\eps\phi \in C^\infty_c(B_{r'})$. We obtain
\[
\int_{B_{r'}} (\omega^a\omega_\eps^{-a}) h \nabla u\cdot \nabla\phi \,dz = \int_{B_{r'}} (\omega^a\omega_\eps^{-a}) f \phi \,dz + \int_{B_{r'}} (\omega^a\omega_\eps^{-a}) h \big(\sum_{i=1}^na_i \frac{y_i^2}{\eps^2 + y_i^2}\mathcal{G}_i u\big)\phi \,dz\,.
\]
Since all functions involved are sufficiently smooth, and 
\[
\omega^a\omega_\eps^{-a} \leq c \prod_{-1<a_i<0}|y_i|^{a_i} \in L^1(B_1)\,, 
\]
we can pass to the limit as $\eps \to 0$. It follows that $u \in C^{1,\alpha}_\loc(B_1) \subset H^{1}_\loc(B_1)$ is a weak solution to 
\begin{equation*}
   -\div(h\nabla u)=f + h\sum_{i=1}^n a_i\mathcal{G}_i u \in C^{1,\alpha}_\loc(B_{r'}).
\end{equation*}
By the classical Schauder estimates for elliptic equations, we then conclude that $u\in C^{3,\alpha}_\loc(B_{r'})$. Since $r'>0$, the thesis follows.
\end{proof}

\begin{Corollary}[Application to monomial CKN]\label{corapplCKN}
Let $a \in [0, \infty)^n$, $0\le b-q<1$ and $ p = \frac{2(d+|a|)}{d+|a|-2+2(b-q)}$. Let $u$ be a optimizer of 
\eqref{eqmonCKN}. Then $u\in C^{3, \alpha}_\loc(\overline{\R^d_*}\setminus\{0\})$ for all $\alpha \in (0,1)\cap (0, p-2]$. 
\end{Corollary}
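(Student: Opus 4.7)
The plan is to verify the hypotheses \eqref{eq7.3} of Theorem \ref{thm7.1} for the Euler--Lagrange equation satisfied by an optimizer of \eqref{eqmonCKN}, and then to invoke Theorem \ref{thm7.1} directly. By standard variational arguments, an optimizer $u$ weakly satisfies
\[
-\dive\bigl(\omega^a |z|^{-2q}\nabla u\bigr) = \mu\, \omega^a |z|^{-bp}|u|^{p-2}u \qquad \text{in }\R^d_*,
\]
together with the Neumann conditions $\omega^a\partial_{y_i}u=0$ on $\Sigma_i \cap \R^d_*$, for some Lagrange multiplier $\mu$. Since the conclusion is local and $\R^d_*\setminus\{0\}$ is invariant under translations and dilations, it suffices to fix $z_0 \in \overline{\R^d_*}\setminus\{0\}$ and argue on a small ball $B_1^*$ centered at $z_0$ on which $|z| \ge c_0 > 0$. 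The equation is then of the form \eqref{eq7.2} with $h(z) = |z|^{-2q}$ and $f(z) = \mu|z|^{-bp}|u(z)|^{p-2}u(z)$.

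The first step is to bootstrap the regularity of $u$. A Moser iteration in the spirit of Lemma \ref{thmdegiorgi} gives $u \in L^\infty_{\loc}(\R^d_*)$, hence $f \in L^\infty(B_1^*) \subset L^p(B_1^*, \omega^a)$ for every $p \in (1,\infty)$. Applying Theorem \ref{thm1}(ii) with $F \equiv 0$ and $p$ arbitrarily large then yields $u \in C^{1,\beta}(\overline{B_r^*})$ for every $\beta \in (0,1)$ and every $r \in (0,1)$, together with the pointwise Neumann condition $\partial_{y_i}u = 0$ on $\partial B_r^* \cap \Sigma_i$. Under the assumption $\alpha \le p-2$, the scalar map $t\mapsto |t|^{p-2}t$ is of class $C^{1,\alpha}(\R)$; composing with $u \in C^{1,\beta}$ (for any $\beta \ge \alpha$) and multiplying by the smooth factor $|z|^{-bp}$ gives $f \in C^{1,\alpha}(\overline{B_r^*})$.

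I would then verify the remaining hypotheses \eqref{eq7.3} directly. The coefficient $h = |z|^{-2q}$ is smooth on $B_1^*$, bounded below by a positive constant, and $\partial_{y_i}h = -2q|z|^{-2q-2}y_i$ vanishes on $\Sigma_i$. The chain rule and the pointwise Neumann condition give
\[
\partial_{y_i}f = -\mu bp|z|^{-bp-2}y_i|u|^{p-2}u + \mu(p-1)|z|^{-bp}|u|^{p-2}\partial_{y_i}u,
\]
which vanishes on $\Sigma_i$; consequently
\[
\frac{\partial_{y_i}f}{y_i} = -\mu bp|z|^{-bp-2}|u|^{p-2}u + \mu(p-1)|z|^{-bp}|u|^{p-2}\,\frac{\partial_{y_i}u}{y_i},
\]
whose first summand is bounded on $B_r^*$. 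For the second, the $C^{1,\beta}$-regularity and the Neumann vanishing imply $|\partial_{y_i}u(z)|\le C|y_i|^\beta$ (varying only the $y_i$ coordinate), and therefore
\[
\int_{B_r^*}\omega^{a+2e_i}\Bigl|\frac{\partial_{y_i}u}{y_i}\Bigr|^{p_i}dz \le C\int_{B_r^*}\omega^a\, y_i^{\,a_i + 2 + (\beta-1)p_i}\,dz,
\]
which is finite whenever $(1-\beta)p_i < a_i + 3$.

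The genuinely delicate point is precisely this last integrability. Because $p_i = (d+|a|+2)/(1-\alpha)$ is pinned down by the target exponent $\alpha$, there is no room to vary $\alpha$ to secure the bound. The way out is that the intermediate $C^{1,\beta}$-regularity of $u$ is available for \emph{every} $\beta \in (0,1)$: choosing $\beta$ sufficiently close to $1$ makes $(1-\beta)p_i$ arbitrarily small, so the required $L^{p_i}$-estimate holds (uniformly in $i = 1,\dots,n$). With all hypotheses of Theorem \ref{thm7.1} then satisfied on $B_r^*$, a direct application produces $u \in C^{3,\alpha}(B_{r/2}^*)$, and letting $z_0$ range over $\overline{\R^d_*}\setminus\{0\}$ yields the claimed local regularity.
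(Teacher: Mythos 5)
Your proposal is correct and follows the same strategy as the paper: localize away from the origin, bootstrap via Moser iteration and Theorem \ref{thm1} to $u\in C^{1,\beta}$ for all $\beta<1$ with the pointwise Neumann condition, verify the hypotheses \eqref{eq7.3}, and invoke Theorem \ref{thm7.1}. The only difference is that the paper dismisses the verification of the $L^{p_i}$-condition in \eqref{eq7.3} as ``easy to check'', whereas you carry it out explicitly (modulo a harmless notational slip where $\omega^a\,y_i^{a_i+2+(\beta-1)p_i}$ should read $\omega^{a+2e_i}\,y_i^{(\beta-1)p_i}$) and correctly identify that the freedom to push $\beta$ toward $1$ is what guarantees $(1-\beta)p_i<a_i+3$ and hence the required integrability of $\partial_{y_i}f/y_i$.
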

\begin{proof}
We refer to \cite{Paglia25} for the precise definition of an optimizer for \eqref{eqmonCKN}. For our purposes, it suffices to recall that, under the assumptions on $a, b, p, q$ stated above, such an optimizer $u$ exists, belongs to $H^{1,a}_\loc(\R^d_* \setminus \{0\})$, and solves the Euler-Lagrange equation (up to a multiplicative constant in the right hand side)
\[
\begin{cases}
-\div(\omega^a |z|^{-2q} \nabla u) = \omega^a |z|^{-bp}|u|^{p-2}u & \text{in }\R^d_* \setminus \{0\}\,,\\
\omega^a \partial_{y_i} u = 0  & \text{on }(\partial\R^d_*\cap \Sigma_i) \setminus \{0\}, \text{ for }i= 1, \ldots, n\,.
\end{cases}
\]
Fix $z_0 \in \overline{\R^d_* }\setminus  \{0\}$ and choose $R = R(z_0) >0$ such that $0 \not \in \overline{B_R(z_0)}$. Then $u$ is a solution to 
\[
\begin{cases}
-\div(\omega^a h \nabla u) = \omega^a g|u|^{p-2}u & \text{in }B^*_R(z_0)\,,\\
\omega^a \partial_{y_i} u = 0  & \text{on }\partial B^*_R(z_0)\cap \Sigma_i, \text{ for }i= 1, \ldots, n\,.
\end{cases}
\]
where $h = (|z|^{-2q})_{|B_R(z_0)}$ and $g = (|z|^{-bp})_{|B_R(z_0)}$ satisfy \eqref{eq7.3}.

Fix $0<R_3<R_2<R_1<R$. First, using the Sobolev inequality in Lemma \ref{L:Sobolev}, one can perform a standard Moser-type iteration (see Lemma \ref{thmdegiorgi}) and get that $u \in L^\infty(B^*_{R_1}(z_0))$.

Now we apply Theorem \ref{thm1} with $f = g|u|^{p-2}u \in L^\infty (B^*_{R_1}(z_0)) $, obtaining $u \in C^{1,\beta}(B^*_{R_2}(z_0))$ for all $\beta \in (0,1)$, and the pointwise Neumann condition $\partial_{y_{i}} u = 0$ on $ \partial B^*_{R_2}(z_0)\cap \Sigma_i $ for all $i = 1, \ldots, n$.

As a consequence,  this implies that $|u|^{p-2}u \in C^{1, \alpha}(B^*_{R_2}(z_0)) $ for $\alpha\in(0,1)\cap(0,p-2]$. Moreover, it is easy to check that $f = g|u|^{p-2}u$ satisfies the assumption \eqref{eq7.3}, which allows us to apply Theorem \ref{thm7.1}. Then, $u\in C^{3,\alpha}(B_{R_3}^*(z_0))$ and the conclusion follows since $R_3>0$ is arbitrary. 
\end{proof}

\subsection{Smoothness of solutions to the isotropic homogeneous equation}

We conclude with a simple result showing that, in the most basic setting, solutions are indeed smooth.
\begin{Theorem}[Smoothness of \emph{degenerate harmonic} functions]\label{TheoSmooth}
    Let $a\in (-1,\infty)^n$, and let $u\in H^{1,a}(B_1^*)$ be a weak solution to \eqref{eq7.2} with $f=0$ and $h\equiv 1$. Then $u\in C^\infty(B_r^*)$, for every $r\in(0,1)$.
\end{Theorem}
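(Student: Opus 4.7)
The plan is to bootstrap from the $C^{1,\beta}$ regularity granted by Theorem \ref{thm1} by iterating a Dirichlet--to--Neumann-type identity that trades a degenerate derivative for a shift in the exponent vector $a$. The key observation is that in the isotropic homogeneous setting the computation carried out in the proof of Theorem \ref{thm7.1} collapses dramatically: with $h \equiv 1$ and $f \equiv 0$ one has $\Phi_{a_i} = y_i|y_i|^{a_i}$, hence $\varphi \equiv 1$, and consequently $\tilde F$, $\tilde f$ and $b$ in \eqref{eq7.15} all vanish. This means that
\[
w_i \;:=\; \mathcal{G}_i u \;=\; \frac{\partial_{y_i} u}{y_i}
\]
should be a weak solution of the same type of problem but with the shifted weight $\omega^{a + 2 e_i}$.

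First I would even-reflect $u$ in each $y_i$ to obtain a weak solution of $-\dive(\omega^a \nabla u) = 0$ in $B_1$ that is even in each $y_i$ (as in the first step of the proof of Theorem \ref{thm7.1}), and invoke Theorem \ref{thm1} to get $u \in C^{1,\beta}(B_r^*)$ for every $\beta \in (0,1)$ and $r < 1$. Since the coefficients are independent of $x$, a standard difference-quotient argument together with iterated applications of Theorem \ref{thm1} shows that $\partial_x^\gamma u$ solves the same equation for every multi-index $\gamma$, so $u$ is smooth in the $x$-variables. Next, via the approximation scheme of Lemma \ref{lemA.2*}, I would reproduce the manipulation of the proof of Theorem \ref{thm7.1} at $\eps > 0$: the regularized quantity $|y_i|^{a_i}\partial_{y_i}u_\eps$ solves the equation with weight $\omega^a_\eps/|y_i|^{2 a_i}$, and division by $\Phi_{a_i}$ together with Caccioppoli-type bounds, the compactness machinery of Lemmas \ref{lemA.1}--\ref{lemA.2*}, and the superdegenerate extensions recorded in Remark \ref{R:supsing}, yields that the limit $\mathcal{G}_i u$ weakly solves
\[
-\dive\bigl(\omega^{a+2e_i} \nabla (\mathcal{G}_i u)\bigr) = 0
\]
in $B_{r'}^*$ with the natural Neumann conditions on all $\Sigma_j$. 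Since $a + 2 e_i \in (-1,\infty)^n$, Theorem \ref{thm1} applies again and delivers $\mathcal{G}_i u \in C^{1,\beta}(B_{r''}^*)$.

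The iteration is then automatic: for every multi-indices $\gamma \in \mathbb{N}^{d-n}$ and $k = (k_1,\dots,k_n) \in \mathbb{N}^n$, the mixed quantity
\[
\partial_x^\gamma\, \mathcal{G}_1^{k_1} \cdots \mathcal{G}_n^{k_n} u
\]
weakly solves the analogous problem with weight $\omega^{a + 2\sum_i k_i e_i}$, and hence is of class $C^{1,\beta}$ by a fresh application of Theorem \ref{thm1}. The evenness of $u$ in each $y_i$ forces its Taylor expansion to contain only even powers of $y_i$, so the algebraic identities
\[
\partial_{y_i} u \;=\; y_i \,\mathcal{G}_i u, \qquad \partial_{y_i}^2 u \;=\; \mathcal{G}_i u + y_i^2\, \mathcal{G}_i^2 u,
\]
and their higher-order and mixed analogues, express each Euclidean derivative $\partial_z^\delta u$ as a polynomial in $y$ whose coefficients are iterated $\mathcal{G}$-quantities. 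Since all such coefficients are $C^{1,\beta}$, every derivative of $u$ is continuous in $B_r^*$, proving $u \in C^\infty(B_r^*)$.

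The main obstacle is the rigorous justification of the shifted equation for $\mathcal{G}_i u$ up to $\Sigma_i$ when $a_i + 2 \ge 1$ enters the superdegenerate regime: one must verify that $\mathcal{G}_i u$ lies in the correct weighted Sobolev space -- which, via the evenness of $\partial_{y_i} u$ and a Hardy-type control in the spirit of Remark \ref{Hardyrmrk}, reduces to a second-order weighted energy bound on $u$ -- and that the Neumann conditions on the remaining hyperplanes $\Sigma_j$, $j \ne i$, survive the limit $\eps \to 0$. Once this functional setup is secured, the iteration closes and yields full smoothness.
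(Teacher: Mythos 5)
Your central step is the same as the paper's: with $h\equiv 1$ and $f\equiv 0$ one indeed has $\Phi_{a_i}=y_i|y_i|^{a_i}$, hence $\varphi\equiv 1$ and $\tilde f=\tilde F=b=0$ in \eqref{eq7.15}, so the Theorem \ref{thm7.1} machinery (regularization, the odd quantity $|y_i|^{a_i}\partial_{y_i}u$, division by $\Phi_{a_i}$, Remark \ref{R:supsing}) shows that $\mathcal{G}_iu=\partial_{y_i}u/y_i$ weakly solves $-\dive(\omega^{a+2e_i}\nabla(\mathcal{G}_iu))=0$ with the conormal conditions, and Theorem \ref{thm1} gives $\mathcal{G}_iu\in C^{1,\beta}_{\loc}$; this is exactly how the paper's proof of Theorem \ref{TheoSmooth} begins. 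Where you differ is the bootstrap. The paper immediately returns to the unweighted setting via the identity $-\Delta u=\sum_{i=1}^na_i\,\mathcal{G}_iu\in C^{1,\alpha}_{\loc}(B_1)$ and lets classical Schauder theory produce $u\in C^{3,\alpha}_{\loc}$; it then observes that each $\mathcal{G}_iu$ solves a problem of the same form \eqref{eq7.2} with weight $a+2e_i$, so the same statement applies to it, and iterating this two-derivative gain yields $C^\infty$. You instead stay entirely inside the weighted theory, applying Theorem \ref{thm1} to every iterate $\partial_x^\gamma\mathcal{G}_1^{k_1}\cdots\mathcal{G}_n^{k_n}u$ (weight $\omega^{a+2\sum_ik_ie_i}$, still admissible since the exponents remain in $(-1,\infty)$ and evenness is preserved by each $\mathcal{G}_i$) and then reconstructing the Euclidean derivatives from $\partial_{y_i}v=y_i\mathcal{G}_iv$, the identities extending to $\Sigma_i$ by continuity. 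This is a viable alternative, but it is heavier at exactly the point you flag: at every level you must rerun the $\eps$-regularized derivation of the shifted weak formulation (membership of the new quotient in the correct, increasingly superdegenerate space via Hardy, and survival of the Neumann conditions on the other hyperplanes), whereas the paper's detour through $-\Delta u=\sum_ia_i\mathcal{G}_iu$ delegates all derivatives of order two and higher to classical Schauder, so each pass of the induction only ever needs one layer of weighted quotients. Your algebraic reconstruction step is correct but should be written out; with that, the argument closes and proves the theorem.
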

\begin{proof}
Following the same argument as in the proof of Theorem \ref{thm7.1}, we find that, for each $i = 1,\ldots, n$, the function $w_i = {\partial_{y_i}u}/{y_i}$ is a weak solution to
\begin{equation*}
    -\dive( \omega^a |y_i|^{2}\nabla w_i)=0\,.
\end{equation*}
Hence, by Theorem \ref{thm1}, we have $w_i\in C^{1,\alpha}_{\loc}(B_1)$ for all $\alpha \in (0,1)$. From \eqref{eq7.2} it then follows that 
\begin{equation*}
    -\Delta u=\sum_{i=1}^n a_i w_i\in C^{1,\alpha}_{\loc}(B_1)\,.
\end{equation*}
Therefore, classical Schauder's theory implies $u\in C^{3,\alpha}_{\loc}(B_1)$. In fact, we get that every solution to problems in the form \eqref{eq7.2} belongs to $C^{3,\alpha}_{\loc}(B_1)$. In particular, $w_i \in C^{3,\alpha}_{\loc}(B_1)$. We can then iterate this bootstrapping procedure, obtaining $u \in C^\infty_\loc(B_1)$, which concludes the proof.
\end{proof}

\section*{Acknowledgements}
The authors are research fellows of Istituto Nazionale di Alta Matematica INDAM group GNAMPA. 
G.C., G.F. and S.V. are supported by the GNAMPA project E5324001950001 \emph{PDE ellittiche che degenerano su variet\`a di dimensione bassa e frontiere libere molto sottili}. G.C. is supported by the PRIN project 20227HX33Z \emph{Pattern formation in nonlinear phenomena}. S.V. is supported by the PRIN project 2022R537CS \emph{$NO^3$ - Nodal Optimization, NOnlinear elliptic equations, NOnlocal geometric problems, with a focus on regularity}.
F.P. is supported by the European Union's Horizon Europe research and innovation programme under the Marie Sklodowska-Curie grant agreement No 101126554.
\section*{Disclaimer}
Co-Funded by the European Union. Views and opinions expressed are however those of the author only and do not necessarily reflect those of the European Union. Neither the European Union nor the granting authority can be held responsible for them.

\end{document}